\documentclass[11pt]{article}
\usepackage[total={7in, 8in}]{geometry}
\usepackage{graphicx}
\usepackage{xcolor}
\usepackage{amsmath, amsthm, latexsym, amssymb, color,cite,enumerate, physics,hyperref,rotating, wrapfig}
\usepackage{cutwin}
\usepackage{floatflt}
\usepackage{tikz} 
\usepackage{circuitikz}
\usetikzlibrary{external} 
\usepackage{adjustbox}
\usepackage{listings}
\usepackage{caption,subcaption}
\usepackage{mathrsfs}
\usepackage{comment}
\theoremstyle{theorem}
\newtheorem{theorem}{Theorem}[section]
\newtheorem{lemma}[theorem]{Lemma}
\newtheorem{definition}[theorem]{Definition}
\newtheorem{corollary}[theorem]{Corollary}
\newtheorem{proposition}[theorem]{Proposition}

\newtheorem{notation}[theorem]{Notation}
\theoremstyle{remark}
\newtheorem{remark}{Remark}
\newtheorem{examplex}{Example}
\newenvironment{example}
  {\pushQED{\qed}\examplex}
  {\popQED\endexamplex}
\newcommand*{\myproofname}{Proof}

\renewcommand\Re{\operatorname{Re}}
\renewcommand\Im{\operatorname{Im}}
\newcommand\MdR{\mbox{M}_d(\mathbb{R})}
\newcommand\GldR{\mbox{Gl}_d(\mathbb{R})}
\newcommand\OdR{\mbox{O}_d(\mathbb{R})}

\newcommand\Exp{\operatorname{Exp}}
\newcommand\diag{\operatorname{diag}}
\newcommand\supp{\operatorname{Supp}}

\renewcommand\det{\operatorname{det}}

\newcommand{\Log}{\operatorname{Log}}
\newcommand{\sgn}{\operatorname{sgn}}
\newcommand{\lcm}{\operatorname{lcm}}
\usepackage{multicol,lipsum,etoolbox}



\newcommand{\evan}[1]{{\color{blue} Evan says: #1}}
\newcommand{\jose}[1]{{\color{red} Jose says: #1}}

\author{Pedro H. Alves\\
\normalsize  Department of Mathematics\\[-0.8ex]
\normalsize Duke University
\and
Evan Randles\thanks{Corresponding author: erandles@colby.edu}\,\,\thanks{Research partially supported by Haynesville Project Grant}\\
\normalsize  Department of Mathematics\\[-0.8ex]
\normalsize Colby College
\and
Jos\'e M. Valdovinos \thanks{Research supported by the ANR project HEAD under grant agreement ANR-24-CE40-3260} \\
\normalsize Univ Toulouse, INSA Toulouse \\[-0.8ex]
\normalsize CNRS, IMT, Toulouse, France\\\\}

\title{Generalized Gaussian Estimates and Local Limit Theorems for Discrete Convolution Powers of Complex Functions\\ \textit{The $d$-dimensional case}}
\date{}
\begin{document}
\maketitle

\begin{abstract}
    We establish generalized Gaussian estimates and local limit theorems with cumulants, up to any order of accuracy, with sharp Gaussian-type error for the convolution powers of certain complex-valued functions on $\mathbb{Z}^d$. These global space-time estimates/error are written in terms of the Legendre-Fenchel transforms of positive-homogeneous polynomials and are mirrored by estimates satisfied by the heat kernels associated to a related class of partial differential operators. The results obtained here enjoy applications to the analysis and stability of numerical difference schemes to partial differential equations. This work extends several recent results, pertaining to one and several dimensions, of P. Diaconis, L. Saloff-Coste, J.-F. Coulombel, G. Faye, L. Coeuret, and the second author.
\end{abstract}

\noindent{\small\bf Keywords:} Convolution powers, local limit theorems, Gaussian estimates, stability of numerical difference schemes.\\

\noindent{\small\bf Mathematics Subject Classification:} Primary 42A85, 42B99; Secondary  35K25, 60F99, 65M12.

\section{Introduction and Main Results}

In this article, we establish generalized Gaussian bounds and local (central) limit theorems with cumulants, up to any order of accuracy, with Gaussian-type error for the convolution powers of certain complex-valued functions on $\mathbb{Z}^d$. Mirrored by the sharp error satisfied by the ``heat'' kernels which appear as attractors in local limits, our global space-time error is written in terms of the Legendre-Fenchel transform of positive-homogeneous polynomials, which were introduced by L. Saloff-Coste and the second author in \cite{RSC17,RSC17b,RSC20}. As we will see, for a given $\phi:\mathbb{Z}^d\to\mathbb{C}$, these polynomials often appear in series expansions of $\phi$'s Fourier transform about its local maxima.  Our results extend to $d$ dimensions the $1$-dimensional  Gaussian-type estimates of J.-F. Coulombel and G. Faye in \cite{CF22} and extend the local limit theorems of L. Coeuret in \cite{Co25} and J.-F. Coulombel and G. Faye in \cite{CF24}. In the context of $\mathbb{Z}^d$, our results also extend the Gaussian estimates and local limit theorems of the second author and L. Saloff-Coste in \cite{RSC17} by weakening hypotheses and significantly sharpening error.\\

\noindent We denote by $\ell^1(\mathbb{Z}^d)$ the set of absolutely summable complex-valued functions on $\mathbb{Z}^d$. Equipped with its usual norm $\|\cdot\|_1$, $\ell^1(\mathbb{Z}^d)$ is a Banach algebra under the convolution product
\begin{equation*}
    \phi\ast \lambda(x)=\sum_{y\in\mathbb{Z}^d}\phi(x-y)\lambda(y)
\end{equation*}
defined for $x\in\mathbb{Z}^d$. Following the articles \cite{DSC14, RSC15, RSC17, BR22, CF22, R23, CF24,Co25}, for a given $\phi\in\ell^1(\mathbb{Z}^d)$, we study the asymptotic behavior of the convolution powers $\phi^{(n)}\in\ell^1(\mathbb{Z}^d)$ defined iteratively by setting $\phi^{(1)}=\phi$ and, for $n\geq 2$, $\phi^{(n)}=\phi^{(n-1)}\ast \phi$. In the special case that $\phi$ is non-negative with $\sum_x \phi=1$, the convolution powers $\phi^{(n)}$ are well-studied objects in probability theory: they appear as the transition kernels of a random walk on $\mathbb{Z}^d$. In this case, the large-$n$ asymptotic description of $\phi^{(n)}$ is described by the classical local (central) limit theorem and under mild conditions can be shown to satisfy two-sided Gaussian estimates\cite{Woess2000,Spitzer,LawlerLimic2010}. When $\phi$ is signed or complex valued, the convolution powers can exhibit much richer behavior, much of which is not seen in the probabilistic setting. Our study of convolution powers of complex-valued functions is driven by its applications to numerical difference schemes in partial differential equations, especially the stability of such schemes. For a more thorough description of this subject (including history, applications, and recent developments), we point the reader to \cite{DSC14,RSC17,CF22,R23,CF24,Co25}.\\

\noindent Given $\phi\in\ell^1(\mathbb{Z}^d)$, its characteristic function/Fourier transform is defined by
\begin{equation*}
\widehat{\phi}(\xi)=\sum_{x\in\mathbb{Z}^d}\phi(x)e^{ix\cdot\xi}
\end{equation*}
for $\xi\in\mathbb{R}^d$ where $\cdot$ denotes the usual dot product. With this, we can understand $\phi$'s convolution powers through the identity
\begin{equation}\label{eq:FTIdentityReal}
\phi^{(n)}(x)=\frac{1}{(2\pi)^d}\int_{\mathbb{T}^d}\widehat{\phi}(\xi)^n e^{-i x\cdot\xi}\,d\xi
\end{equation}
for $n\in\mathbb{N}_+$ and $x\in\mathbb{Z}^d$; here $\mathbb{T}^d=(-\pi,\pi]^d$. As in \cite{CF22,CF24,Co25}, our Gaussian estimates and local limit theorems will be obtained under several assumptions on $\widehat{\phi}$, including strong regularity. To this end, we shall focus on the subspace $\mathcal{H}_d$ of $\ell^1(\mathbb{Z}^d)$ consisting of those $\phi$'s for which $\widehat{\phi}$ is holomorphic on a neighborhood of $\mathbb{T}^d$ in $\mathbb{C}^d$. Aligned with the perspective taken in \cite{CF24}, we remark that having $\phi\in\mathcal{H}_d$ is equivalent to the assumption that
\begin{equation*}
F_\phi(z)=\sum_{x\in\mathbb{Z}^d}\phi(x)z^x=\sum_{x\in\mathbb{Z}^d}\phi(x)z_1^{x_1}z_2^{x_2}\cdots z_d^{x_d}
\end{equation*}
is holomorphic on a poly-annulus of the form $\{z=(z_1,z_2,\dots,z_d)\in\mathbb{C}^d:1-\epsilon<\abs{z_j}<1+\epsilon,\,\,j=1,2,\dots,d \}$
for $\epsilon>0$. Of course, this equivalence is seen on account of the fact that $F_\phi(e^{i\xi_1},e^{i\xi_2},\dots,e^{i\xi_d})=\widehat{\phi}(\xi)$ for $\xi=(\xi_1,\xi_2,\dots,\xi_d)$. 
We remark that the collection of finitely-supported complex-valued functions on $\mathbb{Z}^d$ is contained in $\mathcal{H}_d$ and every $\phi\in\mathcal{H}_d$ has finite moments of all orders (and hence $\mathcal{H}_d\subseteq \mathcal{S}_d$ in the notation of \cite{RSC17} and \cite{R23}). We shall further restrict our attention to the set $\mathcal{H}_d^*$ consisting of those $\phi\in\mathcal{H}_d$ which are normalized in the sense that
\begin{equation}\label{eq:PhiHatNormalization}
\sup_{\xi\in \mathbb{T}^d}\abs{\widehat{\phi}(\xi)}=\sup_{\xi\in\mathbb{T}^d}\abs{F_\phi\left(e^{i\xi_1},e^{i\xi_2},\dots,e^{i\xi_d}\right)}=1.
\end{equation}
For $\phi\in \mathcal{H}_{d}^*$, we define
\begin{equation*}
    \Omega(\phi)=\left\{\xi\in\mathbb{T}^d:\abs{\widehat{\phi}(\xi)}=1\right\}.
\end{equation*}
Through the identity \eqref{eq:FTIdentityReal}, it is evident that the behavior of convolution powers is determined by the behavior of $\widehat{\phi}$ near to the points $\Omega(\phi)$. To this end, for $\xi_0\in\Omega(\phi)$, we define
\begin{equation}\label{eq:DefOfGamma}
\Gamma_{\xi_0}(\xi)=\Log\left(\frac{\widehat{\phi}(\xi+\xi_0)}{\widehat{\phi}(\xi_0)}\right)
\end{equation}
where $\Log$ denotes the principal branch of the logarithm; $\Gamma_{\xi_0}$ is evidently holomorphic on a neighborhood of $0$ in $\mathbb{C}^d$. Given that $\widehat{\phi}$ is maximized in absolute value at $\xi_0$, it is easy to see that the zeroth and first-order terms of the Maclaurin expansion for $\Gamma_{\xi_0}$ are zero and $i\alpha\cdot\xi$ for some $\alpha\in\mathbb{R}^d$, respectively. As illustrated in \cite{Tho65,DSC14,RSC15,RSC17,CF22,BR22, R23,CF24,Co25}, it is the nature of the next non-vanishing term(s) in the series expansion that determines the asymptotic nature of convolution powers. In this article (and consistent with \cite{RSC17}), we shall assume that these are \textit{positive-homogeneous polynomials} (Definition \ref{def:PosDefPoly}). As we will see, these polynomials generalize to $d$ dimensions the single-variable even-order polynomials $\xi\mapsto \beta\xi^{2m}$ for $\Re(\beta)>0$ and $m\in\mathbb{N}_+$. In the context of one dimension, the assumption that all such expansions begin (beyond the first order) with (the negative of) such even order polynomials are basic hypotheses in \cite{DSC14,CF22,CF24,Co25}.\\

\subsection{Positive-Homogeneous Polynomials}
\noindent To introduce positive-homogeneous polynomials, we first introduce the ``dilations" with which homogeneity is formulated. We denote by $\MdR$ the collection of $d\times d$ real matrices. The general linear group and orthogonal group will be denoted by $\GldR$ and $\OdR$, respectively, and their common identity will be denoted by $I_d$ or just $I$ when the dimension is clear. Correspondingly, their (real) Lie algebras taken equipped with the classical Lie bracket $[\cdot,\cdot]$ are $\operatorname{Lie}(\GldR)=\mathfrak{g}_d(\mathbb{R})=\MdR$ and $\operatorname{Lie}(\OdR)= \mathfrak{o}_d(\mathbb{R})=\{S\in\MdR: S\,\mbox{is skew symmetric}\}$. For $E\in\MdR$, we define
\begin{equation*}
t^E=\exp((\ln t) E)=\sum_{k=0}^\infty\frac{(\ln t)^k}{k!}E^k
\end{equation*}
for $t>0$. As discussed in \cite{RSC17,BR22,R23}, $\{t^E\}$ is a one-parameter subgroup of $\GldR$ in the sense that $\mathbb{R}^*\ni t\mapsto t^E\subseteq \GldR$ is a smooth homomorphism. Given a complex function $P$ defined on $\mathbb{R}^d$ and $E\in\MdR$, we say that \textbf{$P$ is homogeneous with respect to $E$} (or homogeneous with respect to $\{t^E\}$) if
\begin{equation}\label{eq:HomWRTE}
    tP(\xi)=P(t^{E}\xi)
\end{equation}
for all $t>0$ and $\xi=(\xi_1,\xi_2,\dots,\xi_d)\in\mathbb{R}^d$. The set of all such matrices $E$ for which \eqref{eq:HomWRTE} holds is called \textbf{the exponent set of $P$} and denoted by $\Exp(P)$. 
\begin{definition}\label{def:PosDefPoly}
    Let $P$ be a complex-valued multivariate polynomial in $d$ variables and consider the real-valued function $R:\mathbb{R}^d\to\mathbb{R}$ defined by
    \begin{equation*}
        R(\xi)=\Re P(\xi)
    \end{equation*}
    for $\xi\in\mathbb{R}^d$. We say that \textbf{$P$ is a positive-homogeneous polynomial} if $\Exp(P)\neq \varnothing$ and $R$ is positive-definite in the sense that $R(\xi)\geq 0$ and $R(\xi)=0$ only when $\xi=0$. 
\end{definition}

\begin{example}\label{ex:Intro1}
Here, we consider several examples of positive-homogeneous polynomials, including the canonical class of semi-elliptic ones.
    \begin{enumerate}
        \item Consider the polynomial $P$ defined by
        \begin{equation*}
            P(\eta,\zeta)=\eta^2+\zeta^4
        \end{equation*}
        for $(\eta,\zeta)\in\mathbb{R}^2$. We observe easily that $D=\diag(1/2,1/4)\in\Exp(P)$ because
        \begin{equation*}
            P(t^D(\eta,\zeta))=P(t^{1/2}\eta,t^{1/4}\zeta)=(t^{1/2}\eta)^2+(t^{1/4}\zeta)^4=t P(\eta,\zeta)
        \end{equation*}
        for all $t>0$ and $(\eta,\zeta)\in\mathbb{R}^2$. Since $R(\eta,\zeta)=P(\eta,\zeta)$ is clearly positive-definite, we conclude that $P$ is a positive-homogeneous polynomial.
        \item For a positive integer $m$, observe that
        \begin{equation*}
            \xi\mapsto \abs{\xi}^{2m}=(\xi_1^2+\xi_2^2+\cdots\xi_d^2)^m
        \end{equation*}
        is a positive-homogeneous polynomial with
        \begin{equation*}
            \Exp(\abs{\cdot}^{2m})=\frac{1}{2m}I+\mathfrak{o}_d(\mathbb{R})
        \end{equation*}
        where $I$ is identity in $\GldR$. 
        \item Let $\mathbf{n}=(n_1,n_2,\dots,n_d)$ be a $d$-tuple of positive integers (i.e., $\mathbf{n}\in\mathbb{N}_+^d$) and, for a multi-index $\alpha=(\alpha_1,\alpha_2,\dots,\alpha_d)\in \mathbb{N}^d$, define
        \begin{equation*}
            \abs{\alpha:\mathbf{n}}=\sum_{k=1}^d \frac{\alpha_k}{n_k}.
        \end{equation*}
        In the language of L. H\"{o}rmander \cite{Hormander1983}, a polynomial $P$ is \textbf{semi-elliptic} if it can be written in the form
        \begin{equation}\label{eq:SemiEllipticPolyGeneral}
        P(\xi)=\sum_{|\alpha:\mathbf{n}|=1}a_\alpha \xi^\alpha=\sum_{|\alpha:\mathbf{n}|=1}a_\alpha \xi_1^{\alpha_1}\xi_2^{\alpha_2}\cdots\xi_d^{\alpha_d}
        \end{equation}
        for some $\mathbf{n}\in\mathbb{N}_+^d$ and coefficients $\{a_\alpha\}\subseteq\mathbb{C}$. Given $D=\diag(1/n_1,1/n_2,\dots,1/n_d)\in\MdR$, observe that
        \begin{equation*}
            P(t^D\xi)=\sum_{|\alpha:\mathbf{n}|=1}a_\alpha (t^{1/n_1}\xi_1)^{\alpha_1}(t^{1/n_2}\xi_2)^{\alpha_2}\cdots(t^{1/n_d}\xi_d)^{\alpha_d}=\sum_{|\alpha:\mathbf{n}|=1}a_\alpha t^{|\alpha:\mathbf{n}|}\xi^\alpha=tP(\xi) 
        \end{equation*}
        
        for all $t>0$ and $\xi\in\mathbb{R}^d$. Thus, $D\in\Exp(P)$. It is not hard to see that, for a semi-elliptic polynomial $P$ to have positive-definite real part, $\mathbf{n}=2\mathbf{m}$ for $\mathbf{m}\in\mathbb{N}_+^d$ so that
        \begin{equation}\label{eq:PosSemiElliptic1}
            P(\xi)=\sum_{\abs{\alpha:2\mathbf{m}}=1}a_\alpha\xi^\alpha=\sum_{\abs{\alpha:\mathbf{m}}=2}a_\alpha\xi^{\alpha}.
        \end{equation}
        Consequently, any semi-elliptic polynomial of the form \eqref{eq:SemiEllipticPolyGeneral} is a positive-homogeneous polynomial provided that $\mathbf{n}=2\mathbf{m}$ for $\mathbf{m}\in\mathbb{N}_+^d$ and
        \begin{equation}\label{eq:PosSemiElliptic2}
            R(\xi)=\sum_{\abs{\alpha:\mathbf{m}}=2} \Re(a_{\alpha})\xi^{\alpha}
        \end{equation}
        is positive-definite. In the sequel, a \textbf{positive semi-elliptic polynomial} is one of the form \eqref{eq:PosSemiElliptic1} (for $\mathbf{m}\in\mathbb{N}_+^d$) whose real part \eqref{eq:PosSemiElliptic2} is positive definite. As we have just shown, every positive semi-elliptic polynomial $P$ is a positive-homogeneous polynomial with 
        \begin{equation*}
            D=\diag\left(\frac{1}{2m_1},\frac{1}{2m_2},\dots,\frac{1}{2m_d}\right)\in\Exp(P).
        \end{equation*}

       We see that the polynomial $P(\eta,\zeta)=\eta^2+\zeta^4$ of Item 1 is positive semi-elliptic with $\mathbf{m}=(1,2)$. It is not difficult to see that, for $a\in\mathbb{C}$, 
       \begin{equation*}
           P(\eta,\zeta)=\eta^2+a\eta\zeta^2+\zeta^4
       \end{equation*}
       is positive semi-elliptic whenever $-2<\Re a<2$. Looking to Item 2, $\xi\mapsto \abs{\xi}^{2m}$ is also semi-elliptic with $\mathbf{m}=(m,m,\dots,m)$. By the same logic, it is easy to verify that all positive-definite elliptic polynomials of order $2m$ are positive semi-elliptic and have $I/2m$ in their exponent sets. Semi-elliptic polynomials arise as symbols for a class of hypoelliptic partial differential operators which were introduced by F. Browder in \cite{Br57}. For further analysis of such semi-elliptic operators, we refer the reader to \cite{Hormander1983, RSC17b,RSC20}.
       
       \item We consider the polynomial
       \begin{equation*}
       P(\eta,\zeta)=\eta^2 + \eta^4 + 2 \eta \zeta - 4 \eta^3 \zeta + \zeta^2 + 6 \eta^2 \zeta^2 - 4 \eta \zeta^3 + \zeta^4
       \end{equation*}
       for $(\eta,\zeta)\in\mathbb{R}^2$; this is evidently not semi-elliptic. We invite the reader to verify the somewhat tedious computation that $P$ is a positive-homogeneous polynomial with
       \begin{equation*}
       E=\begin{pmatrix}
       3/8 & 1/8\\
       1/8 & 3/8
       \end{pmatrix}\in\Exp(P).
       \end{equation*}
       An easy way to verify this is to first observe that $P$ can be factored as
        \begin{equation*}
           P(\eta, \zeta) = (\eta + \zeta)^2 +(\eta - \zeta)^4
       \end{equation*}
       for $(\eta,\zeta)\in\mathbb{R}^2$. With this, 
       \begin{eqnarray*}
           P(t^E(\eta, \zeta)) & =& P\left (t^{1/2}(\eta+\zeta)/2 + t^{1/4}(\eta-\zeta)/2, t^{1/2}(\eta+\zeta)/2 - t^{1/4}(\eta-\zeta)/2 \right)\\
           & = &\left(t^{1/2}(\eta+\zeta) \right)^2 +  \left(t^{1/4}(\eta-\zeta) \right)^4 \\
           & =& tP(\eta, \zeta)
       \end{eqnarray*} 
       for $t>0$ and $(\eta,\zeta)\in\mathbb{R}^2$ from which we see that $E\in\Exp(P)$. The fact that this computation becomes easy through factoring is directly connected to the observation that $E$ is diagonalizable with $E=ADA^{-1}$ where $D=\diag(1/2,1/4)$ and 
       \begin{equation*}A = \begin{pmatrix}
           1/\sqrt{2} & -1/\sqrt{2} \\
           1/\sqrt{2} & 1/\sqrt{2} \\
       \end{pmatrix}.
       \end{equation*}
       Further, it is essentially the coordinate transformation defined by $A$ that allows us to nicely factor $P$. To see this, one can easily check that
      \begin{equation*}
      P_A(x,y):=P(A(x,y))=2x^2+4y^4
      \end{equation*}
      is semi-elliptic with $D=\diag(1/2,1/4)\in\Exp(P_A)$. With this, we see that
      \begin{equation*}
      P(t^E(\eta,\zeta))=P(At^DA^{-1}(\eta,\zeta))=P_A(t^DA^{-1}(\eta,\zeta))=t P_A(A^{-1}(\eta,\zeta))=tP(\eta,\zeta)
      \end{equation*}
      for all $t>0$ and $(\eta,\zeta)\in\mathbb{R}^2$. This example is closely connected to Example 7.3 of \cite{RSC17} which we revisit in Section \ref{sec:Misaligned} below.
    \end{enumerate}
\end{example}

\noindent In view of the final item in the example above, we see that not all positive-homogeneous polynomials are semi-elliptic. Still, as the example illustrates, it is at least sometimes possible to make a change of coordinates so that a positive-homogeneous polynomial becomes semi-elliptic in the new coordinate system. The following proposition shows that this, in fact, can always be done.

\begin{proposition}\label{prop:PosHomAreSemiElliptic}
    For any positive-homogeneous polynomial $P$ (in $d$ variables), there exists $A\in \GldR$ for which $P_A(\xi)=P(A\xi)$ is positive semi-elliptic. In particular, there exists some $\mathbf{m} = \left (m_1, m_2,\dots, m_d\right)\in\mathbb{N}_+^d$ for which
    \begin{equation*}
    P_A(\xi)=\sum_{\abs{\alpha:\mathbf{m}}=2}a_\alpha\xi^\alpha
    \end{equation*}
    and
    \begin{equation*}
        R_A(\xi)=R(A\xi)=\sum_{|\alpha:\mathbf{m}|=2}\Re(a_\alpha)\xi^\alpha
    \end{equation*}
    is positive-definite. Further
    \begin{equation*}
    D:=\diag\left(1/2m_1,1/2m_2,\dots,1/2m_d\right)\in\Exp(P_A)
    \end{equation*}
    and $E:=ADA^{-1}\in\Exp(P)$. 
\end{proposition}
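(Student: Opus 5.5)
The plan is to first find an element of $\Exp(P)$ that is diagonalizable over $\mathbb{R}$ with positive eigenvalues, and then let the diagonalizing matrix be $A$. Once we have such an element $\Lambda=ADA^{-1}$ with $D=\diag(d_1,\dots,d_d)$, the semi-elliptic structure of $P_A$ should follow from bookkeeping on monomials.

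\emph{Step 1 (eigenvalues have positive real part).} Fix any $E\in\Exp(P)$, which exists by hypothesis. Taking real parts in \eqref{eq:HomWRTE} gives $R(t^E\xi)=tR(\xi)$. Since $R$ is continuous and positive-definite, its minimum $c$ on the unit sphere is strictly positive. Using the homogeneity, I will deduce that $t^E\xi\to0$ as $t\to0^+$ for every $\xi$: if some orbit stayed outside a small ball, rescaling it onto the sphere would contradict $R(t^E\xi)=tR(\xi)\to0$. Hence $t^E\to0$ as $t\to0^+$, so every eigenvalue of $E$ has positive real part.

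\emph{Step 2 (extracting a real-diagonalizable exponent).} I expect this to be the main obstacle. Consider the real algebraic group
\[
\mathcal{G}=\{(g,s)\in\GldR\times\mathbb{R}^*:\ P(g\xi)=sP(\xi)\ \text{for all }\xi\},
\]
which contains the one-parameter subgroup $\{(t^E,t)\}$. I would take the Zariski closure of this subgroup; it is an abelian algebraic subgroup of $\mathcal{G}$. By the Jordan decomposition in algebraic groups, the closure contains the multiplicative Jordan components of each of its elements. Write $E=H+K+N$, where $H$ is semisimple with real spectrum, $K$ is semisimple with purely imaginary spectrum, $N$ is nilpotent, and $H,K,N$ commute. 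Then the components of $(t^E,t)$ are $(t^N,1)$, $(t^K,1)$ and $(t^H,t)$; the scalar factor $t$ lies entirely in the real-split part. I would conclude that $(t^H,t)\in\mathcal{G}$ for all $t>0$, that is, $\Lambda:=H\in\Exp(P)$.

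If this algebraic-group argument proves awkward, my fallback is more hands-on. I would expand $P(t^E\xi)$ in the Jordan basis; each entry is a finite sum of terms $t^{\lambda}(\ln t)^k$. Since the identity $P(t^E\xi)=tP(\xi)$ holds for all $t>0$, comparing coefficients of these linearly independent functions of $t$ should force the same identity with $E$ replaced by $H$. By Step 1, $\Lambda$ is diagonalizable over $\mathbb{R}$ with positive eigenvalues, say $\Lambda=ADA^{-1}$ with $A\in\GldR$ and $d_k>0$.

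\emph{Step 3 (monomials).} Homogeneity of $P$ under $\Lambda$ translates into $P_A(t^D\xi)=tP_A(\xi)$. Expanding $P_A=\sum a_\alpha\xi^\alpha$ and using $t^D\xi=(t^{d_1}\xi_1,\dots,t^{d_d}\xi_d)$, I compare coefficients in $t$. This gives $\sum_k\alpha_kd_k=1$ whenever $a_\alpha\neq0$, and the same holds for $R_A$ with coefficients $\Re(a_\alpha)$. Next I restrict to the $k$-th axis. There $R_A(se_k)$ is a one-variable polynomial that is positive for all $s\neq0$, and only monomials $\xi_k^{n}$ with $nd_k=1$ can occur in it. So $R_A(se_k)=c\,s^{n}$ with $c>0$, which forces $n$ to be even, say $n=2m_k$, and hence $d_k=1/2m_k$. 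Therefore $\abs{\alpha:\mathbf{m}}=2$ for every $\alpha$ with $a_\alpha\neq0$, $R_A$ is positive-definite because $A$ is invertible, $D\in\Exp(P_A)$, and $E:=ADA^{-1}=\Lambda\in\Exp(P)$.
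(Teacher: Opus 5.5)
Your proposal is correct and has the same skeleton as the paper. You split an exponent into commuting real-spectrum semisimple, imaginary-spectrum semisimple and nilpotent parts, show the real-spectrum part is itself in $\Exp(P)$, and read off $\mathbf{m}$ by restricting $R_A$ to the coordinate axes; your Step 3 is essentially the end of the proof of Lemma \ref{lem:TripleSumDiagLemma}.

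The difference is how you justify Step 2.
\begin{itemize}
\item \emph{The paper's route.} The paper puts $E$ in real Jordan form $A^{-1}EA=D+S+N$ with $S$ skew-symmetric and writes $P_A(t^Nt^S\xi)=\sum_\alpha a_\alpha t^{1-\alpha\cdot\lambda}\xi^\alpha$. It then kills each monomial with $\alpha\cdot\lambda\neq 1$ by letting $t\to\infty$ or $t\to 0$, using only that $t^S$ is orthogonal and that $t^N$ grows polynomially in $\ln t$. This is elementary and gives $D\in\Exp(P_A)$ and $\alpha\cdot\lambda=1$ in one stroke.
\item \emph{Your algebraic-group route.} It is basis-free and identifies $H$ intrinsically as the generator of the hyperbolic parts of $t^E$. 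However, it needs more than Jordan--Chevalley, which only keeps the semisimple and unipotent parts in the group. You must also know that the hyperbolic factor of a semisimple element of a group defined over $\mathbb{R}$ stays in the group. That is true: in an eigenbasis of $g_s$, the Zariski closure of $\langle g_s\rangle$ consists of the diagonal matrices satisfying every relation $\prod_i w_i^{k_i}=1$ that the eigenvalues $z_i$ satisfy, and these relations persist under $z_i\mapsto\abs{z_i}$. This step should be stated and proved, or cited.
\item \emph{Your fallback.} It is closer to the paper and works if run, after complexifying the polynomial identity, in a complex Jordan basis $E=V(\diag(\lambda)+N_J)V^{-1}$. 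With $Q(\zeta)=P(V\zeta)$ you get $\sum_\alpha q_\alpha t^{\alpha\cdot\lambda}(t^{N_J}\zeta)^\alpha=tQ(\zeta)$. Group the monomials by the complex number $\alpha\cdot\lambda$, use linear independence of the functions $t^\sigma(\ln t)^k$, and evaluate the vanishing groups at $t=1$. This gives $q_\alpha=0$ unless $\alpha\cdot\lambda=1$, and taking real parts yields $H=V\diag(\Re\lambda_1,\dots,\Re\lambda_d)V^{-1}\in\Exp(P)$. This replaces the paper's asymptotics by exact linear independence, at the price of complexifying.
\end{itemize}

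Finally, Step 1 is unnecessary, since the axis argument in Step 3 already forces $d_k=1/2m_k>0$. As sketched it would also need repair: no positive lower bound for $R$ on $\{\abs{\eta}\geq r\}$ is available a priori. A connectedness argument on the curve $\{t^E\xi\}_{0<t\leq 1}$ for small $\xi$ does work.
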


\noindent The above proposition extends Proposition 2.2 of \cite{RSC17} by removing an unnecessary hypothesis requiring $\Exp(P)$ to contain a element with real spectrum. In fact, this condition was baked into the definition of positive-homogeneous polynomial in \cite{RSC17} and so, in view of the proposition, our more general definition (Definition \ref{def:PosDefPoly}) coincides with the class of positive-homogeneous polynomials studied in that article (and also appear in \cite{RSC17b,RSC20,BR22,R23}). As the proposition's proof is not central to the focus of this article, we have placed it in Appendix \ref{ssec:PHP}.\\

\subsection{Positive-Homogeneous Functions and the Legendre-Fenchel Transform}
\noindent We now discuss a more general concept of positive homogeneity taken from \cite{BR22} (and \cite{R23}) that will be very useful to us for stating and obtaining exponential bounds. For $E\in\MdR$, we say that $\{t^E\}$ is a \textbf{contracting group} if
\begin{equation*}
    \lim_{t\to 0}\|t^E\|=0;
\end{equation*}
here, $\|\cdot\|$ denotes the operator norm on $\MdR$. For continuous, positive-definite, and homogeneous functions (not necessarily polynomials), we have the following characterization from \cite{BR22}.
\begin{proposition}[Proposition 1.2 of \cite{BR22}]\label{prop:PosHomEquiv}
Let $R:\mathbb{R}^d\to\mathbb{R}$ be continuous, positive-definite, and have $\Exp(R)\neq \varnothing$. The following are equivalent:
\begin{enumerate}[(a)]
    \item The unital level set of $R$, 
    \begin{equation*}
        S_R=\{\eta\in \mathbb{R}^d:R(\eta)=1\},
    \end{equation*}
    is compact.
    \item For every $E\in\Exp(R)$, $\{t^E\}$ is contracting.
    \item There exists $E\in\Exp(R)$ for which $\{t^E\}$ is contracting. 
    \item We have
    \begin{equation*}
        \lim_{\xi\to\infty}R(\xi)=\infty.
    \end{equation*}
\end{enumerate}
\end{proposition}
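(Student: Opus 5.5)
We prove the cycle of implications (b)$\Rightarrow$(c)$\Rightarrow$(a)$\Rightarrow$(d)$\Rightarrow$(b). Throughout we use the homogeneity identity $R(t^E\eta)=tR(\eta)$ and the fact that $t\mapsto t^E$ is a continuous group homomorphism.

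The implication (b)$\Rightarrow$(c) is trivial, because $\Exp(R)\neq\varnothing$.

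For (c)$\Rightarrow$(a), fix $E\in\Exp(R)$ with $\{t^E\}$ contracting. By continuity, $S_R$ is closed, so it suffices to show that $S_R$ is bounded.
\begin{itemize}
\item Let $m=\min_{|\xi|=1}R(\xi)>0$, which is positive by positive-definiteness and compactness of the unit sphere.
\item Since $\|t^E\|\to 0$ as $t\to 0$, we can choose $t_0\in(0,m)$ with $\|t_0^E\|\le 1$.
\item Suppose $\eta\in S_R$ with $|\eta|$ large. Consider the continuous curve $s\mapsto s^E\eta$ for $s\in(0,1]$. It starts at $\eta$ and tends to $0$ as $s\to 0$, so it crosses the unit sphere at some $s$, i.e.\ $|s^E\eta|=1$.
\item At that point, $m\le R(s^E\eta)=s$.
\item On the other hand, $|\eta|=|s^{-E}(s^E\eta)|\le\|s^{-E}\|$, and $\|s^{-E}\|$ is bounded uniformly for $s\in[m,1]$ by continuity of $s\mapsto s^{-E}$ on this compact interval.
\end{itemize}
This bounds $|\eta|$, so $S_R$ is compact.

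For (a)$\Rightarrow$(d), let $M=\max_{S_R}|\eta|$. Take $\xi\neq 0$ and any $E\in\Exp(R)$. Then $\eta=(1/R(\xi))^E\xi$ satisfies $R(\eta)=1$, so $\eta\in S_R$ and $\xi=R(\xi)^E\eta$. Hence, if $R(\xi)\le C$ for some constant $C$, we get $|\xi|\le M\sup_{0<t\le C}\|t^E\|$.

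The main obstacle here is that we need $\sup_{0<t\le C}\|t^E\|<\infty$, i.e.\ boundedness of $t^E$ as $t\to 0$, which is not a priori clear. We handle this by a reverse use of compactness.
\begin{itemize}
\item Let $K=\{\xi: R(\xi)\le 1\}$. Then $K=\{0\}\cup\bigcup_{0<t\le 1}t^E S_R$.
\item $K$ is closed. To see that $K$ is bounded, note that if $\xi_k\in K$ with $|\xi_k|\to\infty$, then $R(\xi_k/|\xi_k|)\ge m$ on the unit sphere. I would instead argue directly: the sublevel set $K$ is star-shaped under the flow $t^E$, and its boundary is $S_R$ (every point $\xi$ with $R(\xi)\le1$ lies on the orbit of a point of $S_R$).
\item A cleaner route is to show that $t^E$ is bounded on $S_R$ for $t\le 1$. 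The set $\{t: t^E S_R\cap\{|\xi|=r\}\ne\varnothing\}$ is compact for large $r$ by the computation in (c)$\Rightarrow$(a) reversed. Using $R\ge m>0$ on the unit sphere, every point with $|\xi|\ge 1$ has $R$ bounded below on the unit sphere; combining this with connectedness of orbits forces orbits in $K$ that reach large norm to pass through the sphere of radius $r$ at a parameter where $R\ge m$. This needs care, but the key mechanism is the intermediate value argument along orbits.
\end{itemize}
Granting boundedness of $K$, (d) follows at once: $R(\xi)\le C$ forces $\xi\in C^E K$, which is a bounded set.

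For (d)$\Rightarrow$(b), fix any $E\in\Exp(R)$ and $\xi\neq 0$. Then $R(t^E\xi)=tR(\xi)\to 0$ as $t\to 0$.
\begin{itemize}
\item By (d), $R$ is coercive, so there exists $r$ such that $R\ge 1$ outside the ball of radius $r$. Hence the orbit $t^E\xi$ stays in a fixed compact set for $t\le 1/R(\xi)$.
\item Any limit point $\zeta$ of $t^E\xi$ as $t\to 0$ satisfies $R(\zeta)=0$, so $\zeta=0$. Therefore $t^E\xi\to 0$ for each $\xi$.
\item To upgrade pointwise convergence to uniform convergence, apply this on the unit sphere. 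By (d) and continuity, $R$ is bounded on the unit sphere by some $c$. The orbits started from the unit sphere remain in the compact set $\{R\le c\}$ for $t\le 1$.
\item Uniformity comes from a compactness argument: for each $\xi$ on the unit sphere choose $t_\xi$ with $|t_\xi^E\xi|<\varepsilon/2$. By continuity this persists on a neighborhood of $\xi$, and a finite subcover gives $t_*$ such that every point of the sphere reaches norm $<\varepsilon$ at some time $\ge t_*$.
\item For times below that, use $R(t^E\xi)\le t c$ together with the fact that $\{R\le\delta\}$ shrinks to $\{0\}$ as $\delta\to 0$, which follows from compactness of $\{R\le c\}$ and positive-definiteness.
\end{itemize}
The last point in fact gives the uniformity directly: $\|t^E\|\le\sup\{|\zeta|: R(\zeta)\le tc\}\to 0$ as $t\to 0$. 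Hence $\{t^E\}$ is contracting.
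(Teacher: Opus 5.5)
The paper gives no proof of this proposition (it is imported from \cite{BR22}), so I judge your argument on its own terms. Your steps (b)$\Rightarrow$(c), (c)$\Rightarrow$(a) and (d)$\Rightarrow$(b) are sound. In the last one, the closing bound $\|t^E\|\le\sup\{|\zeta|:R(\zeta)\le tc\}\to 0$ already does everything. The finite-subcover digression before it only produces \emph{some} time at which each orbit is small, and can be dropped.

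The genuine gap is (a)$\Rightarrow$(d). You reduce it to boundedness of $K=\{R\le 1\}=\{0\}\cup\bigcup_{0<t\le 1}t^ES_R$ and then write ``granting boundedness of $K$''. That boundedness is precisely the content of the implication, and the orbit-wise intermediate value mechanism you sketch cannot supply it. Suppose $\xi\in K$ is far out and its orbit $u\mapsto u^E\xi$ crosses $\{|\zeta|=r\}$ ($r>M$) at a point $w$ with $R(w)\ge m_r:=\min_{|\zeta|=r}R>0$. All you learn is $\xi=v^Ew$ with $0<v\le 1/m_r$. Bounding $|\xi|$ then still needs $\sup_{0<v\le 1/m_r}\|v^E\|<\infty$, i.e.\ control of $t^E$ as $t\to 0$, which is exactly the obstacle you flagged. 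Information along a single orbit is circular here.

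What is missing is a global input. One option is topological. $R\neq 1$ on $U=\{|\xi|>M\}$, and for $d\ge 2$ this set is connected, so either $R<1$ on all of $U$ or $R>1$ on all of $U$. The former would make $R$ bounded on $\mathbb{R}^d$, which is impossible because $R(t^E\xi)=tR(\xi)\to\infty$. Hence $K\subseteq\overline{B(0,M)}$, and $\{R\le C\}\subseteq C^E\,\overline{B(0,M)}$ is bounded. For $d=1$, run the same argument on each half-line; each is a single orbit since $E\neq 0$.

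A cleaner option bypasses (a) entirely. Suppose $E\in\Exp(R)$ had an eigenvalue $a+ib$ with $a\le 0$. Take an eigenvector $x+iy$ with $x,y\in\mathbb{R}^d$ and $x\neq 0$. Then $t^Ex=t^a\bigl(\cos(b\ln t)\,x-\sin(b\ln t)\,y\bigr)$ stays bounded for $t\ge 1$, while $R(t^Ex)=tR(x)\to\infty$, contradicting continuity of $R$ on a compact set. So every $E\in\Exp(R)$ has spectrum in the open right half-plane and $\{t^E\}$ is contracting. Thus (b) holds outright under the standing hypotheses, and all four conditions are in fact always satisfied. With that in hand, your estimate $|\xi|\le M\sup_{0<t\le C}\|t^E\|$ closes (d) immediately, since $t\mapsto\|t^E\|$ is continuous on $(0,C]$ and tends to $0$ at $0$.
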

\noindent In light of the above, we make the following definition:
\begin{definition}\label{def:PosDefFunc}
    Let $R$ be a continuous, positive definite function on $\mathbb{R}^d$ with $\Exp(R)\neq \varnothing$. If any (and hence every) of the conditions of Proposition \ref{prop:PosHomEquiv} is fulfilled, we say that $R$ is a positive-homogeneous function.
\end{definition}

\noindent Given a positive-homogeneous function $R$, it follows from Proposition \ref{prop:PosHomEquiv} that
\begin{equation*}
    \tr E=\tr E'>0
\end{equation*}
for every $E,E'\in\Exp(R)$ (see, e.g., Corollary 2.2 of \cite{BR22}). With this, we define the \textbf{homogeneous order} of a positive-homogeneous function $R$ to be the positive number
\begin{equation*}
    \mu_R=\tr E
\end{equation*}
for $E\in\Exp(R)$. As shown in \cite{DSC14,RSC17,BR22,R23} and as we will see in this article, the homogeneous order will determine the so-called on-diagonal asymptotics of convolution powers for $\phi\in\mathcal{H}_d^*$; it plays the role of $1/2m$ in the estimates and asymptotics of \cite{DSC14,RSC15,CF22,CF24,Co25}.  \\

\noindent For a positive-homogeneous polynomial $P$ with real part $R$, Proposition \ref{prop:PosHomAreSemiElliptic} guarantees that, for 
\begin{equation*}
    D=\diag(1/2m_1,1/2m_2,\dots,1/2m_d),
\end{equation*}
$E=ADA^{-1}\in \Exp(P)$ and hence $E=ADA^{-1}\in \Exp(R)$. Since $\|t^E\|\asymp \|t^D\|$ for all\footnote{For two non-negative functions $f$ and $g$ on a set $X$, the notation $f\asymp g$ means that there are positive constants $C$ and $C'$ for which $C f(x)\leq g(x)\leq C'g(x)$ for all $x\in X$.} $t>0$, it follows immediately that $R$ is a positive-homogeneous function in the sense of Definition \ref{def:PosDefFunc} and its homogeneous order is\footnote{We remark that, for a positive-homogeneous polynomial $P$, $\Exp(P)\subseteq\Exp(R)$ and hence it makes sense to define the order of $P$ to be $\mu_P=\mu_R$.}
\begin{equation*}
    \mu_P=\mu_R=\frac{1}{2m_1}+\frac{1}{2m_2}+\cdots+\frac{1}{2m_d}=\abs{\mathbf{1}:2\mathbf{m}}
\end{equation*}
where $\mathbf{1}=(1,1,\dots,1)\in\mathbb{N}_+^d$. In short, the real parts of positive-homogeneous polynomials are always positive-homogeneous functions. In \cite{BR22}, many non-polynomial examples of positive-homogeneous functions are studied, including some fairly exotic ones. In this article, we will consider two main types of positive-homogeneous functions. The first are positive-homogeneous polynomials arising as principal terms in the Maclaurin expansion of $\Gamma_{\xi_0}(\xi)$ for $\xi_0\in\Omega(\phi)$. The second, which will appear naturally as the error/upper bounds in our theorems, are introduced as follows. \\

\noindent Let $P$ be a positive-homogeneous polynomial with $R=\Re(P)$. We define the Legendre-Fenchel transform of $R$ by
\begin{equation*}
R^{\#}(x)=\sup_{\xi\in\mathbb{R}^d}\{x\cdot\xi-R(\xi)\}
\end{equation*}
for $x\in\mathbb{R}^d$. As illustrated in \cite{BD96,RSC17,RSC17b,RSC20} and discussed below, the Legendre-Fenchel transform $R^{\#}$ plays a primary role in off-diagonal estimates for convolution powers and, relatedly, the heat kernels associated to higher-order partial differential operators. As the following proposition guarantees, $R^{\#}$ is itself a (continuous) positive-homogeneous function.

\begin{proposition}[Proposition 8.15 of \cite{RSC17}]\label{prop:ExpSetLegFenchelTransform}
    Let $P$ be a positive-homogeneous polynomial with $R=\Re P$ and exponent set $\Exp(P)$. Then the Legendre-Fenchel transform $R^{\#}$ of $R$ is a continuous positive-homogeneous function with
    \begin{equation*}
        \{F\in\MdR: (I-F)^{\top}=I-F^{\top}\in\Exp(R)\}\subseteq \Exp(R^{\#}).
    \end{equation*}
\end{proposition}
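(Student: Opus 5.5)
To prove Proposition \ref{prop:ExpSetLegFenchelTransform}, I would split the claim into three parts: $R^{\#}$ is finite and continuous; it is positive-definite; and its exponent set contains every $F$ with $I-F^{\top}\in\Exp(R)$. Once these are in hand, Proposition \ref{prop:PosHomEquiv} upgrades $R^{\#}$ to a positive-homogeneous function in the sense of Definition \ref{def:PosDefFunc}.

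\textbf{Step 1: the exponent-set inclusion.} This is a direct change of variables. Take $F$ with $E:=I-F^{\top}\in\Exp(R)$; then $F=I-E^{\top}$ and so $t^{F}=t\,t^{-E^{\top}}$. For $t>0$, substitute $\xi=t^{E}\eta$, which is a bijection of $\mathbb{R}^d$. Using $R(t^E\eta)=tR(\eta)$, we get
\begin{equation*}
R^{\#}(t^{F}x)=\sup_{\eta}\left\{t\,t^{-E^{\top}}x\cdot t^{E}\eta-tR(\eta)\right\}=t\sup_{\eta}\{x\cdot\eta-R(\eta)\}=tR^{\#}(x),
\end{equation*}
since $t^{-E^{\top}}x\cdot t^{E}\eta=x\cdot\eta$. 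Because $\Exp(P)\subseteq\Exp(R)$ is nonempty, this also shows $\Exp(R^{\#})\neq\varnothing$.

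\textbf{Step 2: finiteness and continuity.} I would use Proposition \ref{prop:PosHomAreSemiElliptic} to pass to coordinates where $R_A(\xi)=R(A\xi)$ is semi-elliptic. Then there is a constant $c>0$ with
\begin{equation*}
R_A(\xi)\geq c\sum_k \xi_k^{2m_k}.
\end{equation*}
I would prove this by homogeneity with respect to $t^D$: both sides are $D$-homogeneous, and on the compact set $\{\sum_k\xi_k^{2m_k}=1\}$ the function $R_A$ is positive, so it has a positive minimum there. This lower bound grows superlinearly in every direction. Since $R^{\#}_A(x)=R^{\#}(A^{-\top}x)$, it gives $R^{\#}<\infty$ everywhere. $R^{\#}$ is also convex as a supremum of affine functions. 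A convex function that is finite on all of $\mathbb{R}^d$ is continuous.

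\textbf{Step 3: positive-definiteness.} Taking $\xi=0$ in the supremum gives $R^{\#}(x)\geq -R(0)=0$. For $x\neq 0$, take $\xi=sx$ with $s>0$ small. Write $R(sx)=s\,R(s^{E-I}\cdot(\cdot))$, or more simply observe that $R(sx)=o(s)$ as $s\to0$. This holds because $R$ is a polynomial with $R(0)=0$ and $\nabla R(0)=0$: it has a minimum at $0$, so it has no linear term. Then $R^{\#}(x)\geq s|x|^2-o(s)>0$ for small $s$. Conversely, at $x=0$ we have $R^{\#}(0)=\sup_\xi(-R(\xi))=0$.

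\textbf{Step 4: conclusion.} To apply Definition \ref{def:PosDefFunc}, I still need one condition from Proposition \ref{prop:PosHomEquiv}. I would check (c): take $E=ADA^{-1}\in\Exp(R)$ and $F=I-E^{\top}$. Since $D$ has diagonal entries $1/2m_k\le 1/2$, $F$ is similar to $I-D$, whose eigenvalues are $1-1/2m_k\geq 1/2>0$. Hence $\|t^{F}\|\to0$ as $t\to0$, so $\{t^F\}$ is contracting and (c) holds. The only genuinely delicate point is the superlinear coercivity in Step 2, and Proposition \ref{prop:PosHomAreSemiElliptic} handles it.
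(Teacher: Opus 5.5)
Your proof is correct and complete. The paper gives no argument for this statement: it is imported from Proposition 8.15 of \cite{RSC17}, so there is no in-paper proof to compare against. What you have written is a self-contained verification built from tools the paper does prove.

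\begin{itemize}
\item Step 1 is the right computation, using $t^{F}=t\,t^{-E^{\top}}$ and $t^{-E^{\top}}x\cdot t^{E}\eta=x\cdot\eta$. It even holds as an identity in $[0,\infty]$ before finiteness is known.
\item Your coercivity bound in Step 2 is exactly Proposition \ref{prop:PureLambdaCompare} applied to $R_A$ with $D\in\Exp(R_A)$.
\item Your identity $R_A^{\#}(x)=R^{\#}(A^{-\top}x)$ is the one used in the proof of Proposition \ref{prop:LFCompare}.
\end{itemize}

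You also correctly avoid circularity. Proposition \ref{prop:LFCompare} itself depends on the present statement: it needs $I-D\in\Exp(R_A^{\#})$ and that $R_A^{\#}$ is positive-homogeneous. You use only Proposition \ref{prop:PosHomAreSemiElliptic}, whose appendix proof does not involve Legendre--Fenchel transforms, and Proposition \ref{prop:PosHomEquiv}.

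Two small remarks.

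First, delete the fragment ``$R(sx)=s\,R(s^{E-I}\cdot(\cdot))$'' in Step 3; it is garbled. The argument via $\nabla R(0)=0$ is all you need. It should be paired with $R(0)=0$, which follows from $R(t^{E}0)=tR(0)$ for all $t>0$.

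Second, in Step 4 you could skip the similarity computation and check condition (d) of Proposition \ref{prop:PosHomEquiv} directly. For $x\neq 0$, taking $\xi=x/|x|$ in the supremum gives $R^{\#}(x)\geq |x|-\max_{|\xi|=1}R(\xi)\to\infty$. Your route via (c) is also fine: $F=A^{-\top}(I-D)A^{\top}$ gives $\|t^{F}\|\leq\|A^{-\top}\|\,\|A^{\top}\|\max_k t^{1-1/2m_k}\to 0$ as $t\to 0$.
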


\begin{example}
    We return back to the examples considered in Example \ref{ex:Intro1}.
    \begin{enumerate}
        \item For $P(\eta,\zeta)=R(\eta,\zeta)=\eta^2+\zeta^4$, we have $\mu_P=1/2+1/4=3/4$ and it is readily computed that
        \begin{equation*}
            R^{\#}(x,y)=\frac{1}{4} x^2+ \frac{3}{4^{4/3}} \abs{y}^{4/3}
        \end{equation*}
        for $(x,y)\in\mathbb{R}^2$. Here, it is easy to see that $R^{\#}$ is a positive-homogeneous function with $F=\diag(1/2,3/4)\in\Exp(R^{\#})$.
        \item For a positive semi-elliptic polynomial $P$ of the form \eqref{eq:PosSemiElliptic1}, we previously showed that $\mu_P=\abs{\mathbf{1}:2\mathbf{m}}$. While an explicit formula for $R^{\#}$ is difficult in general to obtain, it is easy to establish that
        \begin{equation*}
            R^{\#}(x)\asymp \abs{x_1}^{2m_1/(2m_1-1)}+\abs{x_2}^{2m_2/(2m_2-1)}+\cdots+\abs{x_d}^{2m_d/(2m_d-1)}
        \end{equation*}
        for $x=(x_1,x_2,\dots,x_d)\in\mathbb{R}^d$ thanks to Proposition \ref{prop:LFCompare} below. In fact, the proposition guarantees that this comparison is available, after a suitable coordinate transformation, to $R^{\#}$ where $R=\Re P$ for any positive-homogeneous polynomial $P$.
        \item For the positive-homogeneous polynomials $\xi\mapsto P(\xi)=R(\xi)=\abs{\xi}^{2m}$ for a positive integer $m$, we have $\mu_P=\tr(I/2m)=d/2m$ and it is readily calculated that
        \begin{equation*}
            R^{\#}(x)=(2m-1)\abs{\frac{x}{2m}}^{2m/(2m-1)}
        \end{equation*}
        for $x\in\mathbb{R}^d$.
    \end{enumerate}
\end{example}

\noindent We will find the following proposition useful throughout this article; a proof is given in Subsection \ref{ssec:PHFAsymp}. 
\begin{proposition}\label{prop:LFCompare}
    Let $P$ be a positive  semi-elliptic polynomial of the form
    \begin{equation*}
        P(\xi)=\sum_{\abs{\alpha:\mathbf{m}}=2}a_\alpha \xi^\alpha
    \end{equation*}
    for $\xi\in\mathbb{R}^d$ where $\mathbf{m}=(m_1,m_2,\dots,m_d)\in\mathbb{N}_+^d$ and let $R=\Re P$. Then
    \begin{equation}\label{eq:LFAsympt}
        R^{\#}(x)\asymp \abs{x_1}^{2m_1/(2m_1-1)}+\abs{x_2}^{2m_2/(2m_2-1)}+\cdots+\abs{x_d}^{2m_d/(2m_d-1)}
    \end{equation}
    for $x=(x_1,x_2,\dots,x_d)\in\mathbb{R}^d$. More generally, if $P$ is a positive-homogeneous polynomial and $A\in\GldR$ is that guaranteed by Proposition \ref{prop:PosHomAreSemiElliptic} which makes $P_A$ semi-elliptic with $\mathbf{m}=(m_1,m_2,\dots,m_d)\in\mathbb{N}_+^d$. Then $R^{\#}(x)=R_A^{\#}(A^{\top}x)$ for all $x\in\mathbb{R}^d$ and $R_A=\Re P_A$ satisfies the asymptotic \eqref{eq:LFAsympt}.
\end{proposition}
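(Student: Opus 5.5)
The plan has three parts: establish the comparison \eqref{eq:LFAsympt} for positive semi-elliptic $P$ by sandwiching $R$ between multiples of a model sum of powers, then pass that sandwich through the (order-reversing) Legendre-Fenchel transform, and finally reduce the general case to the semi-elliptic one by a linear change of variables.

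\emph{Step 1: sandwich $R$.} Let $P$ be positive semi-elliptic with $R=\Re P$ and $D=\diag(1/2m_1,\dots,1/2m_d)\in\Exp(R)$. Introduce the model function
\begin{equation*}
Q(\xi)=\xi_1^{2m_1}+\xi_2^{2m_2}+\cdots+\xi_d^{2m_d},
\end{equation*}
which is continuous, positive-definite, and has $D\in\Exp(Q)$. By Proposition \ref{prop:PosHomEquiv} the level set $S_Q$ is compact, and $R$ is continuous and strictly positive on $S_Q$. Hence there are constants $0<c\le C$ with $c\le R\le C$ on $S_Q$. Every $\xi\neq 0$ can be written $\xi=t^D\eta$ with $t=Q(\xi)>0$ and $\eta\in S_Q$. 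Using $R(t^D\eta)=tR(\eta)$, this gives
\begin{equation*}
cQ\le R\le CQ \quad\text{on } \mathbb{R}^d.
\end{equation*}

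\emph{Step 2: transfer to $R^{\#}$.} The Legendre-Fenchel transform reverses inequalities, so $R\le CQ$ implies $R^{\#}\ge (CQ)^{\#}$, and $cQ\le R$ implies $R^{\#}\le (cQ)^{\#}$. Since $Q$ is a sum of one-variable functions, its transform splits coordinatewise:
\begin{equation*}
(aQ)^{\#}(x)=\sum_k \sup_{\xi_k}\{x_k\xi_k-a\xi_k^{2m_k}\}=\sum_k \kappa_k(a)\abs{x_k}^{2m_k/(2m_k-1)}.
\end{equation*}
Each one-variable supremum is an explicit calculus computation (of the kind already done in the examples), with constants $\kappa_k(a)>0$ for $a>0$. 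This yields \eqref{eq:LFAsympt}. The case $m_k=1$ needs no special care: it gives exponent $2$.

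\emph{Step 3: general positive-homogeneous $P$.} Take $A$ from Proposition \ref{prop:PosHomAreSemiElliptic}, so that $R_A(\xi)=R(A\xi)$. Substituting $\xi=A\eta$, which is a bijection of $\mathbb{R}^d$, gives
\begin{equation*}
R^{\#}(x)=\sup_\eta\{x\cdot A\eta-R(A\eta)\}=\sup_\eta\{A^{\top}x\cdot\eta-R_A(\eta)\}=R_A^{\#}(A^{\top}x).
\end{equation*}
Since $P_A$ is positive semi-elliptic, Steps 1 and 2 apply directly to $R_A$.

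The only step with real content is Step 1, the sandwich between $R$ and $Q$. It depends on both functions sharing the exponent $D$ and on compactness of $S_Q$, which Proposition \ref{prop:PosHomEquiv} supplies. The rest is routine bookkeeping of constants.
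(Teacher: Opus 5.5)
Your proof is correct, but it puts the comparison in a different place than the paper does. The paper starts with the same change of variables $R^{\#}(x)=R_A^{\#}(A^{\top}x)$ as your Step 3. It then works entirely at the level of the transform. It invokes Proposition \ref{prop:ExpSetLegFenchelTransform} (Proposition 8.15 of \cite{RSC17}) to know that $R_A^{\#}$ is a continuous positive-homogeneous function with $I-D\in\Exp(R_A^{\#})$. It notes that $I-D$ also belongs to the exponent set of $x\mapsto\sum_k\abs{x_k}^{2m_k/(2m_k-1)}$. It then concludes with Proposition \ref{prop:ExpSetIntersectionAsympt}: two positive-homogeneous functions sharing an exponent are comparable.

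You run that same compactness/shared-exponent argument one level earlier, on $R$ and $Q$, where the common exponent $D$ is immediate. (Compactness of $S_Q$ is also clear directly, without Proposition \ref{prop:PosHomEquiv}.) You then push the sandwich through the order-reversing Legendre--Fenchel transform. This works because the separable model has the closed-form transform $(aQ)^{\#}(x)=\sum_k(1-\tfrac{1}{2m_k})(2m_ka)^{-1/(2m_k-1)}\abs{x_k}^{2m_k/(2m_k-1)}$.

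Your route is more self-contained and gives explicit constants. You never need the imported fact about the exponent set of $R^{\#}$, nor its continuity; finiteness and positive-definiteness of $R^{\#}$ fall out of the sandwich. The paper's route is shorter given the imported result. It also does not rely on the model function having an explicitly computable transform, so it applies verbatim to any pair of positive-homogeneous functions sharing an exponent.
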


\subsection{Hypotheses and Main Results}
\noindent We are now in a position to present the hypotheses under which our theorems are stated. For $\phi\in\mathcal{H}_d^*$, let $\xi_0\in\Omega(\phi)$ and consider $\Gamma_{\xi_0}$ defined by \eqref{eq:DefOfGamma}. We take the following definition from \cite{RSC17}.
\begin{definition}\label{def:PosHomType}
  We say that \textbf{$\xi_0$ is a point of positive-homogeneous type for $\widehat{\phi}$} if
  \begin{equation*}
      \Gamma_{\xi_0}(\xi)=i\alpha_{\xi_0}\cdot\xi-P_{\xi_0}(\xi)+\Upsilon_{\xi_0}(\xi)
  \end{equation*}
where $\alpha_{\xi_0}\in\mathbb{R}^d$, $P_{\xi_0}$ is a positive-homogeneous polynomial (with $R_{\xi_0}=\Re P_{\xi_0}$), and $\Upsilon_{\xi_0}(\xi)=o(R_{\xi_0}(\xi))$
as $\xi\to 0$ (for $\xi\in\mathbb{R}^d$).
\end{definition}
\noindent As $\widehat{\phi}$ is maximized in absolute value at $\xi_0\in\Omega(\phi)$, $\xi\mapsto i\alpha_{\xi_0}\cdot\xi$ must be purely imaginary and so necessarily $\alpha_{\xi_0}\in\mathbb{R}^d$. What is essential to the definition above is that we ask the Maclaurin expansion for $\Gamma_{\xi_0}$ to be dominated at low order by $i\alpha_{\xi_0}\cdot\xi-P_{\xi_0}(\xi)$ where $P_{\xi_0}$ is a positive-homogeneous polynomial. This generalizes to $d$ dimensions the parabolic case of \cite{CF24} (see, Assumption 3 of \cite{CF24} and Assumption 1.2 of \cite{CF22}), and, equivalently, the cases of Type 1 and Type $\gamma$ of \cite{RSC15} and \cite{Tho65}, respectively. In the one-dimensional setting, a result of Thom\'{e}e \cite{Tho65} (generalized by Coulombel and Faye in \cite{CF22}) guarantees that, when $\Omega(\phi)\neq \mathbb{T}$, there are only two possibilities for points $\xi_0\in\Omega(\phi)$ when $\phi\in\mathcal{H}_1^*$. The first is for $\xi_0$ to be of positive-homogeneous type for $\widehat{\phi}$. The second asks that the dominant low-order terms of $\Gamma_{\xi_0}$ be purely imaginary (i.e., Type 2 of \cite{RSC15} coinciding with the dispersive case discussed in \cite{CF24}); the existence of points of the second instance implies that associated finite difference schemes are unstable in the maximum norm \cite[Theorem 3]{Tho65}. In $d$-dimensions, the article \cite{R23} considers an analogy of the second case (called points of imaginary-homogeneous type) and establishes local limit theorems in that setting; however, in $d\geq 2$ dimensions, the two cases are no longer collectively exhaustive (see \cite{RSC23}). We refer the reader to the introduction of \cite{R23} which discusses these cases, in one and several dimensions, and their implications for stability of numerical difference schemes.\\ 

\noindent Beyond the hypothesis that $\phi\in\mathcal{H}_d^*$, our theorems are stated under the assumption that every element of $\Omega(\phi)$ is of positive-homogeneous type for $\widehat{\phi}$.  The fact that, for each $\xi\in\Omega(\phi)$, $R_{\xi}$ is continuous and positive-definite guarantees immediately that $\Omega(\phi)\subseteq\mathbb{T}^d$ consists entirely of isolated points (see \cite[Proposition 4.1]{RSC17}). Since $\mathbb{T}^d$ is compact, it then follows that $\Omega(\phi)$ is finite and, in this case, we write
\begin{equation*}
    \Omega(\phi)=\{\xi_1,\xi_2,\dots,\xi_K\}.
\end{equation*}
For each $k=1,2,\dots,K$, by an abuse of notation, we will write $\Gamma_{k}=\Gamma_{\xi_k}$, $\alpha_k=\alpha_{\xi_k}$, $P_k=P_{\xi_k}$, $R_k=R_{\xi_k}$, $\mu_k=\mu_{P_{\xi_k}}$, and $\Upsilon_k=\Upsilon_{\xi_k}$. In this notation, our first theorem is as follows.

\begin{theorem}\label{thm:GeneralGaussEstimate}
    Let $\phi\in\mathcal{H}_d^*$ and assume that every member of $\Omega(\phi)$ is a point of positive-homogeneous type for $\widehat{\phi}$. With this, we assume the notation of the previous paragraph. There exists a constant $L>0$ and positive constants $M_0,M_1,\dots,M_K, C_1,C_2,\dots, C_K$ for which
    \begin{equation*}
        \abs{\phi^{(n)}(x)}\leq \mathcal{E}(n,x)
        \end{equation*}
       for all $n\in\mathbb{N}_+$ and $x\in\mathbb{Z}^d$ where
        \begin{equation*}
            \mathcal{E}(n,x)=\begin{cases}
            \displaystyle e^{-M_0(n+\abs{x})} & \abs{x}>Ln\\
            \displaystyle\sum_{k=1}^K\frac{C_k}{n^{\mu_k}}\exp\left(-nM_kR^{\#}_k\left(\frac{x-n\alpha_k}{n}\right)\right) & \abs{x}\leq Ln
        \end{cases}.
    \end{equation*}
\end{theorem}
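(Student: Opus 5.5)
The approach is to deform the contour of integration in \eqref{eq:FTIdentityReal} into $\mathbb{C}^d$, exploiting the holomorphy of $\widehat{\phi}$ on a neighborhood of $\mathbb{T}^d$. For $\nu\in\mathbb{R}^d$ small and $x\in\mathbb{Z}^d$, $\widehat{\phi}(\xi+i\nu)^n e^{-ix\cdot(\xi+i\nu)}$ is $2\pi$-periodic in each real variable. Cauchy's theorem (a shift of the torus in the imaginary direction) therefore gives
\begin{equation*}
\phi^{(n)}(x)=\frac{e^{x\cdot\nu}}{(2\pi)^d}\int_{\mathbb{T}^d}\widehat{\phi}(\xi+i\nu)^n e^{-ix\cdot\xi}\,d\xi,
\end{equation*}
and hence $\abs{\phi^{(n)}(x)}\leq e^{x\cdot\nu}\sup_{\xi}\abs{\widehat{\phi}(\xi+i\nu)}^n$ in crude form. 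The whole proof consists in choosing $\nu$ well, separately in the regimes $\abs{x}>Ln$ and $\abs{x}\leq Ln$.

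\textbf{Regime $\abs{x}>Ln$.} Fix $\delta>0$ with $\widehat{\phi}$ holomorphic on $\{\abs{\Im\zeta}\leq\delta\}$ and set $B=\sup_{\abs{\nu}\leq\delta,\,\xi}\abs{\widehat{\phi}(\xi+i\nu)}\geq 1$. Take $\nu=-\delta x/\abs{x}$. The crude bound gives $\abs{\phi^{(n)}(x)}\leq B^n e^{-\delta\abs{x}}$. Choose $L$ with $L\delta>2\ln B+2$. Then for $\abs{x}>Ln$ we have $n\ln B\leq \delta\abs{x}/2-n$, so $B^ne^{-\delta\abs{x}}\leq e^{-n-\delta\abs{x}/2}$, which is the claim with $M_0=\min(1,\delta/2)$.

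\textbf{Regime $\abs{x}\leq Ln$.} Use a partition of unity on $\mathbb{T}^d$ with a small neighborhood $U_k$ of each $\xi_k$ and a remainder region $V$ where $\abs{\widehat{\phi}}\leq 1-2\epsilon$. Because we integrate against cutoffs, we should first deform only locally. The cleaner route is to deform the whole torus by a single $\nu$ but estimate the integrand piecewise. On $V$, continuity gives $\abs{\widehat{\phi}(\xi+i\nu)}\leq 1-\epsilon$ for $\abs{\nu}\leq\delta'$. This piece is then at most $e^{\abs{x}\abs{\nu}}(1-\epsilon)^n$, which is exponentially small, $\leq e^{-cn}$, provided $\abs{\nu}\leq c'$ with $c'L$ small relative to $\epsilon$.

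Near $\xi_k$, write $\widehat{\phi}(\xi_k+\zeta)=\widehat{\phi}(\xi_k)\exp(\Gamma_k(\zeta))$ with $\zeta=\xi+i\nu$. Then
\begin{equation*}
\abs{\widehat{\phi}^ne^{-ix\cdot\zeta}}=\exp\bigl(n\Re\Gamma_k(\zeta)+x\cdot\nu\bigr).
\end{equation*}
The key local estimate to prove is
\begin{equation*}
\Re\Gamma_k(\xi+i\nu)\leq -\alpha_k\cdot\nu - c R_k(\xi)+C R_k(\nu)
\end{equation*}
for $\xi,\nu$ small. I would establish this in the coordinates of Proposition \ref{prop:PosHomAreSemiElliptic}, where $P_k\circ A$ is semi-elliptic with weights $\mathbf{m}$. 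Each monomial $(\xi+i\nu)^\beta$ with $\abs{\beta:\mathbf{m}}=2$ is bounded, by weighted Young's inequality, by $\epsilon' R(\xi)+C_{\epsilon'}R(\nu)$, except for the real part of $P(\xi)$ itself. Holomorphy controls the higher-order Taylor terms, whose weighted degree exceeds $2$, once $(\xi,\nu)$ is in a small enough box. The main obstacle is precisely this step: $\Upsilon_k=o(R_k)$ is only given on real $\xi$, whereas we need its complex extension to be $o(R_k(\xi)+R_k(\nu))$. I would handle this by noting that the Taylor expansion of $\Gamma_k$ in the semi-elliptic coordinates must then contain only monomials of weighted degree $\geq 2$ beyond the linear term, with those of degree exactly $2$ forming $P_k$. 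This is a polynomial-algebra argument: a degree-$<2$ weighted monomial would violate $o(R_k)$ on real rays $t^{E}\eta$. Any analytic function consisting of weighted-degree $>2$ terms is then $o$ of the weighted quasi-norm to the power $2$.

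Granting the key estimate, the $k$-th local integral is bounded by
\begin{equation*}
\exp\bigl(nCR_k(\nu)+(x-n\alpha_k)\cdot\nu\bigr)\int e^{-cnR_k(\xi)}\,d\xi\leq C n^{-\mu_k}\exp\bigl(nCR_k(\nu)+(x-n\alpha_k)\cdot\nu\bigr),
\end{equation*}
where the integral bound comes from scaling by $n^{-E}$ with $\det n^{-E}=n^{-\mu_k}$. Now choose $\nu$, depending on $k$ (this is allowed, since by Cauchy we may deform each localized piece separately if we use holomorphic-in-$\zeta$ localizations; alternatively, treat each $k$ by splitting the integral and deforming each piece), to minimize over $\nu$. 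This gives
\begin{equation*}
-n\sup_\nu\{w\cdot\nu-CR_k(\nu)\}=-n(CR_k)^{\#}(w),\qquad w=\frac{x-n\alpha_k}{n}.
\end{equation*}
By homogeneity, $(CR_k)^{\#}\geq M_kR_k^{\#}$; compare via Proposition \ref{prop:LFCompare}. Since $\abs{w}\leq L+\abs{\alpha_k}$ is bounded, the optimizing $\nu$ is bounded, and we rescale it to stay within the smallness window, at the cost of shrinking $M_k$; this uses the comparison \eqref{eq:LFAsympt} on bounded sets. Finally, the exponentially small remainder $e^{-cn}$ is absorbed into the $k=1$ term, because $nR_1^{\#}(w)$ is at most a constant times $n$ on $\abs{x}\leq Ln$.
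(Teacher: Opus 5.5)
Your far-field argument is essentially the paper's: a global imaginary shift of $\mathbb{T}^d$, justified by periodicity. Your key local inequality $\Re\Gamma_k(\xi+i\nu)\leq-\alpha_k\cdot\nu-cR_k(\xi)+CR_k(\nu)$ is also correct. This includes reducing the complex $o(R_k)$ bound to the fact that $\Upsilon_k\circ A_k$ has only monomials with $\abs{\beta:2\mathbf{m}}>1$, which is exactly what Lemma \ref{lem:PhiEst} and Proposition \ref{prop:gBound} establish.

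\textbf{The gap.} The missing step is the one you settle in a parenthesis: deforming with a shift $\nu_k$ that depends on $k$.

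Your ``cleaner route'' of one global shift fails. The $k$-th piece is controlled by $\exp(n[w_k\cdot\nu+CR_k(\nu)])$ with $w_k=(x-n\alpha_k)/n$, so a single $\nu$ must serve every $k$ at once. In the paper's two-drifting-packets example, $\alpha_3=-\alpha_1\neq 0$. At $x=0$ the theorem says both pieces are exponentially small, and to show this you would need both $\alpha_1\cdot\nu>0$ and $\alpha_1\cdot\nu<0$. This is exactly the limitation of the older single-deformation argument, which needed a common $\alpha$ and $P$.

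Neither of your remedies is carried out:
\begin{itemize}
\item A compactly supported cutoff cannot be holomorphic, and multiplying by a smooth partition of unity destroys the holomorphy that Cauchy's theorem needs.
\item Splitting $\mathbb{T}^d$ into regions $\mathscr{D}_k$ and shifting each separately is the right idea. But then Cauchy's theorem produces lateral integrals over $\partial\mathscr{D}_k$ times the segment $\{it\nu_k:0\leq t\leq 1\}$, and your proposal never mentions or estimates these.
\end{itemize}

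\textbf{How the paper handles it.} These lateral terms are the core of Lemma \ref{lem:IntegralExponentialEst}. The paper takes $\mathscr{D}_k=\xi_k+A_k([-\delta,\delta]^d)$, rescales by $n^{D}$, and deforms one coordinate at a time with the explicit shifts $\theta_j$ of \eqref{eq:theta_defn}.
\begin{itemize}
\item On each side face, $\abs{\xi_j}=n^{1/2m_j}\delta$ gives the factor $e^{-\epsilon n\delta^{2m_j}}$.
\item The constraint $M\nu^{2m_j-1}\leq\abs{w_j}/2m_j$ on $[0,\theta_j]$ prevents the imaginary part from creating growth.
\item The cap $\theta_j\leq\delta_j$ produces the $e^{-\epsilon n}$ alternative.
\end{itemize}
This makes the lemma valid for all $y\in\mathbb{R}^d$.

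\textbf{How to repair your version.} In your setting the fix is simpler, because on $\abs{x}\leq Ln$ you have $\abs{w_k}\leq L+\abs{\alpha_k}$ and you already confine $\nu_k$ to a small window. On the lateral faces $R_k(\xi-\xi_k)\geq c_0(\delta)>0$, so the integrand there is at most $\exp\bigl(n[\abs{w_k}\abs{\nu_k}+C\sup_{0\leq t\leq 1}R_k(t\nu_k)-cc_0]\bigr)\leq e^{-cc_0n/2}$, once the window is small in terms of $c_0$ and $L$. The lateral contribution is then $O(\abs{\nu_k})$ times this supremum. Fix the constants in this order:
\begin{enumerate}
\item $L$;
\item $\delta$;
\item the window;
\item $C'\geq C$, large enough that the optimizer of the Legendre problem for $C'R_k$ lies in the window for all $\abs{w}\leq L+\abs{\alpha_k}$, using $(C'R_k)^{\#}\asymp R_k^{\#}$.
\end{enumerate}
No deformation is needed on $\mathscr{B}$. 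With this written out the argument goes through; as it stands, the step that separates this theorem from the common-$P$, common-drift case is missing.

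A minor slip: with your convention $\zeta=\xi+i\nu$, the optimization yields $(CR_k)^{\#}(-w)$ rather than $(CR_k)^{\#}(w)$. This is harmless, since $R_k^{\#}(-w)\asymp R_k^{\#}(w)$.
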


\noindent As was shown in \cite{CF24} in the context of one dimension and as we will show in Section \ref{sec:ProofOfMain}, the previous theorem's dependence on $L$ can be removed when $\phi$ is finitely supported. In this case, we have the following theorem which gives global space-time Gaussian-type bounds.
\begin{theorem}\label{thm:FiniteSupport}
    Let $\phi:\mathbb{Z}^d\to\mathbb{C}$ be finitely supported and normalized so that $\phi\in\mathcal{H}_d^*$. If every point of $\Omega(\phi)$ is of positive-homogeneous type for $\widehat{\phi}$, then there are positive constants $M_1,M_2,\dots,M_K, C_1,C_2,\dots,C_K$ for which
    \begin{equation*}
        \abs{\phi^{(n)}(x)}\leq \sum_{k=1}^K\frac{C_k}{n^{\mu_k}}\exp\left(-nM_k R_k^{\#}\left(\frac{x-n\alpha_k}{n}\right)\right)
    \end{equation*}
    for $n\in\mathbb{N}_+$ and $x\in\mathbb{Z}^d$.
\end{theorem}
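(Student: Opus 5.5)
Theorem~\ref{thm:FiniteSupport} follows from Theorem~\ref{thm:GeneralGaussEstimate}; we only need to handle the region $\abs{x}>Ln$. If $\phi$ is supported in a finite set, there is an integer $N_0$ with $\supp\phi\subseteq\{\abs{y}\leq N_0\}$. By induction on $n$, $\phi^{(n)}$ is then supported in $\{\abs{x}\leq nN_0\}$. Theorem~\ref{thm:GeneralGaussEstimate} provides some $L>0$, and we may enlarge it: for $L'\geq L$ the bound $\sum_k C_kn^{-\mu_k}\exp(-nM_kR_k^{\#}((x-n\alpha_k)/n))$ on $Ln<\abs{x}\leq L'n$ is what we must produce. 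Everything beyond $\abs{x}>N_0 n$ is trivial because $\phi^{(n)}(x)=0$ there. So it suffices to treat the intermediate annulus $Ln<\abs{x}\leq N_0n$ (if $N_0\le L$ we are already done).

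On that annulus Theorem~\ref{thm:GeneralGaussEstimate} gives $\abs{\phi^{(n)}(x)}\leq e^{-M_0(n+\abs{x})}\leq e^{-M_0 n}$. Fix one index, say $k=1$, and set $y=(x-n\alpha_1)/n$. Then $\abs{y}\leq N_0+\abs{\alpha_1}$, which is a bounded set independent of $n$. Since $R_1^{\#}$ is continuous (Proposition~\ref{prop:ExpSetLegFenchelTransform}), it is bounded on this compact set, say by $B$. Hence
\[
n^{-\mu_1}e^{-nM R_1^{\#}(y)}\geq n^{-\mu_1}e^{-nMB}.
\]
Choose $M_1'=\min(M_1, M_0/(2B))$, or any positive constant if $B=0$. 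Then
\[
e^{-M_0n}\leq C\,n^{-\mu_1}e^{-nM_1'B},
\]
because $n^{\mu_1}e^{-M_0n/2}$ is bounded. Replacing $M_1$ by $M_1'$ only enlarges the right-hand side elsewhere, so the inner-region bound survives. Adjusting $C_1$ then yields the global estimate for all $n\in\mathbb{N}_+$ and $x\in\mathbb{Z}^d$.

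The main obstacle is therefore not in this reduction but in Theorem~\ref{thm:GeneralGaussEstimate} itself, which we take as given. The only subtle point here is that the annulus is bounded in the scaled variable $x/n$. This uses finite support essentially; for general $\phi\in\mathcal{H}_d^*$ the tail decays only like $e^{-M_0\abs{x}}$, linearly in $\abs{x}$. That tail cannot be dominated by $e^{-nR^{\#}(x/n)}$, which grows superlinearly in $\abs{x}/n$ whenever some $m_j\ge1$ yields exponent $2m_j/(2m_j-1)>1$.
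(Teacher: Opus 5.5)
Your reduction is correct, but it takes a slightly different route from the paper's. The paper does not use Theorem~\ref{thm:GeneralGaussEstimate} here. Instead it starts from the global estimate \eqref{eq:GoldenSum}, which follows from Lemma~\ref{lem:IntegralExponentialEst} and \eqref{eq:BIntEst} and whose only non-Gaussian piece is $C_kn^{-\mu_k}e^{-n\epsilon_k}$. On $\supp(\phi^{(n)})$ it absorbs $e^{-n\epsilon_k}$ into $\exp(-nM_k'R_k^{\#}((x-n\alpha_k)/n))$ using exactly your observation: $(x-n\alpha_k)/n$ stays in a fixed compact set on which $R_k^{\#}$ is bounded. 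Off the support there is nothing to prove.

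You apply the same absorption trick to the far-field term $e^{-M_0(n+|x|)}$ of Theorem~\ref{thm:GeneralGaussEstimate} on the annulus $Ln<|x|\leq N_0n$. This forces an extra step: producing the prefactor $n^{-\mu_1}$ from the spare factor $e^{-M_0n/2}$, which you handle correctly. The argument is not circular, because the paper's proof of Theorem~\ref{thm:GeneralGaussEstimate} reuses only the argument behind Theorem~\ref{thm:FiniteSupport} (namely \eqref{eq:GoldenSum} plus boundedness of $x/n$ on $|x|\leq nL$), not its conclusion.

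Your version buys modularity. It exhibits Theorem~\ref{thm:FiniteSupport} as a formal corollary of Theorem~\ref{thm:GeneralGaussEstimate} together with the support bound $\supp(\phi^{(n)})\subseteq\{|x|\leq nN_0\}$, matching the paper's remark that finite support removes the dependence on $L$. The paper's version is more economical. It never needs the Cauchy shift into the poly-annulus that produces the far-field bound, and it performs the absorption once rather than effectively twice (once inside the near-field part of Theorem~\ref{thm:GeneralGaussEstimate}, once on your annulus).
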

\noindent In the context of one-dimension, Theorems \ref{thm:GeneralGaussEstimate} and \ref{thm:FiniteSupport} extend Theorem 1.6 of \cite{CF22}; the first corresponds to the implicit case and the second to the explicit case. Beyond extending these theorems to $d$ dimensions, the theorems also remove multiple hypotheses (Assumptions 1.3-1.5) of Theorem 1.6 of \cite{CF22}.   In the full $d$-dimensional setting, Theorem \ref{thm:FiniteSupport} extends Theorem 1.5 of \cite{RSC17} which asked that every expansion $\Gamma$ begin with the same drift $\alpha$ and positive-homogeneous polynomial $P$, i.e., $\alpha_\xi=\alpha$ and $P_\xi=P$ for all $\xi\in \Omega(\phi)$. The proof of Theorem 1.5 of \cite{RSC17} involved making a single contour deformation (via Cauchy's integral formula) that was then optimized to pick out $R^{\#}$ (which was common to all points $\xi\in\Omega(\phi)$). By contrast, here we extend to $d$ dimensions the argument given in \cite{CF24} that allows for several local contour deformations, each specific to $P_\xi$ for each $\xi\in\Omega(\phi)$ which naturally picks out the relevant Legendre-Fenchel transform $R^{\#}_{\xi}$. The local contour deformations we use are direct extensions to $d$-dimensions of those used in \cite{CF24}.\\

\noindent We next turn our attention to local limit theorems. For fairly complete history on local limit theorems for complex-valued functions (up to the date of publication), we refer the reader to \cite{R23}. That article covers local limit theorems in one and several dimensions treating the cases of positive-homogeneous type and so-called imaginary homogeneous type. It also contains a history of the problem dating back to its initial investigation by E. L. De Forest in the nineteenth century and subsequent investigation by I. J. Schoenberg \cite{Schoenberg1953}, T. N. E. Greville\cite{Greville1966} (and later G. Strang \cite{Strang1962}, V. Thom\'{e}e \cite{Tho65,Thomee1969}). What isn't described in \cite{R23} is the recent progress made by \cite{CF24} (and to some extent \cite{Co25}) in the context of one dimension ($d=1$).  The theorems in those articles give local limits with principle attractors ($H_k$), higher-order corrections (given by \textit{cumulants} up to a specified order of accuracy $M\in\mathbb{N}$), and generalized Gaussian-type error. Their results represent a significant improvement over the results in \cite{DSC14} and \cite{RSC17} (in the case $d=1$) where local limit theorems are presented with uniform error of the form $o(n^{-1/2m})$ and $o(n^{-\mu})$, respectively. Our theorem below, Theorem \ref{thm:LLT}, extends the results of \cite{RSC17} giving Gaussian-type error in place of the uniform error $o(n^{-\mu})$ (see Corollary \ref{cor:LLTCor}). The theorem also extends the one-dimensional results of \cite{CF24} and \cite{Co25} to $\mathbb{Z}^d$ for $d\geq 1$ by giving local limits in terms of attractors and higher-order corrections (via cumulants) to any specified order of accuracy. \\

\noindent To introduce our local limit theorems, we must first introduce the (base) attractors with which they are written. Given a positive-homogeneous polynomial $P$ (with $R=\Re P$ and positive-homogeneous order $\mu_P$), we define
\begin{equation}\label{eq:DefOfHP}
    H_P^t(x)=\frac{1}{(2\pi)^d}\int_{\mathbb{R}^d}e^{-tP(\xi)}e^{-ix\cdot\xi}\,d\xi
\end{equation}
for $t>0$ and $x\in\mathbb{R}^d$. These functions arise as ``heat" kernels corresponding to the operator $\Delta_{P}$ with symbol $P$, i.e., they appear as fundamental solutions to the (generally) higher-order heat-type equation
\begin{equation*}
\partial_t+\Delta_P=0
\end{equation*}
where $\Delta_P=P(i\partial)=P(i\partial_1,i\partial_2,\dots,i\partial_d)$. It is shown in \cite[Proposition 2.6]{RSC17} that, for each $t>0$, $x\mapsto H_P^t(x)$ is a Schwartz function and further satisfies the estimate
\begin{equation*}
\abs{H_P^t(x)}\leq \frac{C}{t^{\mu_P}}\exp\left(-tMR^{\#}(x/t)\right)
\end{equation*}
for $t>0$ and $x\in\mathbb{R}^d$, mirrored by the estimates in Theorems \ref{thm:GeneralGaussEstimate} and \ref{thm:FiniteSupport}. The role that these off-diagonal estimates play in the analysis of variable-coefficient partial differential equations is the subject of the article \cite{RSC20}.  We will not say much more about these base attractors here but refer the reader to \cite{Davies1995,Davies1995a,BD96,Davies1997,DSC14,RSC17,RSC20,RSC23} for further discussion.\\

\begin{remark}\label{rmk:OneDHeatKer}
In the one-dimensional setting, the articles \cite{RSC15,Co25,CF24} use the notation $H_l^\beta=H_{2m}^\beta$ to denote the present article's $H_P=H_P^1$ where $P(\xi)=\beta \xi^l$ where $l=2m$ is a positive even integer and $\beta$ is a complex number with $\Re(\beta)>0$. As these one-variable heat kernels will appear in some of our examples, we will distinguish them by using lowercase font. In particular, for an even integer $l=2m$ and $\beta\in\mathbb{C}$ with $\Re(\beta)>0$, we set
\begin{equation}\label{eq:OneDHeatKer}
    h_l^\beta(x)=\frac{1}{2\pi}\int_{\mathbb{R}}e^{-\beta\xi^l}e^{-ix\xi}\,d\xi
\end{equation}
for $x\in\mathbb{R}$. This is easily seen to enjoy the scaling identity $\beta^{1/2m}h_{2m}^\beta(x)=h_{2m}(x/\beta^{1/2m})$ and $h_{2m}:=h_{2m}^1$ can be computed as
\begin{equation*}
    h_{2m}(x)=\frac{1}{\pi}\int_0^\infty e^{-u^{2m}}\cos(x u)\,du
\end{equation*}
for $x\in\mathbb{R}$. In the special case that $l=2m=2$, $h_2^{\beta}$ is the standard Gaussian/heat kernel given explicitly by
\begin{equation*}
    h_2^\beta(x)=\frac{1}{\sqrt{4\pi\beta}}\exp\left(-\frac{x^2}{4\beta}\right)
\end{equation*}
for $x\in\mathbb{R}$. For $l=2m=4$, $h_4$ is known as the bi-harmonic heat kernel (associated with the bi-Laplacian). A detailed and well-written asymptotic analysis of the bi-harmonic heat kernel (and various perturbations) can be found in \cite{Davies1995a}.\\
\end{remark}

\noindent We now introduce the higher-order corrections that depend on the  base attractors $H_{P_k}$ for $k=1,2\dots,K$ and cumulants appearing in the holomorphic error $\Upsilon_k$ for $k=1,2,\dots,K$. To introduce these precisely, let's first focus on a single pair $(P,\Upsilon)$ where $P$ is a positive-homogeneous polynomial and $\Upsilon$ is holomorphic on a neighborhood of $0$ in $\mathbb{C}^d$ having $\Upsilon(\xi)=o(R(\xi))$ as $\xi\to 0$ in $\mathbb{R}^d$. Thanks to Proposition \ref{prop:PosHomAreSemiElliptic}, we take $A\in \GldR$ for which $P_A$ is positive semi-elliptic with $\mathbf{m}=(m_1,m_2,\dots,m_d)\in\mathbb{N}_+^d$. We set $m=\lcm(\mathbf{m})$ and $\mathbf{\kappa}=(\kappa_1,\kappa_2,\dots,\kappa_d)\in\mathbb{N}_+^d$ for which $m_j\kappa_j=m$ for $j=1,2,\dots,d$. We can\footnote{As we show at the beginning of Section \ref{sec:Technical}, the assumption that $\Upsilon_A(\xi)=o(R_A(\xi))$ as $\xi\to 0$ ensures that all non-zero terms $b_\beta z^{\beta}$ in the series $\Upsilon_A$ must have $\abs{\beta:2\mathbf{m}}>1$.}   write
\begin{equation*}
    \Upsilon_A(z)=\Upsilon(Az)=\sum_{\abs{\beta:2\mathbf{m}}>1}b_\beta z^\beta
\end{equation*}
where this series converges absolutely and uniformly on some neighborhood of $0$ in $\mathbb{C}^d$. The key to our higher-order corrections is to aggregate the terms of this series in the following way. Upon noting that
\begin{equation*}
    \Lambda(\beta):=\beta\cdot\mathbf{\kappa}-2m\in\mathbb{N}_+
\end{equation*}
whenever $\abs{\beta:2\mathbf{m}}>1$, we can write
\begin{equation*}
    \Upsilon_A(z)=\sum_{\lambda=1}^\infty \frac{S_\lambda(z)}{\lambda!}
\end{equation*}
where, for each $\lambda\in\mathbb{N}_+$,
\begin{equation*}
S_\lambda(z)=\lambda!\sum_{\Lambda(\beta)=\lambda}b_\beta z^\beta. 
\end{equation*}
As the exponential $e^{\Upsilon(z)}$ appears essentially in our arguments, we aim to understand the series expansion of the exponential of a series. To this end, we introduce the complete Bell polynomials $B_\lambda$ which are characterized by the formal series identity
\begin{equation}\label{eq:BellGenerating}
\exp(\sum_{l=1}^\infty \frac{s_l}{l!} t^l)=\sum_{\lambda=0}^\infty \frac{B_\lambda(s_1,s_2,\dots,s_\lambda)}{\lambda!}t^\lambda.
\end{equation}
For properties and applications of Bell polynomials\footnote{As O'Sullivan explains in \cite{OSullivan2022}, these polynomials were studied by A. De Moivre and L. F. A. Arbogast prior to E. T. Bell's work. Bell called them ``exponential polynomials" \cite{BellExpPoly}.}, we refer the reader to L. Comtet's classic reference \cite{ComtetBook} and C. O'Sullivan's recent survey \cite{OSullivan2022}. For the reader's convenience, we have listed several Bell polynomials in Table \ref{tab:Bell}.
\begin{center}
\begin{tabular}{|l|l|}
\hline
$\lambda$ & $B_\lambda(s_1, s_2, \dots, s_\lambda)$ \\
\hline
\hline
\rule{0pt}{3ex} 0   & $1$ \\
\hline
\rule{0pt}{3ex} 1   & $s_{1}$ \\
\hline
\rule{0pt}{3ex} 2   & $s_{1}^{2} + s_{2}$ \\
\hline
\rule{0pt}{3ex} 3   & $s_{1}^{3} + 3 s_{1} s_{2} + s_{3}$ \\
\hline
\rule{0pt}{3ex} 4   & $s_{1}^{4} + 6 s_{1}^{2} s_{2} + 4 s_{1} s_{3} + 3 s_{2}^{2} + s_{4}$ \\
\hline
\rule{0pt}{3ex} 5   & $s_{1}^{5} + 10 s_{1}^{3} s_{2} + 10 s_{1}^{2} s_{3} + 15 s_{1} s_{2}^{2} + 5 s_{1} s_{4} + 10 s_{2} s_{3} + s_{5}$ \\
\hline
\rule{0pt}{3ex} 6   & $s_1^6 + 15s_1^4s_2 + 20s_1^3s_3 + 45s_1^2s_2^2 + 15s_2^3 + 15s_1^2s_4 + 60s_1s_2s_3 + 6s_1s_5 + 15s_2s_4 + 10s_3^2 + s_6$ \\
\hline
\rule{0pt}{3ex} 7   & $s_1^7 + 21s_1^5s_2 + 35s_1^4s_3 + 105s_1^3s_2^2 + 35s_1^3s_4 + 210s_1^2s_2s_3 + 21s_1^2s_5 $ \\
\rule{0pt}{3ex} & $\hspace{1cm}+ 105s_1s_2^3 + 105s_1s_2s_4+ 70s_1s_3^2 + 7s_1s_6 + 105s_2^2s_3 + 21s_2s_5 + 35s_3s_4 + s_7$\\
\hline
\end{tabular}
\captionof{table}{The complete Bell polynomials $B_\lambda$ for $\lambda=0,1,2,\dots,7$.}\label{tab:Bell}
\end{center}
\vspace{.5cm}
Momentarily forgoing a discussion of convergence, in Section \ref{sec:Technical} we will find that
\begin{equation*}
    e^{n\Upsilon(\xi)}=\sum_{\lambda=0}^\infty\frac{1}{\lambda!}B_\lambda(nS_1(A^{-1}\xi),nS_2(A^{-1}\xi),\dots,nS_\lambda(A^{-1}\xi))
\end{equation*}
for small enough $\xi$. With this, we now introduce the operators that have the above series coefficients as symbols. Given the pair $(P,\Upsilon)$ and $A\in\GldR$ which makes $P_A$ positive semi-elliptic, for each pair of natural numbers $n$ and $\lambda$, consider the partial differential operator defined by
\begin{equation}\label{eq:QLambdaOpertor}
    Q^n_\lambda=\frac{1}{\lambda!}B_\lambda(nS_1(A^{-1}(i\partial),nS_2(A^{-1}(i\partial),\dots,nS_\lambda(A^{-1}(i\partial))
\end{equation}
where $i\partial=(i\partial_1,i\partial_2,\dots,i\partial_d)$. We will take by convention $Q_0^n$ to be the identity operator (for all $n$) and set $Q_\lambda=Q_\lambda^1$ for all $\lambda$. It should be noted that the derivative vector $A^{-1}(i\partial)$ is essentially that used in the references \cite{RSC17b,RSC20} where it is written as $D_{\mathbf{w}}$ for a basis $\mathbf{w}$ of $\mathbb{R}^d$; the coordinate transformation $A$ characterizes the basis $\mathbf{w}$. We avoid this notation here to avoid confusion with our diagonal matrices.\\

\noindent The partial differential operators $Q_{\lambda}^{n}$ play a key role in the statement of our main local limit theorem. Upon noting that, for a positive homogeneous polynomial $P$, the matrix $A$ which makes $P_A$ semi-elliptic need not be unique (one can simply permute coordinates, e.g.), it is natural to ask: How do the operators $Q_{\lambda}^n$ depend, if at all, on $A$? This question is addressed by the following proposition. For the sake of smoothness in our presentation and to avoid an unnecessary digression, the proof is presented in Appendix \ref{ssec:PHP}. 

\begin{proposition}\label{prop:QLmbdOpIsIndependentOfA}
Let $P$, $\Upsilon$, $A$, and $Q_\lambda^n$ as above. Suppose that $\widetilde{A}\in \GldR$ makes $P_{\widetilde{A}}(\cdot)=P(\widetilde{A}\cdot)$ semi-elliptic with $\widetilde{\mathbf{m}}=(\widetilde{m}_1,\widetilde{m}_2,\ldots, \widetilde{m}_d )\in \mathbb{N}_{+}^{d}$. Following the same procedure described above to define $Q^{n}_{\lambda}$ in \eqref{eq:QLambdaOpertor}, for each $\lambda\in\mathbb{N}$ and $n\in\mathbb{N}_+$, we define $\widetilde{Q}^{n}_{\lambda}$ by 
    \begin{equation*}
       \widetilde{Q}^n_\lambda=\frac{1}{\lambda!}B_\lambda \left(n\widetilde{S}_1\left( \widetilde{A}^{-1}(i\partial)\right),n\widetilde{S}_2\left( \widetilde{A}^{-1}(i\partial)\right),\dots,n\widetilde{S}_\lambda\left(\widetilde{A}^{-1}(i\partial) \right) \right)
    \end{equation*}
    where the polynomials $\widetilde{S}_\lambda$ have been obtained from $\Upsilon_{\widetilde{A}}(\cdot)=\Upsilon(\widetilde{A}\cdot)$. Then 
    \begin{equation*}
        \widetilde{Q}_{\lambda}^n= Q_{\lambda}^n, \quad \forall \lambda\in \mathbb{N}, \,\, n\in \mathbb{N}_+.
    \end{equation*}
    In other words, the partial differential operators defined in \eqref{eq:QLambdaOpertor} are independent of the choice of $A$.
\end{proposition}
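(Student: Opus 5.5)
The plan is to show that the symbols of $Q^n_\lambda$ and $\widetilde{Q}^n_\lambda$ coincide as polynomials in $\xi$. The symbol of $Q^n_\lambda$ is $\frac{1}{\lambda!}B_\lambda\bigl(nS_1(A^{-1}\xi),\dots,nS_\lambda(A^{-1}\xi)\bigr)$. I would identify this with an intrinsic object: a homogeneous component of $e^{n\Upsilon}$ taken with respect to the one-parameter group $\{t^E\}$, where $E=ADA^{-1}\in\Exp(P)$. The argument then reduces to two facts:
\begin{enumerate}[(i)]
\item $E=\widetilde{E}:=\widetilde{A}\widetilde{D}\widetilde{A}^{-1}$, and in particular $\lcm(\mathbf{m})=\lcm(\widetilde{\mathbf{m}})$;
\item the $E$-homogeneous decomposition of a convergent power series is unique.
\end{enumerate}

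For the grading, put $\psi_\lambda(\xi):=S_\lambda(A^{-1}\xi)/\lambda!$. If $\Lambda(\beta)=\lambda$, then $\abs{\beta:2\mathbf{m}}=\beta\cdot\kappa/2m=1+\lambda/2m$. Hence $(t^{D}z)^\beta=t^{1+\lambda/2m}z^\beta$, and so
\begin{equation*}
\psi_\lambda(t^E\xi)=t^{1+\lambda/2m}\psi_\lambda(\xi),\qquad \Upsilon(t^E\xi)=\sum_{\lambda\geq 1}t^{1+\lambda/2m}\psi_\lambda(\xi)
\end{equation*}
for small $\xi$ and $0<t\le 1$. Substituting $t=\tau^{2m}$ and using the generating identity \eqref{eq:BellGenerating} with $s_l=nl!\,\psi_l(\xi)$, I get the following. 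The symbol of $Q^n_\lambda$ is the coefficient of $\tau^\lambda$ in $\tau\mapsto\exp\bigl(n\tau^{-2m}\Upsilon(\tau^{2mE}\xi)\bigr)$, viewed as a power series in $\tau$ with polynomial coefficients in $\xi$. Equivalently, $\psi_\lambda$ is the unique polynomial component of $\Upsilon$ that is $E$-homogeneous of degree $1+\lambda/2m$. Uniqueness holds because distinct degrees give distinct powers of $t$, so fact (ii) follows from the identity theorem in $t$. Once (i) holds, the same description applies verbatim to $\widetilde{A}$ with the same $E$ and the same $m$. Therefore $\widetilde{\psi}_\lambda=\psi_\lambda$ for all $\lambda$, and the Bell-polynomial symbols agree for every $n$.

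The main obstacle is fact (i). Set $B=A^{-1}\widetilde{A}$, so that $P_{\widetilde{A}}=P_A\circ B$ and $F:=B\widetilde{D}B^{-1}\in\Exp(P_A)\subseteq\Exp(R_A)$. Here $F$ is diagonalizable with spectrum $\{1/2\widetilde{m}_j\}$, and I need $F=D$. I would first try an eigenvector argument. If $Fv=fv$ with $v\neq0$, then $R_A(sv)=s^{1/f}R_A(v)$ for all $s>0$. Since $R_A(sv)$ is a polynomial in $s$ with $R_A(v)>0$, it follows that $1/f$ is an integer and $R_A(sv)$ is a single monomial in $s$.

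Next, write $V_k=\operatorname{span}\{e_j:m_j=k\}$. Using Proposition~\ref{prop:LFCompare}-type two-sided comparisons $R_A(\xi)\asymp\sum_j\abs{\xi_j}^{2m_j}$, I would compare small-$s$ and large-$s$ behaviour. As $s\to\infty$ the term with the largest $m_j$ in the support of $v$ dominates, and as $s\to0$ the term with the smallest does. For $R_A(sv)$ to be a single power of $s$, these must agree, so $v\in V_k$ with $f=1/2k$. This shows each eigenspace of $F$ lies in some $V_k$, and dimension counting gives $F|_{V_k}=\frac{1}{2k}I$, that is, $F=D$. The comparison for $R_A$ itself, rather than $R_A^{\#}$, comes from positivity of $R_A$ on the compact set $\{\sum_j\abs{\xi_j}^{2m_j}=1\}$ together with $D$-homogeneity.

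Consequently $\widetilde{E}=\widetilde{A}\widetilde{D}\widetilde{A}^{-1}=ABB^{-1}DBB^{-1}A^{-1}\cdots$; more simply $\widetilde{E}=AFA^{-1}=ADA^{-1}=E$. The spectra coincide, so $\widetilde{\mathbf{m}}$ is a permutation of $\mathbf{m}$ and $\lcm$ is unchanged, which completes the proof.
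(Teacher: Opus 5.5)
Your proposal is correct and follows essentially the same route as the paper. The paper first shows $E=ADA^{-1}=\widetilde{A}\widetilde{D}\widetilde{A}^{-1}$, so that $\widetilde{\mathbf{m}}$ is a permutation of $\mathbf{m}$, by proving that each column of $\widetilde{A}$ is an eigenvector of $E$ through the same monomial-plus-two-sided-comparison argument you use (your eigenvectors $Be_l=A^{-1}\widetilde{A}e_l$ are exactly its vectors $\mathbf{b}$); it then concludes by uniqueness of the power-series coefficients in $r$ of $r^{-2m}\Upsilon(r^{2mE}z)=\psi_A(r,A^{-1}z)=\psi_{\widetilde{A}}(r,\widetilde{A}^{-1}z)$. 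The differences are cosmetic: you get the monomial property from $F$-homogeneity rather than from semi-ellipticity of $P_{\widetilde{A}}$ along coordinate axes, and you correctly run the comparison on $R_A$ rather than on the complex-valued $P_A$.
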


 \noindent Let's now consider $\phi\in\mathcal{H}_d^*$ for which every member of $\Omega(\phi)$ is of positive-homogeneous type for $\widehat{\phi}$ and assume the notation of the preceding theorems. For each $P_k$, we take $A_k\in\GldR$ for which $(P_k)_{A_k}(\xi)=P_k(A_k\xi)$ is positive semi-elliptic with $\mathbf{m}_k=(m_{1,k},m_{2,k},\dots,m_{d,k})\in\mathbb{N}_+^d$ and, by a slight abuse of notation, set $m_k=\lcm(\mathbf{m}_k)$. By the preceding construction, we shall write $S_{\lambda,k}$ for the polynomials associated to the holomorphic error $z\mapsto \Upsilon_k(A_kz)$ and denote by $Q_{\lambda,k}^n$ the differential operators defined by \eqref{eq:QLambdaOpertor} (each taking, as ingredients, $S_{\lambda,k}(z)$, $A_k$, and (implicitely) $z\mapsto \Upsilon_k(A_kz)$). In this notation, we have our main local limit theorem.

\begin{theorem}\label{thm:LLT}
    Let $\phi\in\mathcal{H}_d^*$ and suppose that every point of $\Omega(\phi)$ is of positive-homogeneous type for $\widehat{\phi}$. We assume the notation above.  Then, for any $L>0$ and choice of orders of accuracy $\lambda_1,\lambda_2,\dots,\lambda_K\in\mathbb{N}$, there are positive constants $M_1,M_2,\dots,M_k$ and $C_1,C_2,\dots,C_K$ for which
    \begin{eqnarray}\label{eq:LLT}\nonumber
    \lefteqn{\abs{\phi^{(n)}(x)-\sum_{k=1}^K\sum_{\lambda=0}^{\lambda_k}e^{-ix\cdot \xi_k}\widehat{\phi}(\xi_k)^n(Q^n_{\lambda,k} H_{P_k}^n)(x-n\alpha_k)}}\\
    &&\hspace{3cm}\leq \sum_{k=1}^K\frac{C_k}{n^{\mu_k+(\lambda_k+1)/2m_k}}\exp\left(-nM_kR_k^{\#}\left(\frac{x-n\alpha_k}{n}\right)\right)
    \end{eqnarray}
    for all $n\in\mathbb{N}_+$ and $x\in\mathbb{Z}^d$ with $\abs{x}\leq nL$. In the case that $\phi$ is finitely supported, \eqref{eq:LLT} holds uniformly for $n\in\mathbb{N}_+$ and $x\in\mathbb{Z}^d$ for an appropriate choice of constants $C_1,C_2,\dots, C_K, M_1,M_2,\dots,M_K$. 
\end{theorem}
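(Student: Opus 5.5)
Starting from the Fourier representation \eqref{eq:FTIdentityReal}, I would localize. I choose small disjoint neighborhoods $U_k$ of the points $\xi_k\in\Omega(\phi)$ and a smooth partition of $\mathbb{T}^d$ into these pieces and a remainder. On the remainder $\abs{\widehat{\phi}}\leq 1-\delta<1$, and by holomorphy this persists after a small imaginary shift $\xi\mapsto\xi+i\eta$ with $\abs{\eta}\leq\epsilon$. Shifting the contour in the direction $\eta$ chosen against $x$ then bounds this piece by $e^{-cn-c'\abs{x}}$. In the regime $\abs{x}\leq Ln$ this is dominated by $n^{-\mu_k-(\lambda_k+1)/2m_k}\exp(-nM_kR_k^{\#}((x-n\alpha_k)/n))$ once $M_k$ is small, because $R_k^{\#}$ is bounded on bounded sets. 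For each $k$ I then translate $\xi\to\xi+\xi_k$, which produces the prefactor $e^{-ix\cdot\xi_k}\widehat{\phi}(\xi_k)^n$ and leaves the integrand $e^{n\Gamma_k(\xi)}e^{-ix\cdot\xi}$ on a neighborhood of $0$. Next I change variables $\xi=A_k^{-\top}$-adapted coordinates, i.e. $\xi=A_k\zeta$ so that $P_k$ becomes semi-elliptic, and write $y=A_k^{\top}(x-n\alpha_k)$. By Proposition \ref{prop:LFCompare}, $R_k^{\#}(x')=R_{A_k}^{\#}(A_k^{\top}x')$, so it suffices to work in the semi-elliptic coordinates.

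The core step is a local contour deformation adapted to $P_k$, extending \cite{CF24} to $d$ dimensions. I rescale $\zeta=n^{-D}\omega$, where $D=\diag(1/2m_j)$, so that $nP_A(\zeta)=P_A(\omega)$ and $ny\cdot\zeta=(n^{-D}ny)\cdot\omega$. I shift $\omega\mapsto\omega+i\tau$, choosing $\tau$ to nearly realize the supremum defining $R^{\#}$ at $y/n$, suitably scaled. Positive-definiteness together with semi-ellipticity gives $\Re P_A(\omega+i\tau)\geq cR_A(\omega)-C R_A(\tau)$. Taking $\tau=\theta\tau^*$ with $\theta$ small then extracts the factor $\exp(-nMR_A^{\#}(y/n))$, using homogeneity of $R^{\#}$ under $I-D^{\top}$ (Proposition \ref{prop:ExpSetLegFenchelTransform}). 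When $\abs{x}\leq Ln$ the required shift $\tau$ in $\zeta$ variables stays within the holomorphy radius. In the finitely supported case, large $\abs{y}/n$ is instead handled by using the polynomial (Laurent) growth of $F_\phi$, exactly as in the Theorem \ref{thm:FiniteSupport} argument, so the estimate becomes global.

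On the deformed contour I expand $e^{n\Upsilon_A}$ using the Bell polynomial identity \eqref{eq:BellGenerating}. The key point is that $S_\lambda(n^{-D}\omega)=n^{-1-\lambda/2m}S_\lambda(\omega)$, since $\Lambda(\beta)=\lambda$ means $\abs{\beta:2\mathbf{m}}=1+\lambda/2m$. Hence $\frac1{\lambda!}B_\lambda(nS_1,\dots,nS_\lambda)(n^{-D}\omega)=n^{-\lambda/2m}\frac{1}{\lambda!}B_\lambda(S_1,\dots,S_\lambda)(\omega)$. The truncated sum over $\lambda\leq\lambda_k$, integrated against $e^{-nP}$ over all of $\mathbb{R}^d$ (the tails outside $U_k$ are again exponentially small), reproduces exactly $(Q^n_{\lambda,k}H^n_{P_k})(x-n\alpha_k)$, with Proposition \ref{prop:QLmbdOpIsIndependentOfA} allowing any $A_k$. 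The same contour shift applied to these explicit terms costs the same Gaussian-type factor.

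The main obstacle is the remainder $e^{n\Upsilon}-\sum_{\lambda\leq\lambda_k}(\cdots)$ on the shifted contour. I must show it is bounded by $C n^{-(\lambda_k+1)/2m}\,(1+\abs{\omega}+\abs{\tau})^N e^{\epsilon R_A(\omega)+\epsilon R_A(\tau)}$, uniformly for complex $\zeta$ in a small polydisc. I would prove this via Taylor's remainder formula for $s\mapsto\exp(s^{-2m}\cdot)$ applied to the weighted-homogeneous pieces, combined with $\abs{n\Upsilon_A(\zeta)}\leq \epsilon\, n(R_A(\Re\zeta)+R_A(\Im\zeta))$ near $0$. This estimate follows from $\abs{\beta:2\mathbf{m}}>1$ after shrinking the neighborhood. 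The resulting bound is then absorbed into $e^{-cR_A(\omega)}$ and the small multiple of $R_A(\tau)$. Integrating over $\omega$ and accounting for the Jacobian $n^{-\mu}$ then gives the claimed bound $n^{-\mu_k-(\lambda_k+1)/2m_k}\exp(-nM_kR^{\#}_k(\cdot))$.
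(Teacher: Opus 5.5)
Your near-field argument ($\abs{x}\le nL$) follows the paper's route in outline. You localize near each $\xi_k$, pass to the coordinates given by $A_k$, rescale by $n^{D}$, and control $e^{n\Upsilon}$ minus its Bell truncation by Taylor's formula in $r=n^{-1/2m}$ (Lemma \ref{lem:BellEst} and Proposition \ref{prop:HBound}). You then deform the contour and absorb every $e^{-\epsilon n}$ term using boundedness of $R_k^{\#}$ on bounded sets. Your single shift $\theta\tau^*$ with $\theta=\theta(L)$ small is an acceptable substitute for the paper's coordinatewise $\theta_j$ capped at $\delta_j$.

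Two repairs are needed in that part:
\begin{itemize}
\item Cauchy's formula cannot be used with a smooth partition of unity, because the cutoffs are not holomorphic. You must localize to sharp product domains such as $\xi_k+A_k([-\delta,\delta]^d)$.
\item On such domains the deformation creates side contributions (the paper's $J_1^{(j)},J_3^{(j)}$). You must show these are $O(e^{-\epsilon n})$ by controlling the integrand along the whole segment from $0$ to the shift, not only at its endpoint.
\end{itemize}
Shifting the remainder piece against $x$ is unnecessary, and it is not justified on a domain with boundary; the bound $\rho^n$ already suffices there.

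The genuine gap is the global statement for finitely supported $\phi$. For $\abs{x}>nL$, with $L$ beyond the support radius, one has $\phi^{(n)}(x)=0$. So the left side of \eqref{eq:LLT} is exactly the modulus of the sum of the attractor terms $e^{-ix\cdot\xi_k}\widehat{\phi}(\xi_k)^n(Q^n_{\lambda,k}H^n_{P_k})(x-n\alpha_k)$.
\begin{itemize}
\item The Laurent growth of $F_\phi$ only re-proves that $\phi^{(n)}$ vanishes there; it says nothing about the attractors.
\item You also cannot bound the pieces of your decomposition separately in that region, because they cancel only in the sum.
\item In any case, a bound like $e^{-cn-c'\abs{x}}$ is weaker than $\exp(-nMR^{\#}(x/n))$ once $\abs{x}/n$ is large, since $R^{\#}$ grows superlinearly.
\end{itemize}

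What is missing is a direct far-field estimate for the attractors themselves. From \eqref{eq:RewritingQH} and a Gaussian-type bound on $Q_{\lambda,k}H_{P_k}$ (as in Proposition 2.6 of \cite{RSC17}), one gets $\abs{(Q^n_{\lambda,k}H^n_{P_k})(y)}\le B_k e^{-2\rho_k nR_k^{\#}(y/n)}$. Now take $L=1+\max\{\abs{y},\abs{\alpha_k}:y\in\supp(\phi)\}$, so that $\abs{(x-n\alpha_k)/n}\ge 1$ whenever $\abs{x}>nL$. Then half of the exponent is at least $n\epsilon_k$, where $\epsilon_k=\rho_k\inf_{\abs{v}\ge 1}R_k^{\#}(v)>0$. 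This factor $e^{-n\epsilon_k}$ bridges the gap between the attractors' own size (of order $n^{-\mu_k}$ for $\lambda=0$) and the required $n^{-\mu_k-(\lambda_k+1)/2m_k}$. Without it, your argument yields \eqref{eq:LLT} only for $\abs{x}\le nL$.
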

\noindent The above local limit theorem gives precise approximations to convolution powers in terms of the attractors $Q_{\lambda,k}^n H_{P_k}^n$ to any desired order of accuracy (specified by $\lambda_1,\lambda_2,\dots,\lambda_K$); we shall refer to $H_{P_k}^n=Q_{0,k}^nH_{P_k}^n$ as the base attractors and $Q_{\lambda,k}^n H_{P_k}^n$, for $\lambda\geq 1$, the higher-order corrections. Mirroring the estimates of Theorems \ref{thm:GeneralGaussEstimate} and \ref{thm:FiniteSupport}, our Gaussian-type error is written in terms of the Legendre-Fenchel transforms of $R_k=\Re P_k$ for $k=1,\dots,K$. To illustrate the power and utility of the theorem, the remainder of this subsection is dedicated to presenting several consequences of this theorem and making several remarks that help to place the result in context with earlier work. In particular, we will see how the theorem improves upon the local limit theorems of \cite{RSC17,CF24,Co25}. In the subsection to follow, we shall illustrate the results of the theorem for the convolution powers of a simple real-valued function on $\mathbb{Z}^2$. We begin with the following corollary which gives us a local limit theorem with only base attractors.

\begin{corollary}\label{cor:LLTCor}
    Let $\phi\in\mathcal{H}_d^*$ and suppose that every point of $\Omega(\phi)$ is of positive-homogeneous type for $\widehat{\phi}$. Assuming the notation of the preceding theorems, for each $k=1,2,\dots,K$, define\footnote{We note that $\{\gamma\in\mathbb{N}_+:S_{\gamma,k}\neq 0\}$ is necessarily non-empty. For, if $\Upsilon_k\equiv 0$, then $\widehat{\phi}$ would not be $2\pi$ periodic.}
    \begin{equation*}
        \gamma_k=\min\left\{\gamma\in\mathbb{N}_+:S_{\gamma,k}\neq 0\right\}.
    \end{equation*}
    Then, for any $L>0$, there are constants $M_1,M_2,\dots,M_K$ and $C_1,C_2,\dots,C_K$ for which
    \begin{eqnarray*}
        \abs{\phi^{(n)}(x)-\sum_{k=1}^K e^{-ix\cdot\xi_k}\widehat{\phi}(\xi_k)^nH_{P_k}^n(x-n\alpha_k)}&\leq& \sum_{k=1}^K\frac{C_k}{n^{\mu_k+\gamma_k/2m_k}}\exp\left(-nM_k R_k^{\#}\left(\frac{x-n\alpha_k}{n}\right)\right)\\
        &\leq&\sum_{k=1}^K\frac{C_k}{n^{\mu_k+1/2m_k}}\exp\left(-nM_k R_k^{\#}\left(\frac{x-n\alpha_k}{n}\right)\right)
    \end{eqnarray*}
    for all $n\in\mathbb{N}_+$ and $x\in\mathbb{Z}^d$ with $\abs{x}\leq nL$. In the case that $\phi$ is finitely supported, for appropriately chosen constants $C_1,C_2,\dots,C_K$ and $M_1,M_2,\dots,M_K$, the above estimates hold uniformly for all $n\in\mathbb{N}_+$ and $x\in\mathbb{Z}^d$. 
\end{corollary}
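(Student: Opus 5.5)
We will deduce Corollary \ref{cor:LLTCor} directly from Theorem \ref{thm:LLT}, applied with the orders of accuracy $\lambda_k=\gamma_k-1\in\mathbb{N}$ for $k=1,2,\dots,K$. The point is that, for each $k$, the correction operators $Q^n_{\lambda,k}$ with $1\leq\lambda\leq\gamma_k-1$ vanish identically. Once this is shown, the double sum in \eqref{eq:LLT} collapses to the base attractors $e^{-ix\cdot\xi_k}\widehat{\phi}(\xi_k)^nH^n_{P_k}(x-n\alpha_k)$, recalling that $Q^n_{0,k}$ is the identity. The error in \eqref{eq:LLT} then has decay $n^{-\mu_k-(\lambda_k+1)/2m_k}=n^{-\mu_k-\gamma_k/2m_k}$, which is the first inequality. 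For finitely supported $\phi$ the statement is uniform in $n$ and $x$, exactly as in Theorem \ref{thm:LLT}.

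The key step is the vanishing of $Q^n_{\lambda,k}$ for $1\leq\lambda<\gamma_k$. By definition of $\gamma_k$, we have $S_{1,k}=\cdots=S_{\gamma_k-1,k}=0$. Every monomial in the complete Bell polynomial $B_\lambda(s_1,\dots,s_\lambda)$ with $\lambda\geq1$ has the form $\prod_j s_j^{k_j}$ with $\sum_j jk_j=\lambda$. This follows from the generating identity \eqref{eq:BellGenerating} by tracking powers of $t$, and there is no constant term. Hence, if $\lambda\leq\gamma_k-1$, every such monomial contains some $s_j$ with $j\leq\lambda<\gamma_k$. Substituting $s_j=nS_{j,k}(A_k^{-1}(i\partial))=0$ therefore kills every monomial, so $Q^n_{\lambda,k}=0$ as an operator. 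If $\gamma_k=1$, we simply take $\lambda_k=0$ and there is nothing to check.

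Finally, the second inequality is immediate. Since $\gamma_k\geq1$, we have $n^{-\mu_k-\gamma_k/2m_k}\leq n^{-\mu_k-1/2m_k}$ for $n\in\mathbb{N}_+$, and the constants $C_k$ and $M_k$ are unchanged.

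The only thing to make sure of is that $\gamma_k$ is finite, so that $\lambda_k$ is a well-defined natural number. This is guaranteed by the footnote: if $\Upsilon_k\equiv0$, then all $S_{\gamma,k}$ would vanish and $\widehat{\phi}$ would fail to be periodic. I expect no real obstacle here; the argument is purely the bookkeeping of Bell-polynomial weights.
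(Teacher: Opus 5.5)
Your proposal is correct and is essentially the paper's proof. You apply Theorem \ref{thm:LLT} with $\lambda_k=\gamma_k-1$ and observe that $Q^n_{\lambda,k}=\frac{1}{\lambda!}B_\lambda(0,\dots,0)=0$ for $1\leq\lambda<\gamma_k$, so only the base attractors survive, with error of order $n^{-\mu_k-\gamma_k/2m_k}$; your weight-counting argument is just a more elaborate way of saying that $B_\lambda$ depends only on $s_1,\dots,s_\lambda$ and has no constant term, and the second inequality and finitely supported case follow exactly as you describe.
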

\noindent Before proving the corollary, let's compare the statement to known local limit theorems in the context of $\mathbb{Z}^d$ (for $d\geq 1)$. For finitely supported functions, the corollary immediately extends Theorem 1.6 of \cite{RSC17} in replacing the uniform $o(n^{-\mu_\phi})$ error with Gaussian-type error which, on the diagonal, decays as $O(n^{-\mu_k-\gamma_k/2m_k})$. We note that Theorem 1.6 of \cite{RSC17} focuses only on attractors with minimal on-diagonal decay, i.e., those $k$'s for which $\mu_k=\mu_\phi=:\min_{j}\mu_j$, whereas the statement above includes all attractors; we touch further on this in Remark \ref{rmk:refRn}. To our knowledge, the only other such local limit theorem for complex functions on $\mathbb{Z}^d$ with Gaussian-type error appears in the first author's thesis as Theorem 3.3.1 \cite{AlvesThesis}. In that theorem, the on-diagonal decay is given in terms of the following constant: For a holomorphic error term $\Upsilon$ for which $\Upsilon(\xi)=o(R(\xi))$ as $\xi\to 0$ where $R=\Re P$, we define, for any $E\in\Exp(P)$,
    \begin{equation*}
        \lambda(\Upsilon):=\sup\left\{\lambda\geq 0:\lim_{t\to 0}t^{-(1+\lambda)}\sup_{R(\xi)\leq 1}\abs{\Upsilon(t^E\xi)}=0\right\}.
    \end{equation*}
It is straightfoward to verify that this is independent of $E\in\Exp(P)$. In studying the proof of Proposition 2.3.3 of \cite{AlvesThesis}, we find, for $k=1,2,\dots,K$,
\begin{eqnarray*}
   \lefteqn{\hspace{-1cm}\lambda_k= \lambda(\Upsilon_k)=\min\{\abs{\beta:2\mathbf{m}_k}:b_{\beta,k}\neq 0\}-1}\\
   &&\hspace{1cm}=\frac{1}{2m_k}\min\{\beta\cdot\kappa_k-2m_k:b_{\beta,k}\neq 0\}=\frac{1}{2m_k}\min\{\lambda:S_{\lambda,k}\neq 0\}=\gamma_k/2m_k.
\end{eqnarray*}
With this observation, we see that Corollary \ref{cor:LLTCor} recaptures exactly Theorem 3.3.1 of \cite{AlvesThesis}.

\begin{proof}[Proof of Corollary \ref{cor:LLTCor}]
We obtain the result by applying Theorem \ref{thm:LLT} to the orders of accuracy $\lambda_k=\gamma_k-1$ for $k=1,2,\dots,K$. In the case that, $\gamma_k=1$, this yields the attractor $Q_{0,k}^nH_{P_k}^n=H_{P_k}^n$ since $Q_{0,k}$ is the identity. If $\gamma_k>1$, we have $S_{\lambda,k}=0$ for all $1\leq \lambda<\gamma_k$. Consequently
\begin{equation*}
    Q_{\lambda,k}^n=\frac{1}{\lambda!}B_{\lambda}(nS_1(A^{-1}(i\partial),nS_2(A^{-1}(i\partial),\dots,nS_\lambda(A^{-1}(i\partial)))=\frac{1}{\lambda!}B_{\lambda}(0,0,\dots,0)
\end{equation*}
is the zero operator for every $\lambda=1,2,\dots,\gamma_{k}-1$. Thus,
\begin{eqnarray*}
\lefteqn{\hspace{-1cm}\sum_{\lambda=0}^{\gamma_k-1}e^{-ix\cdot\xi_k}\widehat{\phi}(\xi_k)^n (Q_{\lambda,k}^nH_{P_k}^n)(x-n\alpha_k)}\\
\hspace{2cm}&=&e^{-ix\cdot\xi_k}\widehat{\phi}(\xi_k)^n(Q_{0,k}^n H_{P_k}^n)(x-n\alpha_k)+\sum_{\lambda=1}^{\gamma_k-1}e^{-ix\cdot\xi_k}\widehat{\phi}(\xi_k)^n (Q_{\lambda,k}^nH_{P_k}^n)(x-n\alpha_k)\\
&=&e^{-ix\cdot\xi_k}\widehat{\phi}(\xi_k)^n(Q_{0,k}^nH_{P_k}^n)(x-n\alpha_k)+\sum_{\lambda=1}^{\gamma_k-1} 0\\
&=&e^{-ix\cdot\xi_k}\widehat{\phi}(\xi_k)^nH_{P_k}^n(x-n\alpha_k).
\end{eqnarray*}
Appealing to Theorem \ref{thm:LLT}, we obtain
\begin{eqnarray*}
\abs{\phi^{(n)}(x)-\sum_{k=1}^{K}e^{-ix\cdot\xi_k}\widehat{\phi}(\xi_k)^n H_{P_k}^n(x-n\alpha_k)}&=&
    \abs{\phi^{(n)}(x)-\sum_{k=1}^{K}\sum_{\lambda=0}^{\gamma_k-1}e^{-ix\cdot\xi_k}\widehat{\phi}(\xi_k)^n (Q_{\lambda,k}^n H_{P_k}^n)(x-n\alpha_k)}\\
    &\leq&\sum_{k=1}^K\frac{C_k}{n^{\mu_k+\gamma_k/2m_k}}\exp\left(-nM_kR_k^{\#}\left(\frac{x-n\alpha_k}{n}\right)\right)
\end{eqnarray*}
for $n\in\mathbb{N}_+$ and $x\in\mathbb{Z}^d$ with $\abs{x}\leq nL$ since $\lambda_k+1=\gamma_k-1+1=\gamma_k$ for $k=1,2\dots,K$. By the theorem, this result holds uniformly for $n\in\mathbb{N}_+$ and $x\in\mathbb{Z}^d$ in the case of finitely-supported $\phi$.
\end{proof}

\noindent Let us now make two comments to help understand the results in a way that might be useful for numerical simulations. The first aims at elucidating how the attractors $Q_{\lambda,k}^nH_{P_k}^n$ depends on $n$.\\

\noindent As we will see in Section \ref{sec:Technical}, each polynomial $S_\lambda$ enjoys a notion of homogeneity which yields, in particular,
\begin{equation*}
    n^{1+\lambda/2m} S_\lambda(A^{-1}\xi)=S_\lambda(n^DA^{-1}\xi)=S_\lambda (A^{-1}n^E\xi)
\end{equation*}
for all $n\in\mathbb{N}_+$ and $\xi\in\mathbb{R}^d$ where $D=\diag(1/2m_1,1/2m_2,\dots,1/2m_d)$ and $E=ADA^{-1}\in\Exp(P)$. Further, thanks to the homogeneity of Bell polynomials (see \cite[Equation (2.19)]{OSullivan2022}),
\begin{eqnarray*}
\lefteqn{\hspace{-4cm}B_\lambda(nS_1(A^{-1}\xi),nS_2(A^{-1}\xi),\dots,nS_\lambda(A^{-1}\xi))}\\  \hspace{4cm}&=&B_\lambda(n^{-1/2m}S_1(A^{-1}n^E\xi),n^{-2/2m}S_2(A^{-1}n^E\xi),\dots,n^{-\lambda/2m}S_\lambda(A^{-1}n^{E}\xi))\\
    &=&n^{-\lambda/2m}B_\lambda(S_1(A^{-1}n^{E}\xi),S_2(A^{-1}n^{E}\xi),\dots,S_\lambda(A^{-1}n^{E}\xi))
\end{eqnarray*}
for $n\in\mathbb{N}_+$ and $\xi\in\mathbb{R}^d$. Appealing to this identity and making the change of variables $\xi\mapsto n^{-E}\xi$ for $E=ADA^{-1}$, we find
\begin{eqnarray}\label{eq:RewritingQH}\nonumber
    \lefteqn{(Q_\lambda^nH_P^n)(x-n\alpha)}\\\nonumber
    &=&\frac{1}{(2\pi)^d}\int_{\mathbb{R}^d}\frac{1}{\lambda!}B_\lambda(nS_1(A^{-1}\xi),nS_2(A^{-1}\xi),\dots,nS_\lambda(A^{-1}\xi))e^{-nP(\xi)}e^{-i(x-n\alpha)\cdot\xi}\,d\xi\\\nonumber
    &=&\frac{1}{(2\pi)^d}\int_{\mathbb{R}^d}\frac{n^{-\lambda/2m}}{\lambda!}B_{\lambda}(S_1(A^{-1}n^{E}\xi),S_2(A^{-1}n^E\xi),\cdots,S_\lambda(A^{-1}n^E\xi))e^{-P(n^E\xi)}e^{-i(x-n\alpha)\cdot\xi}\,d\xi\\
    &=&\frac{1}{n^{\mu+\lambda/2m}}\frac{1}{(2\pi)^d}\int_{\mathbb{R}^d}\frac{1}{\lambda!}B_\lambda(S_1(A^{-1}\xi),S_2(A^{-1}\xi),\dots,S_\lambda(A^{-1}\xi))e^{-P(\xi)}e^{-i(x-n\alpha)\cdot n^{-E}\xi}\,d\xi\\\nonumber
    &=&\frac{1}{n^{\mu+\lambda/2m}}Q_{\lambda}H_P\left(n^{-E^{\top}}(x-n\alpha)\right)
\end{eqnarray}
for all $n\in\mathbb{N}_+$ and $x\in\mathbb{R}^d$. Also, given that $R^{\#}$ is homogeneous with respect to $I-E^{\top}$, we have
\begin{equation*}
    n R^{\#}\left(\frac{x-n a}{n}\right)=R^{\#}\left(n^{-E^{\top}}(x-n\alpha)\right)
\end{equation*}
for $n\in\mathbb{N}_+$ and $x\in\mathbb{Z}^d$. Thus, in these terms, \eqref{eq:LLT} can be rewritten as
\begin{equation}\label{eq:LLTExplicitTime}
    \abs{\phi^{(n)}(x)-\sum_{k=1}^K\sum_{\lambda=0}^{\lambda_k}\frac{e^{-ix\cdot\xi_k}\widehat{\phi}(\xi_k)^n}{n^{\mu_k+\lambda/2m_k}}(Q_{\lambda,k}H_{P_k})(y_{n,k}(x))}\leq \sum_{k=1}^K\frac{C_k}{n^{\mu_k+(\lambda_k+1)/2m_k}}\exp\left(-M_kR_k^{\#}(y_{n,k}(x))\right)
\end{equation}
where we have put $y_{n,k}(x)=n^{-E_{k}^\top}(x-n\alpha_k)$. 

Let's now rewrite \eqref{eq:LLTExplicitTime} to make everything semi-elliptic by also changing variables by $A_k$. To this end, observe
\begin{eqnarray*}
    (Q_{\lambda,k}H_{P_k})(y)&=&\frac{1}{(2\pi)^d}\int_{\mathbb{R}^d}\frac{1}{\lambda!}B_{\lambda}(S_1(A_k^{-1}\xi),S_2(A_k^{-1}\xi),\dots,S_{\lambda}(A_k^{-1}\xi))e^{-P_k(\xi)}e^{-iy\cdot\xi}\,d\xi\\
    &=&\frac{\abs{\det(A_k)}}{(2\pi)^d}\int_{\mathbb{R}^d}\frac{1}{\lambda!}B_{\lambda}(S_1(\xi),S_2(\xi),\dots,S_\lambda(\xi))e^{-P_{k}(A_k\xi)}e^{-iy\cdot A_k\xi}\,d\xi\\
    &=&\frac{\abs{\det(A_k)}}{\lambda!}(B_{\lambda}(S_1(i\partial),S_2(i\partial),\dots,S_{\lambda}(i\partial))H_{P_{A_k}})(A_k^{\top}y)
\end{eqnarray*}
for all $y\in\mathbb{R}^d$ where, by an abuse of notation, we have written $P_{A_k}(\xi)=P_k(A_k\xi)$ instead of $(P_k)_{A_k}$. Now, thanks to Proposition \ref{prop:LFCompare}, we have $R_{k}^{\#}(\cdot)=R_{A_k}^{\#}(A_{k}^{\top}\cdot)$ and therefore
\begin{equation*}
    R_k^{\#}(y_{n,k}(x))=R_{A_k}^{\#}(A_k^{\top}y_{n,k}(x))=R_{A_k}^{\#}(w_{n,k}(x))
\end{equation*}
where
\begin{equation*}
    w_{n,k}(x)=A_k^{\top}y_{n,k}(x)=(n^{-E_k}A_k)^{\top}(x-n\alpha_k)=(A_kn^{-D_k})^{\top}(x-n\alpha_k)=n^{-D_k}(A_k^{\top}(x-n\alpha_k))
\end{equation*}
for $D_k=\diag(1/2m_{1,k},1/2m_{2,k},\dots,1/2m_{d,k})$. Employing \eqref{eq:LFAsympt}, we obtain
\begin{equation*}
    \exp\left(-M_kR_k^{\#}(y_{n,k})\right)=\exp\left(-M_kR_{A_k}^{\#}(w_{n,k})\right)\leq\exp\left(-M_k\sum_{j=1}^d\abs{w_{n,k}(x)_j}^{2m_{j,k}/(2m_{j,k}-1)}\right)
\end{equation*}
where we have (if necessary) adjusted the constant $M_k$. In these terms, \eqref{eq:LLT} (and \eqref{eq:LLTExplicitTime}) is
\begin{eqnarray}\label{eq:LLTSemiElliptic}\nonumber
   \lefteqn{\hspace{-4.5cm} \abs{\phi^{(n)}(x)-\sum_{k=1}^K\sum_{\lambda=0}^{\lambda_k}\frac{\abs{\det(A_k)}e^{-ix\cdot\xi_k}\widehat{\phi}(\xi_k)^n}{\lambda! \,n^{\mu_k+\lambda/2m_k}}(B_{\lambda}(S_{1,k}(i\partial),S_{2,k}(i\partial),\dots,S_{\lambda,k}(i\partial))H_{P_{A_k}})(w_{n,k}(x))}}\\\nonumber
   \hspace{4cm}&&\leq\sum_{k=1}^K\frac{C_k}{n^{\mu_k+(1+\lambda_k)/2m_k}}\exp(-M_kR_{A_k}^{\#}(w_{n,k}(x)))\\
   \hspace{4cm}&&\leq\sum_{k=1}^K\frac{C_k}{n^{\mu_k+(1+\lambda_k)/2m_k}}\exp(-M_k\sum_{j=1}^d\abs{w_{n,k}(x)_j}^{2m_{k,j}/(2m_{k,j}-1)}).
\end{eqnarray}
With \eqref{eq:LLTSemiElliptic}, let's compare our results with the $1$-dimensional results of J.-F. Coulombel and G. Faye in \cite{CF24}. First, as was pointed out in Remark 1.5 of \cite{RSC17}, when $d=1$, every positive homogeneous polynomial is of the form $P(\xi)=\beta\xi^{2m}$ where $\Re(\beta)>0$ and $m$ is a positive natural number. Thus, given $\phi\in\mathcal{H}_1^*$, for $\xi_k\in\Omega(\phi)$ to be of positive-homogeneous type for $\widehat{\phi}$, we must have
\begin{equation*}
    P_k(\xi)=\beta_k\xi^{2m_k}
\end{equation*}
where $\Re(\beta_k)>0$, $m_k\in\mathbb{N}_+$ and, according to our notation, $\mathbf{m}_k=m_k$, $\kappa_k=1$, $A_k=I$, $\mu_k=1/2m_k$,  and $\Lambda_k(\beta)=\beta-2m_k$. Comparing with \cite{CF24}, our holomorphic error term is
\begin{equation*}
    \Upsilon_k(z)=\sum_{\nu\geq 2m_k+1}\frac{\gamma_{k,\nu}}{\nu!}(iz)^\nu=\sum_{\lambda=1}^\infty \frac{1}{\lambda!}S_{\lambda,k}(z)
\end{equation*}
so that
\begin{equation*}
    S_{\lambda,k}(z)=\frac{\lambda!\,\gamma_{k,2m_k+\lambda }}{(2m_k+\lambda)!}(iz)^{2m_k+\lambda}
\end{equation*}
for each $\lambda$. Thus, in view of Equations (5) of \cite{CF24} and \eqref{eq:BellGenerating} above,
\begin{eqnarray*}
    \lefteqn{1+\sum_{\lambda=1}^\infty P_{k,\lambda}(Y)Z^\lambda=\exp\left(\sum_{\nu\geq 1}\frac{\gamma_{k,2m_k+\nu}}{(2m_k+\nu)!}Y^{2m_k+\nu}Z^\nu\right)}\\
    &&\hspace{2cm}=\exp\left(\sum_{\lambda=1}^\infty \frac{S_{\lambda,k}(-iY)}{\lambda!}Z^\lambda\right)=1+\sum_{\lambda=1}^\infty B_{\lambda}(S_{1,k}(-iY),S_{2,k}(-iY),\dots,S_{\lambda,k}(-iY))\frac{Z^\lambda}{\lambda!}.
\end{eqnarray*}
Therefore, with $\partial=\partial_1=d/dx$ and using the notation of \cite{CF24} (and ours here), we have
\begin{equation*}
    P_{k,\lambda}(-d/dx)=\frac{B_\lambda(S_{1,k}(i\partial),S_{2,k}(i\partial),\dots,S_{\lambda,k}(i\partial))}{\lambda!}=Q_{\lambda,k}
\end{equation*}
since $A_k^{-1}=I=1$. Upon noting that $\mu_k+\lambda/2m_k=(1+\lambda)/2m_k$ and $w_{k,j}(x)=(x-n\alpha_k)/n^{1/2m_k}$, \eqref{eq:LLTSemiElliptic} is
\begin{eqnarray*}
    \lefteqn{\abs{\phi^{(n)}(x)-\sum_{k=1}^K\sum_{\lambda=0}^{\lambda_k}\frac{e^{-ix\cdot\xi_k}\widehat{\phi}(\xi_k)^n}{n^{(1+\lambda)/2m_k}}(P_{k,\lambda}(-d/dx)H_{P_k})\left(\frac{x-n\alpha_k}{n^{1/2m_k}}\right)}}\\
    &&\hspace{5cm}\leq \sum_{k=1}^K\frac{C_k}{n^{(2+\lambda_k)/2m_k}}\exp\left(-M_k\abs{\frac{x-n\alpha_k}{n^{1/2m_k}}}^{2m_k/(2m_k-1)}\right)
\end{eqnarray*}
for $n\in\mathbb{N}_+$ and $x\in\mathbb{Z}$ with $\abs{x}\leq nL$ (or unrestricted provided $\phi$ is finitely supported). In the special case that the order of accuracy at each point is chosen to be the same, i.e., $\lambda_1=\lambda_2=\cdots=\lambda_K=M$, we recapture Theorem 4 of \cite{CF24}. \\

\noindent We will conclude this subsection with a remark and a corollary. The remark allows us to distinguish attractors in local limit theorems relative to ``minimal decay". The corollary describes the speed of convergence of convolution powers to attractors and has applications to numerical difference schemes. For simplicity, let's first set some notation that will be used in our examples throughout this article.
\begin{notation}[Real Error]\label{not:RealError}
    Let $\phi\in\mathcal{H}_d^*$ satisfy the hypotheses of Theorem \ref{thm:LLT} and assume the notation of the theorem. For a choice of orders of accuracy $\lambda_1,\lambda_2,\dots,\lambda_K\in\mathbb{N}$, set
    \begin{equation*}
    \mathcal{R}_{\lambda_1,\lambda_2,\dots,\lambda_K}^n(x)=\phi^{(n)}(x)-\sum_{k=1}^K\sum_{\lambda=0}^{\lambda_k}e^{-ix\cdot\xi_k}\widehat{\phi}(\xi_k)^n (Q_{\lambda,k}^nH_{P_k}^n)(x-n\alpha_k)
    \end{equation*}
    for $n\in\mathbb{N}_+$ and $x\in\mathbb{Z}^d$. In view of the calculations we used to obtain \eqref{eq:LLTExplicitTime} and \eqref{eq:LLTSemiElliptic}, this expression has several equivalent formulations. In the special case that $\lambda_1=\lambda_2=\cdots\lambda_K=\lambda$, we will write $\mathcal{R}_\lambda^n$ in place of $\mathcal{R}_{\lambda,\lambda,\dots,\lambda}^n$. 
\end{notation}

\begin{remark}[Paying Attention to Minimal Decay]\label{rmk:refRn}
Let us consider $\phi$  with finite support and satisfying the hypotheses of Theorem \ref{thm:LLT}. From the theorem (in particular, \eqref{eq:LLTExplicitTime}), for $\lambda_1,\lambda_2,\dots,\lambda_K\in\mathbb{N}$, we obtain positive constants $C_k$ and $M_k$, $k=1,\ldots, K$, for which the estimate 
\begin{equation}\label{eq:In-Est-RLn}
\begin{aligned}
\vert \mathcal{R}_{\lambda_1,\ldots,\lambda_K}^{n}(x)\vert &\leq \sum_{k=1}^{K}\frac{C_k}{n^{\mu_k+(\lambda_k+1)/2m_k}}\exp\left( -M_k R_{k}^{\#}(y_{n,k}(x)) \right) \\ & \leq \frac{C}{n^{\nu}}\sum_{k=1}^{K}\exp\left( -M_k R_{k}^{\#}(y_{n,k}(x)) \right),
\end{aligned}
\end{equation}
holds for all $n\in \mathbb{N}_+$ and $x\in\mathbb{Z}^d$ where $y_{n,k}(x)=n^{-E_k}(x-n\alpha_k)$ and $\nu$ is the minimal decay of error given by
\begin{equation*}
    \nu =\min_{k=1,2,\dots,K}\{ \mu_k + (\lambda_{k}+ 1)/2m_{k} \}
\end{equation*}
and we have set $C=\max_k C_k $. With these constants identified, our goal in this remark is to identify those terms in the attractor (appearing in $\mathcal{R}_{\lambda_1,\lambda_2,\dots,\lambda_K}^n$) which decay on the order of the error and thus can be disregarded.\\

\noindent To this end, we first observe the following: Along the same lines as the proof of iii) of Proposition 2.6 in \cite{RSC17}, using the fact that $e^{-P_{k}}$ is a Schwartz function, one can easily show that, for $k=1,\ldots,K$ and any $\lambda\in \mathbb{N}$, there holds the estimate
\begin{eqnarray}\label{eq:CumulantBounds}\nonumber
    \abs{(Q_{\lambda,k}H_{P_k})(y)}&=&\frac{1}{\lambda! (2\pi)^d}\abs{\int_{\mathbb{R}^d}B_{\lambda}(S_{1,k}(A^{-1}\xi),S_{2,k}(A^{-1}\xi),\dots,S_{\lambda,k}(A^{-1}\xi))e^{-P_k(\xi)}e^{-i y\cdot\xi}\,d\xi}\\
    &\leq & C_k'\exp(-M_k' R^{\#}_k(y))
\end{eqnarray}
for all $y\in\mathbb{R}^d$ where $C_k'$ and $M_k'$ are positive constants.  Setting
\begin{equation*}
    U_\nu=\{k=1,2,\dots,K:\mu_k<\nu\}\hspace{1cm}\mbox{and}\hspace{1cm}V_\nu=\{k=1,2,\dots,K:\mu_k\geq\nu\}
\end{equation*}
so that $\{1,2,\dots,K\}=U_{\nu}\cup V_\nu$, we focus our attention on the attractors corresponding to the $k$s in these two sets.

For $k\in U_{\nu}$, let $\underline{\lambda_k}\in\{0,1,\dots,\lambda_k\}$ be the maximal accuracy (relative to $\nu$) having
\begin{equation*}
    \mu_k+\underline{\lambda}_k/2m_k<\nu\hspace{1cm}\mbox{but}\hspace{1cm}\nu\leq \mu_k+(\underline{\lambda}_k+1)/2m_k,
\end{equation*}
we notice that the attractor terms
\begin{equation*}
    (Q_{\lambda,k}^nH_{P_k}^n)(x-n\alpha_k)=\frac{1}{ n^{\mu_k+\lambda/2m_k}}(Q_{\lambda,k}H_{P_k})(y_{n,k})
\end{equation*}
for $\underline{\lambda_k}<\lambda\leq\lambda_k$ will decay at least as fast as the error in \eqref{eq:In-Est-RLn} and so can be disregarded. For $k\in V_{\nu}$, the estimate \eqref{eq:CumulantBounds} shows that every term $Q_{\lambda,k}^n H_{P_k}$ for $\lambda=0,1,\dots,\lambda_k$ decays at least as fast as the error in \eqref{eq:In-Est-RLn} and so every attractor corresponding to $k\in V_\nu$ can be disregarded.

With these two observations in mind, if we consider the refinement
\begin{equation*}
    \widetilde{\mathcal{R}}_{\lambda_1,\lambda_2,\dots,\lambda_K}^n(x)=\phi^{(n)}(x)-\sum_{k\in U_{\nu}}\sum_{\lambda=0}^{\underline{\lambda_k}}e^{-ix\cdot\xi_k}\widehat{\phi}(\xi_k)^n (Q_{\lambda,k}^n H_{P_k}^n)(x-n\alpha_k),
\end{equation*}
then we still have
\begin{equation*}
\abs{\widetilde{\mathcal{R}}_{\lambda_1,\lambda_2,\dots,\lambda_K}^n(x)}\leq \frac{C}{n^{\nu}}\sum_{k=1}^K\exp(-M_kR_k^{\#}(y_{n,k}(x)))
\end{equation*}
for every $n\in\mathbb{N}_+$ and $x\in\mathbb{Z}^d$. Here, by ``refinement", we mean that in the initial expression for $\mathcal{R}_{\lambda_k,\ldots,\lambda_K}^{n}(x)$ we can neglect the terms that decay at the same rate or faster than $n^{-\nu}$.

As a final note, for the special case that $\lambda_1=\lambda_2=\cdots\lambda_K=0$, we have
\begin{equation*}
    \nu>\mu_\phi=\min_k\mu_k. 
\end{equation*}
In this case, the above estimate yields, in particular,
\begin{equation*}
   \widetilde{\mathcal{R}}_{0}^n(x)=\phi^{(n)}(x)-\sum_{k\in U_\nu}e^{-ix\cdot\xi_k}\widehat{\phi}(\xi_k)^n H_{P_k}^n(x-n\alpha_k)= O(n^{-\nu})=o(n^{-\mu_\phi})
\end{equation*}
uniformly for $x\in\mathbb{Z}^d$ as $n\to\infty$. This is precisely how the attractors in the local limit theorem \cite[Theorem 1.6]{RSC17} were identified (i.e., paying only attention to those $k\in U_\nu$). Relevant to this is Example 7.4 of \cite{RSC17} which is revisited in Subsection \ref{ssec:NonMinDecay} below.\\
\end{remark}

\noindent Our final corollary gives quantitative bounds for the speed of convergence of difference schemes in $\ell^p(\mathbb{Z}^d)$ ($1\leq p\leq\infty)$ which we take equipped with its usual norm $\|\cdot\|_p$. In the case that $d=1$, $p=q$, and $\lambda_1=\lambda_2=\cdots=\lambda_K=M$, it recaptures Corollary 1.5 of \cite{CF24}.
\begin{corollary}\label{cor:EllpqEst}
    Let $\phi\in\mathcal{H}_d^*$ and assume that every point of $\Omega(\phi)$ is of positive homogeneous type for $\widehat{\phi}$. We assume the notation of Theorem \ref{thm:LLT}. Given $1\leq p\leq q\leq \infty$ and orders of accuracy $\lambda_1,\lambda_2,\dots,\lambda_K\in\mathbb{N}$, set
    \begin{equation*}
        \gamma=\min_k\left\{\mu_k\left(\frac{1}{p}-\frac{1}{q}\right)+\frac{\lambda_k+1}{2m_k}\right\}.
    \end{equation*}
    Then, there is a uniform constant $C=C_{p,q,\lambda_1,\lambda_2,\dots,\lambda_K}>0$ for which 
    \begin{equation*}
        \left\|\mathcal{R}_{\lambda_1,\lambda_1,\dots,\lambda_K}^n\ast u\right\|_{q}\leq \frac{C}{n^{\gamma}}\|u\|_{p}
    \end{equation*}
    for all $n\in\mathbb{N}_+$ and $u\in\ell^p(\mathbb{Z}^d)$. 
\end{corollary}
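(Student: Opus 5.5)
The plan is to use Young's inequality for convolutions on $\mathbb{Z}^d$: with $1+1/q=1/p+1/r$, i.e. $1/r=1-(1/p-1/q)$, we have $\|\mathcal{R}^n\ast u\|_q\leq \|\mathcal{R}^n\|_r\|u\|_p$. So it suffices to prove $\|\mathcal{R}^n_{\lambda_1,\dots,\lambda_K}\|_r\leq C n^{-\gamma}$ for all $n$. By the triangle inequality we may treat each $k$ separately, and then the bound $\sum_k n^{-\gamma_k}\leq Kn^{-\gamma}$ gives the statement.

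The main input is the Gaussian-type error of Theorem \ref{thm:LLT}. If $\phi$ were finitely supported, \eqref{eq:LLTExplicitTime} would hold for all $x$, giving
\begin{equation*}
\abs{\mathcal{R}^n(x)}\leq \sum_k \frac{C_k}{n^{\mu_k+(\lambda_k+1)/2m_k}}\exp\left(-M_kR_k^{\#}(y_{n,k}(x))\right).
\end{equation*}
The key lattice-sum estimate is
\begin{equation*}
\sum_{x\in\mathbb{Z}^d}\exp\left(-rM_kR_k^{\#}(n^{-E_k^\top}(x-n\alpha_k))\right)\leq C n^{\mu_k}.
\end{equation*}
To prove it, use Proposition \ref{prop:LFCompare} to pass to $w=n^{-D_k}A_k^\top(x-n\alpha_k)$ and bound $R^{\#}_{A_k}(w)$ below by $c\sum_j\abs{w_j}^{2m_{j,k}/(2m_{j,k}-1)}$. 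Then compare the sum over the lattice $A_k^\top\mathbb{Z}^d$ shifted by $n\alpha_k$ with an integral. Since the lattice is not aligned with the axes, I would bound the sum by a Riemann-type argument: the function is radially decreasing in each anisotropic variable, and each lattice point has a unit-size cell around it. The result is at most $C(1+\abs{\det n^{D_k}})=C(1+n^{\mu_k})\leq 2Cn^{\mu_k}$, using $\tr D_k=\mu_k$. Hence for $r<\infty$,
\begin{equation*}
\|\mathcal{R}^n\|_r\leq \sum_k C n^{-\mu_k-(\lambda_k+1)/2m_k}n^{\mu_k/r},
\end{equation*}
and the exponent is $\mu_k(1-1/r)+(\lambda_k+1)/2m_k=\mu_k(1/p-1/q)+(\lambda_k+1)/2m_k$, as required. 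The case $r=\infty$ (i.e. $p=1,q=\infty$) is just the pointwise bound.

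For general $\phi\in\mathcal{H}_d^*$, the estimate \eqref{eq:LLT} holds only for $\abs{x}\leq Ln$. I expect this region $\abs{x}>Ln$ to be the main obstacle. There I would bound the two pieces of $\mathcal{R}^n$ separately.

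1. For $\phi^{(n)}$ itself, take $L$ as in Theorem \ref{thm:GeneralGaussEstimate}, so that $\abs{\phi^{(n)}(x)}\leq e^{-M_0(n+\abs{x})}$ when $\abs{x}>Ln$. Its $\ell^r$ norm over this region is $O(e^{-cn})$.
2. For the attractor terms, use $(Q^n_{\lambda,k}H^n_{P_k})(x-n\alpha_k)=n^{-\mu_k-\lambda/2m_k}(Q_{\lambda,k}H_{P_k})(y_{n,k}(x))$ together with \eqref{eq:CumulantBounds}. This bounds them by $Cn^{-\mu_k}\exp(-M'R_k^{\#}(y_{n,k}))$.
3. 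Choose $L$ larger than $2\max_k\abs{\alpha_k}$. Then for $\abs{x}>Ln$ we have $\abs{x-n\alpha_k}\geq \abs{x}/2$. By homogeneity and the comparison in Proposition \ref{prop:LFCompare}, $nR_k^{\#}((x-n\alpha_k)/n)\geq c\,n\min_j\abs{(x-n\alpha_k)/n}^{\rho_j}$ with $\rho_j>1$, which is at least $c'\abs{x}$ once $\abs{x}/n\geq L$ is bounded below.
4. So the attractor terms are at most $Cn^{-\mu_k}e^{-c(n+\abs{x})/2}$ on this region, and their $\ell^r$ norm there is also $O(e^{-cn})$.

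All of these contributions beyond the main region are $O(n^{-\gamma})$. Combining them with the main-region bound completes the plan.
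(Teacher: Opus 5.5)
Your proposal is correct and has the same architecture as the paper's proof. Both use Young's inequality with $1+1/q=1/p+1/r$, reduce to $\|\mathcal{R}^n\|_r\le Cn^{-\gamma}$ via the Gaussian majorant of Theorem \ref{thm:LLT}, rely on the uniform bound $\sum_{x}\exp(-rM_kR_k^{\#}(y_{n,k}(x)))\le Cn^{\mu_k}$, and show exponential smallness on $\abs{x}>Ln$.

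The real difference is how you prove that lattice-sum bound. The paper takes a layer-cake route:
\begin{itemize}
\item it slices $\mathbb{Z}^d$ into level sets $B_{n,l}$ of $R_{A}^{\#}(w_n(x))$ and encloses each in the parallelepiped $\mathcal{P}_{n,l}=n\alpha+A^{-\top}n^{D}\prod_j[-c_j(l+1),c_j(l+1)]$;
\item it counts lattice points with Widmer's (Blichfeldt-type) bound $\#(\mathbb{Z}^d\cap\mathcal{P}_{n,l})\le C\operatorname{Vol}(\mathcal{P}_{n,l})=C(l+1)^dn^{\mu}$, which it invokes only for $n$ large enough that $\mathcal{P}_{n,0}$ contains a unit ball;
\item it then sums this against $e^{-lrM}$.
\end{itemize}

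You instead compare the sum directly with an integral over unit cells and change variables $w=n^{-D}A^{\top}(v-n\alpha)$, which produces the Jacobian $n^{\mu}/\abs{\det A}$. This is self-contained, avoids the external lattice-point result, and holds for all $n\ge 1$.

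When you write it out, make explicit why the cell comparison is uniform in $n$. Since $n^{-D}$ is a contraction for $n\ge1$, moving $x$ inside its cell $x+[-1/2,1/2]^d$ changes each $w_j$ by at most a constant $C$ independent of $n$. Hence $\exp(-c\sum_j\abs{w_j}^{\rho_j})\le\exp(-c\sum_j(\abs{w_j'}-C)_+^{\rho_j})$ for every $w'$ in the image of the cell, and the right-hand side has an $n$-independent finite integral in $w$. This is exactly how your "decreasing in each anisotropic variable" and "unit-size cell" combine; without it, a skewed cell does not sit coordinatewise closer to the origin.

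Your treatment of $\abs{x}>Ln$ also supplies the details the paper leaves to the reader. You enlarge $L$ beyond both the constant of Theorem \ref{thm:GeneralGaussEstimate} and $2\max_k\abs{\alpha_k}$, which is legitimate since Theorem \ref{thm:LLT} holds for every $L$. You then bound the attractors with \eqref{eq:CumulantBounds} and the at-least-linear growth of $R_k^{\#}$ away from the origin.
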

\begin{proof}
We shall first prove the corollary in the special case that $\phi$ is finitely supported. Writing $\mathcal{R}^n=\mathcal{R}^n_{\lambda_1,\lambda_2,\dots,\lambda_K}$, by virtue of Young's inequality for convolution, it suffices to prove that
\begin{equation*}
\|\mathcal{R}^n\|_{r}\leq\frac{C}{n^{\gamma}}
\end{equation*}
for a uniform constant $C$ where $1-1/r=1/p-1/q$. If $r=\infty$, $1/p-1/q=1$ and so the desired estimate follows immediately from \eqref{eq:LLTExplicitTime}. In the case that $r<\infty$, the estimate \eqref{eq:LLTSemiElliptic} and the $\ell^r(\mathbb{Z}^d)$ triangle inequality guarantee that
\begin{eqnarray*}
    \|\mathcal{R}^n\|_r&\leq &\sum_{k=1}^{K} \left( \sum_{x\in \mathbb{Z}^d} \frac{C_k^r}{n^{r\mu_k+r(\lambda_k+1)/2m_k}} \exp\left(-rM_k R_{A_k}^{\#}(w_{n,k}(x) \right) \right)^{1/r} \\ &=& \sum_{k=1}^{K} \left( \frac{C_k^r}{n^{(r-1)\mu_k+r(\lambda_k+1)/2m_k}} \sum_{x\in \mathbb{Z}^d} \frac{1}{n^{\mu_k}}\exp\left(-rM_k R_{A_k}^{\#}(w_{n,k}(x) \right) \right)^{1/r} \\ &\leq& \frac{C}{n^{\gamma}}\sum_{k=1}^{K} \| n^{-\mu_k} \exp\left(-rM_k R_{A_k}^{\#}(w_{n,k}(x) \right)\|_{1}^{1/r}
\end{eqnarray*}
for $n\in\mathbb{N}_+$ where $C$ is some positive constant. Let us assume, momentarily, that for each $k=1,\dots,K$, there is a constant $T_k$ for which
\begin{equation}\label{eq:UniformEll1Bound}
    \left\|n^{-\mu_k}\exp(-rM_k R_{A_k}^{\#}(w_{n,k}(\cdot)))\right\|_{1}\leq T_k
\end{equation}
for every $n\in\mathbb{N}_+$, i.e., these majorants are uniformly bounded in $\ell^1(\mathbb{Z}^d)$. Then, we have
\begin{equation*}
    \left\|\mathcal{R}^n\right\|_{r}\leq \frac{C}{n^\gamma}\sum_{k=1}^K T_k^{1/r}
\end{equation*}
for every $n\in\mathbb{N}_+$ which is the desired estimate.

Thus, to complete the proof for this finitely-supported case, it remains to prove the uniform estimate \eqref{eq:UniformEll1Bound}. To this end let's suppress the $k$ dependence and observe that
\begin{eqnarray*}
    \left\|n^{-\mu}\exp(-rM R_{A}^{\#}(w_{n}(\cdot)))\right\|_{1}&=&\sum_{x\in\mathbb{Z}^d}n^{-\mu}\exp(-r M_k R_{A}^{\#}(w_n(x)))\\
    &\leq &\sum_{l=0}^\infty n^{-\mu}\#(\{x\in\mathbb{Z}^d:l\leq R_{A}^{\#}(w_n(x))< l+1\}) e^{-l r M}\\
    &\leq &\sum_{l=0}^\infty \#(B_{n,l})\, n^{-\mu} e^{-lrM}
\end{eqnarray*}
where
\begin{equation*}
    B_{n,l}=\{x\in\mathbb{Z}^d:R_A^{\#}(w_n(x))\leq l+1\}=\{x\in\mathbb{Z}^d:R_{A}^{\#}(n^{-D}A^{\top}(x-n\alpha))\leq l+1\}
\end{equation*}
and, by a slight abuse of notation, we have denoted by $\# (B)$ the number of elements in $B$. By an appeal to Proposition \ref{prop:LFCompare}, we can find positive constants $c_1,c_2,\dots,c_d\geq 1$ such that
\begin{eqnarray*}
    B_{n,l}&\subseteq& \left\{x\in\mathbb{Z}^d: \abs{n^{-1/2m_j}(A^{\top}(x-n\alpha))_j}^{2m_j/(2m_j-1)}\leq c_j (l+1) \,\,\,\forall \,j=1,2,\dots,d\right\}\\
    &&\hspace{2.5cm}\subseteq\left\{x\in\mathbb{Z}^d:\abs{(A^{\top}(x-n\alpha))_j}\leq n^{1/2m_j}c_j (l+1)\,\,\,\forall j=1,2,\dots,d\right\}=\mathbb{Z}^d\cap \mathcal{P}_{n,l}
\end{eqnarray*}
where $\mathcal{P}_{n,l}$ is the parallelepiped
\begin{equation*}
    \mathcal{P}_{n,l}=n\alpha+A^{-\top}n^D\left(\prod_{j=1}^d[-c_j(l+1),c_j(l+1)]\right)\subseteq\mathbb{R}^d.
\end{equation*}
Now, we would like to bound the number of lattice points in $\mathcal{P}_{n,l}$ by the volume/measure of $\mathcal{P}_{n,l}$. Thanks to a result by M. Widmer\footnote{As Widmer notes and M. Henk and J. M. Wills explain in \cite{HW08}, this result (or one closely related) was originally known to H. F. Blichfeldt and presented at the 1921 April 9th meeting of the American Mathematical Society in San Francisco\cite{AMSMeeting}. At the same meeting, Blichfeldt's younger colleague, E. T. Bell, presented several papers foreshadowing Bell's later study of ``exponential polynomials"\cite{BellAhar,BellExpPoly}.} (Corollary 2.10 of \cite{Wid12}), we are able to obtain such a bound after noting that, because $A\in\GldR$ and $n^D$ is a dilation, $\mathcal{P}_{n,0}\subseteq\mathcal{P}_{n,l}$ must contain a shifted (by $n\alpha$) Euclidean ball of radius one for all sufficiently large $n$. Consequently, for sufficiently large $n$ and all $l\in\mathbb{N}$, $\mathcal{P}_{n,l}\cap \mathbb{Z}^d$ cannot be contained in a proper affine subspace of $\mathbb{R}^d$. An appeal to \cite[Corollary 2.10]{Wid12} gives
\begin{equation*}
    \#(\mathbb{Z}^d\cap \mathcal{P}_{n,l})\leq C\operatorname{Vol}(\mathcal{P}_{n,l})\leq C(l+1)^d \det(n^D)=C(l+1)^d n^{\mu}
\end{equation*}
for sufficiently large $n$ where $C>0$. Consequently,
\begin{equation*}
    \sum_{l=0}^\infty\#(B_{n,l})e^{-\mu}e^{-lrM}\leq\sum_{l=0}^\infty C(l+q)^d n^\mu n^{-\mu} e^{-lrM}=\sum_{l=0}^\infty C(1+l)^d e^{-lrM}=T<\infty
\end{equation*}
for all sufficiently large $n$ and so we conclude that $n^{-\mu}\exp(-r M R_{A}^{\#}(w_n(x)))$ is uniformly bounded in $\ell^1(\mathbb{Z}^d)$ and the proof in the case where $\phi$ has finite support is complete.

In the general case that $\phi\in\mathcal{H}_d^*$ satisfies the hypotheses of the corollary but is not finitely supported, we take $L$ as guaranteed by Theorem \ref{thm:GeneralGaussEstimate}.  In this case, one bounds $\|\mathcal{R}^n\|_r$ by three components. The first component is the Gaussian-type local limit bound for $\abs{x}\leq n L$ and is treated exactly as we have done above. The second two components are the $\ell^r(\mathbb{Z}^d)$ norms of $e^{-M_0(n+\abs{x})}$, coming from Theorem \ref{thm:GeneralGaussEstimate}, and the sum of attractors $\abs{(Q_{\lambda,k}^n H_{P_k}^n)(x-n\alpha_k)}$ which are all restricted to the domain $\abs{x}>nL$. When restricted to these domains, the norms of these functions can be easily seen to decay exponentially in $n$ (see Section 4.1 in \cite{CF24}). We leave the details for the interested reader.
\end{proof}

\subsection{Example of a Simple Real-Valued Function on $\mathbb{Z}^2$}\label{ssec:IntroExample}

Consider $\phi: \mathbb{Z}^2\to \mathbb{R}$ given by
\begin{equation*}
\phi(x,y)=\frac{1}{16}\times
\begin{cases}
4 & (x, y) = (0, \pm 1),(\pm 1, 0)\\
1 & (x, y) = (\pm 2, 0)\\
-1 & (x, y) = (0, \pm 2)\\
0 & \mbox{otherwise}
\end{cases}
\end{equation*}
for $(x,y)\in\mathbb{Z}^2$. Figure \ref{fig:Ex1ConvPower} illustrates $\phi^{(n)}(x, y)$ on $-50\leq x,y\leq 50$ for $n= 100$ and $n=1,000$.

\begin{table}[h!]
  \centering
  \begin{tabular}{  | c | c | }
    \hline
     $n=100$ & $n=1000$ \\ \hline

    \begin{minipage}{.48\textwidth}
      \includegraphics[width=\linewidth]{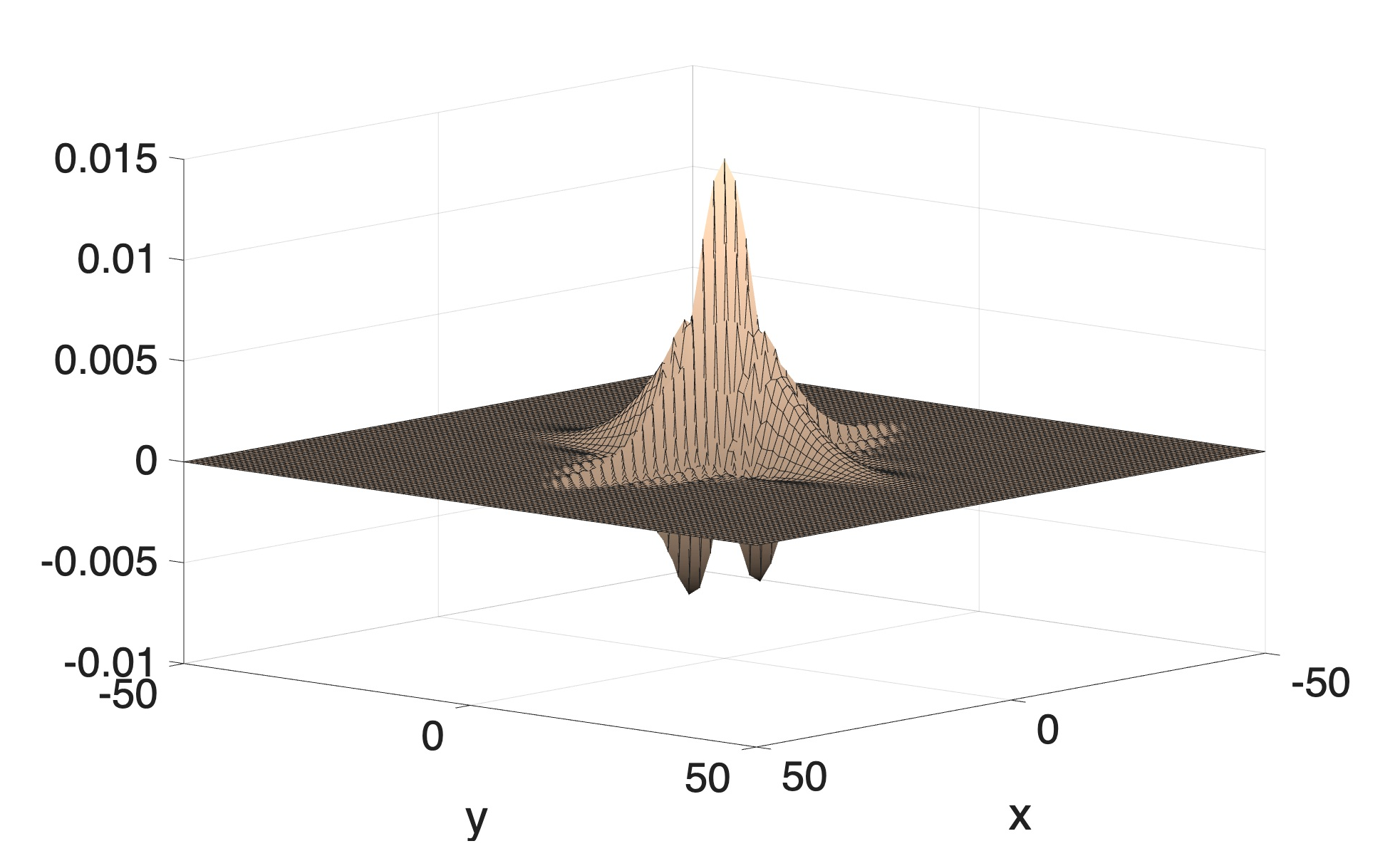}
    \end{minipage}
	&
      \begin{minipage}{.48\textwidth}
      \includegraphics[width=\linewidth]{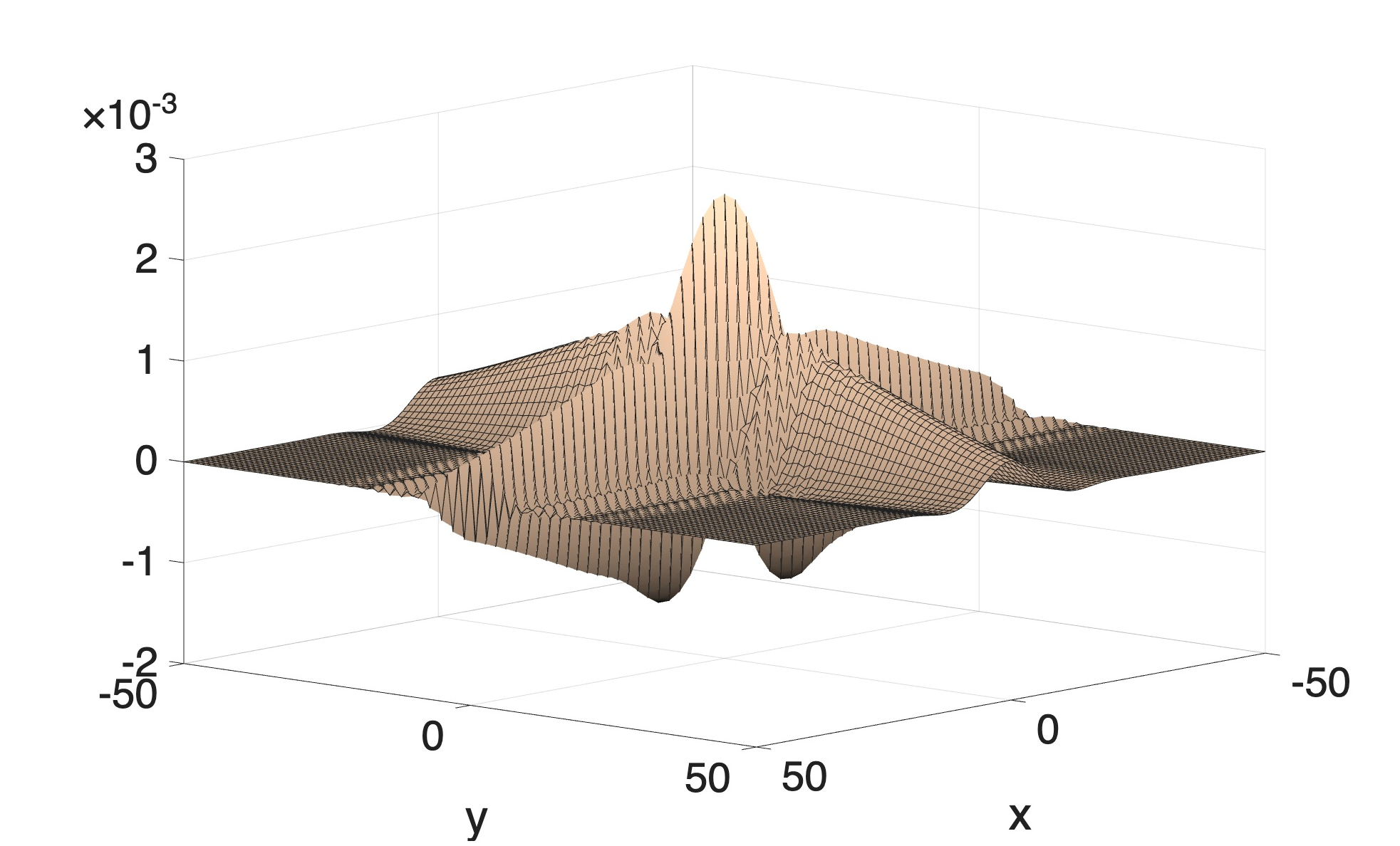}
    \end{minipage}\\ \hline
  \end{tabular}
  \captionof{figure}{The graphs of $\phi^{(n)}$ for $n=100$ and $n=1,000$ are displayed on the grid $-50\leq x,y\leq 50$.}\label{fig:Ex1ConvPower}
  \end{table}

\begin{figure}[h!]
\begin{center}
\includegraphics[width=.6\linewidth]{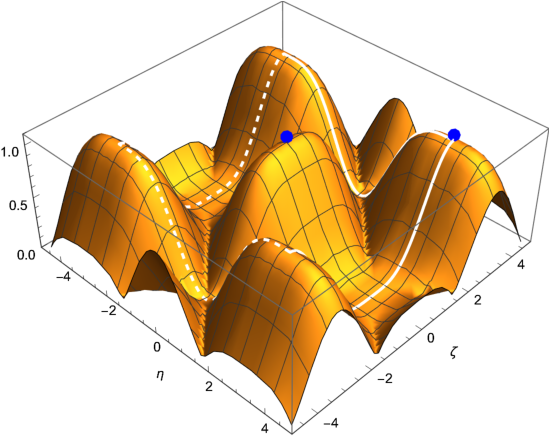}
    \caption{The graph of $|\widehat{\phi}(\eta,\zeta)|$ on the rectangle $[-3\pi/2, 3\pi/2]^2$. The white curve indicates $\partial\mathbb{T}^2$ with the solid portion of the curve indicating $\partial\mathbb{T}^2\cap\mathbb{T}^2$ and the dotted portion indicating $\partial\mathbb{T}^2\setminus\mathbb{T}^2$. The blue points sit above $\xi_1=(0,0)$ and $\xi_2=(\pi,\pi)$ in $\Omega(\phi)$.}
    \label{fig:Ex1FT_Plot}
    \end{center}
\end{figure}

\noindent A direct computation shows that 
\begin{equation*}
    \widehat{\phi}(\eta,\zeta)=\frac{1}{2}\cos(\eta)+\frac{1}{2}\cos(\zeta)+\frac{1}{8}\cos(2\eta)-\frac{1}{8}\cos(2\zeta)
\end{equation*}
for $(\eta,\zeta)\in\mathbb{R}^2$. Figure \ref{fig:Ex1FT_Plot} illustrates $(\eta,\zeta)\mapsto |\widehat{\phi}(\eta,\zeta)|$ on the domain $(-3\pi/2,3\pi/2]^2\supseteq \mathbb{T}^2$. One can easily verify that $\sup_{(\eta,\zeta)}|\widehat{\phi}(\eta,\zeta)|=1$ and $\Omega(\phi)=\{\xi_1,\xi_2\}\subseteq\mathbb{T}^2$ where $\xi_1=(0,0)$ with $\widehat{\phi}(\xi_1)=1$ and $\xi_2=(\pi,\pi)$ with $\widehat{\phi}(\xi_2)=-1$. Since $\phi$ is finitely supported, we have $\phi\in\mathcal{H}_d^*$ and so we may consider the analytic expansions for $\Gamma_1=\Gamma_{\xi_1}$ and $\Gamma_2=\Gamma_{\xi_2}$ about $(0,0)$.  A close look at Figure \ref{fig:Ex1FT_Plot} shows that the decay of $\widehat{\phi}$ away from these maxima  (and hence the nature of $\Gamma_1$ and $\Gamma_2$) are both anisotropic in nature but not identical.

For $\xi_1\in\Omega(\phi)$, we have 
\begin{eqnarray*}
\Gamma_1(\eta, \zeta)&=& -\frac{1}{2}\eta^2-\frac{1}{16}\zeta^4-\frac{1}{48}\eta^4-\frac{1}{32}\eta^2\zeta^4+\frac{1}{96}\zeta^6 -\frac{7}{2560}\zeta^{8} +O(\eta^6)+O(\eta^4\zeta^2)+O(\eta^2\zeta^6)+O(\zeta^{10})\\
&=& -P_1(\eta, \zeta) + \Upsilon_1(\eta, \zeta)
\end{eqnarray*}
where
\begin{equation*}P_1(\eta,\zeta)=R_{1}(\eta, \zeta)=\eta^2/2+\zeta^4/16,
\end{equation*} which is a positive semi-elliptic polynomial with $\mathbf{m}_1=(1,2)$, and $\Upsilon_1$ is given by

\begin{equation*}
 \Upsilon_{1}(\eta,\zeta)=\sum_{\abs{\beta:2\mathbf{m}_1}>1}b_{\beta,1}\xi^{\beta}
 =-\frac{1}{48}\eta^4-\frac{1}{32}\eta^2\zeta^4+ \frac{1}{96}\zeta^6- \frac{7}{2560}\zeta^8 +O(\eta^6+\eta^4\zeta^2+\eta^2\zeta^6+\zeta^{10})
\end{equation*}
 and consequently has $\Upsilon_{1}(\xi)=o(R_1(\xi))$ as $\xi \to 0$ in $\mathbb{R}^2$ (see \cite[Lemma A.5]{R23}). Hence $\xi_1=(0,0)$ is of positive-homogeneous type for $\widehat{\phi}$ with $\alpha_1=(0,0)$, indicating no drift. 
 
 Now, by completely analogous reasoning, we see that $\xi_2=(\pi,\pi)$ is also of positive-homogeneous type for $\widehat{\phi}$ with $\alpha_2=(0,0)$, with 
\begin{equation*}
\Gamma_{2}(\eta,\zeta)=-P_2(\eta,\zeta)+\Upsilon_{2}(\eta,\zeta),
\end{equation*}
where $P_2(\xi)=R_2(\xi)=\eta^4/16+\zeta^2/2$, $\xi=(\eta,\zeta)$,
is a positive semi-elliptic polynomial with $\mathbf{m}_2=(2,1)$, and $\Upsilon_{2}(\xi)=o(R_2(\xi))$ as $\xi\to 0 $ in $\mathbb{R}^2$ is given by
\begin{equation*}
\Upsilon_2(\xi)=\sum_{\abs{\beta:2\mathbf{m}_2}>1}b_{\beta,2}\xi^\beta=-\frac{1}{48}\zeta^4-\frac{1}{32}\eta^4\zeta^2+\frac{1}{96}\eta^6-\frac{7}{2560}\eta^{8}+O(\eta^{10}+\zeta^6+\eta^4\zeta^4+\eta^6\zeta^2)
\end{equation*}
for $\xi=(\eta,\zeta)$. It is readily computed that 
\begin{equation*}
    R_1^{\#}(x,y)=\frac{1}{2}x^2+\frac{3}{4^{2/3}}\abs{y}^{4/3}\hspace{1cm}\mbox{and}\hspace{1cm}R_2^{\#}(x,y)=\frac{3}{4^{2/3}}\abs{x}^{4/3}+\frac{1}{2}y^{2}
\end{equation*}
for $(x,y)\in\mathbb{R}^2$. Thus, an appeal to Theorem \ref{thm:FiniteSupport} gives us positive constants $C_1,C_2,M_1$, and $M_2$ for which
\begin{eqnarray}\label{eq:Ex1Bound}\nonumber
    \abs{\phi^{(n)}(x,y)}&\leq & \sum_{k=1}^2\frac{C_k}{n^{\mu_k}}\exp\left(-nM_k R_k^{\#}\left(\frac{(x,y)-n\alpha_k}{n}\right)\right)\\\nonumber
    &\leq& \frac{C}{n^{3/4}}\left[\exp\left(-nMR_1^{\#}\left(\frac{x}{n},\frac{y}{n}\right)\right)+\exp\left(-nMR_2^{\#}\left(\frac{x}{n},\frac{y}{n}\right)\right)\right]\\
    &=&\frac{C}{n^{3/4}}\left[\exp\left(-\frac{M}{2n}x^2-\frac{3M}{4^{2/3}n^{1/3}}\abs{y}^{4/3}\right)+\exp\left(-\frac{3M}{4^{2/3}n^{1/3}}\abs{x}^{4/3}-\frac{M}{2n}y^2\right)\right]
\end{eqnarray}
for all $n\in\mathbb{N}_+$ and $(x,y)\in\mathbb{Z}^2$; here $C=\max\{C_1,C_2\}$ and $M=\min\{M_1,M_2\}$. We remark that, as $P_1\neq P_2$, this example does not meet the hypotheses of Theorem 1.5 of \cite{RSC17} and so this Gaussian-type estimate cannot be obtained by the results there. This estimate, with $M=0.5$ and $C=0.3$, is illustrated in Figure \ref{fig:Ex1AbsBounds} for $n=100$ and $n=1,000$.

\begin{table}[h!]
  \centering
  \begin{tabular}{  | c | c | }
    \hline
     $n=100$ & $n=1000$ \\ \hline

    \begin{minipage}{.45\textwidth}
      \includegraphics[width=1.05\linewidth]{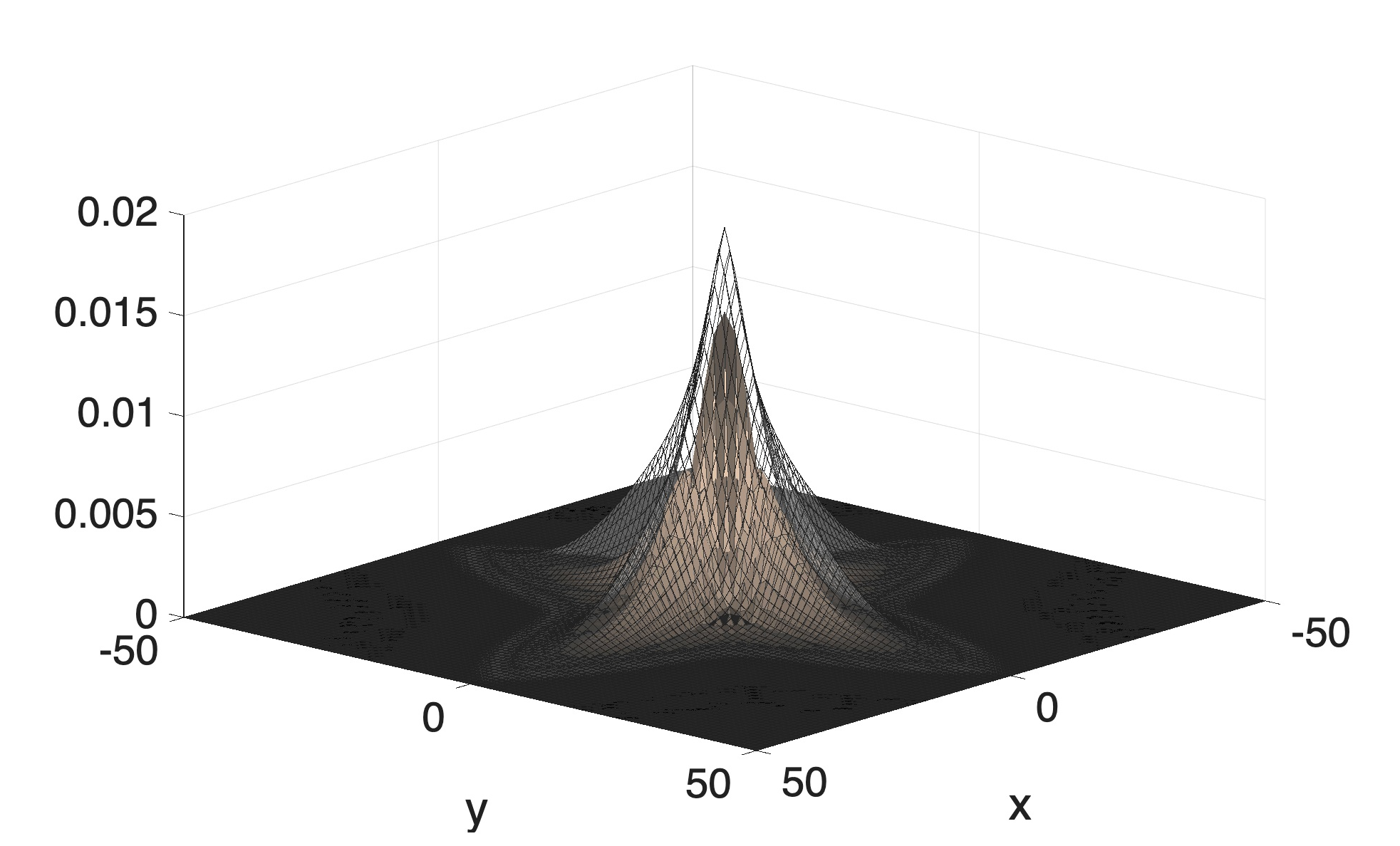}
    \end{minipage}
	&
      \begin{minipage}{.45\textwidth}
      \includegraphics[width=1.05\linewidth]{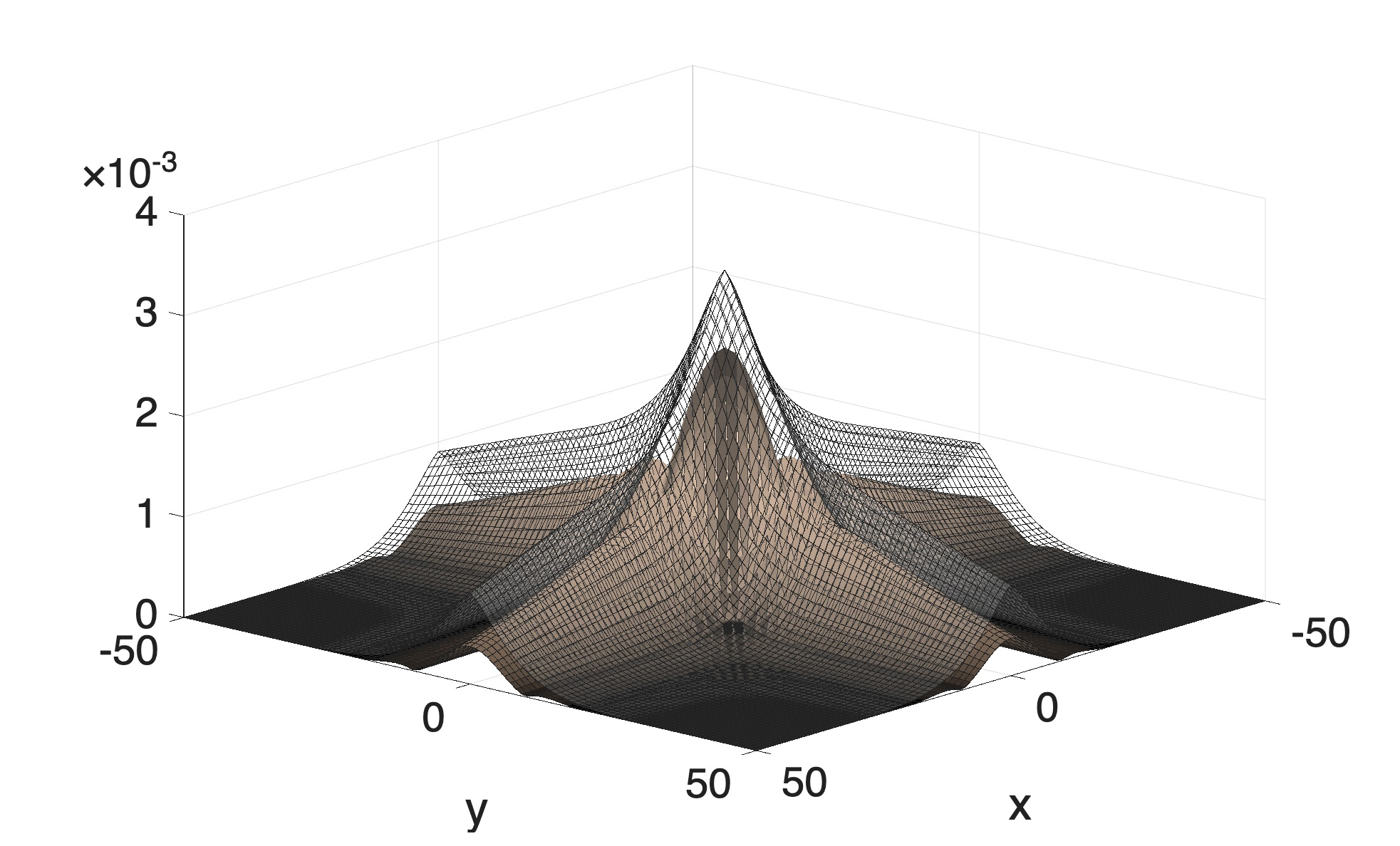}
    \end{minipage}\\ \hline
  \end{tabular}
  \captionof{figure}{On the grid $-50\leq x\leq 50$, $\abs{\phi^{(n)}}$ are illustrated by the solid light brown surfaces (for $n=100$ and $1,000$). The Gaussian-type error appearing on the right-hand side of \eqref{eq:Ex1Bound} is illustrated by the transparent ``nets" above.}\label{fig:Ex1AbsBounds}
\end{table}

Our next task is to state our local limit theorem with base attractors only. In other words, we aim to apply Corollary \ref{cor:LLTCor}. First, from our work above, $P_1(\cdot)$ and $P_2(\cdot)$ are semi-elliptic with $D_1=\diag(1/2,1/4)\in\Exp(P_1)$, $D_2=\diag(1/4,1/2)\in\Exp(P_2)$, and $A_k=I_2$ for $k=1,2$. Correspondingly, we have
\begin{equation*}
\begin{aligned}
    H_{P_1}(x,y)&=\frac{1}{(2\pi)^2}\int_{\mathbb{R}^2}e^{-P_1(\xi)}e^{-i(x,y)\cdot\xi}\,d\xi\\
    &=\frac{1}{(2\pi)^2}\int_{\mathbb{R}^2}e^{-(\eta^2/2+\zeta^4/16)}e^{-i(x\eta+y\zeta)}\,d\eta\,d\zeta
    =h_2^{1/2}(x)h_4^{1/16}(y)
\end{aligned}    
\end{equation*}
and
\begin{equation*}
\begin{aligned}
    H_{P_2}(x,y)&=\frac{1}{(2\pi)^2}\int_{\mathbb{R}^2}e^{-P_2(\xi)}e^{-i(x,y)\cdot\xi}\,d\xi\\
    &=\frac{1}{(2\pi)^2}\int_{\mathbb{R}^2}e^{-(\eta^4/16+\zeta^2/2)}e^{-i(x\eta+y\zeta)}\,d\eta\,d\zeta 
    =h_4^{1/16}(x)h_2^{1/2}(y)
\end{aligned}
\end{equation*}
for $(x,y)\in\mathbb{R}^2$ where we have used \eqref{eq:OneDHeatKer}. In the present simple situation, both attractors can be factored into the product of single-variable functions: the standard Gaussian/heat kernel and bi-harmonic heat kernel on $\mathbb{R}$. As illustrated in the introductory section of \cite{RSC17}, this factoring is not always possible.

With our aim to apply Corollary \ref{cor:LLTCor}, it remains to compute $\gamma_1$ and $\gamma_2$. For this, we compute the polynomials $S_{\lambda,1}(\cdot)$ and $S_{\lambda,2}(\cdot)$ for small integers $\lambda$. Since $m_1=\lcm(\mathbf{m}_1)=2$, $\kappa_1=(2,1)$ and therefore
\begin{equation*}
    \Lambda_1(\beta)=\beta\cdot\kappa_1-2m_1=2\beta_1+\beta_2-4
\end{equation*}
for multi-indices $\beta=(\beta_1,\beta_2)\in\mathbb{N}^2$. In looking at $\Upsilon_1$ above, we see the lowest order non-zero terms have 
\begin{equation*}
\Lambda_1((0,6))=2\hspace{1cm}\mbox{and}\hspace{1cm}\Lambda_1((2,4))=\Lambda_1((4,0))=\Lambda_1((0,8))=4
\end{equation*}
corresponding, respectively, to $\zeta^6/96$, $-\eta^2\zeta^4/32$, $-\eta^4/48$, and $-7\zeta^{8}/2560$. For all higher-order terms, we have $\Lambda_1(\beta)> 4$. The first four polynomials $S_{\lambda,1}$ are thus given by
\begin{equation*}
    S_{1,1}(\xi)=\sum_{\Lambda_1(\beta)=1}b_{\beta,1}\xi^\beta=0,\hspace{3cm}S_{2,1}(\xi)=2!\sum_{\Lambda_1(\beta)=2}b_{\beta,1}\xi^\beta=\frac{2}{96}\zeta^6,
\end{equation*}
\begin{equation*}
    S_{3,1}(\xi)=3!\sum_{\Lambda_1(\beta)=3}b_{\beta,1}\xi^\beta=0,\hspace{1cm}\mbox{and}\hspace{1cm}S_{4,1}(\xi)=4!\sum_{\Lambda_1(\beta)=4}b_{\beta,1}\xi^\beta=-\frac{24}{48}\eta^4-\frac{24}{32}\eta^2\zeta^4-\frac{24\cdot 7}{2560}\zeta^8
\end{equation*}
for $\xi=(\eta,\zeta)$. This shows, in particular, that
\begin{equation*}
    \gamma_1=\min\{\lambda:S_{\lambda,1}\neq 0\}=2.
\end{equation*}
A completely analogous computation gives $\gamma_2=2$ as well. Of course, to simply pick out this constant with the aim of applying Corollary \ref{cor:LLTCor}, it was not necessary to explicitly write down the polynomials above. We have done so for illustrative purposes and so that, if it is of interest, the reader can form the operators $Q_{\lambda,1}^n$ for $\lambda=1,2,3,4$ appearing in Theorem \ref{thm:LLT}. In a separate (and perhaps more interesting) example presented in Section \ref{sec:Misaligned}, we will compute the attractors $Q_{\lambda}^n H_P^n$ for $\lambda=0,1,2,\dots,7$.

Since $\gamma_1=\gamma_2=2$, $Q_{1,1}=Q_{1,2}=0$ and $\mathcal{R}_0=\mathcal{R}_1:=\mathcal{R}$ in terms of Notation \ref{not:RealError}. With this,
\begin{eqnarray*}
    \mathcal{R}^n(x,y) &=& \phi^{(n)}(x,y)-\sum_{k=1}^2 e^{-i(x,y)\cdot\xi_k} \widehat{\phi}(\xi_k)^{n}H_{P_k}^{n}((x,y)-n\alpha_k) 
    \\ 
    &=&\phi^{(n)}(x,y)- \sum_{k=1}^2\frac{e^{-i(x,y)\cdot\xi_k}\widehat{\phi}(\xi_k)^n}{n^{\mu_k}}H_{P_{k}}\left(\left(n^{-D_k}(x,y)\right) \right) \\ 
    &=&\phi^{(n)}(x,y)-\frac{1}{n^{3/4}}\left[h_2^{1/2}\left(\frac{x}{n^{1/2}}\right)h_4^{1/16}\left(\frac{y}{n^{1/4}}\right)+(-1)^{x+y+n}h_4^{1/16}\left(\frac{x}{n^{1/4}}\right)h_2^{1/2}\left(\frac{y}{n^{1/2}}\right)\right]\\
    &=&\phi^{(n)}(x,y)-\frac{2\sqrt{2}}{n^{3/4}}\left[h_2\left(\frac{2x}{(2n)^{1/2}}\right)h_4\left(\frac{2y}{n^{1/4}}\right)+(-1)^{x+y+n}h_4\left(\frac{2x}{n^{1/4}}\right)h_2\left(\frac{2y}{(2n)^{1/2}}\right)\right]
\end{eqnarray*}
for $n\in\mathbb{N}_+$ and $(x,y)\in\mathbb{Z}^2$. An appeal to Corollary \ref{cor:LLTCor} gives us positive constants $C_1,C_2,M_1$, and $M_2$ for which the estimate
\begin{eqnarray}\label{eq:Ex1LLT}\nonumber
    \abs{\mathcal{R}^{n}(x,y)}
    &\leq&\sum_{k=1}^2\frac{C_k}{n^{\mu_k+\gamma_k/(2m_k)}}\exp\left(-nM_kR_k^{\#}\left(\frac{(x,y)-n\alpha_k}{n}\right)\right)\\
   &\leq& \frac{C}{n^{5/4}} \left[\exp\left(-\frac{M}{2n}x^2-\frac{3M}{4^{2/3}n^{1/3}}\abs{y}^{4/3}\right)+\exp\left(-\frac{3M}{4^{2/3}n^{1/3}}\abs{x}^{4/3}-\frac{M}{2n}y^2\right)\right]
\end{eqnarray}
holds for all $n\in\mathbb{N}_+$ and $(x,y)\in\mathbb{Z}^2$; here $C=\max\{C_1,C_2\}$ and $M=\min\{M_1,M_2\}$. The absolute value of the error and its generalized Gaussian bounds (with $C = 0.042$ and $M = 0.15$) are illustrated in Figure \ref{fig:Ex1LLT}. As the previous estimate holds for all $n\in\mathbb{N}_+$ and $(x,y)\in \mathbb{Z}^2$, we obtain
\begin{equation*}
    \| \mathcal{R}^{n}\|_{\infty} \leq \frac{C}{n^{5/4}}
\end{equation*}
for all $n \in \mathbb{N}_+$. This behavior is illustrated by Figure \ref{fig:Ex1_R0_LogLog}.
\begin{table}[h!]
  \centering
  \begin{tabular}{  | c | c | }
    \hline
     $n=100$ & $n=1000$ \\ \hline

    \begin{minipage}{.45\textwidth}
      \includegraphics[width=1.05\linewidth]{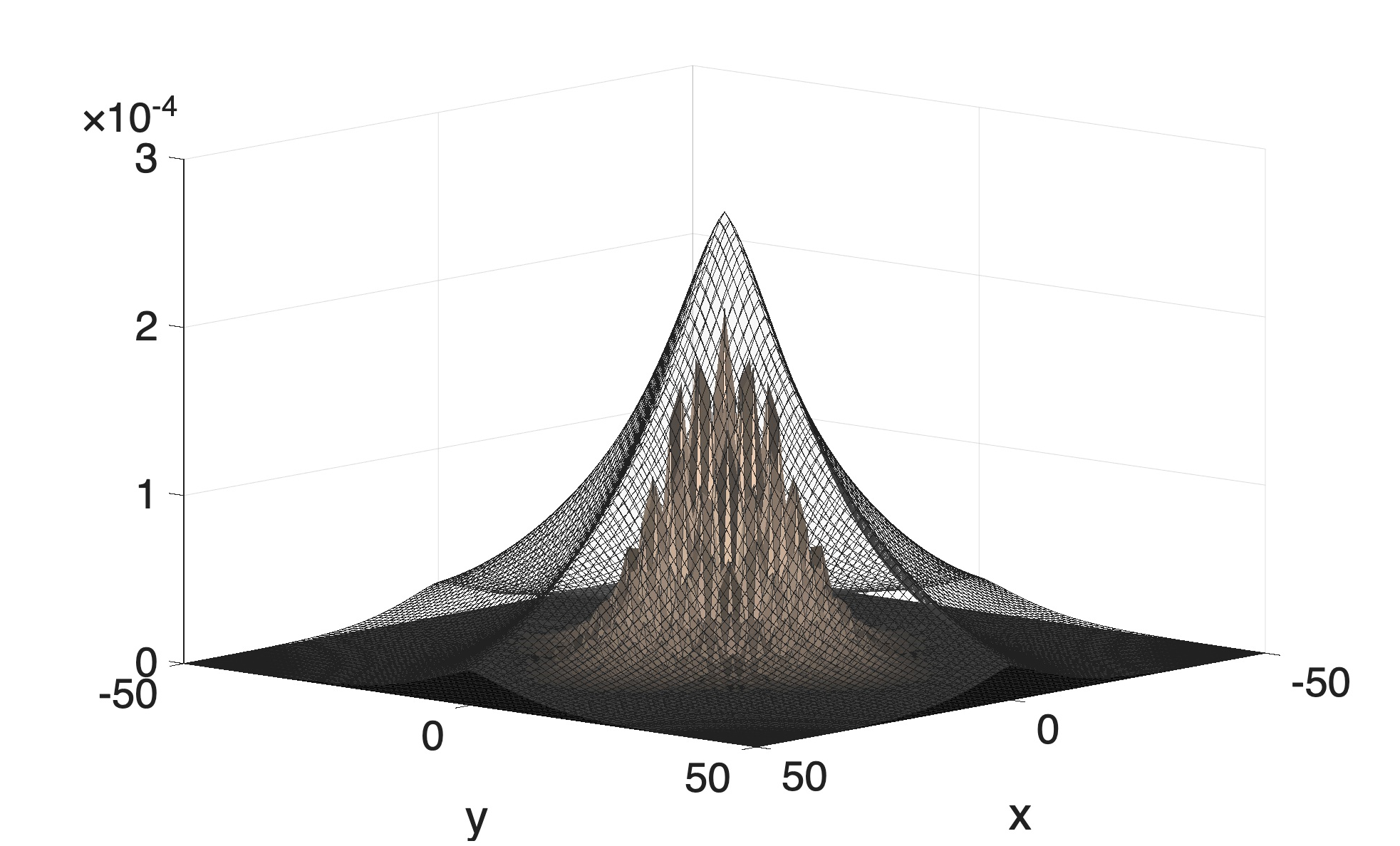}
    \end{minipage}
	&
      \begin{minipage}{.45\textwidth}
      \includegraphics[width=1.05\linewidth]{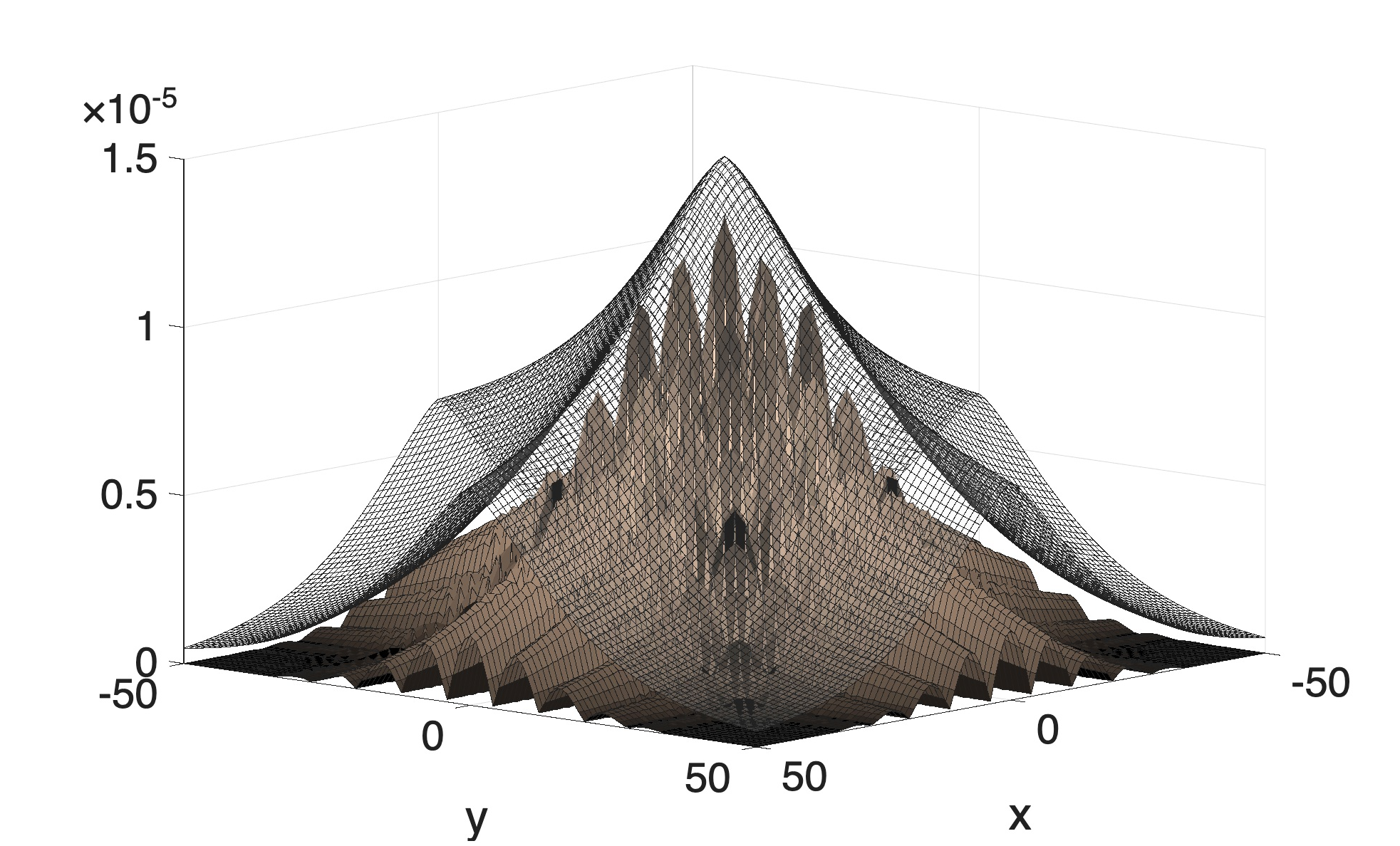}
    \end{minipage}\\ \hline
  \end{tabular}\captionof{figure}{The real error $\abs{\mathcal{R}^{n}}$ is illustrated by the solid light brown surfaces and the Gaussian-type error from the right-hand side of \eqref{eq:Ex1LLT} is illustrated by the transparent ``nets" above for $n=100$ and $n=1,000$.}\label{fig:Ex1LLT} 
\end{table}

\begin{figure}[h!]
    \centering
    \includegraphics[width=0.6\linewidth]{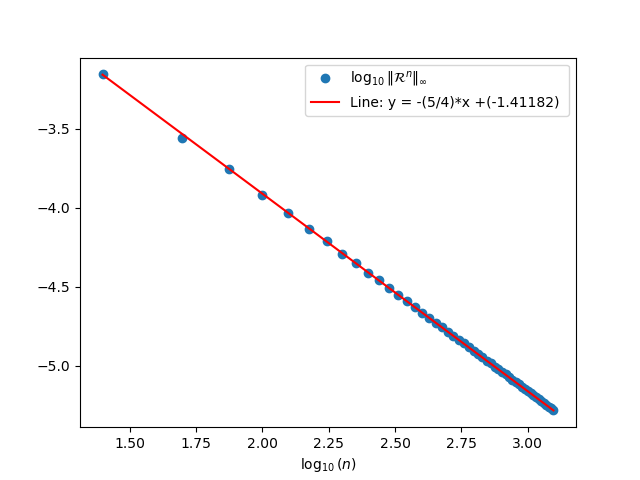}
    \caption{A graph of the $\log_{10}\Vert \mathcal{R}^{n}\Vert_{\infty}$ values (blue points) as a function of $\log_{10}(n)$, for values of $n$ ranging from $25$ to $1250$ in increments of $25$. The line in red is the best linear fit of fixed slope $-5/4$ for the data.} 
    \label{fig:Ex1_R0_LogLog}
\end{figure}

\subsection{Article Structure}

\noindent This article is organized as follows: In Section \ref{sec:ProofOfMain}, we prove our main results, Theorems \ref{thm:GeneralGaussEstimate}, \ref{thm:FiniteSupport}, and \ref{thm:LLT}; these proofs make use of two key lemmas which we take up in Section \ref{sec:Key}. In Section \ref{sec:Technical}, we establish several technical estimates used in the subsequent section. In Section \ref{sec:Key}, we prove the main technical estimates used in the proofs of our main theorems. These estimates are ``complex" in nature, making essential use of the holomorphy  of $\widehat{\phi}$ for $\phi\in\mathcal{H}_d^*$. In Section \ref{sec:Examples}, we revisit three examples from \cite{RSC17} that our results here substantially improve upon. The appendix is split into three subsections focusing, respectively, on the proofs of Propositions \ref{prop:PosHomAreSemiElliptic} and \ref{prop:QLmbdOpIsIndependentOfA}, asymptotics of positive-homogeneous functions, and the useful notion of subhomogeneity.

\section{Proofs of Theorems \ref{thm:GeneralGaussEstimate}, \ref{thm:FiniteSupport}, and \ref{thm:LLT}}\label{sec:ProofOfMain}
Let $\phi\in\mathcal{H}_d^*$ satisfy the hypotheses of Theorem \ref{thm:GeneralGaussEstimate} and assume the theorem's notation. With the aim of invoking the identity \eqref{eq:FTIdentityReal}, we shall assume that the finite set $\Omega(\phi)$ lives in the interior of $\mathbb{T}^d$, $\operatorname{Int}(\mathbb{T}^d)$, otherwise we replace $\mathbb{T}^d$ by a shifted copy of itself as it is done explicitly in \cite[Remark 4.2]{RSC17}. For each $k=1,2,\dots, K$, we write $\Gamma_k=\Gamma_{\xi_k}$ and correspondingly $\Upsilon_k=\Upsilon_{\xi_k}$ (in view of Definition \ref{def:PosHomType}). By virtue of Proposition \ref{prop:PosHomAreSemiElliptic}, we shall choose an element in $\GldR$ that makes $P_k$ semi-elliptic and denote it by $A_k$. With this, set
\begin{equation*}
\mathscr{D}_k=A_k([-\delta,\delta]^d)+\xi_k\subseteq\operatorname{Int}(\mathbb{T}^d)
\end{equation*}
where $\delta>0$ is yet to be specified but small enough so that $z\mapsto \Gamma_k(A_k z)$ is holomorphic on the polysquare
\begin{equation*}
[-\delta,\delta]^d+i[-\delta,\delta]^d=\{z\in\mathbb{C}^d:\max\{\abs{\Re z_j},\abs{\Im z_j}\}\leq \delta\,\,\mbox{for}\,\,j=1,2,\dots,d\}.
\end{equation*}
Our exponential estimates will be obtained by performing contour integration over the boundary of poly-rectangles living inside these poly-squares. Setting $\mathscr{B}=\mathbb{T}^d\setminus \cup_{k=1}^K \mathscr{D}_k$, the identity \eqref{eq:FTIdentityReal} gives
\begin{equation*}
    \phi^{(n)}(x)=\sum_{k=1}^K\int_{\mathscr{D}_k}\widehat{\phi}(\xi)^n e^{-ix\cdot\xi}\,d\xi+\int_{\mathscr{B}}\widehat{\phi}(\xi)^n e^{-ix\cdot\xi}\,d\xi
\end{equation*}
for $x\in\mathbb{Z}^d$ and $n\in\mathbb{N}_+$. Observe that, for each $k=1,2,\dots,K$,
\begin{eqnarray*}
\int_{\mathscr{D}_k}\widehat{\phi}(\xi)^n e^{-ix\cdot\xi}\,d\xi&=& e^{-ix\cdot\xi_k}\int_{A_k([-\delta,\delta]^d)}\widehat{\phi}(\xi+\xi_k)^n e^{-ix\cdot\xi}\,d\xi\\
&=&e^{-ix\cdot\xi_k}\widehat{\phi}(\xi_k)^n\int_{A_k([-\delta,\delta]^d)}e^{n\Gamma_k(\xi)}e^{-ix\cdot\xi}\,d\xi\\
&=&e^{-i x\cdot \xi_k}\widehat{\phi}(\xi_k)^n\int_{A_k([-\delta,\delta]^d)}\left(e^{-P_k(\xi)+\Upsilon_k(\xi)}\right)^n e^{-i(x-n\alpha_k)\cdot\xi}\,d\xi\\
&=&e^{-ix\cdot\xi_k}\widehat{\phi}(\xi_k)^n I_k(n,x-n\alpha_k)
\end{eqnarray*}
for $n\in\mathbb{N}_+$ and $x\in\mathbb{Z}^d$ where, for $f_k(\xi)=\exp(-P_k(\xi)+\Upsilon_k(\xi))$,
\begin{equation*}
I_k(n,y):=\int_{A_k([-\delta,\delta]^d)}f_k(\xi)^n e^{-iy\cdot\xi}\,d\xi
\end{equation*}
for $n\in\mathbb{N}_+$ and $y\in\mathbb{R}^d$. Consequently,
\begin{equation}\label{eq:ConvPowerSumWithI}
\phi^{(n)}(x)=\sum_{k=1}^K e^{-ix\cdot\xi_k}\widehat{\phi}(\xi_k)^n I_k(n,x-n\alpha_k)+\int_{\mathscr{B}}\widehat{\phi}(\xi)^n e^{-ix\cdot\xi}\,d\xi
\end{equation}
for $n\in\mathbb{N}_+$ and $x\in\mathbb{Z}^d$. Given that $\Omega(\phi)$ lives in the interior of $\cup_{k}\mathscr{D}_k$, it is evident that
\begin{equation*}
    \rho:=\sup_{\xi\in \mathscr{B}}\abs{\widehat{\phi}(\xi)}<1
\end{equation*}
and therefore
\begin{equation}\label{eq:BIntEst}
    \abs{\int_{\mathscr{B}} \widehat{\phi}(\xi)^n e^{-ix\cdot\xi}\,d\xi}\leq \int_{\mathscr{B}}\abs{\widehat{\phi}(\xi)}^n\,d\xi\leq \rho^n=e^{-\epsilon_0 n}
\end{equation}
for all $n\in\mathbb{N}_+$ and $x\in\mathbb{Z}^d$; here $\epsilon_0=\ln(1/\rho)>0$. To estimate the terms in our sum \eqref{eq:ConvPowerSumWithI}, we make use of the following key lemma which is proven in Section \ref{sec:Key}.
\begin{lemma}\label{lem:IntegralExponentialEst}
Let $P$ be a positive-homogeneous polynomial with real part $R$ and homogeneous order $\mu$. Let $\Upsilon$ be holomorphic on a neighborhood of $0$ in $\mathbb{C}^d$ with $\Upsilon(\xi)=o(R(\xi))$ as $\xi\to 0$ and set $f(\xi)=\exp(-P(\xi)+\Upsilon(\xi))$. Finally, from Proposition \ref{prop:PosHomAreSemiElliptic}, take $A\in\GldR$ for which $P_A$ is semi-elliptic. Then there exist $\delta>0$ and positive constants $\epsilon$, $M$, and $C$ for which
\begin{equation}\label{eq:IntegralExponentialEst1}
I(n,y)=\int_{A([-\delta,\delta]^d)} f(\xi)^n e^{-iy\cdot\xi}\,d\xi
\end{equation}
satisfies
\begin{equation}\label{eq:IntegralExponentialEst2}
\abs{I(n,y)}\leq \frac{C}{n^{\mu}}\left(e^{-n\epsilon}+\exp\left(-nM R^{\#}(y/n)\right)\right)
\end{equation}
for all $n\in\mathbb{N}_+$ and $y\in\mathbb{R}^d$.
\end{lemma}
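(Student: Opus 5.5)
First I change variables $\xi=Az$, which reduces everything to the semi-elliptic case. Write $P_A(z)=P(Az)$, $\Upsilon_A(z)=\Upsilon(Az)$ and $w=A^{\top}y$. Then
\[
I(n,y)=\abs{\det A}\int_{[-\delta,\delta]^d}e^{-nP_A(z)+n\Upsilon_A(z)}e^{-iw\cdot z}\,dz .
\]
Proposition \ref{prop:LFCompare} gives $R^{\#}(y)=R_A^{\#}(A^{\top}y)$, so it suffices to prove the estimate with $A=I$. The target bound is then
\[
\frac{C}{n^{\mu}}\Bigl(e^{-n\epsilon}+\exp\bigl(-nM\textstyle\sum_j\abs{w_j/n}^{2m_j/(2m_j-1)}\bigr)\Bigr),
\]
using the asymptotic \eqref{eq:LFAsympt}.

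In these coordinates the domain is the polysquare, where the integrand is holomorphic. I shift the contour coordinatewise, one variable at a time, following \cite{CF24}: replace $z_j$ by $z_j+i\tau_j$ with real parameters $\tau_j$, $\abs{\tau_j}\leq\delta$. Cauchy's theorem in each variable shows that the integral over $[-\delta,\delta]^d$ equals the integral over the shifted face $[-\delta,\delta]^d+i\tau$ plus lateral pieces. On a lateral piece, some real coordinate sits at $\pm\delta$.

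The main technical estimate is a bound for $\Re$ of the exponent on the shifted region. The key fact is the following. For $\abs{\Re z_j}\leq\delta$ and $\abs{\Im z_j}\leq\delta$ with $\delta$ small,
\[
\Re\bigl(-P_A(\theta+i\tau)+\Upsilon_A(\theta+i\tau)\bigr)\leq -c\sum_j\theta_j^{2m_j}+C'\sum_j\abs{\tau_j}^{2m_j}.
\]
This follows from homogeneity of $P_A$ under $t^D$ and from the bound $\abs{\Upsilon_A}=o(\sum_j\abs{z_j}^{2m_j})$ on complex neighborhoods. The latter needs the fact (footnoted in the paper) that $\Upsilon_A$ has only monomials with $\abs{\beta:2\mathbf m}>1$. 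The proof expands $P_A(\theta+i\tau)$ and controls the cross terms by weighted Young inequalities, using that every monomial of $P_A$ has weight $1$. I expect this to be the main obstacle. Positivity is known only on real points, so one must show the perturbation to complex arguments costs at most $\sum\abs{\tau_j}^{2m_j}$ plus a small multiple of $\sum\theta_j^{2m_j}$.

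The factor $e^{-iw\cdot(\theta+i\tau)}$ contributes $e^{w\cdot\tau}$, so on the shifted face the integral is at most
\[
e^{n(C'\sum_j\abs{\tau_j}^{2m_j})+w\cdot\tau}\int e^{-nc\sum_j\theta_j^{2m_j}}\,d\theta\leq \frac{C}{n^{\mu}}\,e^{\,n\sum_j\left(C'\abs{\tau_j}^{2m_j}+\tau_j w_j/n\right)}.
\]
I choose $\tau_j=-\operatorname{sgn}(w_j)\,\sigma\abs{w_j/n}^{1/(2m_j-1)}$ with a small constant $\sigma$. This makes each term equal to $-c''\abs{w_j/n}^{2m_j/(2m_j-1)}$, and that is the Legendre–Fenchel optimization. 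This choice requires $\abs{\tau_j}\leq\delta$, which holds when $\abs{w_j/n}$ is bounded by some $\eta$. When $\abs{w_j/n}>\eta$, I take $\tau_j=\mp\delta$ (up to $\sigma$). The gain is then $-c\,\delta\abs{w_j}/n\leq -c'\abs{w_j/n}^{2m_j/(2m_j-1)}$ for bounded ratios, and otherwise it still beats $-n\epsilon$. For very large $\abs{w}/n$ the linear gain is weaker than $R^{\#}$. In that range I bound only by $e^{-n\epsilon'}$, which is acceptable since the lemma allows the term $e^{-n\epsilon}$ (the regime $\abs{x}>Ln$ is handled separately in Theorem \ref{thm:GeneralGaussEstimate}).

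On the lateral pieces, one real coordinate equals $\pm\delta$. There the bound gives $\Re\leq -c\delta^{2m_j}+C'\sum\abs{\tau}^{2m}$ together with the favorable sign of $w\cdot\tau$. Choosing $\sigma$ and $\eta$ small relative to $\delta$ yields a contribution of $e^{-n\epsilon}$, times the polynomial factor coming from integrating over the remaining coordinates. Combining the two cases gives \eqref{eq:IntegralExponentialEst2}; the factor $n^{-\mu}$ in front of $e^{-n\epsilon}$ is harmless after shrinking $\epsilon$.
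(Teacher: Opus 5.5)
Your proposal is correct and follows essentially the same route as the paper. You pass to semi-elliptic coordinates via $A$, use the complex bound $\Re\bigl(-P_A+\Upsilon_A\bigr)(\theta+i\tau)\le -c\sum_j\theta_j^{2m_j}+C'\sum_j\abs{\tau_j}^{2m_j}$, and shift contours one coordinate at a time to the Legendre-optimal height capped at $\delta$, bounding the lateral pieces and the capped case by $e^{-n\epsilon}$. That complex bound is the paper's Proposition~\ref{prop:gBound}, which cites Proposition 8.13 of \cite{RSC17}; your weighted-Young sketch reproves that inequality. The only differences are cosmetic: you do not rescale by $n^{-D}$ first, so $n^{-\mu}$ comes from $\int e^{-nc\sum_j\theta_j^{2m_j}}\,d\theta$ rather than from the Jacobian.
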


Armed with the above lemma and in view of \eqref{eq:ConvPowerSumWithI}, we combine estimates \eqref{eq:BIntEst} and \eqref{eq:IntegralExponentialEst2} to obtain positive constants $\epsilon_0,\epsilon_1,\dots,\epsilon_K$, $C_1,C_2,\dots,C_K$, and $M_1,M_2,\dots,M_K$ for which
\begin{equation*}
\abs{\phi^{(n)}(x)}\leq e^{-\epsilon_0 n}+\sum_{k=1}^K \frac{C_k}{n^{\mu_k}}\left[e^{-n\epsilon_k}+\exp\left(-nM_k R_k^{\#}\left(\frac{x-n\alpha_k}{n}\right)\right)\right]
\end{equation*}
for all $n\in\mathbb{N}_+$ and $x\in\mathbb{Z}^d$. By, if necessary, modifying $\epsilon_0$ and $C_1$, the term $e^{-\epsilon_0 n}$ can be absorbed into the first term of the sum giving
\begin{equation}\label{eq:GoldenSum}
\abs{\phi^{(n)}(x)}\leq\sum_{k=1}^K \frac{C_k}{n^{\mu_k}}\left[e^{-n\epsilon_k}+\exp\left(-nM_k R_k^{\#}\left(\frac{x-n\alpha_k}{n}\right)\right)\right]
\end{equation}
which holds uniformly for $n\in\mathbb{N}_+$ and $x\in\mathbb{Z}^d$. We are now in a position to prove Theorem \ref{thm:FiniteSupport}.

\begin{proof}[Proof of Theorem \ref{thm:FiniteSupport}]
Given that $\phi:\mathbb{Z}^d\to\mathbb{C}$ is finitely supported, we have a uniform constant $L=\max\{\abs{y}:y\in \supp(\phi)\}$ for which $\abs{x}\leq nL$ for every $x\in\supp(\phi^{(n)})$ and $n\in\mathbb{N}_+$. By the triangle inequality, for each $k=1,2,\dots,K$, we obtain a constant $L_k\geq 0$ for which
\begin{equation*}
\abs{\frac{x-n\alpha_k}{n}}\leq L_k
\end{equation*}
for every $x\in\supp(\phi^{(n)})$ and $n\in\mathbb{N}_+$. Also, for each $k$, the fact that $R_k^{\#}$ is continuous on $\mathbb{R}^d$, it must be bounded on bounded sets and so we may find a  positive constant $M_k'$ for which
\begin{equation*}
M_k' R_k^{\#}\left(\frac{x-n\alpha_k}{n}\right)\leq \epsilon_k
\end{equation*}
for all $n\in\mathbb{N}_+$ and $x\in\supp(\phi^{(n)})$. By virtue of \eqref{eq:GoldenSum}, we have
\begin{eqnarray*}
\abs{\phi^{(n)}(x)}&\leq& \sum_{k=1}^K \frac{C_k}{n^{\mu_k}}\left[e^{-\epsilon_k n}+\exp\left(-nM_k R_k^{\#}\left(\frac{x-n\alpha_k}{n}\right)\right)\right]\\
&\leq &\sum_{k=1}^K\frac{C_k}{n^{\mu_k}}\left[\exp\left(-nM_k' R_k^{\#}\left(\frac{x-n\alpha_k}{n}\right)\right)+\exp\left(-nM_k R_k^{\#}\left(\frac{x-n\alpha_k}{n}\right)\right)\right]\\
&=&\sum_{k=1}^K\frac{C_k'}{n^{\mu_k}}\exp\left(-nM_k''R_k^{\#}\left(\frac{x-n\alpha_k}{n}\right)\right)
\end{eqnarray*}
for every $x\in\supp(\phi^{(n)})$ and $n\in\mathbb{N}_+$ where we have put $C_k'=2C_k$ and $M_k''=\min\{M_k,M_k'\}$. By renaming the constants $C_k$ and $M_k$, the theorem follows immediately from the above inequality and the observation that the inequality holds trivially outside of $\phi^{(n)}$'s support.
\end{proof}

\begin{proof}[Proof of Theorem \ref{thm:GeneralGaussEstimate}] As in Subsection 3.1 of \cite{CF24}, we first look at the far-field regime to obtain $L$. Because $\phi\in\mathcal{H}_d^*$, we take $\epsilon>0$ for which $F_\phi(z)$ is holomorphic on the poly-annulus $\{z\in\mathbb{C}^d:1-\epsilon<\abs{z_j}\leq 1+\epsilon,\,\,j=1,2,\dots,d\}$. With this, we define $\gamma=\ln(1+\epsilon/2)$ and take $\delta> 0$ for which
\begin{equation*}
    e^\delta=\sup\{\abs{F_\phi(z)}:1-\epsilon/2\leq \abs{z_j}\leq 1+\epsilon/2,\,\,j=1,2,\dots,d\}.
\end{equation*}
For $x\in\mathbb{Z}^d$ and $n\in\mathbb{N}_+$, we invoke Cauchy's formula to find
\begin{equation*}
    \phi^{(n)}(x)=\int_{\mathbb{T}^d}\widehat{\phi}(\xi-i\theta)^n e^{-ix\cdot(\xi-i\theta)}\,d\xi=e^{-x\cdot\theta}\int_{\mathbb{T}^d}\widehat{\phi}(\xi-i\theta)^ne^{-ix\cdot\xi}\,d\xi
\end{equation*}
for $\theta=\gamma\sgn(x)=\gamma(\sgn(x_1),\sgn(x_2),\dots,\sgn(x_d))$ (which keeps $\abs{e^{i(\xi_k-i\theta_k)}}\in [1-\epsilon/2,1+\epsilon/2]$ regardless of the values of the coordinates of $x$). We note that no boundary terms are produced in this application of Cauchy's formula thanks to the periodicity of $\widehat{\phi}$. Since
\begin{equation*}
x\cdot\theta=\gamma(\abs{x_1}+\abs{x_2}+\cdots+\abs{x_d})\geq \gamma\abs{x},
\end{equation*}
we obtain
\begin{equation*}
\abs{\phi^{(n)}(x)}\leq e^{-\gamma\abs{x}}\int_{\mathbb{T}^d}\abs{\widehat{\phi}(\xi-i\theta)}^n\,d\xi\leq e^{-\gamma\abs{x}}e^{n\delta}
\end{equation*}
for all $x\in\mathbb{Z}^d$ and $n\in\mathbb{N}_+$. Now, upon selecting $L=3\delta/\gamma$, it follows that
\begin{equation*}
    \abs{\phi^{(n)}(x)}\leq \exp\left(-\frac{\gamma}{2}\abs{x}-\frac{\delta}{2}n\right)\leq e^{-M_0(n+\abs{x})}
\end{equation*}
whenever $\abs{x}>nL$ where $M_0=\min\{\gamma,\delta\}/2$. This gives us the estimate for the far-field regime $\abs{x}>nL$. With this $L$ fixed in hand, we can easily obtain the near-field estimate (for $\abs{x}\leq nL$) by repeating exactly the same argument used in the proof of Theorem \ref{thm:FiniteSupport}. We leave these details to the reader.
\end{proof}

\noindent We now focus on the proof of Theorem \ref{thm:LLT}. Similar to the preceding arguments, we make use of the following lemma which bounds the error between $I(n,y)$ and the sum of attractors $Q_{\lambda}^nH_P^n(y)$. The lemma's proof can be found in Section \ref{sec:Key}.

\begin{lemma}\label{lem:IntegralLLTEst}
    Let $P$ be a positive-homogeneous polynomial with real part $R$ and homogeneous order $\mu$. Let $\Upsilon$ be holomorphic on a neighborhood of $0$ in $\mathbb{C}^d$ with $\Upsilon(\xi)=o(R(\xi))$ as $\xi\to 0$ and, as in Lemma \ref{lem:IntegralExponentialEst}, set $f(z)=\exp(-P(z)+\Upsilon(z))$. From Proposition \ref{prop:PosHomAreSemiElliptic}, take $A\in\GldR$ for which $P_A$ is semi-elliptic with $\mathbf{m}=(m_1,m_2,\dots,m_d)$ so that $\Upsilon_{A}$ has the representation
    \[
    \Upsilon_{A}(z)=\sum_{\abs{\beta : 2\mathbf{m}}>1}b_{\beta}z^{\beta}= \sum_{\lambda=1}^{\infty}\frac{S_{\lambda}(z)}{\lambda!},
    \]
    with $S_{\lambda}(z)=\lambda! \sum_{\Lambda(\beta)=\lambda}b_{\beta}z^{\beta}$, where we have set $m = \lcm(\mathbf{m})$, $\kappa=(\kappa_1,\kappa_2,\ldots,\kappa_d)\in \mathbb{N}_{+}^d$ such that $m_j \kappa_j =m$ for $j=1,2,\ldots,d$, and $\Lambda(\beta)=\beta\cdot\kappa- 2m$. With this, we consider the operators $Q_\lambda^n$ defined by \eqref{eq:QLambdaOpertor} and the associated attractors $Q_\lambda^n H_P^n$ defined for $\lambda,n\in\mathbb{N}$ with $n\geq 1$. For any $\lambda_0 \in \mathbb{N}$ there is $\delta>0$ and positive constants $\epsilon$, $M$, and $C$ for which $I(n,y)$ defined by \eqref{eq:IntegralExponentialEst1} satisfies
        \begin{equation*}
    \abs{I(n,y)-\sum_{\lambda=0}^{\lambda_0}(Q_{\lambda}^nH_P^n)(y)}\leq\frac{C}{n^{\mu+(\lambda_0+1)/2m}}\left(e^{-n\epsilon}+\exp(-nMR^{\#}(y/n))\right)
    \end{equation*}
for all $n\in\mathbb{N}_+$ and $y\in\mathbb{R}^d$.
  \end{lemma}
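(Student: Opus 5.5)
We would first change variables by $\xi=Az$, which reduces everything to the semi-elliptic case. With $y'=A^{\top}y$ we have
\[
I(n,y)=\abs{\det A}\int_{[-\delta,\delta]^d}e^{n(-P_A(z)+\Upsilon_A(z))}e^{-iy'\cdot z}\,dz .
\]
The attractors transform the same way: $(Q_\lambda^nH_P^n)(y)$ equals $\abs{\det A}$ times the inverse Fourier integral over $\mathbb{R}^d$ of $\frac{1}{\lambda!}B_\lambda(nS_1(z),\dots,nS_\lambda(z))e^{-nP_A(z)}$, evaluated at $y'$. Proposition \ref{prop:LFCompare} gives $R^{\#}(y/n)=R_A^{\#}(y'/n)$. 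So we may assume $A=I$, $P=P_A$ semi-elliptic with $D=\diag(1/2m_j)$, and $R^{\#}\asymp\sum_j\abs{x_j}^{2m_j/(2m_j-1)}$.

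Next comes a Taylor expansion in the Bell variables. Put $w=n^{D}z$ (anisotropic rescaling). Homogeneity gives $nS_\lambda(z)=n^{-\lambda/2m}S_\lambda(w)$ and $nP(z)=P(w)$, so
\[
n\Upsilon(z)=\sum_{\lambda\ge1}\tau^\lambda S_\lambda(w)/\lambda!, \qquad \tau=n^{-1/2m}.
\]
Then
\[
e^{n\Upsilon(z)}-\sum_{\lambda\le\lambda_0}\frac{1}{\lambda!}B_\lambda(nS_1(z),\dots,nS_\lambda(z))
\]
is the Taylor remainder of order $\lambda_0+1$ in $\tau$ of $g(\tau)=\exp(\sum_\lambda \tau^\lambda S_\lambda(w)/\lambda!)$, evaluated at $\tau=n^{-1/2m}$. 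Using Cauchy estimates in an auxiliary complex variable $\tau$, we aim to bound this remainder on the complexified polysquare $z\in[-\delta,\delta]^d+i\theta$, with $\theta$ small, by
\[
C\,n^{-(\lambda_0+1)/2m}\bigl(1+\abs{w}\bigr)^{N}\exp\bigl(\tfrac12\Re P(\Re w)\bigr)\cdot(\text{subhomogeneous growth in }\Im w).
\]
The needed inputs are $\abs{\Upsilon(z)}\le\eta\,(R(\Re z)+\textstyle\sum_j\abs{\Im z_j}^{2m_j})$ for $\delta$ small (from $\Upsilon=o(R)$ and the multi-index structure $\abs{\beta:2\mathbf m}>1$), together with polynomial bounds on $B_\lambda(S_1(w),\dots)$.

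The main step is the contour shift, done exactly as in Lemma \ref{lem:IntegralExponentialEst}. We deform the $z$-integral to $z\mapsto z+i\theta$, choosing $\theta_j=-\sgn(y_j)\,c\,\abs{y_j/n}^{1/(2m_j-1)}$, capped at $\delta$, which is the per-coordinate maximizer picking out $\abs{y_j/n}^{2m_j/(2m_j-1)}$. The lateral faces of the poly-rectangle contribute $O(n^{-\mu-(\lambda_0+1)/2m}e^{-n\epsilon})$, because there $\Re(-P+\Upsilon)\le-c\delta^{2m_j}$. On the shifted face we use
\[
\Re(-P(\xi+i\theta))\le -cR(\xi)+C'\textstyle\sum_j\abs{\theta_j}^{2m_j}.
\]
This needs subhomogeneity and Young-type inequalities on the mixed terms; we expect it to be the main obstacle, since the cross monomials of $P$ must be absorbed with small constants. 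Combining it with the factor $e^{y\cdot\theta}$ gives
\[
\exp\bigl(-nM\textstyle\sum_j\abs{y_j/n}^{2m_j/(2m_j-1)}\bigr)\asymp\exp\bigl(-nM'R^{\#}(y/n)\bigr).
\]
The remainder bound from the previous step then integrates against $e^{-cnR(\xi)}$ to give $n^{-\mu}\cdot n^{-(\lambda_0+1)/2m}$ after the substitution $w=n^{D}\xi$.

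Finally, the truncation error must be handled. The difference between $\int_{[-\delta,\delta]^d}$ and $\int_{\mathbb{R}^d}$ of $\sum_{\lambda\le\lambda_0}Q_\lambda$-symbols times $e^{-nP}$ is treated by the same shifted contour on the complement, where $\Re P\ge c\delta^{2m_j}$ in some coordinate. This yields $C n^{-\mu-(\lambda_0+1)/2m}e^{-n\epsilon}$, since polynomial factors in $n$ are absorbed by halving $\epsilon$. In the regime $\abs{y}/n$ large, the cap $\abs{\theta_j}=\delta$ produces the $e^{-n\epsilon}$ alternative. Summing the three contributions gives the lemma.
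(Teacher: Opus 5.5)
Your proposal follows the paper's proof. You reduce to the semi-elliptic $P_A$ and write $e^{n\Upsilon}$ minus the truncated Bell expansion as a Taylor remainder in $r=n^{-1/2m}$ (the paper's $\psi(r,z)$ in Lemma \ref{lem:BellEst} and Proposition \ref{prop:HBound}). You shift contours coordinate by coordinate with $\theta_j\propto\abs{y_j/n}^{1/(2m_j-1)}$ capped at $\delta$, exactly as in Lemma \ref{lem:IntegralExponentialEst}, and you discard the attractor tails outside the box as $O(e^{-n\epsilon})$.

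The differences are minor. You bound the remainder by Cauchy estimates in complex $\tau$ instead of the real Taylor formula with Fa\`a di Bruno; this works only if the circle has radius independent of $n$, e.g.\ $\rho\asymp\min\{1,(\sum_j\abs{w_j}^{2m_j})^{-1/2m}\}$, which is where your polynomial factor $(1+\abs{w})^N$ comes from. For the tails no contour shift is needed, since the paper simply bounds $\int_{\mathbb{R}^d\setminus\mathscr{R}(n)}e^{-R_A}\abs{B_\lambda}\,d\xi\le C'e^{-n\epsilon'}$.
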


\begin{proof}[Proof of Theorem \ref{thm:LLT}]
Starting with the identity \eqref{eq:ConvPowerSumWithI}, we have
\begin{equation*}
\begin{aligned}
    &\abs{\phi^{(n)}(x)-\sum_{k=1}^K\sum_{\lambda=0}^{\lambda_k} e^{-ix\cdot\xi_k}\widehat{\phi}(\xi_k)^n(Q_{\lambda,k}^nH_{P_k}^n)(x-n\alpha_k)}\\
    &\leq \abs{\int_{\mathscr{B}}\widehat{\phi}(\xi)^ne^{-ix\cdot\xi}\,d\xi}+\abs{\sum_{k=1}^K e^{-ix\cdot\xi_k}\widehat{\phi}(\xi_k)^n\left(I_k(n,x-n\alpha_k)-\sum_{\lambda=0}^{\lambda_k} (Q_{\lambda,k}^nH_{P_n}^n)(x-n\alpha_k) \right)} \\
    &\leq \abs{\int_{\mathscr{B}}\widehat{\phi}(\xi)^ne^{-ix\cdot\xi}\,d\xi}+\sum_{k=1}^K\abs{I_k(n,x-n\alpha_k)-\sum_{\lambda=0}^{\lambda_k} (Q_{\lambda,k}^nH_{P_k}^n)(x-n\alpha_k)}
\end{aligned}
\end{equation*}
for all $n\in\mathbb{N}_+$ and $x\in\mathbb{Z}^d$ (so as long as $\delta>0$ is small enough). Taking $\delta>0$ to be that guaranteed by Lemma \ref{lem:IntegralLLTEst}, the results of the lemma and the estimate \eqref{eq:BIntEst} give positive constants constants $C_0,C_2,\dots,C_K$, $\epsilon_0,\epsilon_1,\dots,\epsilon_K$, and $M_1,M_2,\dots,M_K$, for which
\begin{equation*}
\begin{aligned}
    &\abs{\phi^{(n)}(x)-\sum_{k=1}^K\sum_{\lambda=0}^{\lambda_k} e^{-ix\cdot\xi_k}\widehat{\phi}(\xi_k)^n(Q_{\lambda,k}^nH_{P_k}^n)(x-n\alpha_k)}\\
    &\leq C_0e^{-\epsilon_0 n}+\sum_{k=1}^K\frac{C_k}{n^{\mu_k+(\lambda_k+1)/(2m_k)}}\left(e^{-n\epsilon_k }+\exp\left(-nM_k R_k^{\#}\left(\frac{x-n\alpha_k}{n}\right)\right)\right)
\end{aligned}
\end{equation*}
$n\in\mathbb{N}_+$ and $x\in\mathbb{Z}^d$. By adjusting the constants $C_0,C_1$ and $\epsilon_0,\epsilon_1$, we can absorb the zeroth term into the first term of the sum and from this it follows that
\begin{equation*}
\begin{aligned}
    &\abs{\phi^{(n)}(x)-\sum_{k=1}^K\sum_{\lambda=0}^{\lambda_k} e^{-ix\cdot\xi_k}\widehat{\phi}(\xi_k)^n(Q_{\lambda,k}^nH_{P_k}^n)(x-n\alpha_k)} \\
    &\hspace{4cm}\leq \sum_{k=1}^K\frac{C_k}{n^{\mu_k+(\lambda_k+1)/(2m_k)}}\left(e^{-n\epsilon_k }+\exp\left(-nM_k R_k^{\#}\left(\frac{x-n\alpha_k}{n}\right)\right)\right)
\end{aligned}
\end{equation*}
for all $n\in\mathbb{N}_+$ and $x\in\mathbb{Z}^d$. For a fixed $L>0$, we may now repeat the argument given for the proof of Theorem \ref{thm:FiniteSupport} to obtain the desired estimate \eqref{eq:LLT} for $\abs{x}\leq nL$. 

In the case of finitely-supported $\phi$, we argue along the lines of Section 3.1 of \cite{Co25} and Section 4.1 of \cite{CF24}. First, we apply the above argument to obtain positive constants $C_k$ and $M_k$ (for $k=1,2,\dots,K$) for which \eqref{eq:LLT} holds for when $\abs{x}\leq nL$ where 
\begin{equation*}
    L:=1+\max\{\abs{y},\abs{\alpha_k}:y\in\supp(\phi),k=1,2,\dots,K\}.
\end{equation*}
Though we do not need it here, we suspect that a Bernstein-type inequality akin to Lemma 3.1 of \cite{CF22} is true; such an inequality would give $\abs{\alpha_k}\leq \max\{\abs{y}:y\in\supp(\phi)\}$ so that $L=1+\max\{\abs{y}:y\in\supp(\phi)\}$. For $\abs{x}>nL$, we have $\phi^{(n)}(x)=0$ and so, to obtain a global estimate, we must find constants $C_k'\geq C_k$ and $0<M_k'\leq M_k$ for which
\begin{eqnarray}\label{eq:FarFieldGauss}\nonumber
    \abs{\sum_{k=1}^K\sum_{\lambda=0}^{\lambda_k}e^{-ix\cdot\xi_k}\widehat{\phi}(\xi_k)^n(Q_{\lambda,k}^n H_{P_k}^n)(x-n\alpha_k)}&\leq& \sum_{k=1}^K\sum_{\lambda=0}^{\lambda_k}\abs{(Q_{\lambda,k}^n H_{P_k}^n)(x-n\alpha_k)}\\
    &\leq&\sum_{k=1}^K\frac{C'_k}{n^{\mu_k+(\lambda_k+1)/2m_k}}\exp
    \left(-nM'_kR_k^{\#}\left(\frac{x-n\alpha_k}{n}\right)\right)
\end{eqnarray}
whenever $\abs{x}>nL$. To this end, for each $k$, using the fact that $e^{-P_k}$ is a Schwartz function, one can apply the method of proof of Proposition 2.6 of \cite{RSC17} to find positive constants $\rho_k$ and $B_{k}$ for which
\begin{equation*}
    \abs{Q_{\lambda,k}^n H_{P_k}^n(y)}\leq B_{k}e^{-n (2\rho_k) R_k^{\#}(y/n)}
\end{equation*}
for all $\lambda=0,1,\dots,\lambda_k$, $y\in\mathbb{R}^d$ and $n\in\mathbb{N}_+$. By virtue of Proposition \ref{prop:ExpSetLegFenchelTransform}, $R_k^{\#}$ is continuous and vanishes only at $0$ and therefore
\begin{equation*}
\epsilon_k:=\rho_k\inf_{y\in\mathbb{R}^d,\abs{y}\geq 1} R_k^{\#}(y)>0.
\end{equation*}
Upon noting that $\abs{(x-n\alpha_k)/n}\geq \abs{x/n}-\abs{\alpha_k}\geq 1
$ whenever $\abs{x}>nL$, we have
\begin{equation*}
    \abs{(Q_{\lambda,k}^n H_{P_k}^n)(x-n\alpha_k)}\leq B_{k}e^{-n\epsilon_k}\exp\left(-n\rho_kR_k^{\#}\left(\frac{x-n\alpha_k}{n}\right)\right)
\end{equation*}
for every $\lambda=0,1,\dots,\lambda_k$ and $n\in\mathbb{N}_+$ and $x\in\mathbb{Z}^d$ with $\abs{x}> nL$. With this, let's take $M_k'=\min\{\rho_k,M_k\}>0$ and $C_k'\geq C_k$ so that
\begin{equation*}
    (\lambda_k+1)B_ke^{-n\epsilon_k}\leq \frac{C_k'}{n^{\mu_k+(\lambda_k+1)/2m_k}}
\end{equation*}
for all $n\in\mathbb{N}$. Putting the two preceding estimates together, for each $k=1,2,\dots, K$, we obtain
\begin{equation*}
    \sum_{\lambda=0}^{\lambda_k}\abs{(Q_{\lambda,k}^n H_{P_k}^n)(x-n\alpha_k)}\leq \frac{C_k'}{n^{\mu_k+(\lambda_k+1)/2m}}\exp\left(-nM_k'R_k^{\#}\left(\frac{x-n\alpha_k}{n}\right)\right)
\end{equation*}
for $n\in\mathbb{N}_+$ and $x\in\mathbb{Z}^d$ with $\abs{x}>nL$. Summing these inequalities over $k$ immediately yields \eqref{eq:FarFieldGauss} and the proof is complete.
 
\end{proof} 

\section{Preliminary Technical Estimates}\label{sec:Technical}
In this section, we establish several technical estimates that are used to obtain Lemmas \ref{lem:IntegralExponentialEst} and \ref{lem:IntegralLLTEst}, which are key to the proofs of our main results. Before getting started, let's make some observations and fix some notation. Throughout this section, $P$ is a positive semi-elliptic polynomial of the form
\begin{equation*}
    P(\xi)=\sum_{\abs{\beta:\mathbf{m}}=2}a_\beta \xi^\beta
\end{equation*}
(with $\mathbf{m}=(m_1,m_2,\ldots,m_d)\in \mathbb{N}_{+}^d$ and $R= \Re P$) and $\Upsilon$ is a holomorphic function on a neighborhood of $0$ in $\mathbb{C}^d$ for which $\Upsilon(\xi) = o(R(\xi))$ for $\xi \to 0$ in $\mathbb{R}^d$. Throughout this section, we will deal primarily with three nested open neighborhoods of $0$ in $\mathbb{C}^d$: 
\begin{equation*}
    \mathcal{U}\subseteq\mathcal{U}_0\subseteq\mathcal{U}_{00}
\end{equation*}
The largest of these neighborhoods, $\mathcal{U}_{00}$, we fix here so that its closure $\overline{\mathcal{U}_{00}}$ is a compact subset of the domain of holomorphy of $\Upsilon$. With this, we can write
\begin{equation*}
\Upsilon(z)=\sum_{\beta\in\mathbb{N}^d}b_\beta z^\beta
\end{equation*}
where this series converges absolutely and uniformly on $\overline{\mathcal{U}_{00}}$. In view of Lemma \ref{lem:SubHomLittleO} and the fact that $\Upsilon(\xi)=o(R(\xi))$ as $\xi\to 0$, observe that
\begin{equation*}
0=\lim_{t\to 0}t^{-1}\Upsilon(t^D\xi)=\lim_{t\to 0}\sum_{\beta\in\mathbb{N}^d}b_\beta t^{-1}(t^D\xi)^\beta=\lim_{t\to 0}\sum_{\beta\in\mathbb{N}^d}b_\beta t^{\abs{\beta:2\mathbf{m}}-1}\xi^\beta
\end{equation*}
for every $\xi\in\mathbb{R}^d$ where $D=\diag\left(1/(2m_1),1/(2m_2),\ldots,1/(2m_d) \right)$. Consequently, $b_\beta=0$ for all $\beta$ for which $\abs{\beta:2\mathbf{m}}\leq 1$, and thus $\Upsilon$ is given by
\begin{equation*}
\Upsilon(z) = \sum_{\abs{\beta: 2\mathbf{m}}>1}b_{\beta}z^{\beta},
\end{equation*}
which converges absolutely and uniformly on $\overline{\mathcal{U}_{00}}$. \\

\noindent Looking back to $\mathbf{m}=(m_1,m_2,\dots,m_d$), we set $m= \lcm (\mathbf{m})$ and take $\kappa=(\kappa_1,\kappa_2,\ldots,\kappa_d) \in \mathbb{N}_{+}^d$ such that $m_j \kappa_j =m$ for $j=1,2,\ldots,d$. With this we put
\begin{equation*}
D' = 2mD=\diag (\kappa_1, \kappa_2,\ldots, \kappa_d). 
\end{equation*}
Throughout this section, we will work with both contracting groups $\{t^{D}\}$ and $\{t^{D'}\}$; the one we choose will depend on the application at hand. It is worth mentioning that the $\{t^{D'}\}$ can be evaluated in complex time and the map $\mathbb{C}^{d+1}\ni (r,z)\mapsto r^{D'}z=(r^{\kappa_1}z_1,r^{\kappa_2}z_2,\dots,r^{\kappa_d}z_d)\in\mathbb{C}^d$ is entire. Further, we have
\begin{equation*}
P(r^{D'}z)=r^{2m}P(z)
\end{equation*}
for every $(r,z)\in\mathbb{C}^{d+1}$ (with an analogous result holding for $R=\Re P$). Now, for a multi-index $\beta\in\mathbb{N}^d$, we define
\begin{equation*}
    \Lambda(\beta)=\beta\cdot\kappa-2m
\end{equation*}
and see easily that $\Lambda(\beta)\in \mathbb{N}_+$ precisely when $\abs{\beta:2\mathbf{m}}>1$. As discussed in the introduction, we aggregate the terms in the series for $\Upsilon$ so that
\begin{equation}\label{eq:Up-Slam}
\Upsilon(z)=\sum_{\lambda=1}^\infty\sum_{\Lambda(\beta)=\lambda}b_\beta z^\beta=\sum_{\lambda=1}^\infty \frac{1}{\lambda!}S_\lambda(z)
\end{equation}
where, for each $\lambda\in\mathbb{N}_+$, $S_\lambda$ is the polynomial given by
\begin{equation}\label{Slamb}
S_\lambda(z)=\lambda!\sum_{\Lambda(\beta)=\lambda} b_\beta z^\beta
\end{equation}
for $z\in\mathbb{C}^d$.
It is straightforward to see that the polynomials $S_\lambda$ enjoy the following two homogeneity identities:
\begin{equation}\label{eq:DPrimeScale}
S_\lambda(r^{D'}z)=r^{2m+\lambda}S_\lambda(z)\hspace{2cm}\forall r\in\mathbb{C}\,\,\mbox{ and }\,\,\forall z\in\mathbb{C}^d
\end{equation}
and
\begin{equation}\label{eq:DScale}
S_\lambda(r^Dz)=r^{1+\lambda/2m}S_\lambda(z)\hspace{2cm}\forall r\geq 0\,\,\mbox{ and }\,\,\forall z\in\mathbb{C}^d.
\end{equation}
Finally, given all of the above, we shall fix the compact set
\begin{equation*}
    \mathbb{S}=\{w=\eta-i\zeta\in\mathbb{C}^d:R(w):=R(\eta)+R(\zeta)= 1\}.
\end{equation*}
Since $\{t^D\}_{t>0}$ is contracting, there is some $t_0$ for which $t^D(\mathbb{S})\subseteq\mathcal{U}_{00}$ for all $0\leq t\leq t_0$. With this, fix our second largest neighborhood of $0$ as
\begin{equation*}
    \mathcal{U}_0:=\mathcal{U}_{00}\cap \{z=\xi-i\nu:R(z)=R(\xi)+R(\nu)<t_0\}\subseteq\mathcal{U}_{00}
\end{equation*}
of $0$ in $\mathbb{C}^d$. We have the following basic result that will be used throughout this section.
\begin{lemma}\label{lem:UNaught}
Fix $t_0>0$, $\mathbb{S}$, and $\mathcal{U}_0$ as above. Then, for every $0\leq t\leq t_0$, $t^D(\mathbb{S})\subseteq \mathcal{U}_{00}$ and, for every $z\in \mathcal{U}_0$, $z=t^Dw$ where $w\in \mathbb{S}$ and $t=R(z)=R(\xi)+R(\nu)<t_0$.
\end{lemma}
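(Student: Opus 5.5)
The first assertion is immediate from the choice of $t_0$, since $\{t^D\}_{t>0}$ is contracting and $\mathbb{S}$ is compact. To spell it out, the entries of $t^D w$ are $t^{1/2m_j}w_j$. Hence $\sup_{w\in\mathbb{S}}\abs{t^Dw}\leq \|t^D\|\sup_{w\in\mathbb{S}}\abs{w}\to 0$ as $t\to 0$. Because $\mathcal{U}_{00}$ is an open neighborhood of $0$, some $t_0>0$ gives $t^D(\mathbb{S})\subseteq\mathcal{U}_{00}$ for $0<t\leq t_0$; this is the defining property of $t_0$. We interpret $0^D\mathbb{S}=\{0\}\subseteq\mathcal{U}_{00}$ for $t=0$.

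For the second assertion, take $z=\xi-i\nu\in\mathcal{U}_0$ with $\xi,\nu\in\mathbb{R}^d$, and set $t=R(z)=R(\xi)+R(\nu)$. By definition of $\mathcal{U}_0$ we have $t<t_0$, and $t\geq 0$ by positive-definiteness of $R$. If $t=0$, positive-definiteness forces $\xi=\nu=0$, so $z=0$. We then write $z=0^Dw$ for any $w\in\mathbb{S}$, with $\mathbb{S}\neq\varnothing$ because $R$ is unbounded along rays by Proposition \ref{prop:PosHomEquiv}. So it suffices to treat $t>0$.

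For $t>0$, define $w=t^{-D}z=t^{-D}\xi-i\,t^{-D}\nu$. Since $t^{-D}$ is a real diagonal matrix, $\Re w=t^{-D}\xi$ and $-\Im w=t^{-D}\nu$. The homogeneity $R(s^D\eta)=sR(\eta)$ holds for $s>0$ and $\eta\in\mathbb{R}^d$, because $D\in\Exp(P)\subseteq\Exp(R)$ by Proposition \ref{prop:PosHomAreSemiElliptic}. Applying it with $s=t^{-1}$ gives
\begin{equation*}
R(w)=R(t^{-D}\xi)+R(t^{-D}\nu)=t^{-1}\bigl(R(\xi)+R(\nu)\bigr)=1,
\end{equation*}
so $w\in\mathbb{S}$ and $z=t^Dw$ with $t=R(z)<t_0$.

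The only point needing care is that the complexified function $R(z)=R(\xi)+R(\nu)$ inherits homogeneity under the real dilations $t^D$. This holds because $t^D$ acts separately on the real and imaginary parts. Compactness of $\mathbb{S}$ follows from Proposition \ref{prop:PosHomEquiv}(a): $\mathbb{S}$ is a closed subset of $S_R'\times S_R'$, where $S_R'=\{R\leq 1\}$ is compact. No step is a genuine obstacle.
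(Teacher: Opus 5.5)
Your proof is correct and follows the paper's argument. The first assertion is just the defining property of $t_0$. For the second, you split into $z=0$ and $z\neq 0$, set $t=R(\xi)+R(\nu)$ and $w=t^{-D}z$, and apply the $D$-homogeneity of $R$ separately to the real and imaginary parts to get $R(w)=1$. Your handling of the convention $w=\eta-i\zeta$, with $-\Im w=t^{-D}\nu$, is in fact more careful than the paper's proof, which writes $w=\eta+i\zeta$.
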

\begin{proof}
    By the construction above, $t^{D}(\mathbb{S})\subseteq\mathcal{U}_{00}$ for all $0\leq t\leq t_0$ by our choice of $t_0$. If $z\in\mathcal{U}_0$ is zero, then clearly $z=0^{D}(w)$ where $t=R(0)=0$ for any $w\in\mathbb{S}$. If $z=\xi-i\nu\in\mathcal{U}_0$ is non-zero, we set $t=R(z)=R(\xi)+R(\nu)<t_0$ and $w=t^{-D}z=\eta+i\zeta$ so that $z=t^D(w)$, 
    \begin{equation*}
        R(w)=t^{-1}R(z)=\frac{R(\xi)+R(\nu)}{R(\xi)+R(\nu)}=1,
    \end{equation*}
    and therefore $w\in \mathbb{S}$. 
\end{proof}
\begin{remark}
    If $t_0$ is further restricted so that $\{z=\xi-i\nu:R(z)=R(\xi)+R(\nu)<t_0\}\subseteq\mathcal{U}_{00}$. Then, the above lemma gives
    \begin{equation*}
        \mathcal{U}_0=\bigcup_{0\leq t<t_0}t^{D}(\mathbb{S})\subseteq\mathcal{U}_{00}.
    \end{equation*}
    These sets were key to the construction of the surface-carried measures in \cite{BR22}.
\end{remark}
\noindent With our notation now set, we have the following lemma. 

\begin{lemma}\label{lem:PhiEst}
Let $\mathcal{U}_0\subseteq\mathcal{U}_{00}$, $\Upsilon$, and $S_{\lambda}$ for $\lambda=1,2,\dots,$ be as above and set $S_0=0$. For $\tau\in\mathbb{N}$, define
\begin{equation*}
    \Phi(z)=\sum_{\lambda=0}^\infty\frac{S_{\lambda+\tau}(z)}{\lambda!}
\end{equation*}
so that, in particular, $\Phi(z)=\Upsilon(z)$ when $\tau=0$. Then $\Phi$ is holomorphic on $\mathcal{U}_{00}$ and there is a constant $C=C_{\tau}>0$ for which
\begin{equation*}
    \abs{\Phi(z)}\leq C(R(\xi)+R(\nu))^{1+\max\{1,\tau\}/2m}
\end{equation*}
for all $z=\xi-i\nu\in\mathcal{U}_0$. In particular, $\Phi(z)=o(R(\xi)+R(\nu))$ as $z=\xi-i\nu\to 0$. 
\end{lemma}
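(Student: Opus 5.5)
The plan is to use the scaling structure of the $S_\lambda$ together with the compactness of $\mathbb{S}$ (Lemma \ref{lem:UNaught}). For holomorphy, note that $\Phi(z)=\sum_{\lambda\ge0}\frac{\lambda!\,(\lambda+\tau)!}{\lambda!}\sum_{\Lambda(\beta)=\lambda+\tau}b_\beta z^\beta$ is again a power series whose coefficients are $b_\beta$ multiplied by $\frac{(\Lambda(\beta))!}{(\Lambda(\beta)-\tau)!}\le \Lambda(\beta)^\tau\le (\beta\cdot\kappa)^\tau$, a polynomial weight in $|\beta|$. Since the series for $\Upsilon$ converges absolutely on $\overline{\mathcal{U}_{00}}$, hence on a slightly larger polydisc-type set where Cauchy estimates hold, multiplying coefficients by a polynomial in $|\beta|$ preserves absolute and locally uniform convergence (equivalently, $\Phi$ is obtained from $\Upsilon$ by applying the Euler-type operator $\prod_{j=0}^{\tau-1}(\sum_k\kappa_k z_k\partial_k-2m-j)$, which preserves holomorphy). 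To avoid shrinking domains, I would, if needed, enlarge the original domain of holomorphy slightly beyond $\overline{\mathcal{U}_{00}}$ so that absolute convergence with polynomial weights holds on $\overline{\mathcal{U}_{00}}$ itself. This gives holomorphy on $\mathcal{U}_{00}$ and boundedness by some $B$ on $\overline{\mathcal{U}_{00}}$ after shrinking if necessary.

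For the estimate, take $z=\xi-i\nu\in\mathcal{U}_0$ and write $z=t^Dw$ with $w\in\mathbb{S}$ and $t=R(\xi)+R(\nu)<t_0$, by Lemma \ref{lem:UNaught}. By \eqref{eq:DScale}, $S_{\lambda+\tau}(t^Dw)=t^{1+(\lambda+\tau)/2m}S_{\lambda+\tau}(w)$. Hence
\[
\Phi(t^Dw)=t^{1+\tau/2m}\sum_{\lambda\ge0}\frac{t^{\lambda/2m}}{\lambda!}S_{\lambda+\tau}(w).
\]
When $\tau\ge1$, write $s=t^{1/2m}$ and $g_w(s)=\sum_\lambda \frac{s^\lambda}{\lambda!}S_{\lambda+\tau}(w)$. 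Using \eqref{eq:DPrimeScale}, $g_w(s)=s^{-2m-\tau}\Phi(s^{D'}w)$ for $s\neq0$, and more usefully $\Phi(s^{D'}w)=\sum_\lambda s^{2m+\lambda+\tau}S_{\lambda+\tau}(w)/\lambda!$. Since $s^{D'}w=(s^{2m})^Dw\in\mathcal{U}_{00}$ for $0\le s\le t_0^{1/2m}$, and $(s,w)\mapsto\Phi(s^{D'}w)$ is holomorphic in $s$ on a complex disc $|s|\le s_1$ uniformly for $w$ in a compact neighborhood of $\mathbb{S}$, the function $g_w(s)=s^{-2m-\tau}\Phi(s^{D'}w)$ has a removable singularity at $0$. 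The maximum principle then gives $|g_w(s)|\le s_1^{-2m-\tau}\sup|\Phi|$ uniformly for $w\in\mathbb{S}$ and real $0\le s\le t_0^{1/2m}$ (after choosing $t_0$ so that $t_0^{1/2m}<s_1$). This yields $|\Phi(z)|\le Ct^{1+\tau/2m}$.

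For $\tau=0$, $S_0=0$, so the sum starts at $\lambda=1$ and the same argument, applied to $\sum_{\lambda\ge1}$ with one factor $s$ pulled out, gives $t^{1+1/2m}$. Together these cases give the exponent $1+\max\{1,\tau\}/2m$, and the little-$o$ statement is immediate.

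The main obstacle is the uniformity step: justifying that $s\mapsto\Phi(s^{D'}w)$ is holomorphic on a fixed complex disc for all $w\in\mathbb{S}$. Complex $s$ moves $s^{D'}w$ off the real scaling orbit, so one needs a neighborhood argument. I would handle this using compactness of $\mathbb{S}$ and continuity of $(s,w)\mapsto s^{D'}w$ at $s=0$, which maps to $0\in\mathcal{U}_{00}$: this gives a uniform radius $s_1$ with $s^{D'}\mathbb{S}\subseteq\mathcal{U}_{00}$ for $|s|\le s_1$. Shrinking $t_0$ (and hence $\mathcal{U}_0$) if necessary then completes the argument, with the constant depending on $\tau$.
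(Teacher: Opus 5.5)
Your proof is correct, but your estimate takes a different route from the paper's. Your holomorphy step is essentially the paper's: the coefficients of $\Upsilon$ are multiplied by a weight bounded by $m^\tau\abs{\beta}^\tau$, which does not affect absolute convergence on $\overline{\mathcal{U}_{00}}$. Your remark that $\Phi=\prod_{j=0}^{\tau-1}\bigl(\sum_k\kappa_kz_k\partial_k-2m-j\bigr)\Upsilon$ is a neat extra that the paper does not use.

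The paper stays on the real dilation orbit. It sets $M=\sup_{\overline{\mathcal{U}_{00}}}\sum_{\lambda}\abs{S_{\lambda+\tau}(z)}/\lambda!$ and writes $z=t^Dw$. By \eqref{eq:DScale}, each term becomes $S_{\lambda+\tau}(t^Dw)=(t/t_0)^{1+(\lambda+\tau)/2m}S_{\lambda+\tau}(t_0^Dw)$ with $t_0^Dw\in\mathcal{U}_{00}$. Since $t/t_0<1$ and $S_0=0$, every power is at most $(t/t_0)^{1+\max\{1,\tau\}/2m}$, which gives $\abs{\Phi(z)}\le M(t/t_0)^{1+\max\{1,\tau\}/2m}$ in one line.

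You instead complexify the dilation parameter and run a Schwarz-lemma argument on $g_w(s)=s^{-2m-\tau}\Phi(s^{D'}w)$. This works, but it needs the complex disc $\abs{s}\le s_1$ to map $\mathbb{S}$ into $\mathcal{U}_{00}$. The choice of $t_0$ does not provide this, since it only controls real $t\in[0,t_0]$; hence your tube-lemma step and your shrinking of $t_0$.

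That shrinking is unnecessary. For $z\in\mathcal{U}_0$ with $R(\xi)+R(\nu)\ge s_1^{2m}$, the bound follows from $\sup_{\overline{\mathcal{U}_{00}}}\abs{\Phi}<\infty$ after enlarging $C$, so the lemma holds on the original $\mathcal{U}_0$ as stated.

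What your route buys is that it uses only $\sup\abs{\Phi}$ on the complex orbit, not the absolute sum of the $S_{\lambda+\tau}$. It would therefore apply to any holomorphic function with the right weighted vanishing order. Here, though, identifying $g_w$ with the power series $\sum_\lambda s^\lambda S_{\lambda+\tau}(w)/\lambda!$ already relies on absolute convergence. So the paper's termwise comparison is the more economical argument.
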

\begin{proof}
We first observe that
\begin{equation*}
    \Phi(z)=\sum_{\lambda=0}^\infty \frac{(\lambda+\tau)!}{\lambda!}\sum_{\beta\cdot\kappa=2m+(\lambda+\tau)}b_\beta z^\beta=\sum_{\beta\cdot\kappa-2m\geq \tau}\frac{(\beta\cdot\kappa-2m)!}{(\beta\cdot\kappa-2m-\tau)!} b_\beta z^\beta.
\end{equation*}
Upon noting that $\kappa_j\leq m$ (with equality if and only if $m_j=1$), for any multi-index $\beta\in\mathbb{N}^d$ for which $\beta\cdot\kappa-2m\geq \tau$, we have
\begin{equation*}
  1\leq \frac{(\beta\cdot\kappa-2m)!}{(\beta\cdot\kappa-2m-\tau)!}\leq(\beta\cdot\kappa)^\tau \leq m^\tau\abs{\beta}^\tau
\end{equation*}
where $\abs{\beta}=\beta_1+\beta_2+\cdots+\beta_d$. Consequently
\begin{equation*}
    \limsup_{\abs{\beta}\to\infty}\abs{b_\beta z^\beta}^{1/\abs{\beta}}\leq \limsup_{\abs{\beta}\to\infty}\abs{\frac{(\beta\cdot\kappa-2m)!}{(\beta\cdot\kappa-2m-\tau)!}b_\beta z^\beta}^{1/\abs{\beta}}\leq\limsup_{\abs{\beta}\to\infty}(m\abs{\beta})^{\tau/\abs{\beta}}\abs{b_\beta z^\beta}^{1/\abs{\beta}}=\limsup_{\abs{\beta}\to\infty}\abs{b_\beta z^\beta}^{1/\abs{\beta}}
\end{equation*}
since $\lim_{n\to\infty} (mn)^{\tau/n}=1$. In other words, by virtue of the root test, the only place where the convergence of the series for $\Phi$ and $\Upsilon$ can differ is on the boundary of the domain of holomorphy of $\Upsilon$. In particular, the series for $\Phi$ converges absolutely and uniformly on $\overline{\mathcal{U}_{00}}$ and $\Phi$ is holomorphic on $\mathcal{U}_{00}$. 

With the above, set
\begin{equation*} M=\sup_{z\in\overline{\mathcal{U}_{00}}}\sum_{\lambda=0}^\infty\frac{\abs{S_{\lambda+\tau}(z)}}{\lambda!}<\infty.
\end{equation*}
Given $z\in\mathcal{U}_0$, we write $z=t^{D}w$ for $0\leq t=R(\xi)+R(\nu)<t_0$ and $w\in\mathbb{S}$ by virtue of Lemma \ref{lem:UNaught}. In view of \eqref{eq:DScale}, we have
\begin{eqnarray*} \hspace{-1cm}\abs{\Phi(z)}=\abs{\Phi(t^Dw)}&\leq&\sum_{\lambda=0}^\infty\frac{\abs{S_{\lambda+\tau}(t^Dw)}}{\lambda!}\\
&=&\sum_{\lambda=0}^\infty\frac{\abs{(t/t_0)^{1+(\lambda+\tau)/2m}S_{\lambda+\tau}(t_0^Dw)}}{\lambda!}\\
   &\leq&(t/t_0)^{1+\max\{1,\tau\}/2m}M=C (R(\xi)+R(\nu))^{1+\max\{1,\tau\}/2m}
\end{eqnarray*}
where we have set $C=M/t_0^{1+\max\{1,\tau\}/2m}$.
\end{proof}

\noindent As we will see, Lemma \ref{lem:IntegralExponentialEst} relies on the following estimate.

\begin{proposition}\label{prop:gBound}
Let $P$ be a semi-elliptic polynomial (with $R=\Re P$ and $\mathbf{m}=(m_1,m_2,\dots,m_d)\in\mathbb{N}_+^d$) and suppose that $\Upsilon$ is holomorphic on a neighborhood of $0$ in $\mathbb{C}^d$ and has $\Upsilon(\xi)=o(R(\xi))$ as $\xi\to 0$ for $\xi\in\mathbb{R}^d$. We set
\begin{equation*}
f(z)=e^{-P(z)+\Upsilon(z)}
\end{equation*}
for any $z\in\mathbb{C}^d$ in the domain of holomorphy of $\Upsilon$. Given $D=\diag(1/2m_1,1/2m_2,\dots,1/2m_d)\in \Exp(P)$, define $g(n,z)=f(n^{-D}z)^n$ for those $n\in\mathbb{N}_+$ and $z\in\mathbb{C}^d$ for which $n^{-D}z$ lives in the domain of holomorphy  of $\Upsilon$. Then, there exists an open neighborhood $\mathcal{U}$ of $0$ in $\mathbb{C}^d$ and positive constants $\epsilon$ and $M$ for which
\begin{equation*}
\abs{g(n,z)}\leq\exp\left(-\epsilon(\xi_1^{2m_1}+\xi_2^{2m_2}+\cdots+\xi_d^{2m_d})+M(\nu_1^{2m_1}+\nu_2^{2m_2}+\cdots+\nu_d^{2m_d})\right)
\end{equation*}
whenever $z=\xi-i\nu\in n^D(\mathcal{U})$.
\end{proposition}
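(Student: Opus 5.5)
The plan is to write $\abs{g(n,z)}=\exp\bigl(n\Re(-P(n^{-D}z)+\Upsilon(n^{-D}z))\bigr)$ and use homogeneity to pull the factor $n$ inside. Since $D\in\Exp(P)$, we have $nP(n^{-D}z)=P(z)$ for complex $z$ as well. Indeed, $P(r^{D'}z)=r^{2m}P(z)$ for complex $r$, and $n^{-D}=(n^{-1/2m})^{D'}$. Hence
\begin{equation*}
\abs{g(n,z)}=\exp\bigl(-\Re P(z)+n\Re\Upsilon(n^{-D}z)\bigr).
\end{equation*}
Two estimates are then needed: a lower bound on $\Re P(\xi-i\nu)$ and an upper bound on $n\abs{\Upsilon(n^{-D}z)}$.

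For $\Upsilon$, take $\mathcal{U}\subseteq\mathcal{U}_0$ and apply Lemma \ref{lem:PhiEst} with $\tau=0$. It gives $\abs{\Upsilon(w)}\le C(R(\eta)+R(\zeta))^{1+1/2m}$ for $w=\eta-i\zeta\in\mathcal{U}_0$. With $w=n^{-D}z$ we have $R(n^{-D}\xi)=n^{-1}R(\xi)$, and similarly for $\nu$. Therefore
\begin{equation*}
n\abs{\Upsilon(n^{-D}z)}\le C\,(R(\xi)+R(\nu))\,\bigl(R(n^{-D}\xi)+R(n^{-D}\nu)\bigr)^{1/2m}.
\end{equation*}
The last factor is at most $t_1^{1/2m}$ if $\mathcal{U}$ is chosen inside $\{R(\eta)+R(\zeta)<t_1\}$. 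Shrinking $t_1$ makes this term at most $\theta(R(\xi)+R(\nu))$ with $\theta$ as small as we like.

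For $P$, expand $\Re P(\xi-i\nu)=R(\xi)+Q(\xi,\nu)$. Every monomial $\xi^{\beta}$ with $\abs{\beta:\mathbf{m}}=2$ becomes $(\xi-i\nu)^\beta$, which splits into mixed monomials $\xi^{\gamma}\nu^{\beta-\gamma}$ with $\gamma<\beta$. Each such mixed term, together with the pure $\nu$ terms, will be handled by the weighted Young inequality
\begin{equation*}
\abs{\xi^\gamma\nu^{\beta-\gamma}}\le \epsilon'\sum_j \xi_j^{2m_j}+C_{\epsilon'}\sum_j\nu_j^{2m_j}.
\end{equation*}
This works because the anisotropic weights $\abs{\gamma:2\mathbf{m}}+\abs{\beta-\gamma:2\mathbf{m}}=1$ make the exponents Hölder-conjugate after writing $\abs{\xi_j}^{\gamma_j}=(\xi_j^{2m_j})^{\gamma_j/2m_j}$. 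This step is the main technical point, and it should be done cleanly via the generalized AM--GM inequality $\prod a_j^{p_j}\le\sum p_j a_j$ for $\sum p_j=1$.

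Combine this with the comparison $R(\xi)\asymp \sum_j\xi_j^{2m_j}$ for positive semi-elliptic $R$. This comparison follows from $D$-homogeneity and compactness of the unit level set, both by Proposition \ref{prop:PosHomEquiv} (or the asymptotics of Subsection \ref{ssec:PHFAsymp}). The result is
\begin{equation*}
-\Re P(z)\le -c\sum_j\xi_j^{2m_j}+\epsilon' \sum_j\xi_j^{2m_j}+C_{\epsilon'}\sum_j\nu_j^{2m_j}.
\end{equation*}
Choose $\epsilon'=c/4$. Then choose $t_1$ so small that $\theta R(\xi)\le (c/4)\sum_j\xi_j^{2m_j}$ and $\theta R(\nu)\le C'\sum_j\nu_j^{2m_j}$; the latter holds for any $\theta\le 1$ with $C'$ from the comparison constant. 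This yields the claim with $\epsilon=c/2$ and $M=C_{\epsilon'}+C'$, for $z\in n^D(\mathcal{U})$ with $\mathcal{U}=\mathcal{U}_0\cap\{R(\eta)+R(\zeta)<t_1\}$, which is open.
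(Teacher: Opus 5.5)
Your argument is correct and has the same skeleton as the paper's proof. You bound $-\Re P(\xi-i\nu)$ by a negative multiple of the $\xi$-part plus a multiple of the $\nu$-part, use Lemma \ref{lem:PhiEst} to make $n\abs{\Upsilon(n^{-D}z)}$ a small multiple of $R(\xi)+R(\nu)$ on a small enough neighborhood, rescale by homogeneity, and finish with $R\asymp\sum_j\xi_j^{2m_j}$.

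The main difference is that you prove $-\Re P(\xi-i\nu)\le -c\sum_j\xi_j^{2m_j}+C\sum_j\nu_j^{2m_j}$ yourself, via the binomial expansion and weighted AM--GM with an $\epsilon$-rescaling. That rescaling works because every mixed term has $\abs{\beta-\gamma:2\mathbf{m}}>0$. The paper instead imports this inequality, in the form $-\epsilon'R(\xi)+M'R(\nu)$, as Proposition 8.13 of \cite{RSC17}. A smaller difference: you pull the dilation through $P$ at complex arguments, whereas the paper applies that inequality at $n^{-D}z$ and then uses the real homogeneity of $R$.
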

\begin{proof}
We first appeal to Proposition 8.13 of \cite{RSC17} to find positive constants $\epsilon'$ and $M'$ for which
\begin{equation*}
- \Re P(z)\leq -\epsilon' R(\xi)+M'R(\nu)
\end{equation*}
for $z=\xi-i\nu\in\mathbb{C}^d$. Since $\abs{\Upsilon(z)}=o(R(\xi)+R(\nu))$ as $z=\xi-i\nu\to 0$ by virtue of the preceding lemma, we can find an open neighborhood $\mathcal{U}\subseteq\mathcal{U}_0$ of $0$ in $\mathbb{C}^d$ for which
\[
\abs{\Upsilon(z)} \leq \frac{\epsilon'}{2}\left( R(\xi) + R(\nu) \right),
\]
for all $z=\xi -i\nu\in \mathcal{U}$. Thus, since $R$ is homogeneous with respect to $D$, we have
\begin{eqnarray*}
\abs{g(n,z)}&=&e^{n\Re(-P(n^{-D}z)+\Upsilon(n^{-D}z))}\\
&\leq &e^{-n \Re(P(n^{-D}z))+n\abs{\Upsilon(n^{-D}z)}}\\
&\leq& e^{-n\epsilon' R(n^{-D}\xi)+nM'R(n^{-D}\nu)+\tfrac{\epsilon'}{2}(nR(n^{-D}\xi)+nR(n^{-D}\nu))}\\
&=&e^{-\epsilon R(\xi)+M R(\nu)}
\end{eqnarray*}
for $z\in n^{D}(\mathcal{U})$ where we have set $\epsilon=\epsilon'/2$ and $M=M'+\epsilon'/2$. Upon noting that $R(x)\asymp x_1^{2m_1}+x_2^{2m_2}+\cdots+x_d^{2m_d}$ by virtue of Proposition \ref{prop:PureLambdaCompare}, the desired estimate follows by, if necessary, adjusting the constants $\epsilon$ and $M$.
\end{proof}

\noindent Our next task is to obtain an estimate, analogous to that of the previous proposition, that will be used in the proof of Lemma \ref{lem:IntegralLLTEst}.  To this end,we are interested in the expression $n\Upsilon\left( n^{-D}z \right)$ for $n\in \mathbb{N}_{+}$ which, at least formally, can be expressed as
\begin{equation*}
    n\Upsilon(n^{-D}z)=n\sum_{\lambda=1}^\infty \frac{S_{\lambda}(n^{-D}z)}{\lambda!}=n\sum_{\lambda=1}^\infty n^{-1-\lambda/2m}\frac{S_\lambda(z)}{\lambda!}=\sum_{\lambda=1}^\infty \frac{S_\lambda(z)}{\lambda!}\left(\frac{1}{n^{1/2m}}\right)^\lambda.
\end{equation*}
This motivates us to consider the function

\begin{equation}\label{eq:PsiDef}
\psi(r,z):= \sum_{\lambda=1}^{\infty} \frac{S_{\lambda}(z)}{\lambda !}r^{\lambda},
\end{equation}
which  converges, at least, when $0< r<1$ and $z\in\mathcal{U}_0$ and satisfies
\begin{equation}\label{eq:PsiRelation}
    \Upsilon(r^{D'}z)=r^{2m}\psi(r,z).
\end{equation}
In fact, the following is true.

\begin{lemma}\label{lem:PsiProperties}
    Let $\mathcal{U}_0$ be the open neighborhood of $0$ in $\mathbb{C}^d$ as guaranteed by Lemma \ref{lem:UNaught} whose closure lives inside of the domain of holomorphy of $\Upsilon$. Define
    \begin{equation*}
    \mathcal{D}_0=\left\{(r,z)\in \mathbb{C}\times\mathbb{C}^d: r^{D'}z\in\mathcal{U}_0\right\}\hspace{1cm}\mbox{and}\hspace{1cm}\mathcal{D}_0^+=\left\{(r,z)\in [0,\infty)\times\mathbb{C}^d:r^{D'}z\in\mathcal{U}_0\right\}
    \end{equation*}
    so that $\mathcal{D}_0^+\subseteq\mathcal{D}_0$. Then
    \begin{equation*}
        \psi(r,z)=\sum_{\lambda=1}^\infty \frac{S_{\lambda}(z)}{\lambda!} r^\lambda
    \end{equation*}
    is holomorphic on $\mathcal{D}_0$. Moreover, for each $\tau\in\mathbb{N}_+$,
    \begin{equation*}
        \psi_{\tau}(r,z):=(\partial_r^{\tau}\psi)(r,z)=\sum_{\lambda=\tau}^\infty \frac{S_{\lambda}(z)}{(\lambda-\tau)!} r^{\lambda-\tau}=\sum_{\lambda=0}^\infty\frac{S_{\lambda+\tau}(z)}{\lambda!}r^\lambda
    \end{equation*}
    for every $(r,z)\in\mathcal{D}_0$ and there is a constant $C=C_\tau>0$ for which
    \begin{equation*}
        \abs{\psi_\tau(r,z)}\leq C (R(\xi)+R(\nu))^{1+\tau/2m}
    \end{equation*}
    for every $(r,z)\in\mathcal{D}_0^+$.
\end{lemma}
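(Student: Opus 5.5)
The plan is to reduce everything to the scaling identity \eqref{eq:DPrimeScale} together with Lemma \ref{lem:PhiEst}, exactly as in the proof of that lemma. The key observation is that for $(r,z)\in\mathcal{D}_0$ we have $w:=r^{D'}z\in\mathcal{U}_0$, and by \eqref{eq:DPrimeScale}, $S_\lambda(z)r^\lambda=r^{-2m}S_\lambda(r^{D'}z)$ whenever $r\neq 0$. Since this identity breaks down at $r=0$ (and $z$ itself need not lie in $\mathcal{U}_{00}$), I will not divide by $r^{2m}$. Instead I will work directly with the monomial expansion.

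\textbf{Holomorphy on $\mathcal{D}_0$.} Write
\begin{equation*}
S_\lambda(z)r^\lambda=\lambda!\sum_{\Lambda(\beta)=\lambda}b_\beta z^\beta r^{\beta\cdot\kappa-2m}.
\end{equation*}
Every term of the series for $\psi$ therefore has the form $b_\beta\, r^{-2m}(r^{D'}z)^\beta$, with the total exponent of $r$ nonnegative. Hence
\begin{equation*}
\psi(r,z)=\sum_{\Lambda(\beta)\ge1}b_\beta\, r^{\Lambda(\beta)}z^\beta.
\end{equation*}
To prove convergence, I will use the coordinates $w_j=r^{\kappa_j}z_j$. Then $\abs{b_\beta r^{\Lambda(\beta)}z^\beta}=\abs{r}^{-2m}\abs{b_\beta w^\beta}$ for $r\ne0$, and the series $\sum\abs{b_\beta w^\beta}$ converges uniformly on $\overline{\mathcal{U}_{00}}\supseteq\mathcal{U}_0$. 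This gives locally uniform convergence on $\mathcal{D}_0\cap\{r\neq0\}$, so $\psi$ is holomorphic there.

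The points with $r=0$ need separate treatment, and I expect this to be the main technical obstacle. Since $\mathcal{D}_0\supseteq\{0\}\times\mathbb{C}^d$, the bound $\abs{r}^{-2m}$ blows up as $r\to0$. My first approach is to exploit $\Lambda(\beta)\ge1$: for fixed $z$ in a compact set and $\abs{r}\le\rho$ small, the terms $b_\beta r^{\Lambda(\beta)} z^\beta$ with $\beta\cdot\kappa$ large are controlled by $\abs{b_\beta}(\rho^{\kappa}\abs{z})^\beta\rho^{-2m}$, and only finitely many $\beta$ have $\beta\cdot\kappa\le 2m+\lambda$ for bounded $\lambda$. If this fails, I will apply Hartogs/Riemann extension in $r$. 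For fixed $z$, the function $r\mapsto r^{2m}\psi(r,z)=\Upsilon(r^{D'}z)$ is holomorphic near $r=0$ and vanishes to order $\ge 2m+1$, because the monomials with $\beta\cdot\kappa\le2m$ have $b_\beta=0$. So $\psi(\cdot,z)$ has a removable singularity at $0$. Combined with the locally uniform convergence above, or with Cauchy estimates on a small circle in $r$, this gives joint holomorphy by Osgood's lemma.

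\textbf{Derivatives.} Once holomorphy and locally uniform convergence are established, term-by-term differentiation in $r$ is justified. This yields the stated series for $\psi_\tau$, and reindexing with $\lambda\mapsto\lambda+\tau$ gives the second form.

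\textbf{Bound for real $r\ge0$.} For $(r,z)\in\mathcal{D}_0^+$ with $r>0$, apply \eqref{eq:DPrimeScale} to get
\begin{equation*}
\psi_\tau(r,z)=r^{-2m-\tau}\sum_{\lambda\ge0}\frac{S_{\lambda+\tau}(r^{D'}z)}{\lambda!}=r^{-2m-\tau}\Phi(r^{D'}z),
\end{equation*}
where $\Phi$ is the function of Lemma \ref{lem:PhiEst}. That lemma gives $\abs{\Phi(w)}\le C R(w)^{1+\tau/2m}$ for $w\in\mathcal{U}_0$, since $\max\{1,\tau\}=\tau$ when $\tau\ge1$. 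Homogeneity gives $R(r^{D'}z)=R(r^{D'}\xi)+R(r^{D'}\nu)=r^{2m}(R(\xi)+R(\nu))$. Substituting,
\begin{equation*}
\abs{\psi_\tau(r,z)}\le C\, r^{-2m-\tau}\,r^{2m+\tau}(R(\xi)+R(\nu))^{1+\tau/2m}=C(R(\xi)+R(\nu))^{1+\tau/2m}.
\end{equation*}
At $r=0$ the same bound follows by continuity in $r$. This completes the argument.
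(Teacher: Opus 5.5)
Your proposal is correct and follows the paper's proof. Your two regimes match the paper's compact-set argument: small $|r|$ is dominated through $\rho^{D'}z\in\mathcal{U}_0$, and $|r|$ bounded below through the factor $|r|^{-2m}$. Your $r>0$ bound via $\psi_\tau(r,z)=r^{-2m-\tau}\Phi(r^{D'}z)$ and Lemma \ref{lem:PhiEst} is the paper's computation, so the Hartogs/Riemann-extension fallback is never needed.

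The only deviation is at $r=0$. You take the limit $r\to 0^+$, which is valid because $(r,z)\in\mathcal{D}_0^+$ for all small $r>0$ and $\psi_\tau$ is continuous there. The paper instead writes $\psi_\tau(0,z)=S_\tau(z)$ and uses the $D$-homogeneity of $S_\tau$ together with $\sup_{\mathbb{S}}\abs{S_\tau}$. Your route gives the same conclusion without enlarging $C$.
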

\begin{proof}
To show that $\psi(r,z)$ is holomorphic on $\mathcal{D}_0$, we will first show that the series defining $\psi$ converges uniformly on every compact subset of $\mathcal{D}_0$. To this end, let $K\subseteq\mathcal{D}_0$ be compact and take $\epsilon>0$. Since the projection map $\Pi_2(r,z)=z$ is continuous, $\Pi_2(K)$ is a compact subset of $\mathbb{C}^d$ and so we can easily choose $r_0>0$ for which $r^{D'}z\in\mathcal{U}_0$ whenever $z\in\Pi_2(K)$ and $0\leq r\leq r_0$. By our construction of $\mathcal{U}_0$, the series defining $\Upsilon$ must converge absolutely and uniformly on $\overline{\mathcal{U}_0}\subseteq\overline{\mathcal{U}_{00}}$. Consequently, we may choose some $N\in\mathbb{N}_+$ such that
\begin{equation*}
    \sum_{\lambda=l}^n\frac{\abs{S_\lambda(w)}}{\lambda!}<r_0^{2m}\epsilon
\end{equation*}
whenever $w\in\mathcal{U}_0$ and $n\geq l\geq N$. Now, for any $(r,z)\in K$ and $n\geq l\geq N$, we have two cases: If $\abs{r}<r_0$,
\begin{equation*}
    \sum_{\lambda=l}^n\frac{\abs{S_{\lambda}(z)}}{\lambda!}\abs{r}^\lambda=\sum_{\lambda=l}^n\frac{\abs{S_{\lambda}(r_0^{D'}z)}}{\lambda!}r_0^{-2m-\lambda}\abs{r}^\lambda=r_0^{-2m}\sum_{\lambda=l}^n\frac{\abs{S_{\lambda}(r_0^{D'}z)}}{\lambda!}\abs{r/r_0}^\lambda<r_0^{-2m}\sum_{\lambda=l}^n\frac{\abs{S_{\lambda}(r_0^{D'} z)}}{\lambda!}<\epsilon
\end{equation*}
where we have used \eqref{eq:DPrimeScale}. If, $\abs{r}\geq r_0$ instead, then
\begin{equation*}
    \sum_{\lambda=l}^n\frac{\abs{S_{\lambda}(z)}}{\lambda!}\abs{r}^\lambda=\abs{r}^{-2m}\sum_{\lambda=l}^n\frac{\abs{S_{\lambda}(r^{D'}z)}}{\lambda!}\leq r_0^{-2m}\sum_{\lambda=l}^n\frac{\abs{S_{\lambda}(r^{D'}z)}}{\lambda!}<\epsilon.
\end{equation*}
Consequently, the series for $\psi$ converges absolutely and uniformly on every compact subset of $\mathcal{D}_0$.  Making an appeal to the theorem of Weierstrass (Theorem 1.75 of \cite{ScheidemannCASVBook}), we conclude that $\psi(r,z)$ is holomorphic on $\mathcal{D}_0$ and can be differentiated, term-by-term, ad infinitum. In particular, for any $\tau\in\mathbb{N}_+$,
\begin{equation*}
    \psi_{\tau}(r,z)=\sum_{\lambda=\tau}^\infty\frac{S_{\lambda}(z)}{(\lambda-\tau)!} r^{\lambda-\tau}=\sum_{\lambda=0}^\infty\frac{S_{\lambda+\tau}(z)}{\lambda!}r^\lambda
\end{equation*}
for all $(r,z)\in\mathcal{D}_0$. Now, for $(r,z)\in\mathcal{D}_0^+\subseteq\mathcal{D}_0$ with $r>0$, we make an appeal to Lemma \ref{lem:PhiEst} to find $C=C_\tau>0$ for which
\begin{equation*}
    \abs{\psi_{\tau}(r,z)}=\abs{r^{-2m-\tau}\sum_{\lambda=\tau}^\infty\frac{S_\lambda(r^{D'}z)}{(\lambda-\tau)!}}\leq r^{-2m-\tau}C(R(r^{D'}\xi)+R(r^{D'}\nu))^{1+\tau/2m}=C(R(\xi)+R(\nu))^{1+\tau/2m}.
\end{equation*}
For $r=0$, we have
\begin{equation*}
    \psi_{\tau}(0,z)=S_\tau(z)=S_\tau(t^Dt^{-D}z)=t^{1+\tau/2m}S_\tau(t^{-D}z)
\end{equation*}
by \eqref{eq:DScale} for every $t>0$ and $z=\xi-i\nu\in\mathbb{C}^d$. Of course, upon taking $t=R(z)=R(\xi)+R(\nu)$ so that $t^{-D}z\in\mathbb{S}$, we obtain the desired estimate by updating $C$, if necessary, so that $C=C_\tau\geq\sup_{w\in \mathbb{S}}\abs{S_\tau(w)}$. 
\end{proof}

\begin{lemma}\label{lem:BellEst}
Let $\mathcal{U}_0$ and $\mathcal{D}_0^+$ be as in Lemma \ref{lem:PsiProperties}. For any $\lambda_0\in\mathbb{N}$ and $\epsilon>0$, there exists an open neighborhood $\mathcal{U}\subseteq\mathcal{U}_0$ of $0$ in $\mathbb{C}^d$ and a positive constant $C$ for which
\begin{equation}\label{eq:BellEst}
    \abs{e^{\psi(r,z)}-\sum_{\lambda=0}^{\lambda_0}B_\lambda(S_1(z),S_2(z),\dots,S_{\lambda}(z))\frac{r^\lambda}{\lambda!}}\leq C r^{\lambda_0+1}e^{\epsilon (R(\xi)+R(\nu))}
\end{equation}
for all
\begin{equation*}
    (r,z)\in\mathcal{D}^+:=\{(r,z)\in [0,\infty)\times\mathbb{C}^d:r^{D'}z\in\mathcal{U}\}\subseteq\mathcal{D}_0^+.
\end{equation*}
\end{lemma}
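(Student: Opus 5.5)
The plan is to apply Taylor's theorem with integral remainder in the variable $r$ to $h(r):=e^{\psi(r,z)}$ for fixed $z$. By Lemma \ref{lem:PsiProperties}, $\psi$ is holomorphic on $\mathcal{D}_0$, so $h$ is smooth in $r$. By the Bell polynomial generating identity \eqref{eq:BellGenerating}, together with $\psi(r,z)=\sum_{\lambda\ge1}S_\lambda(z)r^\lambda/\lambda!$ and $\psi(0,z)=0$, its Taylor coefficients at $r=0$ are
\begin{equation*}
h^{(\lambda)}(0)=B_\lambda(S_1(z),\dots,S_\lambda(z)).
\end{equation*}
Hence the left side of \eqref{eq:BellEst} equals
\begin{equation*}
\frac{1}{\lambda_0!}\int_0^r (r-s)^{\lambda_0}h^{(\lambda_0+1)}(s)\,ds,
\end{equation*}
and it is bounded by $\frac{r^{\lambda_0+1}}{(\lambda_0+1)!}\sup_{0\le s\le r}|h^{(\lambda_0+1)}(s)|$. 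Since $(r,z)\in\mathcal{D}^+$ implies $(s,z)\in\mathcal{D}^+$ for $0\le s\le r$, it suffices to bound $|h^{(\lambda_0+1)}(s,z)|$ by $Ce^{\epsilon(R(\xi)+R(\nu))}$ on $\mathcal{D}^+$. This works because $\mathcal{U}$ is a sublevel-type set, and I would choose it of the form $\mathcal{U}_0\cap\{R(\xi)+R(\nu)<t_1\}$ so that this monotonicity holds via homogeneity.

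The derivative is handled by Faà di Bruno:
\begin{equation*}
\partial_r^{N}e^{\psi}=e^{\psi}B_N(\psi_1,\dots,\psi_N),\qquad N=\lambda_0+1,
\end{equation*}
with $\psi_\tau=\partial_r^\tau\psi$. Writing $T=R(\xi)+R(\nu)$, Lemma \ref{lem:PsiProperties} gives $|\psi_\tau(s,z)|\le C_\tau T^{1+\tau/2m}$ on $\mathcal{D}_0^+$. Each monomial of $B_N$ is therefore bounded by a constant times a power $T^{a}$ with $a>0$, and $T^{a}\le C_{a,\epsilon}e^{\epsilon T/2}$ for all $T\ge0$. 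So $|B_N(\psi_1,\dots,\psi_N)|\le Ce^{\epsilon T/2}$.

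The main obstacle is the factor $|e^{\psi(s,z)}|$, which must be bounded by $e^{\epsilon T/2}$ uniformly in $s$ even though $T$ may be large when $s$ is small. I would use \eqref{eq:PsiRelation}: for $s>0$, $\psi(s,z)=s^{-2m}\Upsilon(s^{D'}z)$. Lemma \ref{lem:PhiEst} (with $\tau=0$) gives $|\Upsilon(w)|\le C\,(R(\Re w)+R(\Im w))^{1+1/2m}$ on $\mathcal{U}_0$. Combined with $R(s^{D'}\cdot)=s^{2m}R(\cdot)$, this yields
\begin{equation*}
|\psi(s,z)|\le C\,T\,(s^{2m}T)^{1/2m}.
\end{equation*}
Now $s^{2m}T=R(\Re(s^{D'}z))+R(\Im(s^{D'}z))<t_1$ because $s^{D'}z\in\mathcal{U}$, so $|\psi(s,z)|\le Ct_1^{1/2m}T\le \epsilon T/2$ once $t_1$ is chosen small. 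At $s=0$ we have $\psi=0$ trivially. Combining the two bounds gives \eqref{eq:BellEst}, with the constant absorbing $1/(\lambda_0+1)!$.
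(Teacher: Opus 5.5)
Your proposal is correct and follows essentially the same route as the paper: Taylor's formula in $r$ with the Bell-polynomial identification of the coefficients, Fa\`a di Bruno together with the bounds on $\psi_\tau$ from Lemma \ref{lem:PsiProperties}, and control of $|e^{\psi}|$ through $\Upsilon(r^{D'}z)=r^{2m}\psi(r,z)$ on a sufficiently small neighborhood. Choosing $\mathcal{U}$ as the dilation-invariant set $\{R(\xi)+R(\nu)<t_1\}$, with $t_1$ small enough that this set lies inside $\mathcal{U}_0$ (since $\mathcal{U}_0$ itself need not be invariant), is a useful addition: it justifies that the bounds hold along the whole segment $0\le s\le r$, a point the paper leaves implicit.
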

\begin{proof}
For $(r,z)\in\mathcal{D}_0^+$, Taylor's theorem gives
\begin{equation*}
    e^{\psi(r,z)}-\sum_{\lambda=0}^{\lambda_0}(\partial_r^\lambda e^{\psi})(0,z)\frac{r^{\lambda}}{\lambda!}=\frac{r^{\lambda_0+1}}{(\lambda_0+1)!}\int_0^1(1-t)^{\lambda_0}(\partial_r^{\lambda_0+1}e^{\psi})(rt,z)\,dt.
\end{equation*}
From the generating formula for Bell polynomials \eqref{eq:BellGenerating}, we have
\begin{equation*}
    (\partial_r^{\lambda}e^{\psi(r,z)})(0,z)=B_{\lambda}(S_1(z),S_2(z),\dots,S_\lambda(z))
\end{equation*}
for every $\lambda\in\mathbb{N}$ and $z\in\mathbb{C}^d$. Consequently, for every $(r,z)\in\mathcal{D}_0^+$,
\begin{equation}\label{eq:BellEst1}
    \abs{e^{\psi(r,z)}-\sum_{\lambda=0}^{\lambda_0}B_{\lambda}(S_1(z),S_2(z),\dots,S_{\lambda}(z))\frac{r^\lambda}{\lambda!}}\leq\frac{r^{\lambda_0+1}}{(\lambda_0+1)!} \abs{\int_0^1(1-t)^{\lambda_0}(\partial_r^{\lambda_0+1}e^{\psi})(rt,z)\,dt}.
\end{equation}
Using the Fa\'{a} di Bruno formula, it is easy to see that the derivative $\partial_r^{\lambda_0+1}e^{{\psi}(r,z)}$ is simply a product of $e^{{\psi}(r,z)}$ and a polynomial\footnote{In the context of one dimension, \cite{CF24} describes this polynomial explicitly. See the proof of Lemma 7 in that article.} in $\psi_\tau=\partial_r^\tau \psi$ for $0\leq \tau\leq \lambda_0+1$. Thus, by the previous proposition, we can find $C>0$ and $M\in\mathbb{N}$ for which
\begin{equation}\label{eq:BellEst2}
\abs{(1-t)^{\lambda_0}(\partial_r^{\lambda_0+1}e^{\psi})(rt,z)}\leq C(1-t)^{\lambda_0}(R(\xi)+R(\nu))^{M}\abs{e^{\psi(tr,z)}}
\end{equation}
for $0\leq t\leq 1$ and $(r,z)\in\mathcal{D}_0^+$. Because $\Upsilon(z)=o(R(\xi)+R(\nu))$ by virtue of Lemma \ref{lem:PhiEst}, we select an open neighborhood $\mathcal{U}\subseteq\mathcal{U}_0$ of $0$ in $\mathbb{C}^d$ for which 
\begin{equation*}
\abs{\Upsilon(r^{D'}z)}\leq \frac{\epsilon}{2}\left(R(r^{D'}\xi)+R(r^{D'}\nu)\right)=\frac{\epsilon}{2}r^{2m}(R(\xi)+R(\nu))
\end{equation*}
whenever $r\geq 0$ and $z=\xi-i\nu$ have $r^{D'}z\in\mathcal{U}$, i.e., for $(r,z)\in \mathcal{D}^+$. Thus, because $\Upsilon(r^{D'}z)=r^{2m}\psi(r,z)$ for all $(r,z)\in\mathcal{D}$, we obtain the estimate
\begin{equation}\label{eq:BellEst3}
    \abs{\psi(r,z)}\leq\frac{\epsilon}{2}(R(\xi)+R(\nu))
\end{equation}
for $(r,z)\in\mathcal{D}^+$ after noting that the estimate is immediate when $r=0$. Combining \eqref{eq:BellEst2} and \eqref{eq:BellEst3}, we can adjust the constant $C$ if necessary to obtain
\begin{equation*}
    \abs{\int_0^1 (1-t)^{\lambda_0}(\partial_r^{\lambda_0+1}e^{\psi})(rt,z)}\leq C e^{\epsilon (R(\xi)+R(\nu))}
\end{equation*}
for all $(r,z)\in\mathcal{D}^+$ and, in view of \eqref{eq:BellEst1}, the proof is complete.
\end{proof}

\noindent Our final result of this section is used in the proof of Lemma \ref{lem:IntegralLLTEst}, a key step in the proof of Theorem \ref{thm:LLT}. This result is as follows.

\begin{proposition}\label{prop:HBound}
Let $P$ be a positive semi-elliptic polynomial with $\mathbf{m}=(m_1,m_2,\dots,m_d)\in\mathbb{N}_+^d$ so that $D=\diag(1/2m_1,1/2m_2,\dots,1/2m_d)\in\Exp(P)$ and set $m=\lcm(\mathbf{m})$. Suppose that $\Upsilon$ is holomorphic on a neighborhood of $0$ in $\mathbb{C}^d$ and has $\Upsilon(\xi)=o(R(\xi))$ as $\xi\to 0$ in $\mathbb{R}^d$. With the pair $(P,\Upsilon)$, let $S_1(z),S_2(z),\dots$ be the sequence of polynomials constructed in the introduction (and again in this section) and, for each $\lambda\in\mathbb{N}$, let $B_{\lambda}$ be the complete Bell polynomial of order $\lambda$ (with $B_0\equiv 1$). Given $\lambda_0\in\mathbb{N}$, set
\begin{equation*}
h(n,z)=e^{-nP(n^{-D}z)}\left( e^{n\Upsilon(n^{-D}z)}-\sum_{\lambda=0}^{\lambda_0}B_{\lambda}(S_1(z),S_2(z),\ldots,S_{\lambda}(z))\frac{1}{\lambda!n^{\lambda/2m}} \right),
\end{equation*}
for $n\in \mathbb{N}_+$ and all $z$ for which $n^{-D}z$ liven in the domain of holomorphy  of $\Upsilon$. Then, there exists an open neighborhood $\mathcal{U}$ of $0$ in $\mathbb{C}^d$ and positive constants $C$, $\epsilon$, and $M$ for which the estimate
\begin{equation*}
\abs{h(n,z)} \leq \frac{C}{n^{(\lambda_0+1)/2m}}\exp\left( -\epsilon \left(\xi_{1}^{2m_1}+\xi_{2}^{2m_2}+\cdots + \xi_{d}^{2m_d} \right) + M\left( \nu_{1}^{2m_1}+\nu_{2}^{2m_2}+ \cdots + \nu_{d}^{2m_d} \right)\right)
\end{equation*}
holds whenever $z=\xi-i\nu \in n^{D}(\mathcal{U})$. 
\end{proposition}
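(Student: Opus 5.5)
Throughout, set $r=n^{-1/2m}$. Since $n^{-D}=r^{D'}$ (because $D'=2mD$), the relation \eqref{eq:PsiRelation} gives
\begin{equation*}
n\Upsilon(n^{-D}z)=r^{-2m}\Upsilon(r^{D'}z)=\psi(r,z)
\end{equation*}
whenever $r^{D'}z$ lies in $\mathcal{U}_0$. For the same reason $nP(n^{-D}z)=P(z)$. Hence
\begin{equation*}
h(n,z)=e^{-P(z)}\left(e^{\psi(r,z)}-\sum_{\lambda=0}^{\lambda_0}B_\lambda(S_1(z),\dots,S_\lambda(z))\frac{r^\lambda}{\lambda!}\right),
\end{equation*}
and the bracket is exactly the quantity estimated in Lemma \ref{lem:BellEst}. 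The plan is therefore to combine Lemma \ref{lem:BellEst} with the real-part bound on $-\Re P$ used in the proof of Proposition \ref{prop:gBound}.

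\textbf{Step 1.} Invoke Proposition 8.13 of \cite{RSC17} to get constants $\epsilon',M'>0$ with $-\Re P(z)\le -\epsilon' R(\xi)+M'R(\nu)$ for all $z=\xi-i\nu\in\mathbb{C}^d$.

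\textbf{Step 2.} Apply Lemma \ref{lem:BellEst} with the given $\lambda_0$ and with its $\epsilon$ chosen equal to $\epsilon'/2$. This produces a neighborhood $\mathcal{U}\subseteq\mathcal{U}_0$ and a constant $C$ such that the Bell remainder is at most $Cr^{\lambda_0+1}e^{(\epsilon'/2)(R(\xi)+R(\nu))}$ for all $(r,z)\in\mathcal{D}^+$. Note that $z\in n^D(\mathcal{U})$ means precisely $r^{D'}z=n^{-D}z\in\mathcal{U}$ with $r=n^{-1/2m}\ge 0$, so $(r,z)\in\mathcal{D}^+$ and the lemma applies.

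\textbf{Step 3.} Multiply the two bounds:
\begin{equation*}
\abs{h(n,z)}\le \frac{C}{n^{(\lambda_0+1)/2m}}\exp\left(-\tfrac{\epsilon'}{2}R(\xi)+\left(M'+\tfrac{\epsilon'}{2}\right)R(\nu)\right).
\end{equation*}

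\textbf{Step 4.} Use Proposition \ref{prop:PureLambdaCompare}, which gives $R(x)\asymp\sum_j x_j^{2m_j}$, and adjust the constants to obtain the stated form.

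The only genuinely delicate point is the choice of $\epsilon$ in Step 2. The growth factor $e^{\epsilon(R(\xi)+R(\nu))}$ coming from Lemma \ref{lem:BellEst} must be absorbed by the decay $e^{-\epsilon' R(\xi)}$ from Step 1. Choosing $\epsilon=\epsilon'/2$ before fixing $\mathcal{U}$ leaves net decay $e^{-(\epsilon'/2)R(\xi)}$ in the real direction; the leftover growth in $\nu$ is harmless because the target estimate allows $e^{MR(\nu)}$.

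One should also record that $\mathcal{U}$ lies inside the domain of holomorphy, so $h$ is defined there. This is automatic, since $\mathcal{U}\subseteq\mathcal{U}_0\subseteq\mathcal{U}_{00}$.
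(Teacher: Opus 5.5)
Your proposal is correct and matches the paper's proof essentially step for step. Like the paper, you bound $-\Re P$ via Proposition 8.13 of \cite{RSC17}, apply Lemma \ref{lem:BellEst} with $\epsilon'/2$ at $r=n^{-1/2m}$, multiply the two bounds, and finish with Proposition \ref{prop:PureLambdaCompare}; the only cosmetic difference is that you write $nP(n^{-D}z)=P(z)$ up front, whereas the paper applies the real-part bound at $n^{-D}z$ and then rescales.
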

\begin{proof}
We first appeal to Proposition 8.13 of \cite{RSC17} to find positive constants $\epsilon'$ and $M'$ for which there holds
\begin{equation}\label{prt-RP}
-\Re P(z) \leq -\epsilon' R(\xi) + MR(\nu),    
\end{equation}
for all $z=\xi-i\nu \in \mathbb{C}^d$. Appealing to Lemma \ref{lem:BellEst} with $\lambda_0\in\mathbb{N}$ and $\epsilon'/2>0$ gives an open neighborhood $\mathcal{U}$ of $0$ in $\mathbb{C}^d$ for which \eqref{eq:BellEst} holds whenever $r\geq 0$ and $r^{D'}z\in \mathcal{U}$. Now, for $r=n^{-1/2m}>0$, $n^{-D}z=r^{D'}z$ and $\psi(n^{-1/2m},z)=n\Upsilon(n^{-D}z)$ and therefore
\begin{equation}\label{Bell-app-n}
\abs{e^{n\Upsilon(n^{-D}z)}-\sum_{\lambda=0}^{\lambda_0}B_{\lambda}(S_1(z),S_2(z),\ldots,S_{\lambda}(z))\frac{1}{\lambda!n^{\lambda/2m}}} \leq C (n^{-1/2m})^{\lambda_0 +1}e^{\epsilon'(R(\xi) + R(\nu))/2},
\end{equation}
whenever $n\in\mathbb{N}_+$ and $n^{-D}z\in\mathcal{U}$. Using \eqref{prt-RP} and \eqref{Bell-app-n}, we obtain
\begin{eqnarray*}
\lefteqn{\hspace{-6cm}\abs{e^{-nP(n^{-D}z)}\left(e^{n\Upsilon(n^{-D}z)}-\sum_{\lambda=0}^{\lambda_0}B_{\lambda}(S_1(z),S_2(z),\ldots,S_{\lambda}(z))\frac{1}{\lambda!n^{\lambda/2m}} \right) }}\\
\hspace{6cm}&\leq& \frac{C}{n^{(\lambda_0+1)/2m}}e^{-n\Re P(n^{-D}z)}e^{\epsilon'(R(\xi) + R(\nu))} \\ 
& \leq& \frac{C}{n^{(\lambda_0+1)/2m}}e^{-\epsilon'R(\xi)+M'R(\nu)}e^{\epsilon'(R(\xi) + R(\nu))/2} \\  
&=& \frac{C}{n^{(\lambda_0+1)/2m}}e^{-\epsilon R(\xi)+MR(\nu)},
\end{eqnarray*}
whenever $n\in \mathbb{N}_+$ and $n^{-D}z\in \mathcal{U}$; here, we have set $\epsilon=\epsilon'/2$ and $M=M'+\epsilon'/2$. Upon noting that $R(x) \asymp x_{1}^{2m_1}+x_{2}^{2m_2}+\cdots + x_{d}^{2m_d}$ by virtue of Proposition \ref{prop:PureLambdaCompare}, the desired estimate follows by, if necessary, adjusting the constants $\epsilon$ and $M$
\end{proof}

\section{Proofs of Lemmas \ref{lem:IntegralExponentialEst} and \ref{lem:IntegralLLTEst}}\label{sec:Key}
In this section, we prove the two key lemmas used in the proofs of Theorems \ref{thm:GeneralGaussEstimate}, \ref{thm:FiniteSupport}, and \ref{thm:LLT}. As the reader will see, essential to our arguments is an application (well, several) of Cauchy's integral formula with well-chosen contours which easily pick out the generalized Gaussian error written in terms of the Legendre-Fenchel transform (of $R$). The specific choice of contours we use are direct $d$-dimensional analogues of the contours used in \cite{CF24}.

\begin{proof}[Proof of Lemma \ref{lem:IntegralExponentialEst}]
As in the statement of the lemma, let's write $f(z)=e^{-P(z)+\Upsilon(z)}$ and, given $A\in\GldR$ for which $P_A$ is semi-elliptic (with $\mathbf{m}=(m_1,m_2,\dots,m_d)\in\mathbb{N}_+^d$ and $R_A=\Re P_A$), we set
\begin{equation*}
f_A(z)=f(Az)=e^{-P_A(z)+\Upsilon_A(z)}
\end{equation*}
and $g_A(n,z)=f_A(n^{-D}z)^n$ where $D=\diag(1/2m_1,1/2m_2,\dots,1/2m_d)\in\Exp(P_A)$. Upon noting that $\Upsilon_A(\xi)=o(R_A(\xi))$ as $\xi\to 0$, we make an appeal to Proposition \ref{prop:gBound} to find an open neighborhood $\mathcal{U}$ of $0$ in $\mathbb{C}^d$ and positive constants $\epsilon$ and $M$ for which 
\begin{equation}\label{eq:gAest}
\abs{g_A(n,z)}\leq e^{-\epsilon(\xi_1^{2m_1}+\xi_2^{2m_2}+\cdots+\xi_d^{2m_d})+M(\nu_1^{2m_1}+\nu_2^{2m_2}+\cdots+\nu_d^{2m_d})}
\end{equation}
whenever $z\in n^D(\mathcal{U})$. With this, select $\delta>0$ for which $[-\delta,\delta]^d+i[-\delta,\delta]^d\subseteq\mathcal{U}$ and make the change of variables $\xi\mapsto An^{-D}\xi$ to find
\begin{eqnarray}\label{eq:change_of_vars_eq}\nonumber
I(n,y)&=&\int_{A([-\delta,\delta]^d)}f(\xi)^n e^{-iy\cdot\xi}\,d\xi\\\nonumber
&=&\int_{n^{D}([-\delta,\delta]^d)}f(An^{-D}\xi)^ne^{-iy\cdot (An^{-D}\xi)}\,\abs{\det(An^{-D})}\,d\xi\\\nonumber
&=&\frac{\abs{\det(A)}}{\det(n^D)}\int_{n^D([-\delta,\delta]^d)}f_A(n^{-D}\xi)^ne^{-i(n^{-D}A^{\top}y)\cdot\xi}\, d\xi\\
&=&\frac{C_A}{n^{\mu}}\int_{n^D([-\delta,\delta]^d)}g_A(n,\xi)e^{-iw\cdot\xi}\,d\xi
\end{eqnarray}
where $w=n^{-D}A^{\top}y$, $C_A=\abs{\det(A)}$, and we have noted that $\mu_P=\mu_{P_A}=\tr D$. Consequently
\begin{equation*}
I(n,y)=\frac{C_A}{n^{\mu}}J(n,w)
\end{equation*}
where
\begin{equation*}
J(n,w):=\int_{\mathscr{R}(n)}g_A(n,\xi)e^{-iw\cdot\xi}\,d\xi
\end{equation*}
for $n\in\mathbb{N}_+$, $w\in\mathbb{R}^d$, and we have put
\begin{equation*}
\mathscr{R}(n)=n^{D}([-\delta,\delta]^d)=[-\delta_1,\delta_1]\times[-\delta_2,\delta_2]\times\cdots\times[-\delta_d,\delta_d]
\end{equation*}
for $\delta_j=n^{1/2m_j}\delta$ for $j=1,2,\dots,d$.

We claim that there are uniform positive constants $C$, $\epsilon$, and $M$ for which
\begin{equation}\label{eq:JBoundClaim}
\abs{J(n,w)}\leq Ce^{-\epsilon n}+Ce^{-MR_A^{\#}(w)}
\end{equation}
for all $n\in\mathbb{N}_+$ and $w\in\mathbb{R}^d$. Momentarily taking \eqref{eq:JBoundClaim} for granted and upon noting that $I-D=(I-D)^{\top}\in\Exp(R_A^{\#})$, an appeal to Proposition \ref{prop:LFCompare} yields
\begin{equation*}
R_A^{\#}(n^{-D}A^{\top}y)=nR_A^{\#}(A^{\top}y/n)=nR^{\#}(y/n).
\end{equation*}
With this, we invoke \eqref{eq:JBoundClaim} and the relationship between $I$ and $J$ to obtain
\begin{equation*}
\abs{I(n,y)}\leq \frac{C}{n^{\mu}}\left(e^{-\epsilon n}+e^{-nM R^{\#}(y/n)}\right)
\end{equation*}
for all $n\in\mathbb{N}_+$ and $y\in\mathbb{R}^d$; this is precisely the conclusion of the lemma. 

Thus, to complete the proof of the lemma, it remains to prove \eqref{eq:JBoundClaim}. To this end, we will apply the Cauchy integration formula successively to each of the integration coordinates $\xi_j$. For simplicity, we shall write $\mathscr{R}=\mathscr{R}(n)$ for the remainder of the proof. Taking our cues from \cite{CF24}, we set 
    \begin{equation}\label{eq:theta_defn}
        \theta_j = \begin{cases}
            \left ( \frac{\abs{w_j}}{2Mm_j} \right )^{1/(2m_j-1)} & \text{ if } \frac{|w_j|}{2m_jM} \leq \delta_j^{2m_j-1}\\
            \delta_j & \text{ otherwise}
        \end{cases}
    \end{equation}
    for $j=1,2,\dots,d$.  It is readily verified that the above choice of $\theta_j$ gives $0\leq M\nu^{2m_j-1}\leq \abs{w_j}/2m_j$ for every $\nu \in [0, \theta_j]$ and, further, $\theta_j \leq \delta_j$ for each $j=1,2,\dots,d$.
Also, for any $j=1,2,\dots,d$, we will write $\hat{\xi}_j=(\xi_1,\xi_2,\dots,\xi_{j-1},\xi_{j+1},\dots,\xi_d)$ to denote the $d-1$-tuple formed by removing the $j$th coordinate from $\xi$. In this spirit, we set
\begin{equation*}
 \widehat{\mathscr{R}}_j=\prod_{\substack{l=1\\ l\neq j}}^d [-\delta_l,\delta_l]
\end{equation*}
for $j=1,2,\dots,d$. With our notation set up, let us apply the Cauchy integration formula in the coordinate $\xi_d$ (going around the final rectangle) as illustrated in Figure \ref{fig:Contour}.

\begin{figure}[!ht]
\centering
\resizebox{0.8\textwidth}{!}{
\begin{circuitikz}

\draw[line width=0.72pt, fill opacity=1.00] (7.75, 14.5) circle [radius=6pt];
\draw[line width=0.72pt, fill opacity=1.00] (22.75,14.5) circle [radius=6pt];
\draw[line width=0.72pt, fill opacity=1.00] (22.75, 8.25) circle [radius=6pt];
\draw[line width=0.72pt, fill opacity=1.00] (7.75, 8.25) circle [radius=6pt];
\node at (7.75,14.5) [circ] {};
\draw [ line width=1.5pt](7.75,14.5) to[short] (24.25,14.5);
\draw [ line width=5.5pt](7.75,14.5) to[short] (5.75,14.5);
\draw [ line width=5.5pt](22.75,14.5) to[short] (24.75,14.5);
\node at (22.75,14.5) [circ] {};
\draw [ line width=5.5pt](7.75,14.5) to[short] (7.75,8.25);
\node at (7.75,8.25) [circ] {};
\draw [ line width=5.5pt](7.75,8.25) to[short] (22.75,8.25);
\draw [ line width=5.5pt](22.75,14.5) to[short] (22.75,8.25);
\node at (22.75,8.25) [circ] {};
\draw [line width=1.1pt, ->, >=Stealth] (15.25,8.25) -- (15.25,16.5);
\draw [line width=5.5pt, ->, >=Stealth] (16.4,8.25) -- (16.6,8.25);
\node [font=\LARGE] at (15.6,15) {$0$};
\node [font=\LARGE] at (26.25,14.6) {$\xi_d \in \mathbb{R}$};
\draw [line width=5.5pt, ->, >=Stealth] (7.75,11.30) -- (7.75,11.25);
\draw [line width=5.5pt, ->, >=Stealth] (22.75,11.45) -- (22.75,11.5);
\node [font=\LARGE] at (7.75,15.25) {$-\delta_d$};
\node [font=\LARGE] at (6.75,7.5) {$-\delta_d - i\theta_d$};
\node [font=\LARGE] at (23.75,7.5) {$\delta_d- i\theta_d$};
\node [font=\LARGE] at (6,11.5) {$J^{(1)}_1$};
\node [font=\LARGE] at (24.5,11.5) {$J^{(1)}_3$};
\node [font=\LARGE] at (14.5,7.5) {$J^{(1)}_2$};
\node [font=\LARGE] at (22.75,15.25) {$\delta_d$};
\end{circuitikz}
}
\caption{Contour deformation for the coordinate $\xi_d$ (assuming $\sigma_d>0$.)}\label{fig:Contour}
\end{figure}

We have
    \begin{equation}\label{eq:first_CIT_application}
        \begin{split}
        J(n,w) = \int_{\mathscr{R}}g_A(n,\hat{\xi}_d, \xi_d)e^{-iw\cdot \xi}d\xi = &  J^{(1)}_1 + J^{(1)}_2 + J^{(1)}_3 \\
        \end{split}
    \end{equation}
    where
    \begin{equation*}
    J^{(1)}_1 = -i\sigma_d\int_{\widehat{\mathscr{R}}_d}\int_{0}^{\theta_d}g_A(n,\hat{\xi}_d, -\delta_d-i\sigma_d\nu)e^{-iw\cdot (\hat{\xi}_d, -\delta_d-i\sigma_d\nu)}\,d\nu\,d\hat{\xi}_d,
    \end{equation*}
    \begin{equation*}
    J^{(1)}_3 = -i\sigma_d\int_{\widehat{\mathscr{R}}_d}\int_{\theta_d}^{0}g_A(n,\hat{\xi}_d, \delta_d-i\sigma_d\nu)e^{-iw\cdot (\hat{\xi}_d, \delta_d-i\sigma_d\nu)}\,d\nu\,d\hat{\xi}_d,
    \end{equation*}
    and
    \begin{eqnarray*}
    J^{(1)}_2 &=& \int_{\widehat{\mathscr{R}}_d}\int_{-\delta_d}^{\delta_d}g_A(n,\hat{\xi}_d, \xi_d-i\sigma_d\theta_d)e^{-iw\cdot (\hat{\xi}_d, \xi_d-i\sigma_d\theta_d)}d\xi_dd\hat{\xi}_d\\
    &=&\int_{\mathscr{R}} g_A(n,\hat{\xi}_d,\xi_d-i\sigma_d\theta_d)e^{-iw\cdot(\hat{\xi}_d,\xi_d-i\sigma_d\theta_d)}\,d\xi
    \end{eqnarray*}
    where (and in what follows) $\sigma_j=\sgn(w_j)$ for $j=1,2,\dots,d$. In our notation, the subscripts keep track of the segments of the rectangle over which we're integrating and the superscripts track the number of times the Cauchy's formula has been applied. 

   We claim that there are positive constants $C$ and $\epsilon$ for which 
   \begin{equation}\label{eq:J1ExpEst}
   \abs{J_1^{(1)}}\leq Ce^{-n\epsilon}
   \end{equation}
   for all $n\in\mathbb{N}_+$ and $w\in\mathbb{R}^d$. By virtue of \eqref{eq:gAest}, we have
   \begin{eqnarray*}
   \abs{J_1^{(1)}}&\leq &\int_{\widehat{\mathscr{R}}_d}\int_0^{\theta_d}e^{-\epsilon(\xi_1^{2m_1}+\cdots+\xi_{d-1}^{2m_{d-1}}+\delta_d^{2m_d})+M\nu^{2m_d}}\abs{e^{-iw\cdot(\hat{\xi}_d,-\delta_d-i\sigma_d\nu)}}\,d\nu\,d\hat{\xi}_d\\
   &=&\int_{\widehat{\mathscr{R}}_d}e^{-\epsilon(\xi_1^{2m_1}+\cdots+\xi_{d-1}^{2m_{d-1}}+\delta_d^{2m_d})}\,d\hat{\xi}_d\int_0^{\theta_d}e^{M\nu^{2m_d}}e^{-w_d\sigma_d\nu}\,d\nu\\
   &=& e^{-\epsilon\delta_d^{2m_d}}\int_{\widehat{\mathscr{R}}_d}e^{-\epsilon(\xi_1^{2m_1}+\cdots+\xi_{d-1}^{2m_{d-1}})}\,d\hat{\xi}_d\int_0^{\theta_d}\exp(M\nu^{2m_d}-\abs{w_d}\nu)\,d\nu
   \end{eqnarray*}
   for $n\in\mathbb{N}_+$ and $w\in\mathbb{R}^d$. Now, the first integral is bounded above by the $L^1(\mathbb{R}^{d-1})$ norm of its integrand. Observe that, by our choice of $\theta_d$, we have
   \begin{equation*}
   M\nu^{2m_d}-\abs{w_d}\nu\leq \frac{\abs{w_d}}{2m_d}\nu-\abs{w_d}\nu\leq 0
   \end{equation*}
   and therefore
   \begin{equation*}
   \int_0^{\theta_d}\exp(M\nu^{2m_d}-\abs{w_d}\nu)\,d\nu\leq \theta_d\leq\delta_d.
   \end{equation*}
   Upon recalling that $\delta_d=n^{1/2m_d}\delta$, these two observations yield
   \begin{equation*}
   \abs{J_1^{(1)}}\leq e^{-\epsilon\delta_d^{2m_d}}C\delta_d=n^{1/2m_d}\delta C \exp(-\epsilon n\delta^{2m_d} )
   \end{equation*}
  and so, by adjusting the values of $\epsilon$ and $C$, we obtain the desired estimate \eqref{eq:J1ExpEst}. The same estimate for $\abs{J_3^{(1)}}$ is easily obtained by analogous reasoning.

With the estimates for $J_1^{(1)}$ and $J_3^{(1)}$ in hand, our goal is to bound $J^{(1)}_2$ by successive applications of the Cauchy integral formula in the remaining $d-1$ coordinates. Indeed, thanks to the holomorphy  of $g_A$, another application of Cauchy's formula yields

    \begin{equation}\label{eq:second_CIT_application}
        J^{(1)}_2 =  J_1^{(2)}+J_2^{(2)}+J_3^{(2)}
    \end{equation} 
    
    where

    \begin{eqnarray*}
            \lefteqn{J_1^{(2)} =-i\sigma_{d-1}\int_{\widehat{\mathscr{R}}_{d-1}}\int_{0}^{\theta_{d-1}}g_A(n,\xi_1, \cdots,\xi_{d-2}, -\delta_{d-1}-i\sigma_{d-1}\nu, \xi_d-i\sigma_d\theta_d)}\\
            &&\hspace{6cm}\times e^{-iw\cdot (\xi_1, \cdots,\xi_{d-2}, -\delta_{d-1}-i\sigma_{d-1}\nu, \xi_d-i\sigma_d\theta_d)} d\nu d\hat{\xi}_{d-1},
            \end{eqnarray*} 
            \begin{eqnarray*}
            \lefteqn{J_3^{(2)} =  -i\sigma_{d-1}\int_{\widehat{\mathscr{R}}_{d-1}}\int_{\theta_{d-1}}^{0}g_A(n,\xi_1, \cdots,\xi_{d-2}, \delta_{d-1}-i\sigma_{d-1}\nu, \xi_d-i\sigma_d \theta_d)}\\
           &&\hspace{6cm} \times e^{-iw\cdot (\xi_1, \cdots, \xi_{d-2},\delta_{d-1}-i\sigma_{d-1}\nu, \xi_d-i\sigma_d\theta_d)} \,d\nu \,d\hat{\xi}_{d-1}
            \end{eqnarray*}
            and
            \begin{equation*}
            J_2^{(2)} = \int_{\mathscr{R}}g_A(n,\xi_1, \cdots,\xi_{d-2}, \xi_{d-1}-i\sigma_{d-1}\theta_{d-1}, \xi_d-i\sigma_d\theta_d)e^{-iw\cdot (\xi_1, \cdots, \xi_{d-2},\xi_{d-1}-i\sigma_{d-1}\theta_{d-1}, \xi_d-i\sigma_d\theta_d)} d\xi\\
        \end{equation*}
 An analogous argument to that done for $J_1^{(1)}$ shows that $J_1^{(2)}$ and $J_3^{(2)}$ satisfy the estimate \eqref{eq:J1ExpEst}. Combining \eqref{eq:first_CIT_application}, \eqref{eq:second_CIT_application}, and the estimates for $J_k^{(j)}$ for $k=1,3$ and $j=1,2$, we obtain
 \begin{equation*}
 \abs{J(n,w)}\leq Ce^{-\epsilon n}+\abs{J_1^{(2)}}+\abs{J_2^{(2)}}+\abs{J_3^{(2)}}\leq 3Ce^{-\epsilon n}+\abs{J_2^{(2)}}
 \end{equation*}
 which holds uniformly for $n\in\mathbb{N}_+$ and $w\in\mathbb{R}^d$. We continue this procedure inductively ($d-2$ more times) while successively applying Cauchy's integral formula to obtain the estimate
 \begin{equation}\label{eq:JBound}
 \abs{J(n,w)}\leq Ce^{-\epsilon n}+\abs{J_2^{(d)}}
 \end{equation}
 which holds uniformly for $n\in\mathbb{N}_+$ and $w\in\mathbb{R}^d$ (for uniform positive constants $C$ and $\epsilon$) where
 \begin{eqnarray*}
 J_2^{(d)}&=&\int_{\mathscr{R}}g_A(n,\xi_1-i\sigma_1\theta_1,\xi_2-i\sigma_2\theta_2,\dots,\xi_d-i\sigma_d\theta_d)e^{-iw\cdot(\xi_1-i\sigma_1\theta_1,\xi_2-i\sigma_2\theta_2,\dots,\xi_d-i\sigma_d\theta_d)}\,d\xi\\
 &=&\int_{\mathscr{R}}g_A(n,\xi-i\sigma\theta)e^{-iw\cdot(\xi-i\sigma\theta)}\,d\xi
 \end{eqnarray*}
 for $n\in\mathbb{N}_+$ and $\sigma\theta=(\sigma\theta)(w)=(\sigma_1\theta_1,\sigma_2\theta_2,\dots,\sigma_d\theta_d)$ as defined by \eqref{eq:theta_defn} with $\sigma_j=\sgn(w_j)$ for $j=1,2,\dots,d$. By an appeal to Proposition \ref{prop:gBound}, we have
 \begin{eqnarray}\label{eq:JdEst}\nonumber
 \abs{J_2^{(d)}}&\leq&\int_{\mathscr{R}}\abs{g_A(n,\xi-i\sigma\theta)}e^{-w\cdot \sigma\theta}\,d\xi\\ \nonumber
&\leq &\exp \left(\sum^d_{j=1} M\theta_j^{2m_j} -w_j\sigma_j \theta_j  \right ) \int_{\mathbb{R}^d} \exp \left(-\epsilon \sum^d_{j=1} \xi_j^{2m_j} \right) d\xi\\
&=&C\exp\left(\sum_{j=1}^d M\theta_j^{2m_j}-\abs{w_j}\theta_j\right).
 \end{eqnarray}
 If, for $j=1,2,\dots,d$, $\abs{w_j}/2m_j M\leq \delta_j^{2m_j-1}$, the definition \eqref{eq:theta_defn} gives
\begin{eqnarray*}
M\theta_j^{2m_j}-\abs{w_j}\theta_j&=&(M\theta_j^{2m_j-1}-\abs{w_j})\theta_j\\
&=&-\left(1-\frac{1}{2m_j}\right)\abs{w_j}\left(\frac{\abs{w_j}}{2m_jM}\right)^{1/(2m_j-1)}\\
&=&-M_j\abs{w_j}^{2m_j/2m_j-1}
\end{eqnarray*}
where $M_j=(1-1/2m_j)/(2m_jM)^{1/2m_j-1}>0$. On the other hand, if $\abs{w_j}/2m_jM>\delta_j^{2m_j-1}$,
\begin{equation*}
M\theta_j^{2m_j}-\abs{w_j}\theta_j=M\delta_j^{2m_j}-\abs{w_j}\delta_j\leq -(2m_j-1)M\delta_j^{2m_j}=-\epsilon_j n
\end{equation*}
where $\epsilon_j=(2m_j-1)M\delta^{2m_j} >0.$ Correspondingly, we have two cases: First, if $\abs{w_j}/2m_jM\leq \delta_j^{2m_j-1}$ for all $j=1,2,\dots,d$, then
\begin{eqnarray*}
\abs{J_2^{(d)}}&\leq& C\exp\left(-(M_1\abs{w_1}^{2m_1/(2m_1-1)}+M_2\abs{w_2}^{2m_2/(2m_2-1)}+\cdots+M_d\abs{w_d}^{2m_d/(2m_d-1)})\right)\\
&\leq & Ce^{-MR_A^{\#}(w)}
\end{eqnarray*}
for some $M>0$ by virtue of Proposition \ref{prop:LFCompare}. On the other hand, if there exists at least one $j$ for which $\abs{w_j}/2m_jM>\delta_j^{2m_j-1}$, we still find that all terms in the exponent of \eqref{eq:JdEst} are non-positive and at least one must be less than or equal to $-\epsilon n$ for some $\epsilon>0$. Consequently,
\begin{equation*}
\abs{J_2^{(d)}}\leq Ce^{-\epsilon n}.
\end{equation*}
Putting these two bounds together, we obtain uniform positive constants $C$, $\epsilon$ and $M$ for which
\begin{equation*}
\abs{J_2^{(d)}}\leq Ce^{-\epsilon n}+Ce^{-MR_A^{\#}(w)}
\end{equation*}
which holds for all $n\in\mathbb{N}_+$ and $w\in\mathbb{R}^d$. In view of \eqref{eq:JBound} and upon adjusting the constants $C$, $\epsilon$ and $M$ if necessary, we immediately obtain \eqref{eq:JBoundClaim} as we set out to do.
\end{proof}
\begin{proof}[Proof of Lemma \ref{lem:IntegralLLTEst}]
As in the statement of the lemma, let's write $f(z)=\exp(-P(z)+\Upsilon(z))$ and take $A\in \GldR$ for which $P_A$ is semi-elliptic with $\mathbf{m}=(m_1,m_2,\dots,m_d)\in\mathbb{N}_+^d$, that is
\[
P_{A}(z) = \sum_{\abs{\beta: 2 \mathbf{m}}=1}a_{\beta}z^{\beta},
\]
so that (as $\Upsilon_A(\xi)=\Upsilon(A\xi)=o(\Re P_A(\xi))$ as $\xi\to 0$ in $\mathbb{R}^d$) $\Upsilon_{A}$ has the representation
\[
\Upsilon_{A}(z) = \sum_{\abs{\beta: 2\mathbf{m}}>1}b_{\beta}z^{\beta} = \sum_{\lambda=1}^{\infty}\frac{S_{\lambda}(z)}{\lambda!},
\]
with $S_{\lambda}(z)=\lambda! \sum_{\Lambda(\beta)=\lambda}b_{\beta}z^{\beta}$, where we have set $m = \lcm(\mathbf{m})$, $\kappa=(\kappa_1,\kappa_2,\ldots,\kappa_d)\in \mathbb{N}_{+}^d$ such that $m_j \kappa_j =m$ for $j=1,2,\ldots,d$, and $\Lambda(\beta)=\beta\cdot\kappa- m$. In addition, we define $f_A(z)=f(Az)=\exp(-P_A(z)+\Upsilon_A(z))$ and for $\lambda_0\in\mathbb{N}$ we set 
\begin{equation*}
    h_A(n,z)=e^{-P_A(z)}\left( e^{n\Upsilon_A(n^{-D}z)} -\sum_{\lambda=0}^{\lambda_0}B_{\lambda}(S_1(z), S_2(z),\ldots,S_{\lambda}(z))\frac{1}{\lambda!n^{\lambda/2m}} \right)
\end{equation*}
where $D=\diag(1/2m_1,1/2m_2,\dots,1/2m_d)\in\Exp(P_A)$. We make an appeal to Proposition \ref{prop:HBound} to find an open neighborhood $\mathcal{U}$ of $0$ in $\mathbb{C}^d$, and positive constants $C$, $\epsilon$, and $M$ for which
\begin{equation}\label{eq:hAEst}
    \abs{h_A(n,z)}\leq \frac{C}{n^{(\lambda_0+1)/(2m)}}\exp\left (-\epsilon(\xi_1^{2m_1}+\xi_2^{2m_2}+\cdots+\xi_d^{2m_d})+M(\nu_1^{2m_1}+\nu_2^{2m_2}+\cdots+\nu_d^{2m_d})\right)
\end{equation}
for $z=\xi-i\nu\in n^{D}(\mathcal{U})$. With this, we select $\delta>0$ for which $[-\delta,\delta]^d+i[-\delta,\delta]^d\subseteq\mathcal{U}$ and make a change of variables $\xi\mapsto An^{-D}\xi$ as in the proof of Lemma \ref{lem:IntegralExponentialEst} to find
\begin{equation*}
    I(n,y)=\frac{\abs{\det(A)}}{n^\mu}\int_{n^{D}([-\delta,\delta]^d)}f_A(n^{-D}\xi)^n e^{-iw\cdot\xi}\,d\xi=\frac{C_A}{n^{\mu}}\int_{\mathscr{R}(n)}e^{-P_A(\xi)}e^{n\Upsilon_A(n^{-D}\xi)}e^{-iw\cdot\xi}\,d\xi
\end{equation*}
for $n\in\mathbb{N}_+$ and $y\in\mathbb{R}^d$, where we have set $w=w(n,y)=n^{-D}A^{\top}y$, $\mathscr{R}(n)=n^{D}([-\delta,\delta]^d)$, and $C_A=\abs{\det(A)}$. Also, by making a similar change of variables (as illustrated in \eqref{eq:RewritingQH}), we have
\begin{eqnarray*}
\sum_{\lambda=0}^{\lambda_0}(Q_{\lambda}^n H_P^n)(y) &=&\sum_{\lambda=0}^{\lambda_0}\frac{1}{n^{\lambda/2m}\lambda!(2\pi)^d}\int_{\mathbb{R}^d}e^{-nP(\xi)}e^{-iy\cdot \xi}B_{\lambda}\left( S_{1}(n^{D}A^{-1}\xi),S_{2}(n^{D}A^{-1}\xi),\ldots, S_{\lambda}(n^{D}A^{-1}\xi) \right)\,d\xi \\ 
    &=& \frac{C_A}{n^{\mu}} \sum_{\lambda=0}^{\lambda_0}\frac{1}{n^{\lambda/(2m)}\lambda!(2\pi)^d}\int_{\mathbb{R}^d}e^{-P_{A}(\xi)}e^{-iw\cdot \xi}B_{\lambda}\left( S_{1}(\xi),S_{2}(\xi),\ldots, S_{\lambda}(\xi) \right)d\xi
\end{eqnarray*}
for $n\in\mathbb{N}_+$ and $y\in\mathbb{R}^d$, and $w$ as before. Combining  the two preceding equations gives
\begin{eqnarray*}
\lefteqn{ I(n,y)- \sum_{\lambda=0}^{\lambda_0} (Q_\lambda^n H_P^n)(y)}\\
&=& \frac{C_A}{n^{\mu}(2\pi)^d} \left( \int_{\mathscr{R}(n)}e^{-nP_{A}(n^{-D}\xi)}e^{n\Upsilon_{A}(n^{-D}\xi)}e^{-iw \cdot \xi}d\xi + \right. \\ 
&& \left. - \sum_{\lambda=0}^{\lambda_0}\frac{1}{n^{\lambda/2m}\lambda!}\int_{\mathbb{R}^d}e^{-nP_A(n^{-D}\xi)}e^{-i w\cdot \xi}B_{\lambda}(S_{1}(\xi),S_{2}(\xi),\ldots, S_{\lambda}(\xi))d\xi  \right) \\
&=& \frac{C_A}{n^{\mu}(2\pi)^d}\int_{\mathscr{R}(n)}h_{A}(n,\xi)e^{-iw \cdot \xi}d\xi + \\
&& \,\, - \frac{C_A}{n^{\mu}(2\pi)^d}\sum_{\lambda=0}^{\lambda_0}\frac{1}{n^{\lambda/2m}\lambda!}\int_{\mathbb{R}^d \setminus{\mathscr{R}(n)}}e^{-P_{A}(\xi)}e^{-i w\cdot \xi}B_{\lambda}(S_{1}(\xi),S_{2}(\xi),\ldots, S_{\lambda}(\xi))d\xi \\
&=& \frac{C_A}{n^{\mu}(2\pi)^d}K(n,w) - \frac{C_A}{n^{\mu}(2\pi)^d}\sum_{\lambda=0}^{\lambda_0}\frac{1}{n^{\lambda/2m}\lambda!}\int_{\mathbb{R}^d \setminus{\mathscr{R}(n)}}e^{-P_{A}(\xi)}e^{-i w\cdot \xi}B_{\lambda}(S_{1}(\xi),S_{2}(\xi),\ldots, S_{\lambda}(\xi))d\xi
\end{eqnarray*}
for $n\in\mathbb{N}_+$ and $y\in\mathbb{R}^d$, where we have set
\begin{equation*}
K(n,w)=\int_{\mathscr{R}(n)}h_A(n,\xi)e^{-iw\cdot\xi}\,d\xi
\end{equation*}
for $n\in\mathbb{N}_+$ and $w\in\mathbb{R}^d$. By repeating the arguments in the proof of Lemma \ref{lem:IntegralExponentialEst} which make use of Cauchy's integration formula and the estimate \eqref{eq:hAEst} (in place of \eqref{eq:gAest}), we obtain positive constants $C$, $\epsilon$, and $M$ for which
\begin{equation*}
    \abs{K(n,w)}\leq \frac{C}{n^{(\lambda_0+1)/(2m)}}\left(e^{-\epsilon n}+Ce^{-MR_A^{\#}(w)}\right)
\end{equation*}
for all $n\in\mathbb{N}_+$ and $w\in\mathbb{R}^d$. Now, combining the facts that $B_{\lambda}(S_1(\xi),S_{2}(\xi),\ldots,S_{\lambda}(\xi))$ is a polynomial in $\xi$, $e^{-R_{A}(\cdot)/2}$ is a element of the Schwartz space $\mathcal{S}(\mathbb{R}^d)$, $\mathcal{S}(\mathbb{R}^d)$ is stable under multiplication by polynomials, and $\mathcal{S}(\mathbb{R}^d)\subset L^{1}(\mathbb{R}^d)$ we can conclude that
\begin{equation*}
C'= \sup_{0\leq \lambda \leq \lambda_0}\|e^{-R_A(\cdot)/2} B_{\lambda}(S_1(\cdot),S_{2}(\xi),\ldots,S_{\lambda}(\cdot))\|_{L^1(\mathbb{R}^d)} < \infty.
\end{equation*}
Thus by setting $\epsilon'=\inf\{R_A(\xi)/2:\xi\in \mathbb{R}^d\setminus [-\delta,\delta]^d\}>0,$ we obtain
\begin{eqnarray*}
    \lefteqn{\abs{\int_{\mathbb{R}^d\setminus \mathscr{R}(n) }e^{-P_A(\xi)}e^{-iw\cdot\xi}B_{\lambda}(S_1(\xi),S_{2}(\xi),\ldots,S_{\lambda}(\xi)) \,d\xi}\leq \int_{\mathbb{R}^d\setminus \mathscr{R}(n)}e^{-R_A(\xi)}\abs{B_{\lambda}(S_1(\xi),S_{2}(\xi),\ldots,S_{\lambda}(\xi))} \,d\xi}\\
    &&\hspace{5.5cm}\leq e^{-n\epsilon'}\int_{\mathbb{R}^d\setminus \mathscr{R}(n)}e^{-R_A(\xi)/2}\abs{B_{\lambda}(S_1(\xi),S_{2}(\xi),\ldots,S_{\lambda}(\xi))} \,d\xi\leq C'e^{- n\epsilon'}
\end{eqnarray*}
for all $n\in\mathbb{N}_+$ and $y\in\mathbb{R}^d$, and $0 \leq \lambda \leq \lambda_0$. Consequently,
\begin{eqnarray*}
    \abs{I(n,y)- \sum_{\lambda=0}^{\lambda_0}(Q_\lambda^n H_P^n)(y)}& \leq& \frac{C_A}{n^\mu(2\pi)^d}\abs{K(n,w)}\\
    &&\hspace{-.2cm}+\frac{C_A}{n^{\mu}(2\pi)^d}\abs{\sum_{\lambda=0}^{\lambda_0}\frac{1}{n^{\lambda/2m}\lambda!}\int_{\mathbb{R}^d \setminus{\mathscr{R}(n)}}e^{-P_{A}(\xi)}e^{-i w\cdot \xi}B_{\lambda}(S_{1}(\xi),S_{2}(\xi),\ldots, S_{\lambda}(\xi))d\xi}\\
    &\leq&  \frac{C_A}{n^{\mu}}\frac{C}{n^{(\lambda_0+1)/(2m)}}\left(e^{-\epsilon n}+e^{-MR_A^{\#}(w)}\right)+C_A Ce^{-\epsilon'n}\\
    &\leq& \frac{C}{n^{\mu+(\lambda_0+1)/(2m)}}\left(e^{-\epsilon n}+e^{-MR_A^{\#}(w)}\right)
\end{eqnarray*}
for all $n\in\mathbb{N}_+$ and $y\in\mathbb{R}^d$, where $w=n^{-D}A^{\top}y$; here, we have adjusted uniform positive constants from line to line without explicit mention. As we did in the proof of Lemma \ref{lem:IntegralExponentialEst}, an appeal to Proposition \ref{prop:ExpSetLegFenchelTransform} gives $R_A^{\#}(w)=nR_A^{\#}(A^\top y/n)=nR^{\#}(y/n)$ and therefore
\begin{equation*}
\abs{I(n,y)- \sum_{\lambda=0}^{\lambda_0} (Q_\lambda^n H_P^n)(y)} \leq \frac{C}{n^{\mu+(\lambda_0+1)/(2m)}}\left(e^{-\epsilon n}+e^{-n MR^{\#}(y/n)}\right)
\end{equation*}
which holds for all $n\in\mathbb{N}_+$ and $y\in\mathbb{R}^d$.
\end{proof}

\section{Examples}\label{sec:Examples}

Given the generality in which we have worked, many of the examples considered in the previous works \cite{DSC14,RSC17,CF22,CF24} are ripe for the application of our theorems. For generalized Gaussian bounds in $d$ dimensions, we remark that some of the examples (namely the introductory example and those of Subsections 7.1, 7.3, and 7.5) in \cite{RSC17} are able to be treated by Theorem 1.8 of that article; of course, Theorem \ref{thm:FiniteSupport} recaptures them here. However, for all finitely-supported examples in \cite{RSC17}, our local limit theorem (Theorem \ref{thm:LLT}) not only significantly improves upon the error of the corresponding results in that article, but also gives precise higher order approximation up to any desired order or accuracy. With this noted, we focus this section on revisiting three of the examples presented in Section 7 of \cite{RSC17}: \textit{Two drifting packets} (7.2); \textit{Supporting lattice misaligned with $\mathbb{Z}^2$} (7.3); and \textit{Contribution from non-minimal decay exponent} (7.4).

\subsection{Two Drifting Packets (revisiting Example 7.2 of \cite{RSC17})}\label{ssec:rev-Ex7.2}
In this example, we revisit Example 7.2 of \cite{RSC17} and illustrate how our results improve upon the results there. Consider $\phi:\mathbb{Z}^2\to\mathbb{C}$ defined by
\begin{equation*}
\phi(x,y)=\frac{1}{a}\times
\begin{cases}
\frac{1+i}{4} & (x, y) = (-1, \pm 1)\\
-\frac{1+i}{4} & (x, y) = (1, \pm 1)\\
\pm \frac{1}{\sqrt{2}} & (x, y) = (0, \pm 1)\\
0 & \mbox{otherwise}.
\end{cases}
\end{equation*} where $a = \sqrt{2 + \sqrt{2}}$. A direct computation shows that $\sup_{\xi}|\widehat{\phi}(\xi)|=1$ (so that $\phi\in\mathcal{H}_2^*$) and
\begin{equation*}
    \Omega(\phi) = \{\xi_1, \xi_2, \xi_3, \xi_4\} = \{(\pi/2,3\pi/4),(\pi/2,-\pi/4),(-\pi/2,-3\pi/4),(-\pi/2,\pi/4)\}
\end{equation*} where \begin{equation*}
    \widehat{\phi}(\xi_1) = \widehat{\phi}(\xi_4) = (i)^{5/4}\hspace{1cm}\mbox{ and }\hspace{1cm}\widehat{\phi}(\xi_2) = \widehat{\phi}(\xi_3) = -(i)^{5/4}.
\end{equation*} 
As in \cite{RSC17}, we compute
\begin{equation*}
    \Gamma_{k}(\xi) = i\alpha_k\cdot \xi - P_k(\xi) + \Upsilon_k(\xi)
\end{equation*} 
to find, for $\rho = \sqrt{2}-1$, $\alpha_1 = \alpha_2 = (0, \rho)$, $\alpha_3 = \alpha_4 = (0, -\rho)$, and
\begin{equation}\label{eq:TwoDrift1}
    P_k(\xi) = P(\xi) = \frac{1+i\rho}{4}\eta^2 + \rho \zeta^2
\end{equation} 
for $\xi=(\eta, \zeta)\in \mathbb{R}^2$ and $k=1,2,3,4$ . In particular, each expansion has the same positive semi-elliptic polynomial $P$ with $\mathbf{m}=(1,1)$ and 
\begin{equation*}
    R(\xi)=\frac{1}{4}\eta^2+\rho\zeta^2
\end{equation*}
for $\xi=(\eta,\zeta)\in\mathbb{R}^2$. Consequently, $\mu_k=\abs{\mathbf{1}:2\mathbf{m}_k}=\abs{\mathbf{1}:(2,2)}=1$, and
\begin{equation*}
    R_k^{\#}(x,y)=R^{\#}(x,y)=x^2+\frac{1}{4\rho}y^2
\end{equation*}
for $k=1,2,3,4$ and $(x,y)\in\mathbb{R}^2$. Though the holomorphic terms $\Upsilon_j$ are not all identical, they all are $o(R(\xi))$ as $\xi\to 0$ and so we may apply Theorems \ref{thm:FiniteSupport} and \ref{thm:LLT}. It should be noted that, while the leading polynomials are identical, the drifts are not and so generalized Gaussian estimates cannot be obtained directly from Theorem 1.8 of \cite{RSC17}. Applying Theorem \ref{thm:FiniteSupport} of the present article gives
\begin{eqnarray*}
    \abs{\phi^{(n)}(x,y)}&\leq & \sum_{k=1}^4\frac{C_k}{n^{\mu_k}}\exp\left(-nM_k R_k^{\#}\left(\frac{(x,y)-n\alpha_k}{n}\right)\right)\\
    &\leq& \sum_{k=1}^2\frac{C_k}{n}\exp\left(-nM_k R^{\#}\left(\frac{x,y-n\rho)}{n}\right)\right)+\sum_{k=3}^4\frac{C_k}{n}\exp\left(-nM_k R^{\#}\left(\frac{(x,y+n\rho)}{n}\right)\right)\\
    &\leq &\frac{C}{n}\left[\exp\left(-M\left(\frac{x^2}{n}+\frac{1}{4\rho}\frac{(y-n\rho)^2}{n}\right)\right)+\exp\left(-M\left(\frac{x^2}{n}+\frac{1}{4\rho}\frac{(y+n\rho)^2}{n}\right)\right)\right]\\
    &=&\frac{C}{n}e^{-Mx^2/n}\left[\exp\left(-M\frac{(y-n\rho)^2}{4\rho n}\right)+\exp\left(-M\frac{(y+n\rho)^2}{4\rho n}\right)\right]
\end{eqnarray*}
for $n\in\mathbb{N}_+$ and $(x,y)\in\mathbb{Z}^2$ where $C$ and $M$ are positive constants. This illustrates Gaussian decay in $x$ away from $0$ and also in $y$ away from the two peaks at $y\approx\pm n\rho$. 

Next, we present two local limit theorems. The first we will obtain from Corollary \ref{cor:LLTCor} with base attractors (only). The second will be gotten from Theorem \ref{thm:LLT} with base attractors and higher-order corrections to the first degree of accuracy: $\lambda=\lambda_1=\lambda_2=\lambda_3=\lambda_4=1$. We first observe that, as a consequence of relation \eqref{eq:TwoDrift1}  the base attractors are all given by
\begin{equation}\label{eq:TwoDrift2}
H_{P}^n(x,y) = \frac{1}{(2\pi)^2}\int_{\mathbb{R}^2}e^{-n P(\eta,\zeta)}e^{-i(x,y)\cdot(\eta,\zeta)}\,d\zeta d \eta = \frac{1}{2\pi n \sqrt{\rho(1+i\rho)}} \exp\left( -\frac{x^2}{n(1+i\rho)}-\frac{y^2}{4n\rho} \right), 
\end{equation} 
for $(x,y)\in\mathbb{R}^2$. We note that, since $P$ is (already) semi-elliptic, we have $A_k=I_2$ for $k=1,2,3,4$. Also, we have $\mathbf{m}=\mathbf{m}_1=\cdots\mathbf{m}_4=(1,1)$ so that $m=\lcm(\mathbf{m})=1$, $\kappa=\kappa_1=\cdots\kappa_4=(1,1)$, and $\Lambda=\Lambda_1=\cdots=\Lambda_4$ where
\begin{equation*}
\Lambda(\beta)=\beta\cdot\kappa-2m=\beta_1+\beta_2-2
\end{equation*}
for $\beta=(\beta_1,\beta_2)\in\mathbb{N}^2$.
As we did for the introductory example of Subsection \ref{ssec:IntroExample}, in order to apply Corollary \ref{cor:LLTCor} we must compute $\Upsilon_k$ to obtain $\gamma_k$ (and later $Q_{\lambda,k}$) for $k=1,2,3,4$. We find
\begin{equation*}
\Upsilon_1(\eta, \zeta) =   b_{(2,1),1}\eta^2\zeta +  b_{(0,3),1}\zeta^3 + b_{(4,0),1}\eta^4 +b_{(2,2),1}\eta^2\zeta^2+b_{(0,4),1}\zeta^4  +O(\eta^4\zeta+\eta^2\zeta^3+\zeta^5)
\end{equation*}
where, in particular,
\begin{equation*}
    b_{(2,1),1}=-\frac{1}{2}(1-\rho)\hspace{1cm}\mbox{and}\hspace{1cm}b_{(0,3),1}=\frac{2}{3}i(1-2\rho).
\end{equation*}
In the case of $k=2,3,4$, the structure of $\Upsilon_k$ is similar to that of $\Upsilon_1$ and it is readily verified that
\begin{equation*}
 b_{(2,1),1} = b_{(2,1),2},\quad b_{(0,3),1}= b_{(0,3),2} , 
\end{equation*}
\begin{equation*}
b_{(2,1),3} = b_{(2,1),4} = -b_{(2,1),1}=\frac{1}{2}(1-\rho),\hspace{1cm}\mbox{and}\hspace{1cm} b_{(0,3),3}= b_{(0,3),4} = -b_{(0,3),1}=-\frac{2}{3}i(1-2\rho).
\end{equation*}
With the above coefficients in hand, we see that, for each $k=1,2,3,4$, $\beta=(0,3)$ and $\beta=(2,1)$ are the only multi-index for non-zero coefficients for which $\Lambda(\beta)=\Lambda_k(\beta)=1$. Consequently, we have
\begin{equation}\label{eq:TwoDrift2}
    S_{1,1}(\xi)=S_{1,2}(\xi)=-\frac{1}{2}(1-\rho)\eta^2\zeta+\frac{2}{3}i(1-2\rho)\zeta^3
\end{equation}
and
\begin{equation}\label{eq:TwoDrift3}
    S_{1,3}(\xi)=S_{1,4}(\xi)=-S_{1,1}(\xi)=\frac{1}{2}(1-\rho)\eta^2\zeta-\frac{2}{3}i(1-2\rho)\zeta^3.
\end{equation}
With these, we immediately see that
\begin{equation*}
    \gamma_k=\min\{\lambda:S_{\lambda,k}\neq 0\}=1
\end{equation*}
for every $k=1,2,3,4$ and so we are in a position to use Corollary \ref{cor:LLTCor} (but now with the knowledge that two majorants appearing in the corollary coincide, i.e., we didn't gain anything from computing the $\gamma_k$s). We have 
\begin{eqnarray*}
\mathcal{R}_{0}^n(x,y) &=&\phi^{(n)}(x,y)-\sum_{k=1}^4 e^{-i(x,y)\cdot\xi_k} \widehat{\phi}(\xi_k)^{n}H_{P_k}^{n}((x,y)-n\alpha_k)\\
&= & \phi^{(n)}(x,y)-(i)^{5n/4} \left[\left((-1)^y + (-1)^n \right) (i)^{-x+y/2} H^n_P(x,y-\rho n)\right.\\
&&\hspace{5cm}\left.+ \left((-1)^{y+n} + 1 \right) (i)^{x-y/2}H^n_P(x,y+\rho n)\right]
\end{eqnarray*}
for $(x,y)\in \mathbb{Z}^2$. In this notation, an appeal to Corollary \ref{cor:LLTCor} gives
\begin{equation}\label{eq:TwoPacketsLLT1}
    \abs{\mathcal{R}^{n}_{0}(x,y)}\leq \frac{C}{n^{3/2}}e^{-Mx^2/n}\left[\exp\left(-M\frac{(y-n\rho)^2}{4\rho n}\right)+\exp\left(-M\frac{(y+n\rho)^2}{4\rho n}\right)\right]
\end{equation}
for all $n\in\mathbb{N}_+$ and $ (x,y)\in\mathbb{Z}^2$, and some positive constants $C$ and $M$. Similar to Figure 6 of \cite{RSC17}, for $n=30$ and $n=60$, we illustrate the real parts of $\phi^{(n)}$ in the first row in Figure \ref{fig:TwoPackets}. In the second row, we have illustrated the actual error $\abs{\mathcal{R}_{0}^n}$ and its Gaussian upper bound as given in \eqref{eq:TwoPacketsLLT1} for $n=30$ and $n=60$; here, we have used the values $C = 0.15$, $M=0.3$. In Figure \ref{fig:TwoPackets_R0_Loc}, we have focused around the peak $(x,y)\approx (0,n\rho)$ to illustrate this error for $n=1,000$. From\eqref{eq:TwoPacketsLLT1}, we have the $\ell^\infty$ estimate
\begin{equation*}
\|\mathcal{R}^{n}_0\|_\infty \leq \frac{C}{n^{3/2}}, 
\end{equation*}
for $n\in\mathbb{N}_+$. This is illustrated by the Figure \ref{fig:TwoPackets_R0_LogLog} for $20\leq n\leq 1,000$.
\begin{table}[!h]
  \centering
  \begin{tabular}{  |c| c | c | }
    \hline
     & $n=30$ & $n=60$ \\ \hline
     $\Re(\phi^{(n)})$ & 
    \begin{minipage}{.4\textwidth}
      \includegraphics[width=1\linewidth]{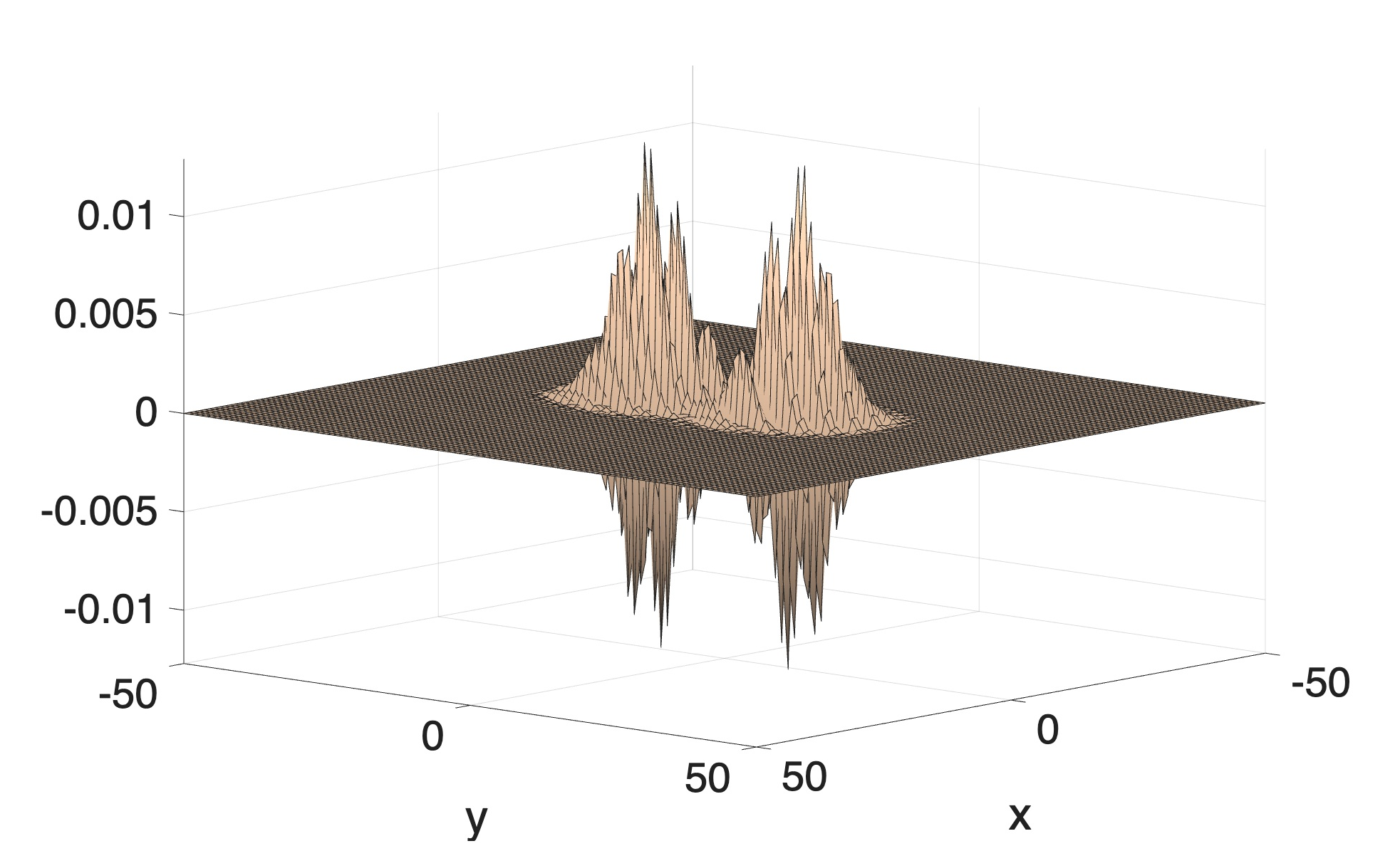}
    \end{minipage}
	&
      \begin{minipage}{.44\textwidth}
      \includegraphics[width=1\linewidth]{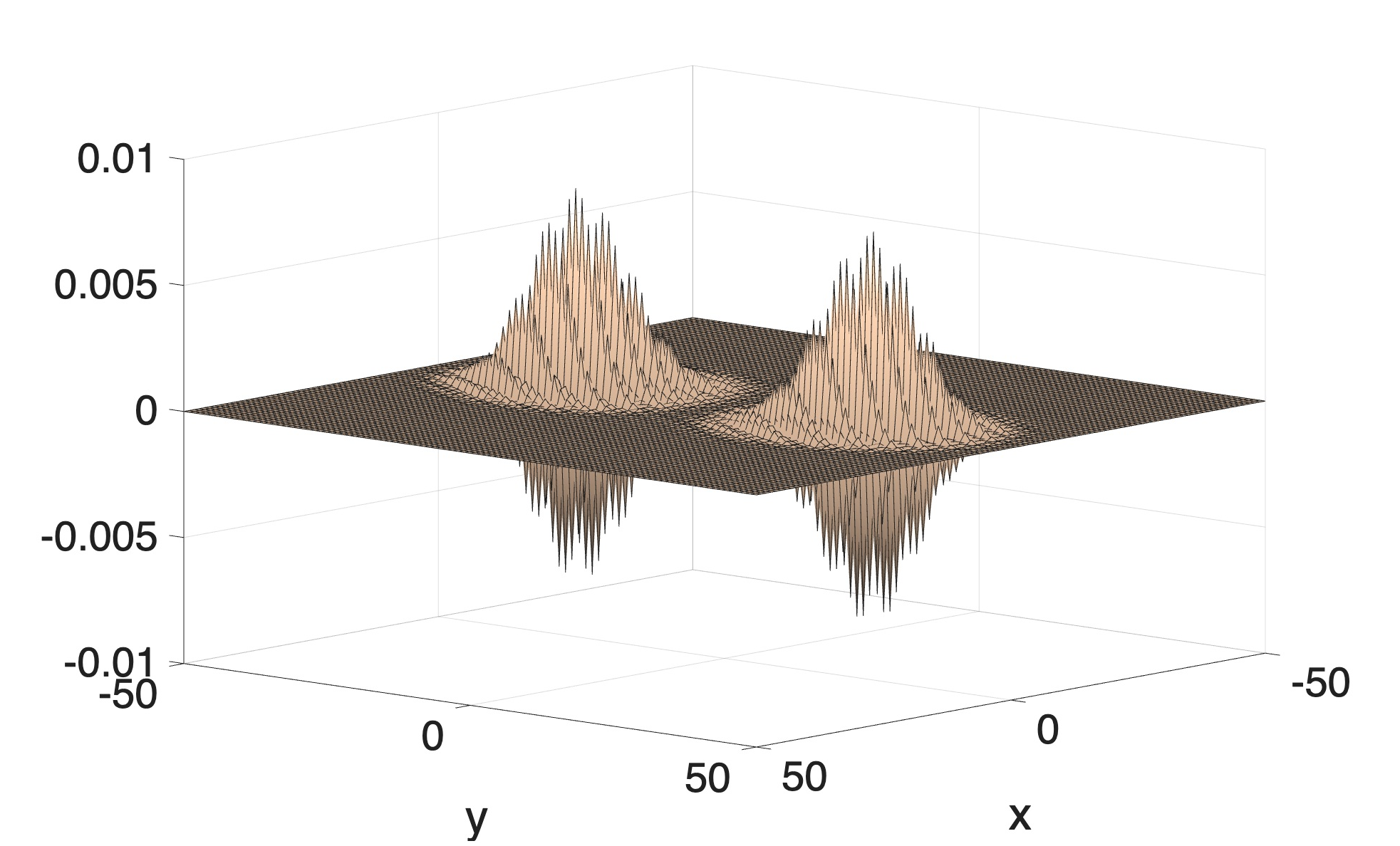}
    \end{minipage}\\ \hline
    \begin{turn}{270}\hspace{-0.5cm}Error\end{turn} & 
    \begin{minipage}{.4\textwidth}
      \includegraphics[width=1\linewidth]{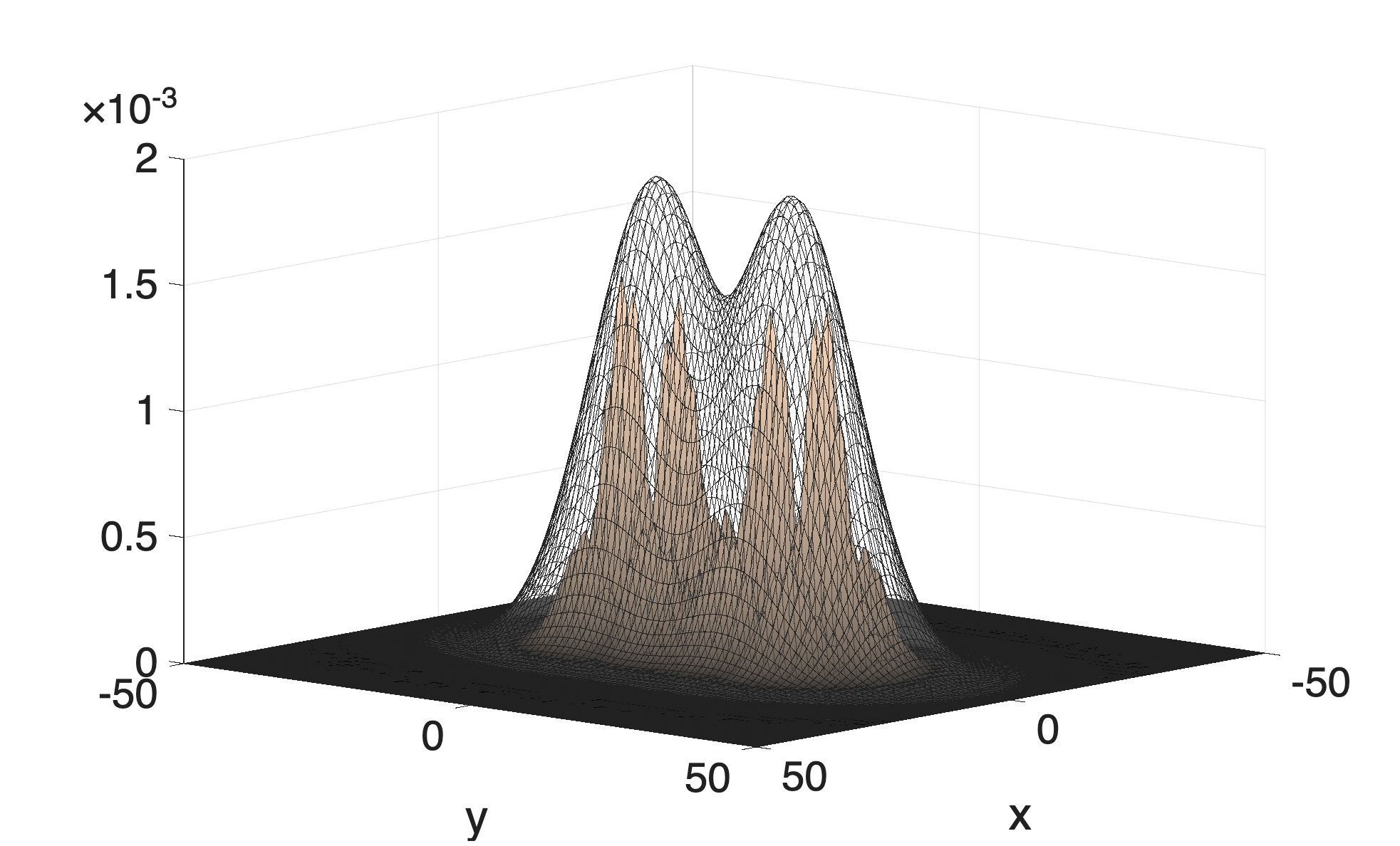}
    \end{minipage}
	&
      \begin{minipage}{.4\textwidth}
      \includegraphics[width=1\linewidth]{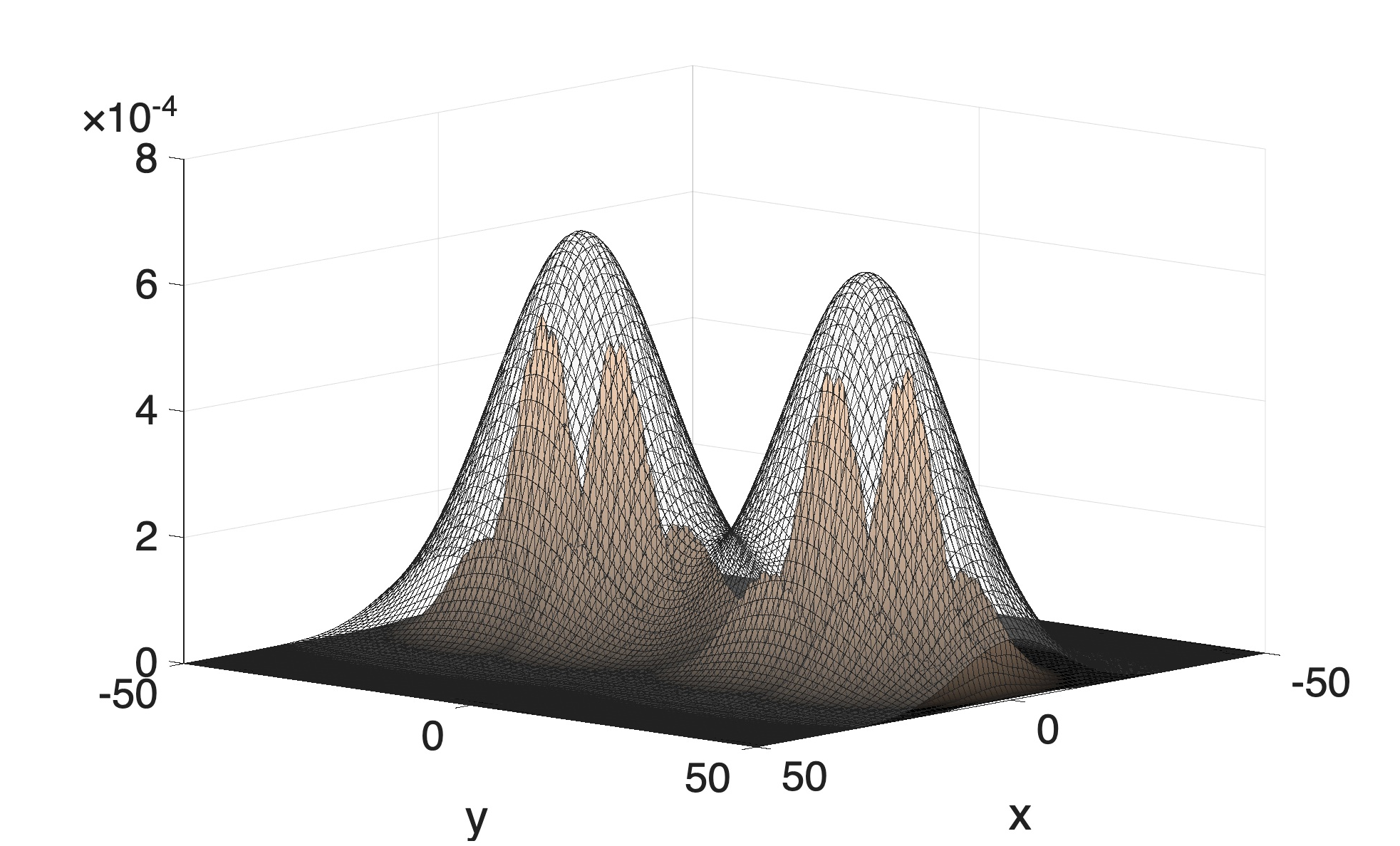}
    \end{minipage}\\ \hline
  \end{tabular}
  \captionof{figure}{For $n=30$ and $n=60$, the graphs of $\Re(\phi^{(n)})$ appear in the first row. In the second row, the absolute errors $\abs{\mathcal{R}_{0}^{n}}$ are illustrated by the solid light brown surfaces and the Gaussian-type error in \eqref{eq:TwoPacketsLLT1} are illustrated by the transparent ``nets" above.}\label{fig:TwoPackets} 
\end{table}

\begin{figure}[h!]
    \centering
    \includegraphics[width=0.75\linewidth, trim = {0 6cm 0 6cm}, clip]{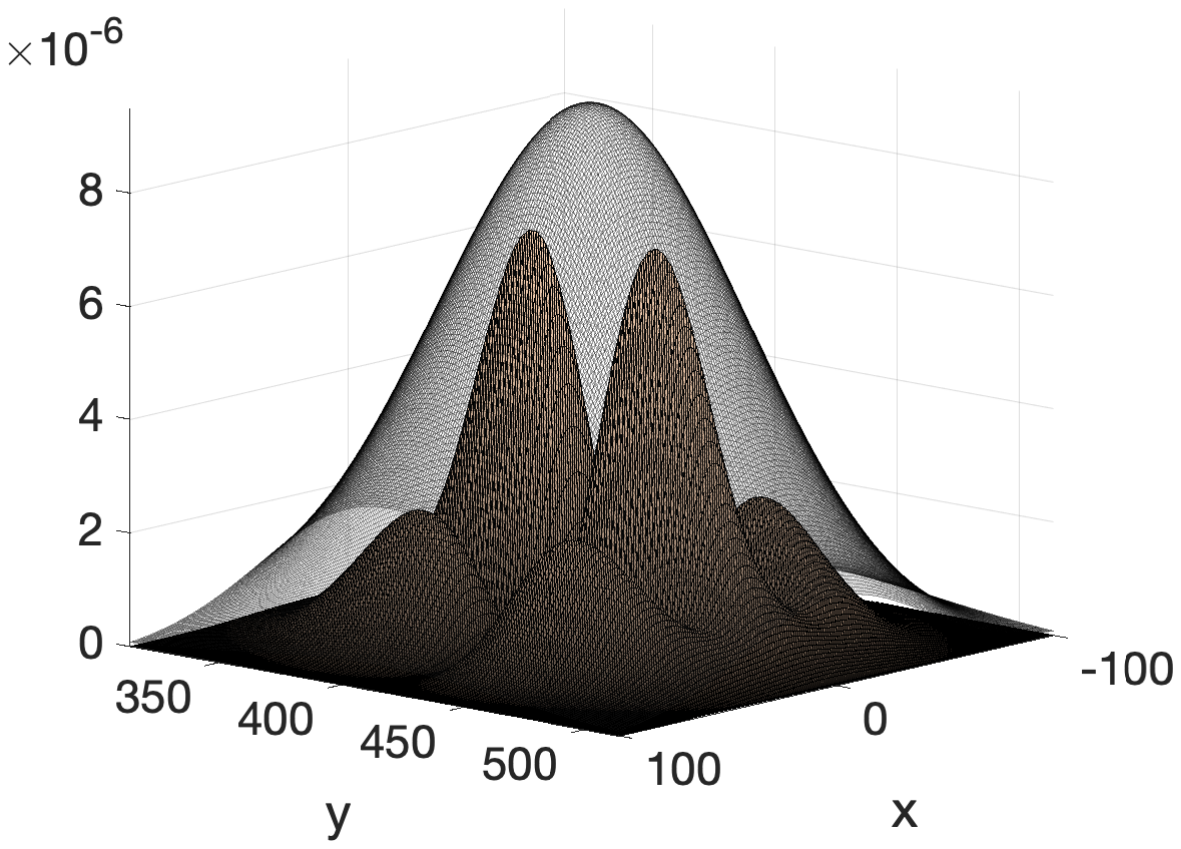}
    \caption{The absolute error $\abs{\mathcal{R}_{0}^n}$ and the Gaussian-type estimate in \eqref{eq:TwoPacketsLLT1} are shown near the drifted peak $(0,n\rho)$ for $n=1,000$, $C=0.3$ and $M = 0.3$.}
    \label{fig:TwoPackets_R0_Loc}
\end{figure}

\begin{figure}[h!]
    \centering
    \includegraphics[width=0.6\linewidth]{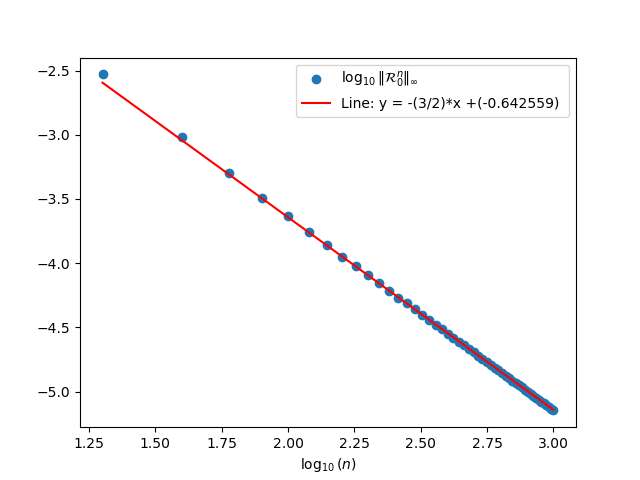}
    \caption{A graph of the values of $\Vert \mathcal{R}^{n}_{0}\Vert_{\infty}$ (blue points) as a function of $n$ in a $\log_{10}-\log_{10}$ plot, for values of $n$ ranging from $20$ to $1,000$ in increments of $20$, along with the best linear fit of slope $-3/2$ (line in red).}
    \label{fig:TwoPackets_R0_LogLog}
\end{figure}

With our goal of deducing a local limit theorem from Theorem \ref{thm:LLT} for $\lambda=\lambda_1=\cdots\lambda_4=1$, let's write down the first-order corrections. Thanks to \eqref{eq:TwoDrift2} and \eqref{eq:TwoDrift3},
\begin{equation*}
    Q_{1,1}=Q_{1,2}=B_1(S_{1,1}(i\partial))=S_{1,1}(i\partial)=\frac{i}{2}(1-\rho)\partial_x^2\partial_y+\frac{2}{3}(1-2\rho)\partial_y^3
\end{equation*}
and
\begin{equation*}
    Q_{1,3}=Q_{1,4}=-Q_{1,1}=-\frac{i}{2}(1-\rho)\partial_x^2\partial_y-\frac{2}{3}(1-2\rho)\partial_y^3.
\end{equation*}
As the base attractors are identical, we compute
\begin{equation*}
\partial_{x}^2\partial_{y} H_{P}(x,y) = \left( \frac{y}{\rho(1+i\rho)} -\frac{2x^2 y}{\rho(1+i\rho)^2} \right) H_{P}(x,y)
\quad \text{and} \quad
\partial_{y}^3H_{P}(x,y) = \left( \frac{3 y}{4\rho^2} - \frac{y^3}{8\rho^3} \right)H_{P}(x,y)
\end{equation*}
so that
\begin{equation*}
    (Q_{1,1}H_P)(x,y)=\frac{y}{4}\left((3+(2+\rho)i)-(3+\rho)(1+i)x^2-\frac{2+\rho}{3}y^2\right)H_P(x,y)
\end{equation*}
for $(x,y)\in\mathbb{R}^2$. Thus, in view of \eqref{eq:LLTExplicitTime}, we have
\begin{eqnarray}\label{eq:TwoDrift4}\nonumber
   (Q_{1,1}^n H_P^n)(x,y-n\rho)&=&(Q_{1,2}^n H_P^n)(x,y-n\rho)\\\nonumber
   &=&\frac{1}{n^{3/2}}(Q_{1,1}H_P)\left(\frac{x}{\sqrt{n}},\frac{y-n\rho}{\sqrt{n}}\right)\\\nonumber
    &=&\frac{1}{4n^{3/2}}\left(\frac{y-n\rho}{\sqrt{n}}\right)\left[3+(2+\rho)i-(3+\rho)(1+i)\frac{x^2}{n}\right.\\
    &&\hspace{4.6cm}\left.-\left(\frac{2+\rho}{3}\right)\frac{(y-n\rho)^2}{n}\right]H_P\left(\frac{x}{\sqrt{n}},\frac{y-n\rho}{\sqrt{n}}\right)
\end{eqnarray}
and, similarly,\begin{eqnarray}\label{eq:TwoDrift5}\nonumber
   (Q_{1,3}^n H_P^n)(x,y+n\rho)&=&(Q_{1,4}^n H_P^n)(x,y+n\rho)\\\nonumber
   &=&\frac{1}{n^{3/2}}(Q_{1,3}H_P)\left(\frac{x}{\sqrt{n}},\frac{y+n\rho}{\sqrt{n}}\right)\\\nonumber
    &=&\frac{-1}{4n^{3/2}}\left(\frac{y+n\rho}{\sqrt{n}}\right)\left[3+(2+\rho)i-(3+\rho)(1+i)\frac{x^2}{n}\right.\\
    &&\hspace{4.6cm}\left.-\left(\frac{2+\rho}{3}\right)\frac{(y+n\rho)^2}{n}\right]H_P\left(\frac{x}{\sqrt{n}},\frac{y+n\rho}{\sqrt{n}}\right)
\end{eqnarray}
for $n\in\mathbb{N}_+$ and $(x,y)\in\mathbb{R}^2$. Using Notation \ref{not:RealError}, we have
\begin{eqnarray*}
    \mathcal{R}_1^n(x,y)&=&\phi^{(n)}(x,y)-\sum_{k=1}^4\sum_{\lambda=0}^1 e^{-i(x,y)\cdot\xi_k}\widehat{\phi}(\xi_k)^n (Q_{\lambda,k}^n H_P^n)((x,y)-n\alpha_k)\\
    &=&\phi^{(n)}(x,y)-(i)^{5n/4}\sum_{\lambda=0}^1\left[\left(((-1)^y+(-1)^n)(i)^{-x+y/2}\right)(Q_{\lambda,1}^n H_P^n)(x,y-n\rho)\right.\\
    &&\left.\hspace{6cm}+\left((-1)^{y+n}+1)(i)^{x-y/2}\right)(Q_{\lambda,3}^n H_P^n)(x,y+n\rho)\right]\\
    &=&\phi^{(n)}(x,y)-(i)^{5n/4}\left(((-1)^y+(-1)^n)(i)^{-x+y/2}\right)\left((Q_{1,1}^n H_P^n)(x,y-n\rho)+H_P^n(x,y-n\rho)\right)\\
    &&\hspace{2.5cm}-(i)^{5n/4}\left(((-1)^{y+n}+1)(i)^{x-y/2}\right)\left((Q_{1,3}^n H_P^n)(x,y+n\rho)+H_P^n(x,y+n\rho)\right)
\end{eqnarray*}
for $n\in\mathbb{N}_+$ and $(x,y)\in\mathbb{Z}^2$ where $H_P^n(x,y\pm n\rho)$, $(Q_{1,1}^n H_P^n)(x,y-n\rho)$, and $(Q_{1,3}^n H_P^n)(x,y+n\rho)$ are given, respectively, by \eqref{eq:TwoDrift2}, \eqref{eq:TwoDrift4}, and \eqref{eq:TwoDrift5}. An appeal to Theorem \eqref{thm:LLT} gives positive constants $C$ and $M$ for which the estimate
\begin{equation}\label{eq:TwoPacketsLLT-wCum}
\abs{\mathcal{R}^{n}_{1}(x,y)}\leq \frac{C}{n^{2}}e^{-Mx^2/n}\left[\exp\left(-M\frac{(y-n\rho)^2}{ n}\right)+\exp\left(-M\frac{(y+n\rho)^2}{ n}\right)\right]
\end{equation}
is satisfied for all $n\in\mathbb{N}_+$ and $ (x,y)\in\mathbb{Z}^2$, as shown in Figure \ref{fig:ex1_LLT_lambda_1}. This yields, in particular, the $\ell^\infty$ estimate
\begin{equation*}
\| \mathcal{R}^{n}_{1} \|_{\infty}\leq \frac{C}{n^2},
\end{equation*}
for $n\in\mathbb{N}_+$ which is supported by the results of numerical simulations shown in Figure \ref{fig:TwoPackets_R1_LogLog}.

\begin{figure}[h!]
    \centering
    \includegraphics[width=0.75\linewidth, trim = {0 6cm 0 6cm}, clip]{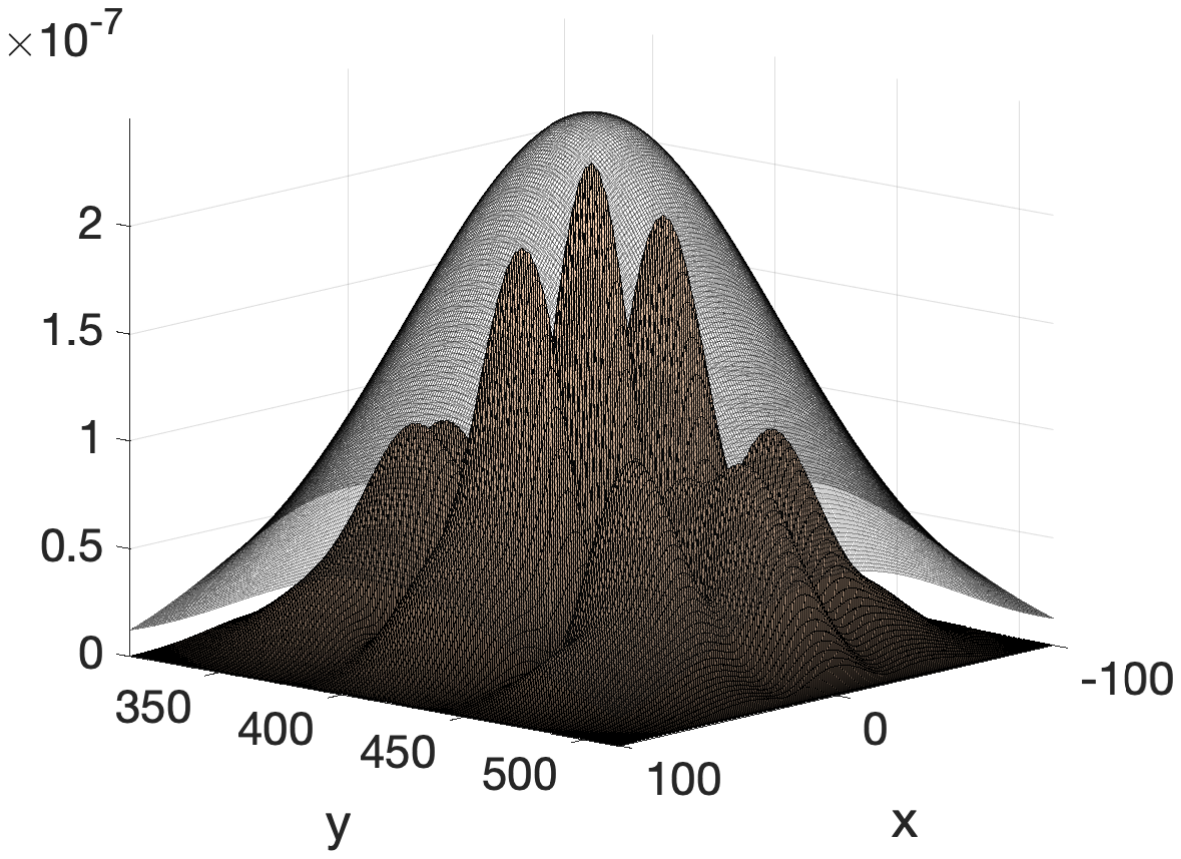}
    \caption{The error $\abs{\mathcal{R}^n_1}$ and the local limit/ theorem bound from Equation \eqref{eq:TwoPacketsLLT-wCum}, with $n=1000$, $C = 0.25$, and $M = 0.15$.}
    \label{fig:ex1_LLT_lambda_1}
\end{figure}

\begin{figure}[h!]
    \centering
    \includegraphics[width=0.6\linewidth]{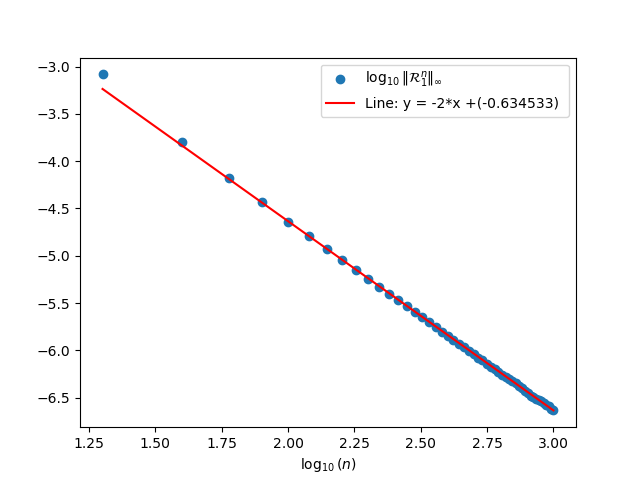}
    \caption{A graph showing the values of $\log_{10}\Vert \mathcal{R}^{n}_{1}\Vert_{\infty}$ (points in blue) as a function of $\log_{10}(n)$, for values of $n$ ranging from $20$ to $1,000$ in increments of $20$, and the best linear fit of slope $-2$ (red line).} 
    \label{fig:TwoPackets_R1_LogLog}
\end{figure}

\subsection{A supporting lattice misaligned with $\mathbb{Z}^2$.}\label{sec:Misaligned}
In this subsection, we revisit Example 7.3 of \cite{RSC17}. Consider $\phi:\mathbb{Z}^2\to\mathbb{C}$ defined by
\begin{equation*}
    \phi(x,y)=\begin{cases}
        3/8 & (x,y)=(0,0)\\
        1/8 & (x,y)=\pm (1,1)\\
        1/4 & (x,y)=\pm (1,-1)\\
        -1/16 & (x,y)=\pm (2,-2)\\
        0 &\,\mbox{otherwise}
    \end{cases}.
\end{equation*}
As shown in \cite{RSC17}, this finitely-supported function is normalized so that $\phi\in\mathcal{H}_2^*$ with
$\Omega(\phi)=\{\xi_1,\xi_2\}=\{(0,0),(\pi,\pi)\}$. For $k=1,2$, we will see that $\xi_k$ is of positive-homogeneous type for $\widehat{\phi}$ with associated positive homogeneous polynomial $P_k=P$ given\footnote{In \cite[Page 1093]{RSC17}, the second coefficient of $P$ was mistakenly given as $23/384$. To the second author's memory, it isn't clear if this error was more than typographical but it doesn't seem to have markedly affected the analysis. Given the high level of accuracy in the present article, getting this coefficient right is critical to our analysis here.} by
\begin{equation*}
    P(\xi)=P(\eta,\zeta)=\frac{1}{8}(\eta+\zeta)^2+\frac{1}{16}(\eta-\zeta)^4
\end{equation*}
for $\xi=(\eta,\zeta)\in\mathbb{R}^2$ and $\mu_k=\mu=3/4$. In particular, this example is ripe for the applications of Theorems \ref{thm:FiniteSupport} and \ref{thm:LLT} and, in contrast to the other examples in this article, this presents a canonical case where the leading polynomials $P=P_1=P_2$ are positive-homogeneous but not semi-elliptic. Correspondingly and as we will see, $A=A_1=A_2\in\operatorname{Gl}_2(\mathbb{R})$ (as guaranteed by Proposition \ref{prop:PosHomAreSemiElliptic} and appearing in \eqref{eq:QLambdaOpertor}) is non-trivial. As discussed in \cite{RSC17}, $\phi$ does meet the hypotheses of \cite[Theorem 1.8]{RSC17} which gives the Gaussian-type estimate
\begin{equation*}
    \abs{\phi^{(n)}(x,y)}\leq\frac{C}{n^{3/4}}\exp\left(-nMR^{\#}\left(\frac{x}{n},\frac{y}{n}\right)\right)
\end{equation*}
for $n\in\mathbb{N}_+$ and $(x,y)\in\mathbb{Z}^2$ where $C$ and $M$ are positive constants and 
\begin{equation*}
    R^{\#}(x,y)\asymp \abs{x+y}^2+\abs{x-y}^{4/3}
\end{equation*}
for $(x,y)\in\mathbb{R}^2$. As Theorem \ref{thm:FiniteSupport} recaptures this estimate exactly, we won't pursue this known result here. Instead, we will focus our attention on local limits. In particular, we will apply Corollary \ref{cor:LLTCor} and Theorem \ref{thm:LLT}, both of which improve upon the local limit theorem presented in Section 7.3 of \cite{RSC17}. We have 
\begin{equation*}
    \widehat{\phi}(\xi)=\frac{3}{8} +\frac{1}{4}\cos(\eta+\zeta) + \frac{1}{2}\cos(\eta-\zeta)-\frac{1}{8}\cos(2(\eta-\zeta))
\end{equation*}
for $\xi=(\eta,\zeta)\in\mathbb{R}^2$. Correspondingly, we see that $\Omega(\phi)=\{(0,0),(\pi,\pi)\}=\{\xi_1,\xi_2\}$ where $\widehat{\phi}(\xi_1)=\widehat{\phi}(\xi_2)=1$. In fact, $\widehat{\phi}(\xi)=\widehat{\phi}(\xi+\xi_1)=\widehat{\phi}(\xi+\xi_2)$, and from this it follows that $\Gamma_1=\Gamma_2=:\Gamma$. Consequently, the ingredients/attractors for local limit theorems are the same for $k=1$ and $2$ and so, when the distinction is unnecessary, we suppress the $k$ subscripts. The Maclaurin expansion of $\Gamma$ to (isotropic) order $6$ is
\begin{eqnarray*}
    \Gamma(\xi)&=&-\frac{1}{8}\eta^2-\frac{1}{4}\eta\zeta-\frac{1}{8}\zeta^2-\frac{23}{384}\eta^4+\frac{25}{96}\eta^3\zeta-\frac{23}{64}\eta^2\zeta^2+\frac{25}{96}\eta\zeta^3-\frac{23}{384}\zeta^4\\
    &&+\frac{67}{23040}\eta^6-\frac{173}{3840}\eta^5\zeta+\frac{259}{1536}\eta^4\zeta^2-\frac{269}{1152}\eta^3\zeta^3+\frac{259}{1536}\eta^2\zeta^4-\frac{173}{3840}\eta\zeta^5+\frac{67}{23040}\zeta^6+o(\abs{\xi}^6)\\
    &=&-P(\eta,\zeta)+\frac{1}{384}\eta^4+\frac{1}{96}\eta^3\zeta+\frac{1}{64}\eta^2\zeta^2+\frac{1}{96}\eta\zeta^3+\frac{1}{384}\eta^4\\
    &&+\frac{67}{23040}\eta^6-\frac{173}{3840}\eta^5\zeta+\frac{259}{1536}\eta^4\zeta^2-\frac{269}{1152}\eta^3\zeta^3+\frac{259}{1536}\eta^2\zeta^4-\frac{173}{3840}\eta\zeta^5+\frac{67}{23040}\zeta^6+o(\abs{\xi}^6)\\
\end{eqnarray*}
as $\xi=(\eta,\zeta)\to (0,0)$. In looking at the first line for $\Gamma$ above, while it is clear that the drift $\alpha$ is zero, it is far from clear which leading terms should be aggregated to form $P$ nor why what remains (especially the forth-order polynomial appearing in the third line) is $o(P(\xi))$ as $\xi\to (0,0)$. We remark that Proposition 3.3 of \cite{RSC17} outlines a strategy to identify leading polynomials in such non-semi-elliptic cases. Still, as it is perhaps obvious to the reader, this is a difficult task. If we, instead, make the change of variables by the matrix
\begin{equation*}
    A=\begin{pmatrix}
        1/\sqrt{2} & -1/\sqrt{2}\\
        1/\sqrt{2} & 1/\sqrt{2}
    \end{pmatrix},
\end{equation*}
things become transparent. Writing $\Gamma_{A}(\xi)=\Gamma(A\xi)$, we find 
\begin{equation*}
    \Gamma_A(\xi)=-\left(\frac{1}{4}\eta^2+\frac{1}{4}\zeta^4\right)+\Upsilon_A(\xi)=-P_A(\xi)+\Upsilon_A(\xi)
\end{equation*}
for $\xi=(\eta,\zeta)\in\mathbb{C}^2$ where
\begin{eqnarray*}
    \Upsilon_A(\xi)&=&\frac{1}{96}\eta^4+\frac{7}{2880}\eta^6+\frac{1}{12}\zeta^6+\frac{53}{322560}\eta^8-\frac{1}{16}\eta^2\zeta^4-\frac{1}{192}\eta^4\zeta^4+\frac{1}{48}\eta^2\zeta^6-\frac{7}{160}\zeta^8+o(\abs{\xi}^8)\\
    &=&\underbrace{0 \vphantom{\frac{1}{2}}}_{S_1(\xi)/1!}+\underbrace{\frac{1}{12}\zeta^6}_{S_2(\xi)/2!}+\underbrace{0 \vphantom{\frac{1}{2}}}_{S_3(\xi)/3!}+\underbrace{\frac{1}{96}\eta^4-\frac{1}{16}\eta^2\zeta^4-\frac{7}{160}\zeta^8}_{S_4(\xi)/4!}+\underbrace{0\vphantom{\frac{1}{2}}}_{S_5(\xi)/5!}+\underbrace{\frac{1}{48}\eta^2\zeta^6+\frac{83}{3780}\zeta^{10}}_{S_6(\xi)/6!}+\cdots.
\end{eqnarray*}

Consistent with the introduction, we have aggregated terms according to $\Lambda(\beta)=\beta\cdot\kappa-2m=(2,1)\cdot\beta-4=\lambda$ for $\lambda=1,2,\dots,6$ where $\mathbf{m}=(1,2)$, $m=\lcm(\mathbf{m})=2$, and $\kappa=(2,1)$. From this, it is evident that $\xi_1$ and $\xi_2$ are of positive-homogeneous type for $\widehat{\phi}$ as the terms of $\Upsilon_A$ all have multi-indices $\beta$ with $\abs{\beta:2\mathbf{m}}\geq 3/2>1$ thanks to \cite[Lemma 8.9]{RSC17}. We observe that, in this anisotropic setting, terms with lower ``isotropic order" often appear in much higher anisotropic homogeneous order terms. For example, the term $(7/2880)\eta^6$ in the above expansion appears in $S_8$ since $\Lambda(6,0)=12-4=8$ but, of course, it is absent from $S_\lambda$ for $1\leq \lambda\leq 7$. For $\lambda=1,2,\dots,7$, Table \ref{tab:Misaligned} lists the polynomials $S_\lambda$. We note that, in looking to Corollary \ref{cor:LLTCor}, $\gamma=2$ since $S_1=0$ and $S_2\neq 0$. 

\begin{center}
\begin{tabular}{|l|l|}
\hline
$\lambda$ & $S_\lambda(\eta,\zeta)$ \\
\hline
\hline
\rule{0pt}{3ex} 1   & $0$ \\
\hline
\rule{0pt}{3ex} 2   & $\frac{1}{6}\zeta^6$ \\[1ex]
\hline
\rule{0pt}{3ex} 3   & $0$ \\
\hline
\rule{0pt}{3ex} 4   & $\frac{1}{4}\eta^4-\frac{3}{2}\eta^2\zeta^4-\frac{21}{20}\zeta^8$   \\[1ex]
\hline
\rule{0pt}{3ex} 5   & $0$ \\
\hline\rule{0pt}{3ex} 6   & $15\eta^2\zeta^6+\frac{332}{21} \zeta^{10}$ \\
\hline
\rule{0pt}{3ex} 7& $0$ \\
\hline
\hline
\end{tabular}
\captionof{table}{The polynomials $S_{\lambda}$ for $\lambda=0,1,2,\dots,7$.}\label{tab:Misaligned}
\end{center}

With \eqref{eq:QLambdaOpertor}, a careful study of Tables \ref{tab:Bell} and \ref{tab:Misaligned} shows that $Q_1^n=Q_3^n=Q_5^n=Q_7^n\equiv 0$,
\begin{equation*}
    Q_{2}^n=\frac{1}{2!}\left(0^2+nS_2(A^{-1}i\partial)\right)=\frac{n}{12}\left(\frac{-i}{\sqrt{2}}\partial_1+\frac{i}{\sqrt{2}}\partial_2\right)^6=-\frac{n}{3\cdot 2^5}(\partial_1-\partial_2)^6,
\end{equation*}
\begin{eqnarray*}
    Q_{4}^n&=&\frac{1}{4!}\left(0+0+0+3(nS_2(A^{-1}i\partial))^2+nS_4(A^{-1}i\partial)\right)\\
    &=&\frac{3n^2}{4!(48)^2}(\partial_1-\partial_2)^{12}\\
    &&+\frac{n}{4!}\left(\frac{1}{4(\sqrt{2})^4}(\partial_1+\partial_2)^4+\frac{3}{2(\sqrt{2})^6)}(\partial_1+\partial_2)^2(\partial_1-\partial_2)^4-\frac{21}{20(\sqrt{2})^8}(\partial_1-\partial_2)^8\right)\\
    &=&\frac{n^2}{3^2 \cdot 2^{11}}(\partial_1-\partial_2)^{12}+\frac{n}{3\cdot 2^7}\left((\partial_1+\partial_2)^4+3(\partial_1+\partial_2)^2(\partial_1-\partial_2)^4-\frac{21}{20}(\partial_1-\partial_2)^8\right),
\end{eqnarray*}
and
\begin{eqnarray*}
    Q_6^n&=&\frac{1}{6!}\left(4\cdot 0 +15 (nS_2(A^{-1}i\partial))^3+3\cdot 0+15(nS_2(A^{-1}i\partial))(nS_4(A^{-1}i\partial)+0+nS_6(A^{-1}i\partial)\right)\\
    &=&\frac{n^3}{3^4\cdot 2^{16}}\left(\partial_1-\partial_2\right)^{18}-\frac{ n^2}{ 3^2\cdot 2^{12}}\left((\partial_1+\partial_2)^4(\partial_1-\partial_2)^6+3(\partial_1+\partial_2)^2(\partial_1-\partial_2)^{10}-\frac{21}{20}(\partial_1-\partial_2)^{14}  \right)\\
    &&\hspace{5cm}+\frac{n}{3 \cdot 2^8}\left((\partial_1+\partial_2)^2(\partial_1-\partial_2)^6-\frac{166}{315}(\partial_1-\partial_2)^{10} \right)
\end{eqnarray*}
for $n\in\mathbb{N}_+$. With the above, we have $Q_{1}^n H_P^n=Q_{3}^n H_P^n=Q_{5}^n H_P^n=0$,
\begin{eqnarray*}
    H_P^n(x,y)=(Q_{0}^n H_P^n)(x,y)&=&\frac{1}{(2\pi)^2}\int_{\mathbb{R}^2}e^{-n((\eta+\zeta)^2/8+(\eta-\zeta)^4/16)}e^{-i(x\eta+y\zeta)}\,d\eta\,d\zeta\\
    &=&\frac{1}{n^{3/4}}\sqrt{\frac{2}{\pi}}\exp\left(-\frac{(x+y)^2}{2n}\right)h_4\left(\frac{x-y}{n^{1/4}}\right)\\
    &=&\frac{\sqrt{8}}{n^{3/4}}h_2\left(\frac{2(x+y)}{(2n)^{1/2}}\right)h_4\left(\frac{x-y}{n^{1/4}}\right)
\end{eqnarray*}
\begin{eqnarray*}
    (Q_{2}^nH_P^n)(x,y)&=&-\frac{n}{96}(\partial_1-\partial_2)^6 H_P^n(x,y)\\
    &=&-\frac{1}{n^{5/4}}\frac{2}{3}\sqrt{\frac{2}{\pi}}\exp\left(-\frac{(x+y)^2}{2n}\right)h_4^{(6)}\left(\frac{x-y}{n^{1/4}}\right)\\
    &=&-\frac{4\sqrt{2}}{3n^{5/4}}h_2\left(\frac{2(x+y)}{(2n)^{1/2}}\right)h_4^{(6)}\left(\frac{x-y}{n^{1/4}}\right),
\end{eqnarray*}
\begin{eqnarray}\label{eq:MisQH4}\nonumber
    \lefteqn{(Q_4^n H_P^n)(x,y)}\\\nonumber
    &=&\frac{n^2}{18432}(\partial_1-\partial_2)^{12}H_P^n(x,y)+\frac{n}{384}\left((\partial_1+\partial_2)^4+3(\partial_1+\partial_2)^2(\partial_1-\partial_2)^4-\frac{21}{20}(\partial_1-\partial_2)^8\right)H_P^n(x,y)\\\nonumber
        &=&\frac{\sqrt{2}}{n^{7/4}}\left(\frac{4}{9}h_2\left(\frac{2(x+y)}{(2n)^{1/2}}\right)h_4^{(12)}\left(\frac{x-y}{n^{1/4}}\right)+\frac{1}{3} h_2^{(4)}\left(\frac{2(x+y)}{(2n)^{1/2}}\right) h_4\left(\frac{x-y}{n^{1/4}}\right)\right.\\
        &&\left. \hspace{2cm}+2h_2^{(2)}\left(\frac{2(x+y)}{(2n)^{1/2}}\right)h_4^{(4)}\left(\frac{x-y}{n^{1/4}}\right)-\frac{7}{5} h_2\left(\frac{2(x+y)}{(2n)^{1/2}}\right) h_4^{(8)}\left(\frac{x-y}{n^{1/4}}\right)\right)
\end{eqnarray}
and
\begin{eqnarray}\label{eq:MisQH6}\nonumber
    \lefteqn{(Q_6^n H_P^n)(x,y)}\\\nonumber
    &=&\frac{n^3}{3^4\cdot 2^{16}}\left(\partial_1-\partial_2\right)^{18}H_P^n(x,y)\\\nonumber
    &&-\frac{ n^2}{ 3^2\cdot 2^{12}}\left((\partial_1+\partial_2)^4(\partial_1-\partial_2)^6+3(\partial_1+\partial_2)^2(\partial_1-\partial_2)^{10}-\frac{21}{20}(\partial_1-\partial_2)^{14}  \right)H_P^n(x,y)\\\nonumber
    &&\hspace{4.5cm}+\frac{n}{3 \cdot 2^8}\left((\partial_1+\partial_2)^2(\partial_1-\partial_2)^6-\frac{166}{315}(\partial_1-\partial_2)^{10} \right)H_P^n(x,y)\\\nonumber
    &=&\frac{\sqrt{2}}{3\cdot n^{9/4}}\left[\frac{8}{27}h_2\left(\frac{2(x+y)}{(2n)^{1/2}}\right)h_4^{(18)}\left(\frac{x-y}{n^{1/4}}\right)-\frac{2}{3}h_2^{(4)}\left(\frac{2(x+y)}{(2n)^{1/2}}\right)h_4^{(6)}\left(\frac{x-y}{n^{1/4}}\right)\right.\\\nonumber
    &&\hspace{1.6cm}-4h_2^{(2)}\left(\frac{2(x+y)}{(2n)^{1/2}}\right)h_4^{(10)}\left(\frac{x-y}{n^{1/4}}\right)+\frac{14}{5}h_2\left(\frac{2(x+y)}{(2n)^{1/2}}\right)h_4^{(14)}\left(\frac{x-y}{n^{1/4}}\right)\\
    &&\left.\hspace{1.8cm}+4h_2^{(2)}\left(\frac{2(x+y)}{(2n)^{1/2}}\right)h_4^{(6)}\left(\frac{x-y}{n^{1/4}}\right)-\frac{1328}{315}h_2\left(\frac{2(x+y)}{(2n)^{1/2}}\right)h_4^{(10)}\left(\frac{x-y}{n^{1/4}}\right)\right]
\end{eqnarray}
for $n\in\mathbb{N}_+$ and $(x,y)\in\mathbb{R}^2$; here, we have used the notation $h_2=h_2^1$ and $h_4=h_4^1$ for the single-variable heat kernel and bi-harmonic heat kernel, respectively, as defined in \eqref{eq:OneDHeatKer} of Remark \ref{rmk:OneDHeatKer}; though a slight abuse of notation, the superscript (in) $h^{(q)}$ indicates $q$-th (ordinary) derivative of these single-variable functions. Finally, let's note that, since $R=P$, a routine calculation (making use of Proposition \ref{prop:LFCompare}) shows that
\begin{equation*}
    R^{\#}(x,y)=\frac{(x+y)^2}{2}+3\frac{\abs{x-y}^{4/3}}{4^{4/3}}\asymp \abs{x+y}^2+\abs{x-y}^{4/3}
\end{equation*}
for $(x,y)\in\mathbb{R}^2$.

Let's put the above ingredients together to obtain local limit theorems for various orders of accuracy. First, as was noted in \cite{RSC17}, the phases in the sum of attractors (for $k=1,2$) combine in a single prefactor that describes the support\footnote{As shown in Section 7.6 of \cite{RSC17}, this phenomenon parallels that of the probabilistic setting (with $\phi\geq 0$) where prefactors describe the support/periodicity of associated random walks. This idea is developed further in \cite{RY25} and tied to the algebraic structure of $\Omega(\phi)$.} of $\phi^{(n)}$. We have
\begin{eqnarray*}
    \sum_{k=1}^2 e^{-i(x,y)\cdot\xi_k}\widehat{\phi}(\xi_k)^n H_{P_k}^n(x,y)&=&(1+\cos(\pi(x+y)))H_P^n(x,y)\\
    &=&\frac{\sqrt{8}\left(1+\cos(\pi(x+y))\right)}{n^{3/4}}h_2\left(\frac{2(x+y)}{(2n)^{1/2}}\right)h_4\left(\frac{x-y}{n^{1/4}}\right)
\end{eqnarray*}
for $(x,y)\in\mathbb{Z}^2$ and $n\in\mathbb{N}_+$. Since $\gamma=\gamma_1=\gamma_2=2$ so that $\mu+\gamma/2m=5/4$, an appeal to Corollary \ref{cor:LLTCor} yields the local limit theorem: There are constants $C$ and $M$ for which
\begin{eqnarray}\label{eq:MisR0LLT}\nonumber
\lefteqn{\hspace{-6cm}\abs{\phi^{(n)}(x,y)-\frac{\sqrt{8}\left(1+\cos(\pi(x+y))\right)}{n^{3/4}}h_2\left(\frac{2(x+y)}{(2n)^{1/2}}\right)h_4\left(\frac{x-y}{n^{1/4}}\right)}}\\\nonumber
\hspace{6cm}&=&\abs{\phi^{(n)}(x,y)-(1+\cos(\pi(x+y)))H_P^n(x,y)}\\\nonumber
    &\leq& \frac{C}{n^{5/4}}\exp\left(-n M R^{\#}\left(\frac{x}{n},\frac{y}{n}\right)\right)\\
    &=&\frac{C}{n^{5/4}}\exp\left(-M\left(\frac{\abs{x+y}^2}{2n}+\frac{3}{4}\frac{\abs{x-y}^{4/3}}{(4n)^{1/3}}\right)\right)
\end{eqnarray}
for all $n\in\mathbb{N}_+$ and $(x,y)\in\mathbb{Z}^2$. Figure \ref{fig:MisR0} illustrates this local limit theorem. We see easily that this improves significantly over Example 7.3 of \cite{RSC17} which reports only a local limit theorem uniform error of $o(n^{-3/4})$.

\begin{table}[!h]
  \centering
  \begin{tabular}{  |c| c | c | }
    \hline
     & $n=50$ & $n=500$ \\ \hline
     $\phi^{(n)}$ & 
    \begin{minipage}{.4\textwidth}
      \includegraphics[width=1\linewidth, trim = {0 6cm 0 6cm},clip]{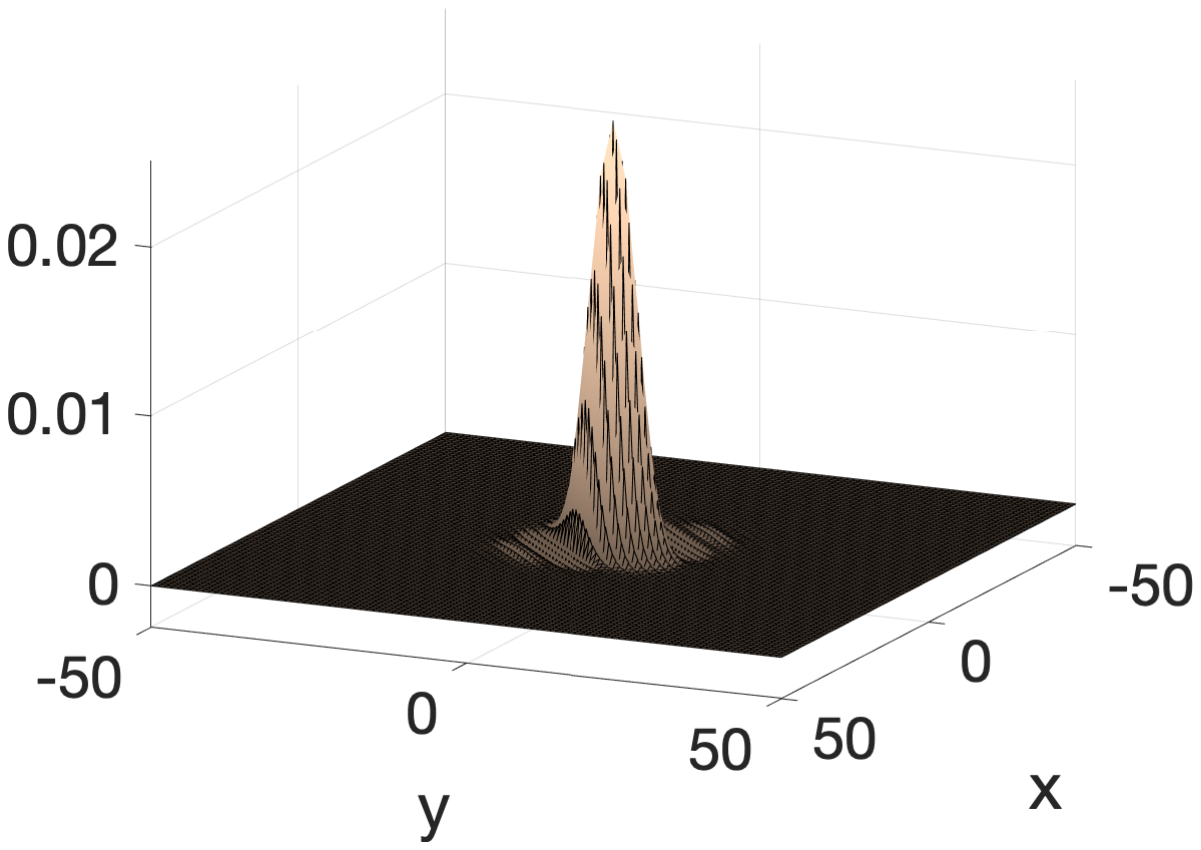}
    \end{minipage}
	&
      \begin{minipage}{.44\textwidth}
      \includegraphics[width=1\linewidth, trim = {0 6cm 0 6cm},clip]{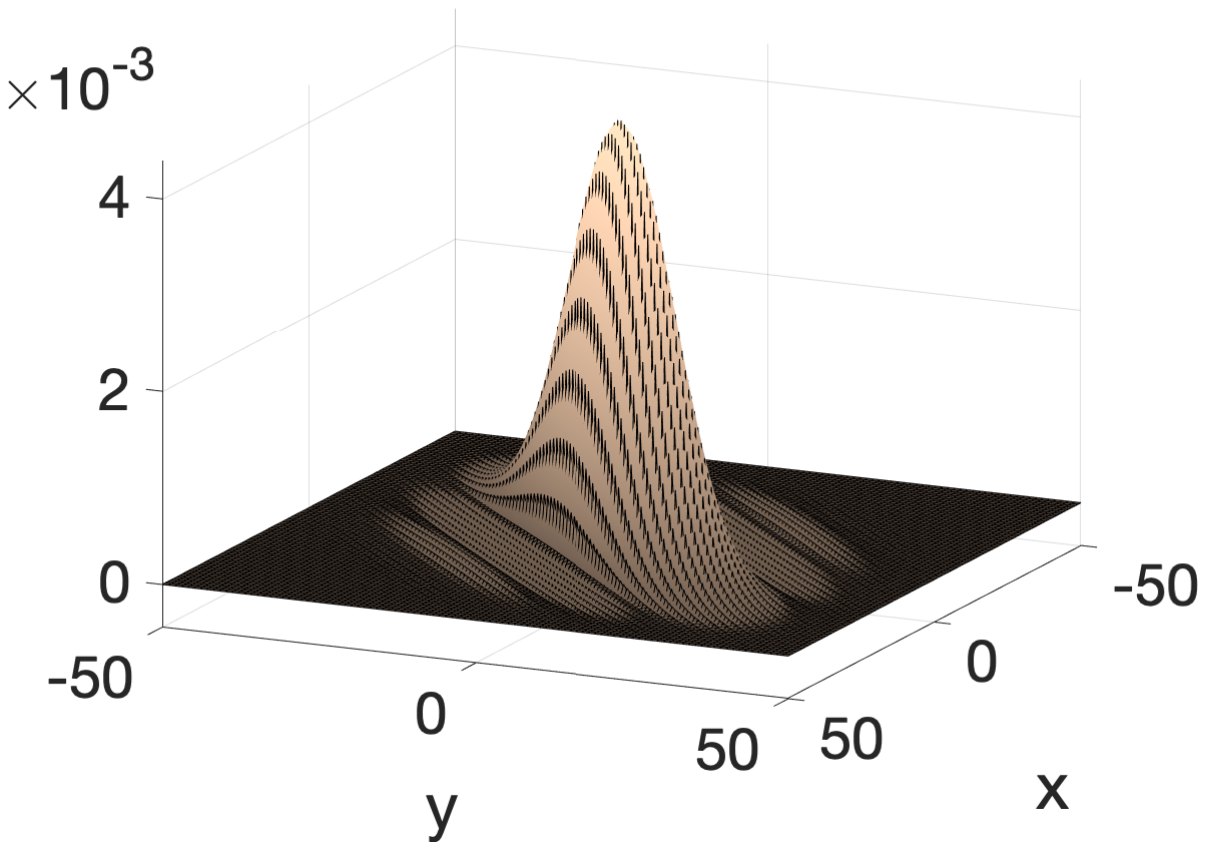}
    \end{minipage}\\ \hline
    $\mathcal{A}_1^n$ &
    \begin{minipage}{.4\textwidth}
     \includegraphics[width=1\linewidth, trim = {0 6cm 0 6cm},clip]{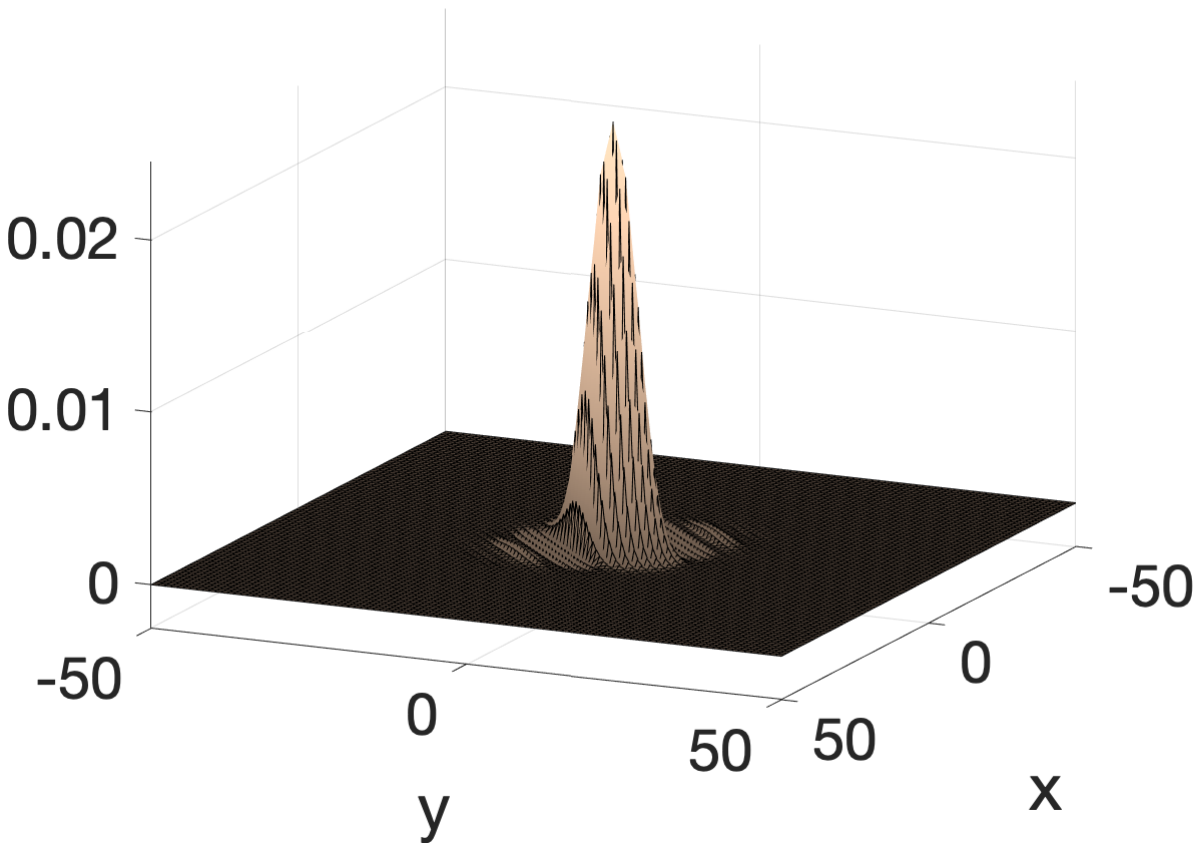}
    \end{minipage}
	&
     \begin{minipage}{.4\textwidth}
     \includegraphics[width=1\linewidth, trim = {0 6cm 0 6cm},clip]{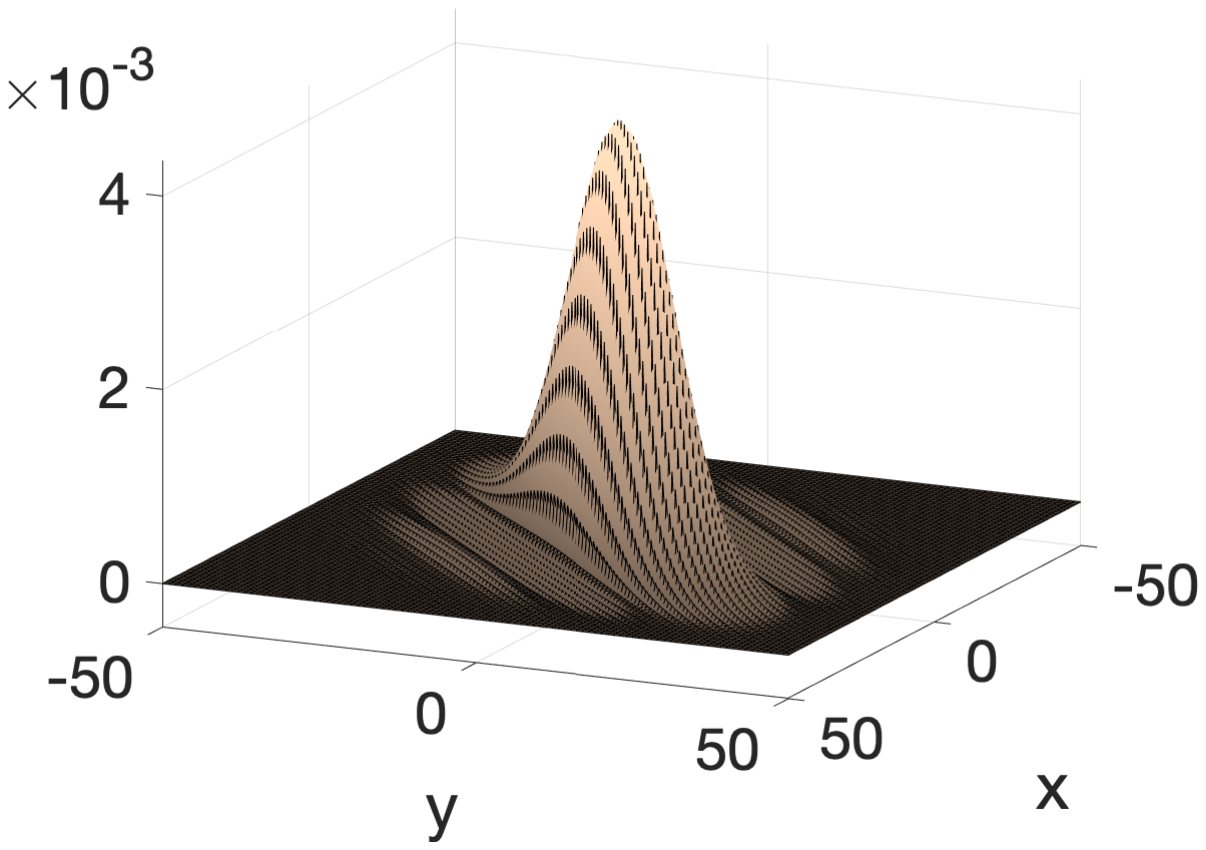}
    \end{minipage}\\ \hline
    \begin{turn}{270}\hspace{-0.5cm}Error\end{turn} & 
    \begin{minipage}{.4\textwidth}
      \includegraphics[width=1\linewidth, trim = {0 6cm 0 6cm},clip]{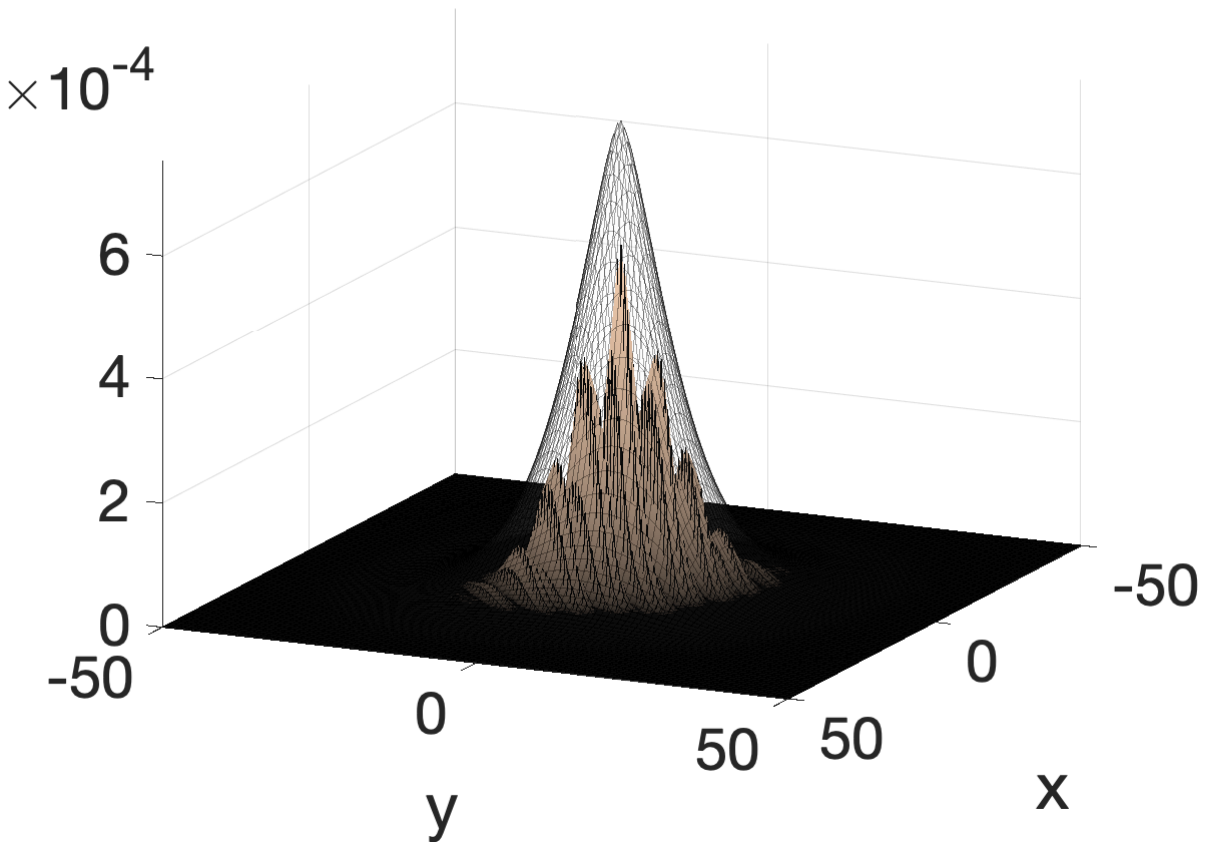}
    \end{minipage}
	&
      \begin{minipage}{.4\textwidth}
      \includegraphics[width=1\linewidth, trim = {0 6cm 0 6cm},clip]{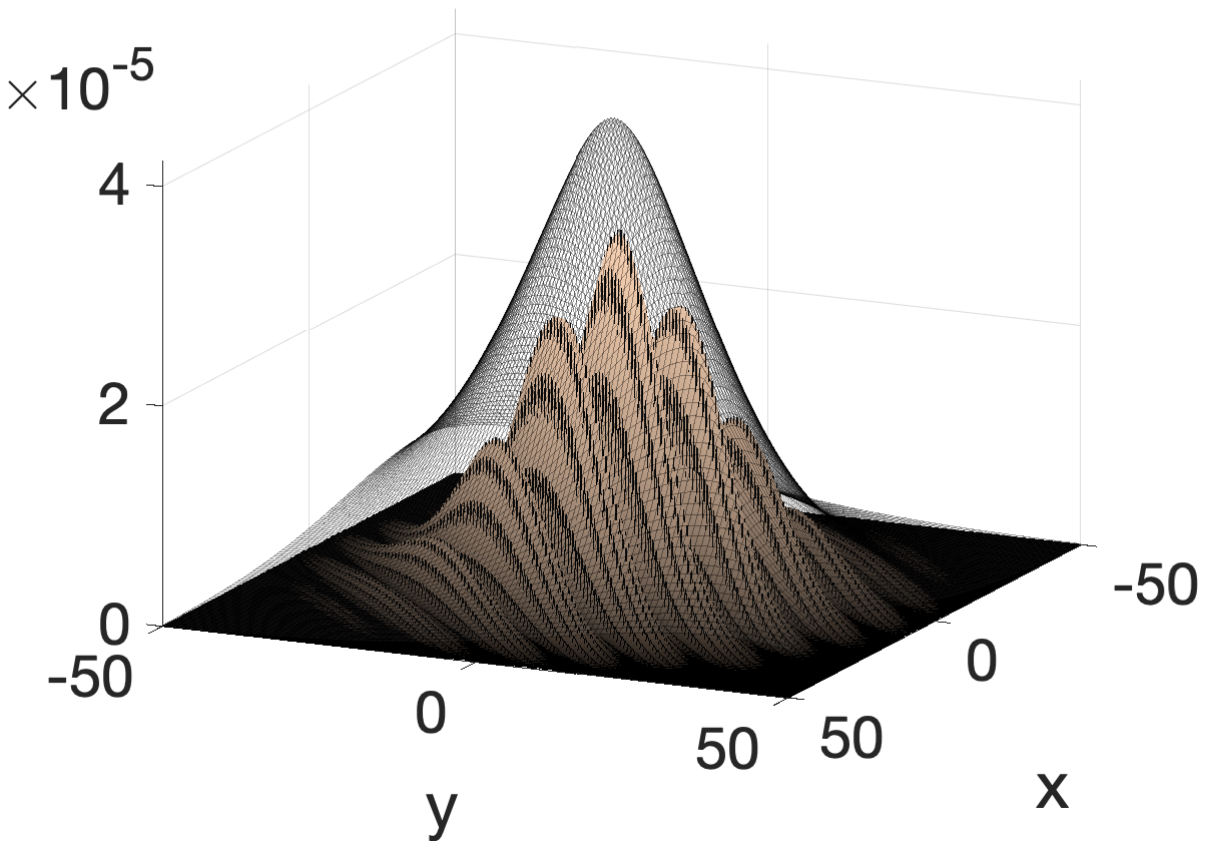}
    \end{minipage}\\ \hline
  \end{tabular}
  \captionof{figure}{The convolution powers $\phi^{(n)}$, the base attractors $\mathcal{A}_1^n=\phi^{(n)}-\mathcal{R}_1^n$, and the error are illustrated for $n=50,500$. In the third row, the real error $\mathcal{R}_1^n=\mathcal{R}_0^n$ is illustrated by the light brown surfaces and the Gaussian-type error \eqref{eq:MisR0LLT} is illustrated by the transparent nets above; here, $C=0.15$.}\label{fig:MisR0} 
\end{table}

\begin{table}[!h]
  \centering
  \begin{tabular}{  |c| c | c | }
    \hline
     $n=50$ & $n=500$ \\ \hline
    \begin{minipage}{.4\textwidth}
      \includegraphics[width=1\linewidth, trim = {0 6cm 0 6cm},clip]{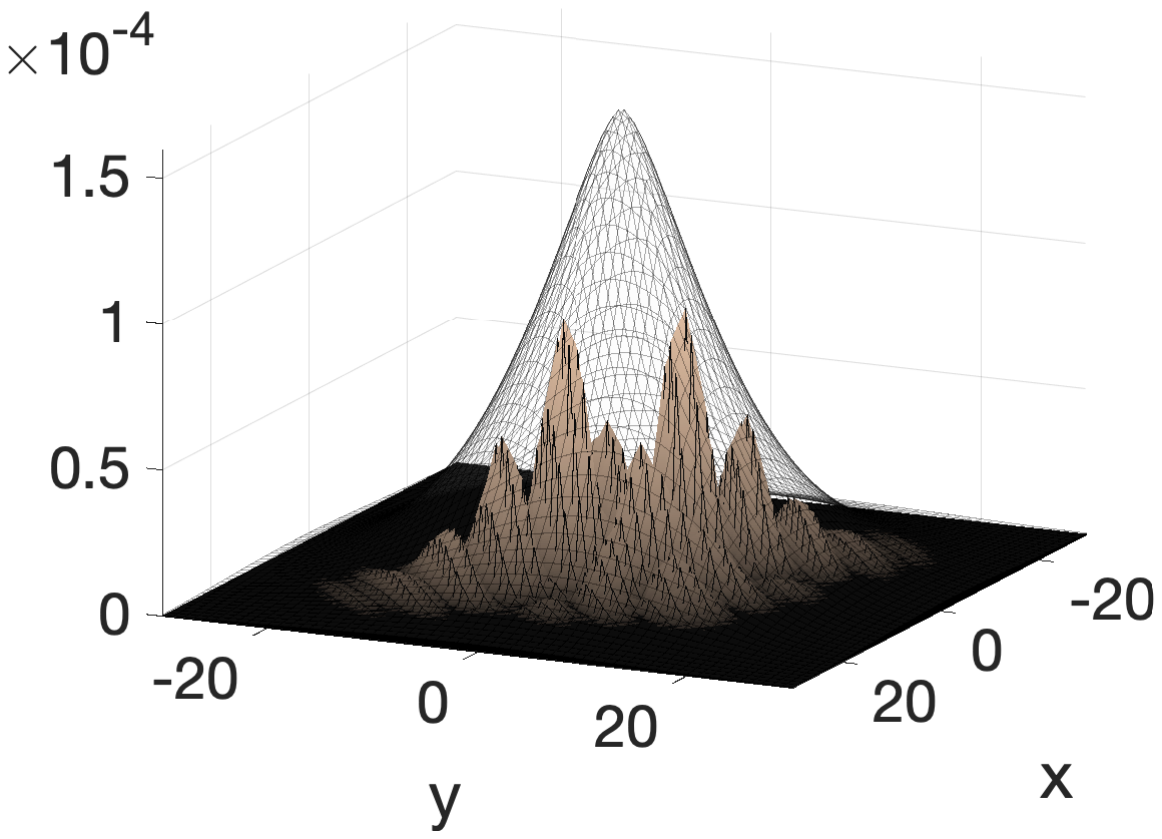}
    \end{minipage}
	&
      \begin{minipage}{.4\textwidth}
      \includegraphics[width=1\linewidth, trim = {0 6cm 0 6cm},clip]{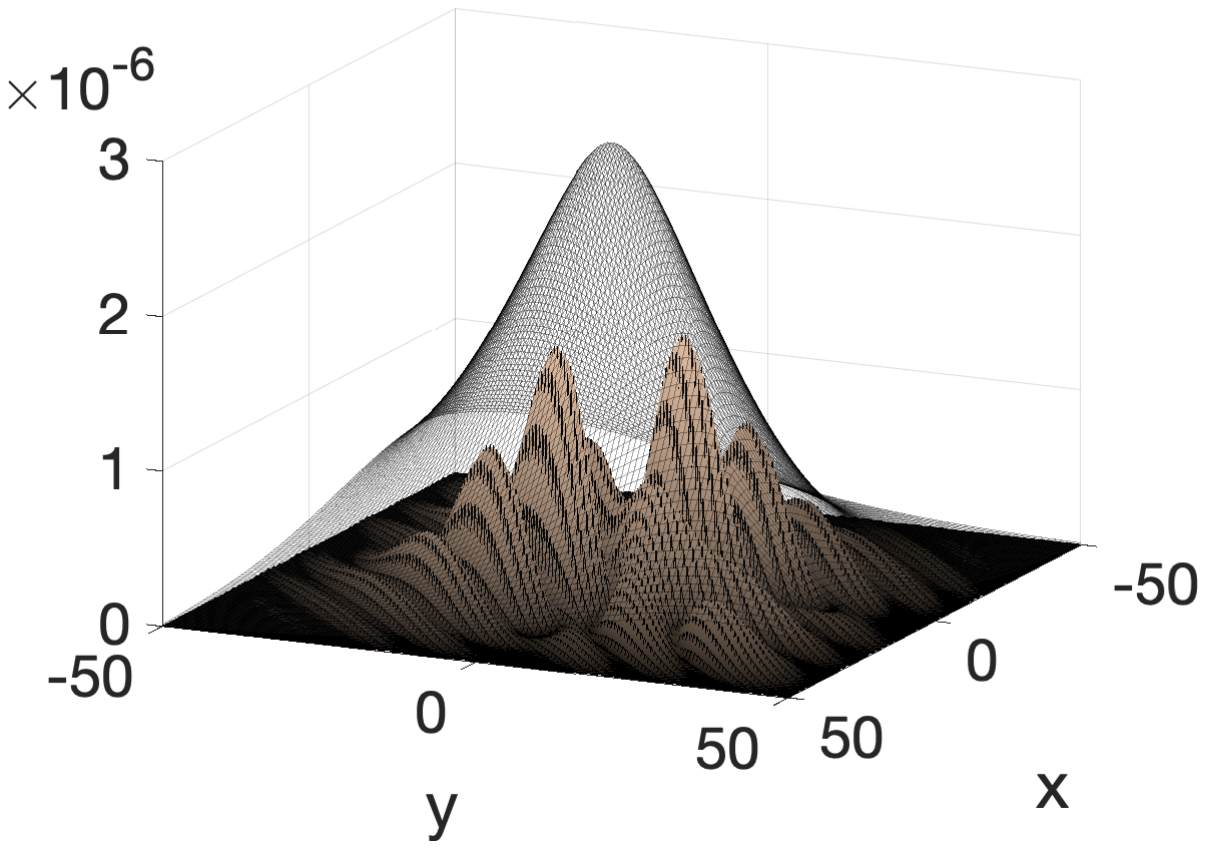}
    \end{minipage}\\ \hline
  \end{tabular}
  \captionof{figure}{This illustrates the real error $\mathcal{R}_3^n$ and Gaussian-type error of \eqref{eq:MisR3} for $n=50,500$; here, $C=0.15$ and $M = 0.2$.}\label{fig:MisR3} 
\end{table}

Let's now illustrate Theorem \ref{thm:LLT} for the order of accuracy $\lambda=\lambda_1=\lambda_2=3$. Upon recalling that $Q_1=Q_3=0$, we have
\begin{eqnarray*}\nonumber
    \mathcal{R}_3^n(x,y)&=&\phi^{(n)}(x,y)-\sum_{k=1}^2\sum_{\lambda=0}^3 e^{-i(x,y)\cdot\xi_k}\widehat{\phi}(\xi_k)^n (Q_{\lambda,k}^n H_{P_k}^n)(x,y)\\\nonumber
    &=&\phi^{(n)}(x,y)-(1+\cos(\pi(x+y)))\sum_{\lambda=0}^3 (Q_{\lambda}^n H_P^n)(x,y)\\\nonumber
    &=&\phi^{(n)}(x,y)-(1+\cos(\pi(x+y)))(H_P^n(x,y)+(Q_2^n H_P^n)(x,y))\\\nonumber
    &=&\phi^{(n)}(x,y)-\sqrt{8}\left(1+\cos(\pi(x+y))\right)\left[\frac{1}{n^{3/4}}h_2\left(\frac{2(x+y)}{(2n)^{1/2}}\right)h_4\left(\frac{x-y}{n^{1/4}}\right)\right.\\
    &&\hspace{6.5cm}\left.-\frac{2}{3 n^{5/4}}h_2\left(\frac{2(x+y)}{(2n)^{1/2}}\right)h_4^{(6)}\left(\frac{x-y}{n^{1/4}}\right)\right]
\end{eqnarray*}
 for $n\in\mathbb{N}_+$ and $(x,y)\in\mathbb{Z}^2$. For $\lambda=\lambda_1=\lambda_2=3$, Theorem \ref{thm:LLT} gives positive constants $C$ and $M$ for which
\begin{equation}\label{eq:MisR3}
\abs{\mathcal{R}_3^n(x,y)}\leq \frac{C}{n^{7/4}}\exp\left(-nMR^{\#}\left(\frac{x}{n},\frac{y}{n}\right)\right)=\frac{C}{n^{7/4}}\exp\left(-M\left(\frac{\abs{x+y}^2}{2n}+\frac{3}{4}\frac{\abs{x-y}^{4/3}}{(4n)^{1/3}}\right)\right)
\end{equation}
for $n\in\mathbb{N}_+$ and $(x,y)\in\mathbb{Z}^2$. This local limit theorem is illustrated in Figure \ref{fig:MisR3}. For the cases of $\lambda=5$ and $\lambda=7$, expressions for $\mathcal{R}_5$ and $\mathcal{R}_7$ are obtained analogously using \eqref{eq:MisQH4} and \eqref{eq:MisQH6}, respectively. For these cases, Theorem \ref{thm:LLT} gives positive constants $C$ and $M$ for which 
\begin{equation*}
    \abs{\mathcal{R}_5^n(x,y)}\leq \frac{C}{n^{9/4}}\exp\left(-M\left(\frac{\abs{x+y}^2}{2n}+\frac{3}{4}\frac{\abs{x-y}^{4/3}}{(4n)^{1/3}}\right)\right)
\end{equation*}
and
\begin{equation*}
    \abs{\mathcal{R}_7^n(x,y)}\leq \frac{C}{n^{11/4}}\exp\left(-M\left(\frac{\abs{x+y}^2}{2n}+\frac{3}{4}\frac{\abs{x-y}^{4/3}}{(4n)^{1/3}}\right)\right)
\end{equation*}
for $n\in\mathbb{N}_+$ and $(x,y)\in\mathbb{Z}^2$. In particular, we obtain a positive constant $C$ for which
\begin{equation*}
    \|\mathcal{R}_5^n\|_{\infty}\leq\frac{C}{n^{9/4}}\hspace{1cm}\mbox{and}\hspace{1cm}\|\mathcal{R}_7^n\|_\infty\leq \frac{C}{n^{11/4}}
\end{equation*}
for $n\in\mathbb{N}_+$. The $\ell^\infty$ decay of $\mathcal{R}_5^n$ and $\mathcal{R}_7^n$ are illustrated in Figures \ref{fig:MisR5_LogLog} and \ref{fig:MisR7_LogLog}, respectively.

\begin{figure}[h!]
    \centering
    \includegraphics[width=0.6\linewidth]{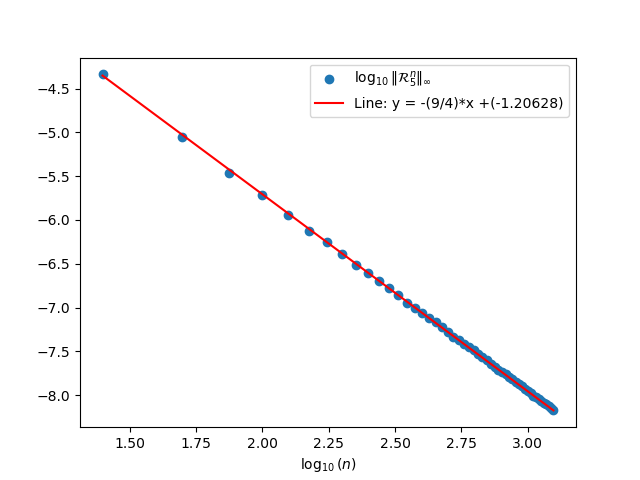}
    \caption{Graph illustrating the $\ell^{\infty}$ decay of $\mathcal{R}_5^n$. The points in blue are the values of $\Vert \mathcal{R}_{5}^n \Vert_{\infty}$ for values of $n$ ranging from $25$ to $1250$ with increments of $25$, in a $\log_{10}-\log_{10}$ scale. The red line is the best linear fit of the data having slope $-9/4$.}
    \label{fig:MisR5_LogLog}
\end{figure}

\begin{figure}[h!]
    \centering
    \includegraphics[width=0.6\linewidth]{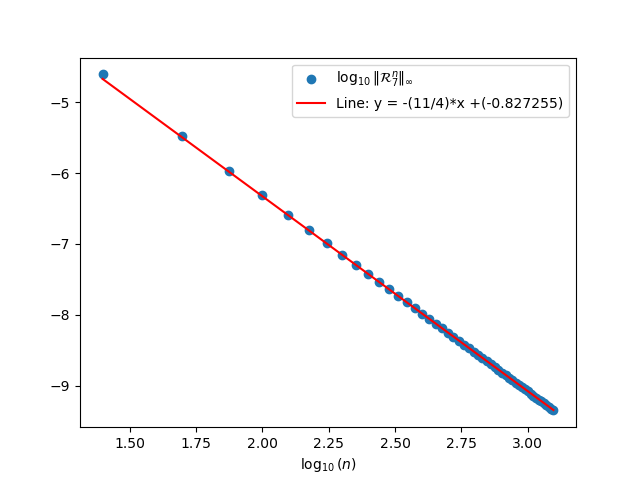}
    \caption{This illustrates the $\ell^{\infty}$ decay of $\mathcal{R}_{7}^{n}$: the values of $\log_{10}\Vert \mathcal{R}_{7}^{n} \Vert_{\infty}$ (blue points) as a function of $\log_{10}(n)$, for $n=25,50,75,\ldots, 1250$, and the best linear fit with slope $-11/4$ (red line) of the data.}
    \label{fig:MisR7_LogLog}
\end{figure}

\subsection{Revisiting Example 7.4 of \cite{RSC17} }\label{ssec:NonMinDecay}
In this final subsection, we revisit the ``contribution from non-minimal decay exponent" example of Subsection 7.4 of \cite{RSC17}. For the examples presented so far, it has been the case that when we consider better approximations of $\phi^{(n)}$
by taking into account more terms (higher-order corrections) in our local limit theorems, the rate at which the approximations become more accurate is the same for all $\xi_k\in \Omega(\phi)$. In other words, $\mu_k$ and $m_k$ have turned out to be same for all $k$. For the example we consider below, this is not the case, which allows us to play with different orders of accuracy for each point in $\Omega(\phi)$ along the lines of  Remark \ref{rmk:refRn}.

Let $\phi:\mathbb{Z}^2\rightarrow \mathbb{R}$ be defined by
\[
\phi(x,y) =  \begin{cases}
        19/128  & (x,y)=(0, 0)  \\
        19/256  &  (x,y) = (0, \pm 1) \\
         1/4  & (x,y)=(\pm 1,0) \\
         1/8 & (x,y)=(\pm 1, \pm 1) \\
         -5/64 & (x,y) = (\pm 2, 0) \\
         -5/128 & (x,y) = (\pm 2, \pm 1) \\
         1/256 & (x,y) = (\pm 4,0 ) \\
         1/512 & (x,y) = (\pm 4, \pm 1) \\
         \quad 0 & \text{otherwise}
    \end{cases}
\]
for $(x,y)\in\mathbb{Z}^2$. Routine calculations give the Fourier transform of $\phi$ as
\begin{eqnarray*}
\widehat{\phi}(\xi) &=& \frac{19}{128}+ \frac{19}{128}\cos(\zeta) + \frac{1}{2}\cos(\eta) + \frac{1}{2}\cos(\eta)\cos(\zeta) -\frac{5}{32}\cos(2\eta) -\frac{5}{32}\cos(2\eta)\cos(\zeta) \\ && + \frac{1}{128}\cos(4\eta)\cos(\zeta) + \frac{1}{128}\cos(4\eta),
\end{eqnarray*}
for $\xi=(\eta,\zeta)$ with which one can verify $\sup_{\xi}|\widehat{\phi}(\xi)|=1$ and
\begin{equation*}
\Omega(\phi) = \{ \xi_1, \xi_2 \}:= \{ (0,0), (\pi,0) \}
\end{equation*}
where $\widehat{\phi}(\xi_1)=1$ and $\widehat{\phi}(\xi_2)=-1$. Thus $\phi\in\mathcal{H}_2^*$. 

For $\xi_1$, the Taylor expansion of $\Gamma_{1}=\Log\left( \widehat{\phi}(\xi_1 + \cdot)/\widehat{\phi}(\xi_1) \right)$ around zero is
\begin{eqnarray*}
\Gamma_{1}(\xi) &=&  -\frac{1}{4}\zeta^2-\frac{1}{96}\zeta^4- \frac{1}{16}\eta^{6} - \frac{1}{1440}\zeta^6 + \frac{3}{128}\eta^8 -\frac{17}{322560}\zeta^{8} + O(|\xi|^{10}) \\ &  =& -P_{1}(\eta,\zeta) + \Upsilon_{1}(\eta, \zeta),
\end{eqnarray*}
for $\xi=(\eta,\zeta)$, where
\begin{equation*}
P_{1}(\eta, \zeta) = R_1(\eta,\zeta)=\frac{\eta^6}{16}+ \frac{\zeta^2}{4}
\end{equation*}
is positive semi-elliptic with $\mathbf{m}_1=(3,1)$, and 
\begin{eqnarray}\label{eq:ex3-Up1}\nonumber
    \Upsilon_1(\xi)&=& -\frac{1}{96}\zeta^4 - \frac{1}{1440}\zeta^6 + \frac{3}{128}\eta^8 -\frac{17}{322560}\zeta^{8} + O(|\xi|^{10}) \\
    &=&\underbrace{0 \vphantom{\frac{1}{2}}}_{S_{1,1}(\xi)/1!}+\underbrace{\frac{3}{128}\eta^8}_{S_{2,1}(\xi)/2!}+\underbrace{0 \vphantom{\frac{1}{2}}}_{S_{3,1}(\xi)/3!} +\cdots;
\end{eqnarray}
here, we have used that $m_1=\lcm(\mathbf{m}_1)=3$ and $\kappa_1=(1,3)$, so that $\Lambda_{1}(\beta)$ in the definition of $S_{1,\lambda}(\xi)$ is given by $\Lambda_{1}(\beta)=\beta_1+3\beta_2-6$ for $\beta=(\beta_1,\beta_2)\in \mathbb{N}_{+}^2$.
For $\xi_2$, we have
\begin{eqnarray*}
\Gamma_{2}(\xi) &=& -\eta^2 -\frac{1}{4}\zeta^2 - \frac{5}{12}\eta^4-\frac{1}{96}\zeta^4- \frac{137}{720}\eta^6-\frac{1}{1440}\zeta^6 + O(|\xi|^8) \\ &=& -P_{2}(\eta,\zeta) + \Upsilon_{2}(\eta,\zeta)
\end{eqnarray*}
for $\xi=(\eta,\zeta)$, where $P_{2}(\eta,\zeta)=R_2(\eta,\zeta)=\eta^2+\zeta^2/4$ is semi-elliptic with $\mathbf{m}_2=(1,1)$ and
\begin{equation}\label{eq:ex3-Up2}
    \Upsilon_2(\xi)= \underbrace{0 \vphantom{\frac{1}{2}}}_{S_{1,2}(\xi)/1!}  - \frac{5}{12}\eta^4-\frac{1}{96}\zeta^4- \frac{137}{720}\eta^6-\frac{1}{1440}\zeta^6 + O(|\xi|^8).
\end{equation}
 In this case, $m_2=\lcm(\mathbf{m}_2)=1$, $\kappa_2=(1,1)$, and $\Lambda_2(\beta)=\beta_1+\beta_2-2$ for $\beta=(\beta_1,\beta_2)\in \mathbb{N}_{+}^2$. Clearly, we have $\Upsilon_{k}(\xi)=\sum_{\abs{\beta:2\mathbf{m}_k}>1}b_{\beta,k}\xi^{\beta}$, so that $\Upsilon_k(\xi)=o(R_{k}(\xi))$ as $\xi\to 0$ in $\mathbb{R}^2$, $k=1,2$ (see \cite[Lemma A.5]{R23}). Thus $\xi_1$ and $\xi_2$ are of positive homogeneous type for $\widehat{\phi}$ with $\alpha_1=\alpha_2=(0,0)$ and so we can apply Theorem \ref{thm:LLT}.  By straightforward  computations, one gets
\begin{equation*}
R_{1}^{\#}(x,y)= \frac{5}{6}\left(\frac{8}{3} \right)^{1/5}\abs{x}^{6/5} + y^2\hspace{1cm}\mbox{and}\hspace{1cm} R_{2}^{\#}(x,y)=\frac{1}{4}x^2 + y^2
\end{equation*}
for $(x,y)\in\mathbb{R}^2$.

Before we state the local limit theorem for the present case, let's observe that $\mu_1=\abs{\mathbf{1}:2 \mathbf{m}_1}=2/3$, $m_{1}=3$, $\mu_2 =\abs{\mathbf{1}: 2 \mathbf{m}_2} = 1$, and $m_2 =1$, so that
\begin{equation*}
\mu_1\neq \mu_2\quad \text{and} \quad m_1\neq m_2.
\end{equation*}
Thus we conclude that the rate at which the asymptotic expansion becomes a better approximation of $\phi^{(n)}$ is slower for $\xi_1$ than for $\xi_2$. In other words, we need to consider more terms associated to $\xi_1$ than for $\xi_2$ to obtain a given order of accuracy. This will become more clear in the discussion that follows, where we are going to state our local limit theorem with degree of acuracy for the cases $\lambda_1=\lambda_2=1$ and $\lambda_1=\lambda_2=3$.

Let's first state the local limit theorem for the first case. For this, we need to compute $(Q^{n}_{\lambda,k}H^{n}_{P_k})(x,y)$ for $\lambda=0,1$ and $k=1,2$. Using $\Upsilon_{1}(\cdot)$ and $\Upsilon_2(\cdot)$ given, respectively, in \eqref{eq:ex3-Up1} and \eqref{eq:ex3-Up2}, as well as the expressions for $P_1(\cdot)$ and $P_2(\cdot)$ above, we get 
\begin{eqnarray*}
    H^n_{P_1}(x,y) &=&(Q^n_{0,1}H^{n}_{P_1})(x,y)  =\frac{1}{(2\pi)^2}\int_{\mathbb{R}^2} e^{-n(  \eta^6/16+ \zeta^2/4)}e^{-i(x\eta+y\zeta)}\, d\zeta d\eta \\ &=& \frac{1}{n^{2/3}} h_{6}^{1/16}\left( \frac{x}{n^{1/6}} \right)h_{2}^{1/4}\left( \frac{y}{n^{1/2}} \right) =  \frac{2^{5/3}}{n^{2/3}}h_6\left( \frac{2^{2/3}x}{n^{1/6}} \right)h_{2}\left( \frac{2y}{n^{1/2}} \right),
\end{eqnarray*}
\begin{eqnarray*}
   H^n_{P_2}(x,y)  &=& (Q^n_{0,2}H^{n}_{P_2})(x,y) = \frac{1}{(2\pi)^2}\int_{\mathbb{R}^2} e^{ -n(\eta^2+\zeta^2/4)}e^{-i(x\eta +y\zeta)}\, d\zeta d\eta\\ &=& \frac{1}{n} h_{2}\left(\frac{x}{n^{1/2}} \right)h_{2}^{1/4}\left( \frac{y}{n^{1/2}} \right) =  \frac{2}{n}h_2\left( \frac{x}{n^{1/2}} \right)h_{2}\left( \frac{2y}{n^{1/2}} \right),
\end{eqnarray*}
and, since $S_{1,1}(\xi)=S_{1,2}(\xi)=0$, 
\begin{equation*}
    (Q^{n}_{1,1}H^{n}_{P_1})(x,y)=(Q^{n}_{1,2}H^{n}_{P_2})(x,y)=0
\end{equation*}
for $(x,y)\in\mathbb{R}^2$ and $n\in\mathbb{N}_+$. Therefore
\begin{eqnarray*}
\mathcal{R}^{n}_{1}(x,y)&=& \phi^{(n)}(x,y) -\sum_{k=1}^{2}\sum_{\lambda=0}^{1} e^{-i(x,y)\cdot\xi_{k}}\widehat{\phi}(\xi_{k})^{n} (Q^{n}_{\lambda,k}H^{n}_{P_k})(x,y) \\ &=& \phi^{(n)}(x,y) -\sum_{k=1}^{2}e^{-i(x,y)\cdot\xi_{k}}\widehat{\phi}(\xi_{k})^{n} (Q^{n}_{0,k}H^{n}_{P_k})(x,y)  \\ &=& \phi^{(n)}(x,y)-\sum_{k=1}^{2}\frac{e^{-i(x,y)\cdot \xi_{k}}\widehat{\phi}(\xi_{k})^n}{n^{\mu_{k}}} (Q_{0,k}H_{P_k})\left(n^{-D_{k}}(x,y) \right) \\ &=& \phi^{(n)}(x,y) - \left[\frac{2^{5/3}}{n^{2/3}}h_6\left( \frac{2^{2/3}x}{n^{1/6}} \right)h_{2}\left( \frac{2y}{n^{1/2}} \right)+ (-1)^{x+n}  \frac{2}{n}h_2\left( \frac{x}{n^{1/2}} \right)h_{2}\left( \frac{2y}{n^{1/2}} \right)\right]
\end{eqnarray*}
for $n\in \mathbb{N}_+$ and $(x,y)\in \mathbb{Z}^2$. Here $D_1=\diag(1/6,1/2)$ and $D_2=\diag(1/2,1/2)$. Thus, Theorem \ref{thm:LLT} implies that, for some positive constants $C_1$, $C_2$, $M_1$, and $M_2$, there holds
\begin{eqnarray}\label{eq:ex3_R1_est}\nonumber
 \abs{ \mathcal{R}^{n}_{1}(x,y) }  &\leq& \frac{C}{n^{2/3+2/6}}\exp\left(-nMR_{1}^{\#}\left( \frac{x}{n},\frac{y}{n} \right) \right) +\frac{C}{n^{1+2/2}}\exp\left(-nMR_{2}^{\#}\left( \frac{x}{n},\frac{y}{n} \right) \right) \\\nonumber
 &=& \frac{C}{n}\exp \left( -M \left\lbrace \frac{5}{6}\left(\frac{8}{3}\right)^{1/5} \frac{\abs{x}^{6/5}}{n^{1/5}}  +  \frac{y^2}{n} \right\rbrace \right)+\frac{C}{n^{2}}\exp \left( -M \left\lbrace \frac{1}{4} \frac{x^2}{n} +  \frac{y^2}{n}  \right\rbrace \right) \\ &\leq& \frac{C}{n}e^{-My^2/n}\left[ \exp\left( -M  \frac{5}{6}\left(\frac{8}{3}\right)^{1/5} \frac{\abs{x}^{6/5}}{n^{1/5}}  \right)+\exp \left( -M  \frac{x^2}{4n}  \right)  \right]  
\end{eqnarray}
holds for all $n\in \mathbb{N}_+$ and $(x,y)\in \mathbb{Z}^2$. Figure \ref{fig:ex3_R1} illustrates this result. In the last line, we have taken $C=\max \{ C_1,C_2\}$ and $M=\min \{ M_1,M_2\}$, and considered the lowest decay rate of $n^{-1}$ or, in terms of the notation of Remark \ref{rmk:refRn}, $n^{-1}=n^{-\nu}$ where
\[
\nu = \min_{k}\{ \mu_k +(\lambda_k+1)/2m_k \}= \min_k \{ 2/3 +2/6, 1 + 2/2\}. 
\]
As $\mu_2\geq \nu$, we get $V_{\nu}=\{k: \mu_k \geq \nu\} = \{2 \}$, so that the previous estimate still holds if we neglect the terms associated to $k=2$ (see Remark \ref{rmk:refRn} for more details). In other words, the error
\begin{eqnarray*}
\widetilde{\mathcal{R}}^{n}_{1}(x,y)&:=& \phi^{(n)}(x,y) -\sum_{\lambda=0}^{1} e^{-i(x,y)\cdot\xi_{1}}\widehat{\phi}(\xi_{1})^{n} (Q^{n}_{\lambda,1}H^{n}_{P_1})(x,y) 
\\ &=& \phi^{(n)}(x,y) - \frac{2^{5/3}}{n^{2/3}}h_6\left( \frac{2^{2/3}x}{n^{1/6}} \right)h_{2}\left( \frac{2y}{n^{1/2}} \right)
\end{eqnarray*}
satisfies the estimate
\begin{equation}\label{eq:ex3_R1Tilde_est}
\vert \widetilde{\mathcal{R}}^{n}_{1}(x,y) \vert \leq \frac{C}{n}e^{-My^2/n}\left[ \exp\left( -M  \frac{5}{6}\left(\frac{8}{3}\right)^{1/5} \frac{\abs{x}^{6/5}}{n^{1/5}}  \right)+\exp \left( -M  \frac{x^2}{4n}  \right)  \right]
\end{equation}
for all $n\in \mathbb{N}_+$ and $(x,y)\in \mathbb{Z}^2$, where $C$ and $M$ are positive constants, as captured in Figure \ref{fig:ex3_R1_Tilde}. As this estimate holds in all $\mathbb{Z}^2$, we obtain
\begin{equation*}
\|\widetilde{\mathcal{R}}_{1}^{n}\|_{\infty} \leq \frac{C}{n},
\end{equation*}
for all $n\in \mathbb{N}_+$. Such decay is shown in Figure \ref{fig:ex3_R1_Tilde_LogLog}.

\begin{table}[!h]
  \centering
  \begin{tabular}{  |c| c | c | }
    \hline
     & $n=50$ & $n=500$ \\ \hline
     $\phi^{(n)}$ & 
    \begin{minipage}{.4\textwidth}
      \includegraphics[width=1\linewidth, trim = {0 6cm 0 6cm},clip]{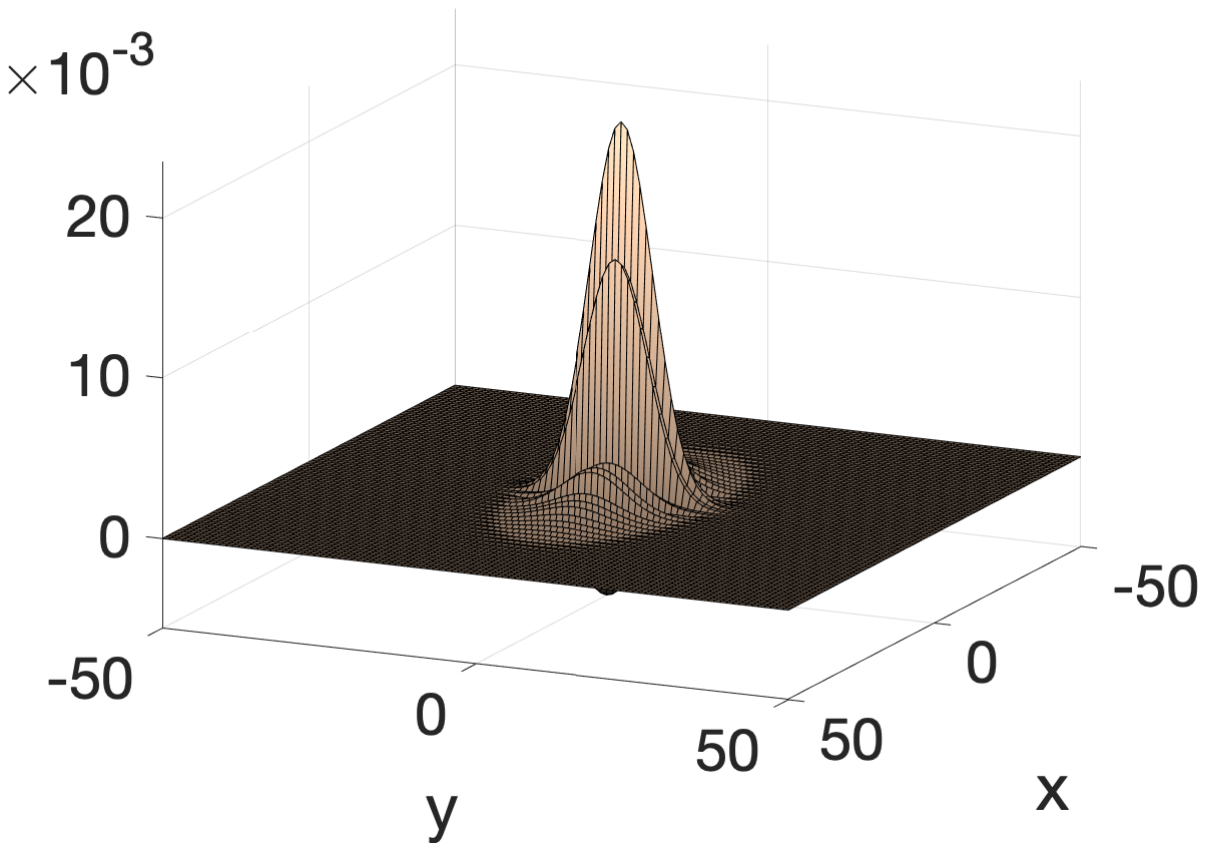}
    \end{minipage}
	&
      \begin{minipage}{.44\textwidth}
      \includegraphics[width=1\linewidth, trim = {0 6cm 0 6cm},clip]{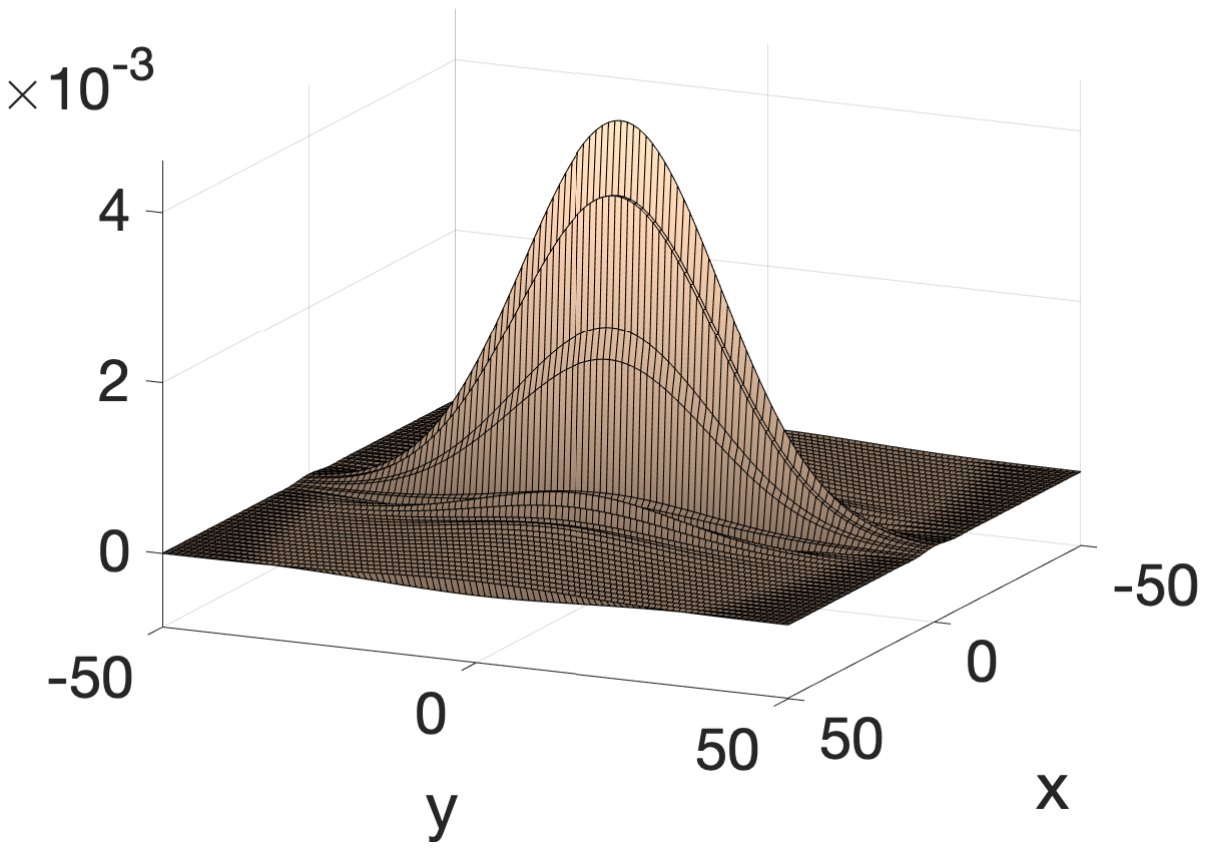}
    \end{minipage}\\ \hline
    $\mathcal{A}_1^n$ &
    \begin{minipage}{.4\textwidth}
     \includegraphics[width=1\linewidth, trim = {0 6cm 0 6cm},clip]{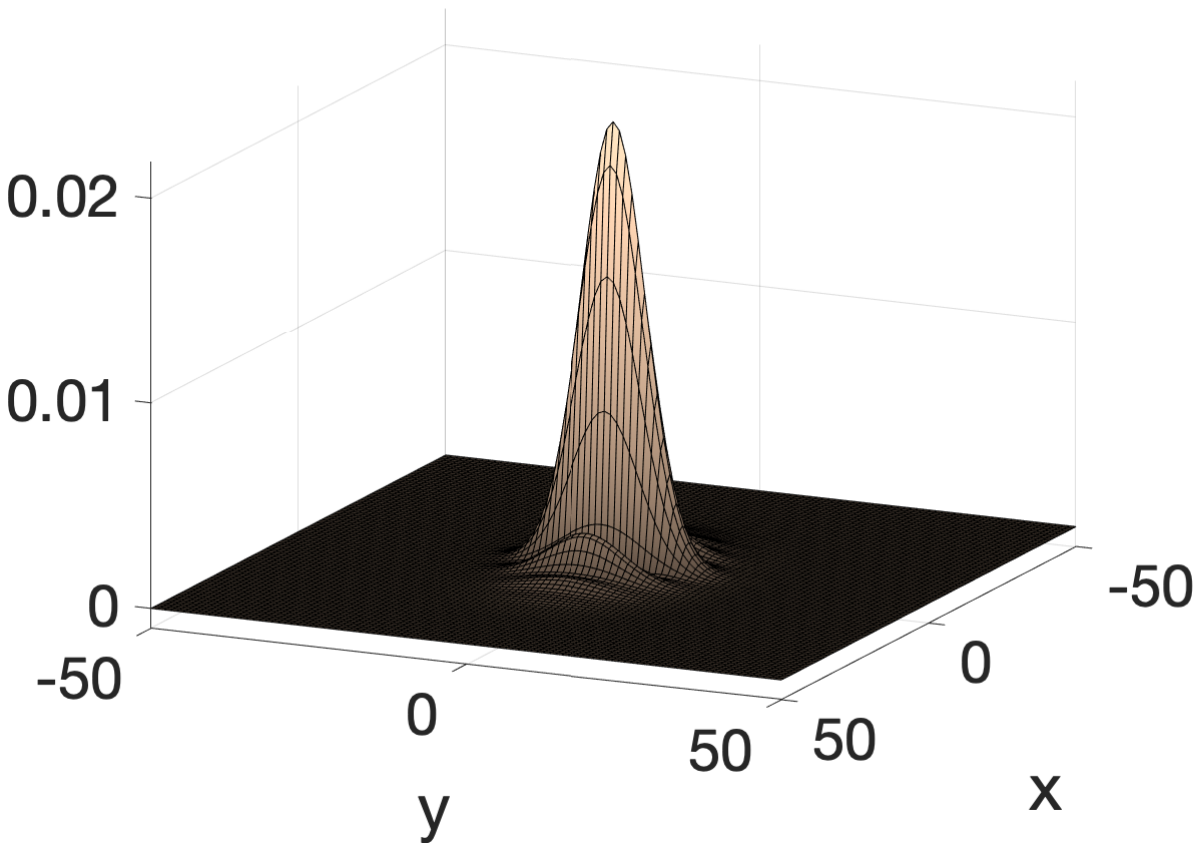}
    \end{minipage}
	&
     \begin{minipage}{.4\textwidth}
     \includegraphics[width=1\linewidth, trim = {0 6cm 0 6cm},clip]{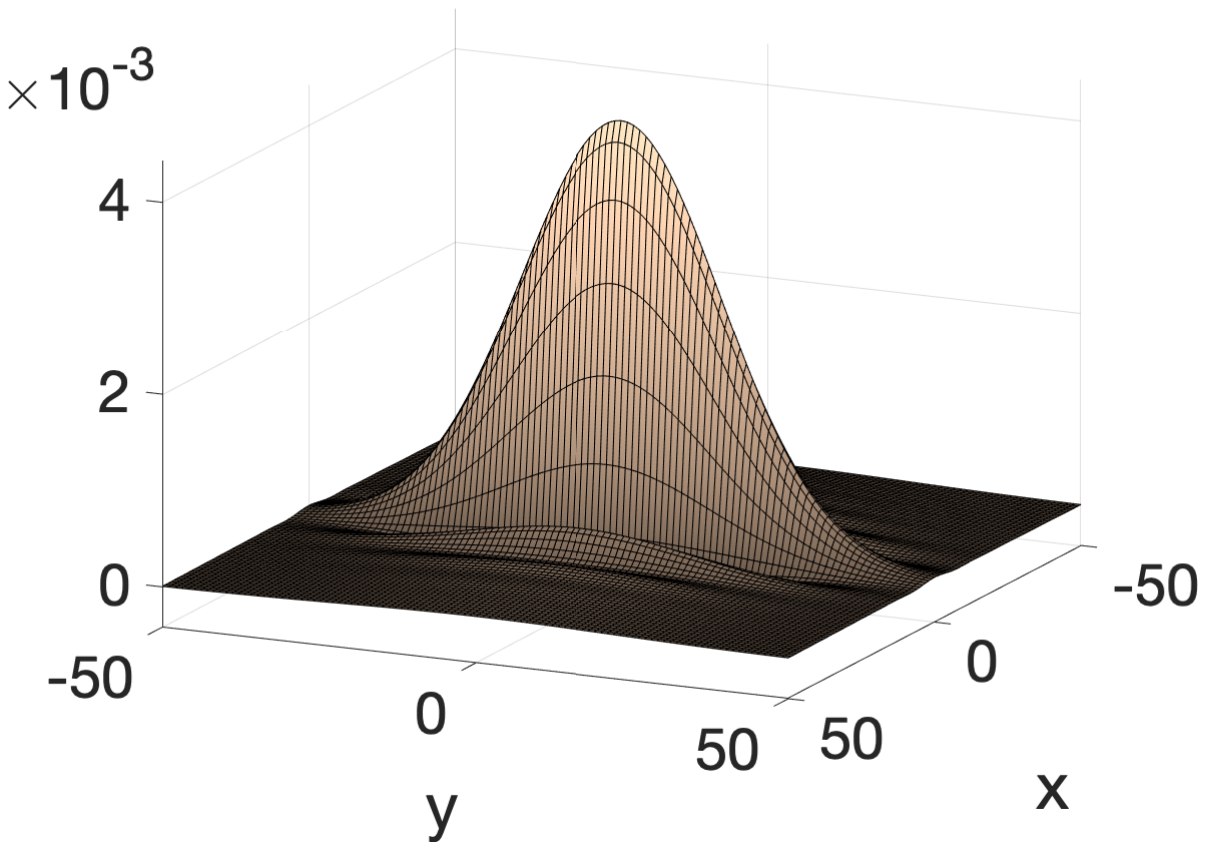}
    \end{minipage}\\ \hline
    \begin{turn}{270}\hspace{-0.5cm} Error \end{turn} & 
    \begin{minipage}{.4\textwidth}
      \includegraphics[width=1\linewidth, trim = {0 6cm 0 6cm},clip]{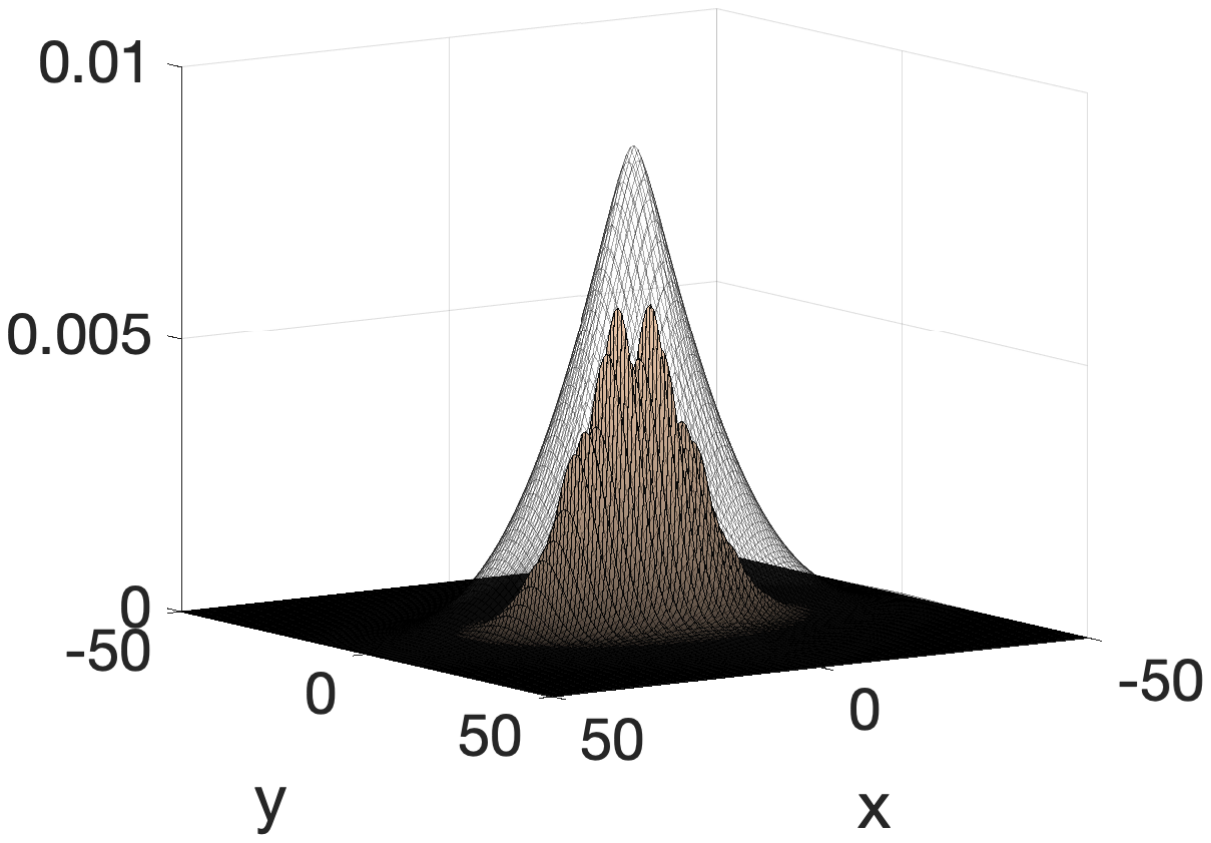}
    \end{minipage}
	&
      \begin{minipage}{.4\textwidth}
      \includegraphics[width=1\linewidth, trim = {0 6cm 0 6cm},clip]{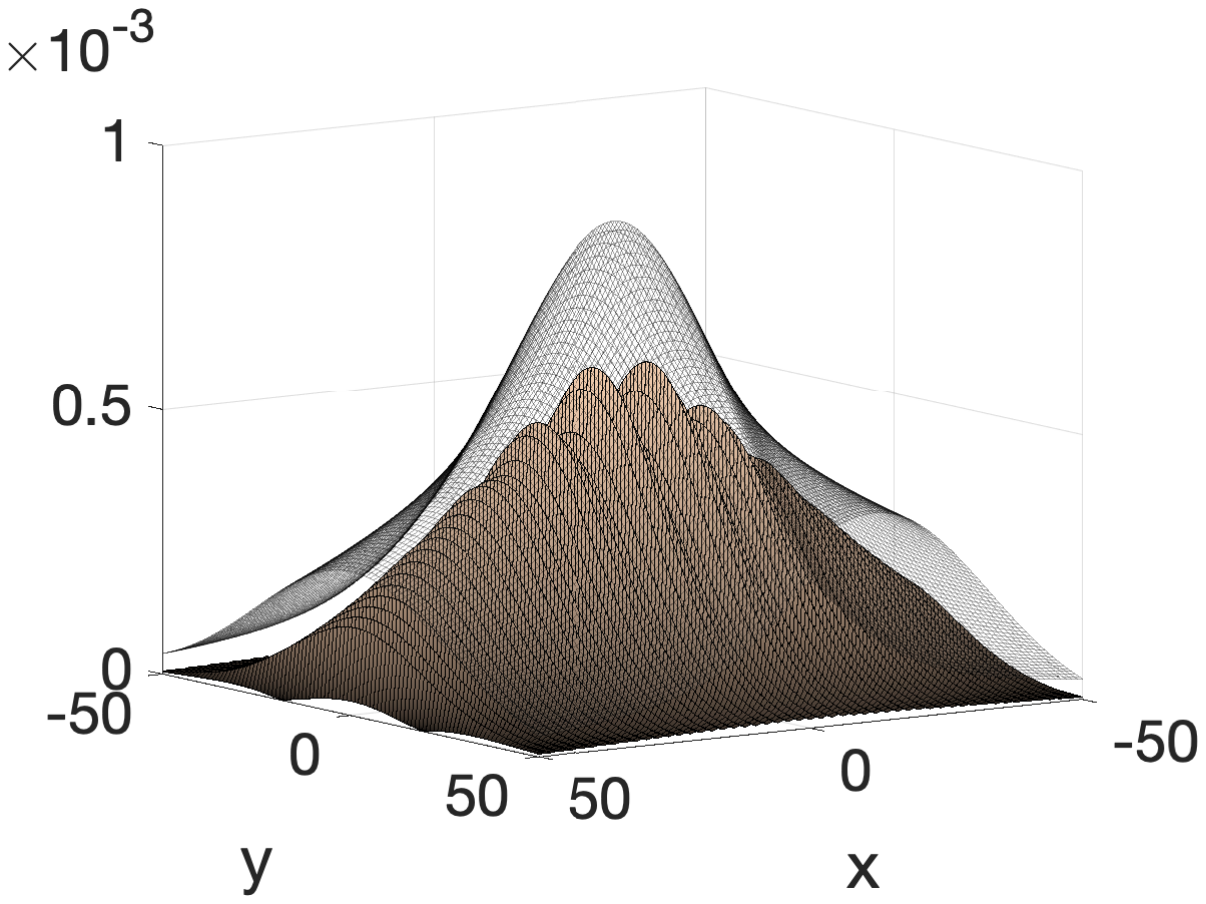}
    \end{minipage}\\ \hline
  \end{tabular}
  \captionof{figure}{The convolution powers $\phi^{(n)}$, the base attractors and first-order corrections $\mathcal{A}_1^n=\phi^{(n)}-\mathcal{R}_1^n$, and the error are illustrated for $n=50,500$. In the third row, the real error $\mathcal{R}_1^n$ is illustrated by the light brown surfaces and the Gaussian-type error in \eqref{eq:ex3_R1_est} is illustrated by the transparent nets above.}\label{fig:ex3_R1} 
\end{table}

\begin{table}[!h]
  \centering
  \begin{tabular}{  |c| c | c | }
    \hline
     $n=50$ & $n=500$ \\ \hline
    \begin{minipage}{.4\textwidth}
      \includegraphics[width=1\linewidth, trim = {0 6cm 0 6cm},clip]{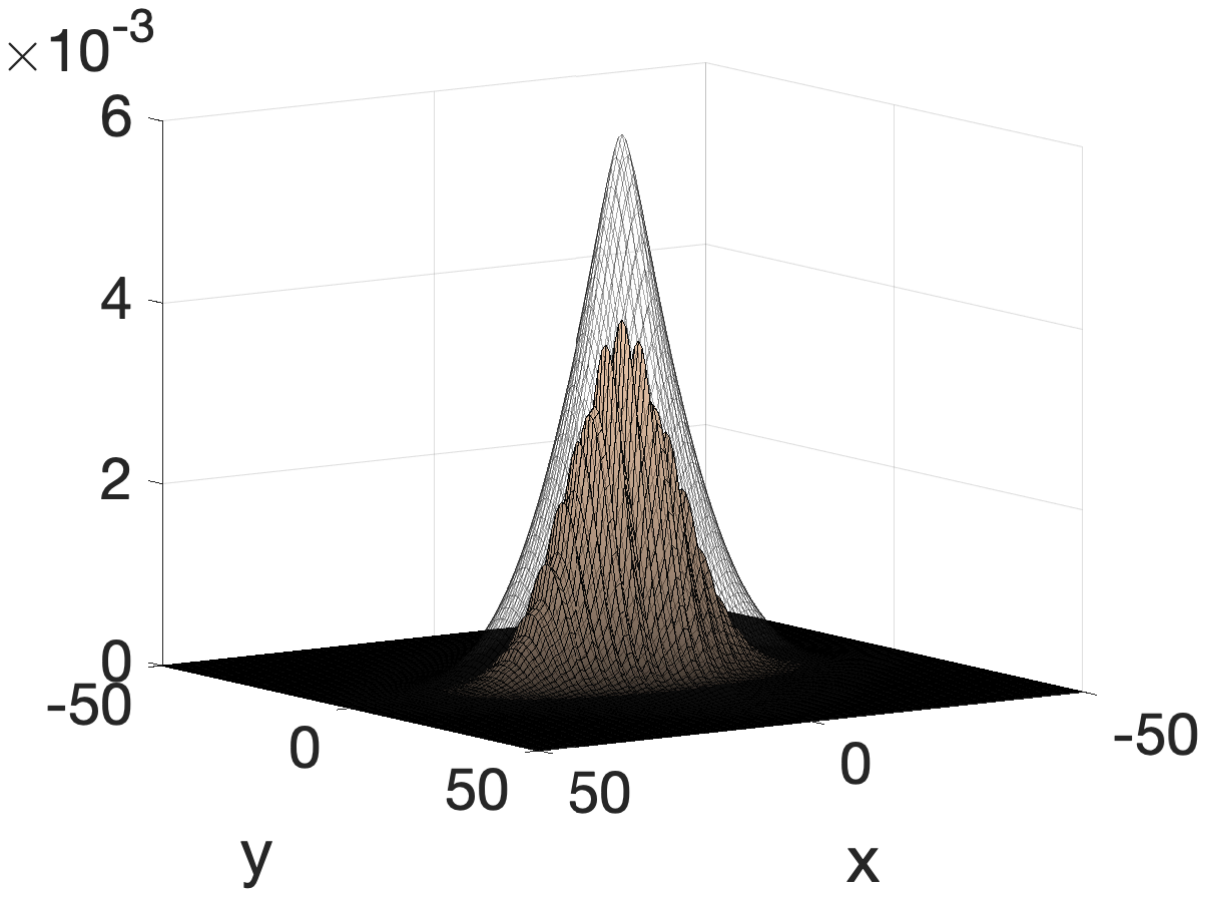}
    \end{minipage}
	&
      \begin{minipage}{.4\textwidth}
      \includegraphics[width=1\linewidth, trim = {0 6cm 0 6cm},clip]{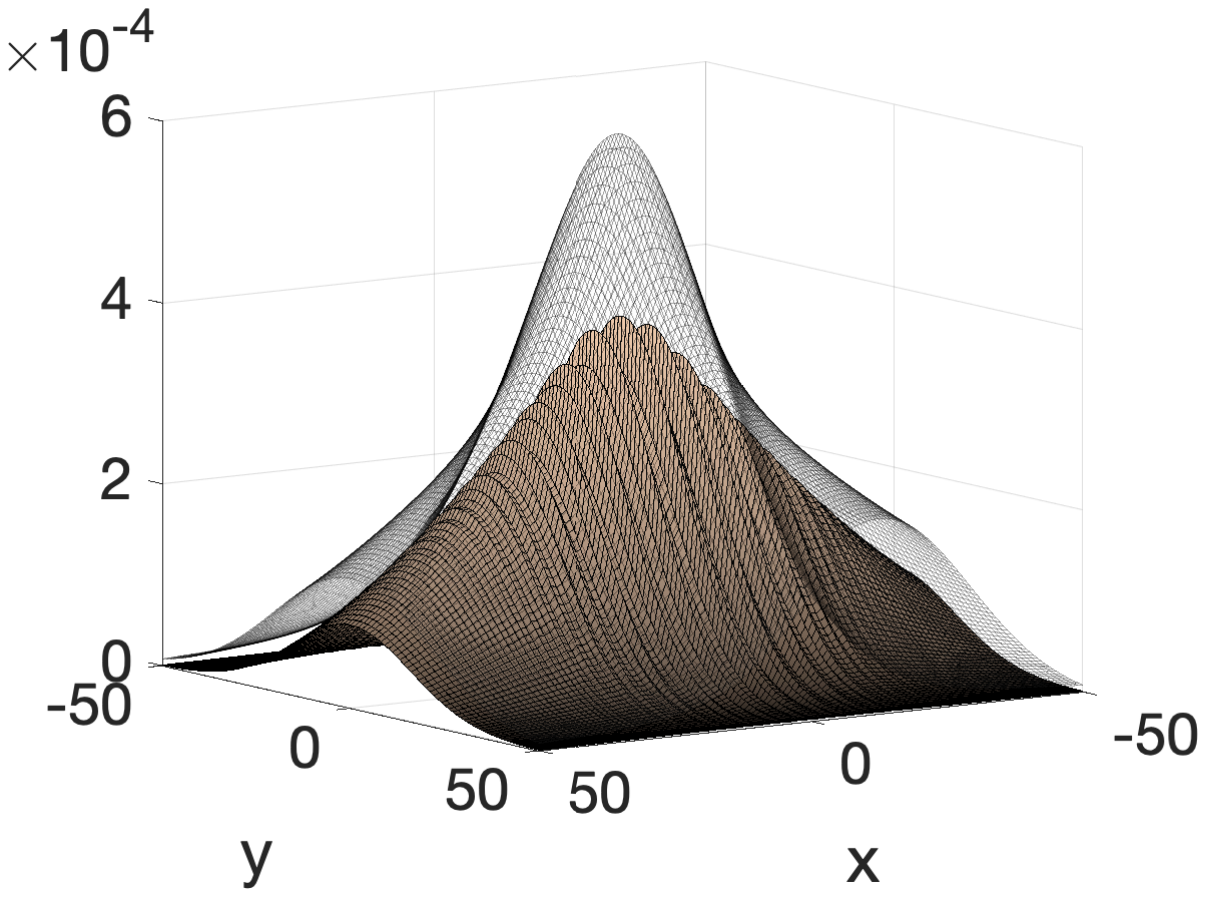}
    \end{minipage}\\ \hline
  \end{tabular}
  \captionof{figure}{The real error $\widetilde{\mathcal{R}}^n_1$ is illustrated by the light brown surfaces and the Gaussian-type error given in \eqref{eq:ex3_R1Tilde_est} is predicted by the transparent nets above for $n=50,500$.}\label{fig:ex3_R1_Tilde} 
\end{table}

\begin{figure}[h!]
    \centering
    \includegraphics[width=0.6\linewidth]{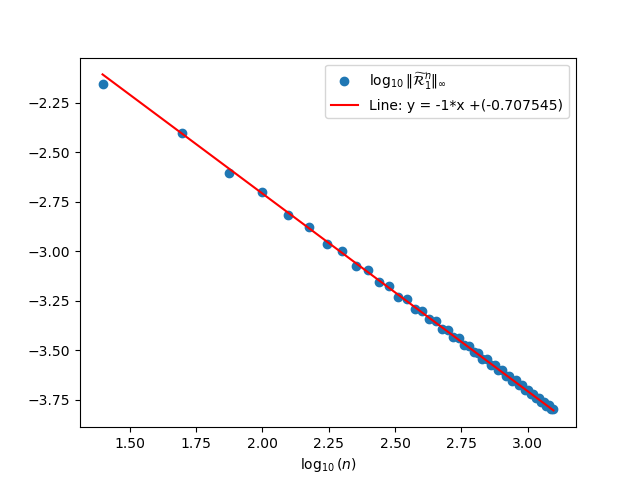}
    \caption{Graph of the values of $\log_{10} \Vert \widetilde{\mathcal{R}}^{n}_1\Vert_{\infty}$ (blue points) as a function of $\log_{10}(n)$. We considered values of $n$ from $25$ to $1,250$ in increments of $25$. The line in red is the best linear fit for the plotted points having slope $-1$.}
    \label{fig:ex3_R1_Tilde_LogLog}
\end{figure}

Next, we look at the case $\lambda_1=\lambda_2=3$. For that, in addition to the already computed $(Q^{n}_{\lambda,k}H^{n}_{P_k})(x,y)$ for $\lambda=0,1$ for $k=1,2$, we need to do the same for $\lambda=2,3$ . Before we move to this, and to avoid computing unnecessary terms, let us first see which terms can be neglected. If we consider the error 
\begin{equation*}
\mathcal{R}_{3}^{n}(x,y) =\phi^{(n)}(x,y)   -\sum_{k=1}^2\sum_{\lambda=0}^{3}e^{-i(x,y)\cdot \xi_{k}}\widehat{\phi}(\xi_{k})^n(Q^{n}_{\lambda,k}H^{n}_{P_k}) (x,y) ,
\end{equation*}
for $n\in \mathbb{N}_+$ and $(x,y)\in \mathbb{Z}^2$, we find
\begin{equation*}
\abs{\mathcal{R}^{n}_{3}(x,y)} \leq  \frac{C}{n^{4/3}}e^{-My^2/n}\left[ \exp\left( -M  \frac{5}{6}\left(\frac{8}{3}\right)^{1/5} \frac{\abs{x}^{6/5}}{n^{1/5}}  \right)+\exp \left( -M  \frac{x^2}{4n}  \right)  \right] 
\end{equation*}
for all $n\in \mathbb{N}_+$ and $(x,y)\in\mathbb{Z}^2$, for some positive constants $C$ and $M$. The decay $n^{-4/3}$ is given by 
\begin{equation*}
4/3= \nu= \min_{k}\{ \mu_k +(\lambda_k+1)/2m_k\} = \min_k\{2/3 + 4/6, 1+ 4/2 \}.
\end{equation*}
Now, as $\mu_2=1 < 4/3=\nu$, we can no longer neglect all the terms in $\mathcal{R}_3^{n}(\cdot)$ associated to $\xi_2$ and still have the above estimate. But we can neglect some of them. As discussed in Remark \ref{rmk:refRn}, for $k=2$ it's enough to consider the expansion up to $\underline{\lambda}_2$ satisfying 
\begin{equation*}
\mu_2+ \underline{\lambda}_2/2m_2<\nu, \,\, \text{and}\,\,\, \nu \leq \mu_2+ (\underline{\lambda}_2+1)/2m_2,
\end{equation*}
which is the case for $\underline{\lambda}_2=0$. Thus, the previous estimate remains to hold for the error considering only the purely attractor term for $k=2$, that is for
\begin{equation*}
 \widetilde{\mathcal{R}}_{3}^{n}(x,y) := \phi^{(n)}(x,y)  -\sum_{\lambda=0}^{3}e^{-i(x,y)\cdot \xi_{1}}\widehat{\phi}(\xi_{1})^n (Q^{n}_{\lambda,1}H^{n}_{P_1}) (x,y)- e^{-i(x,y)\cdot\xi_2} \widehat{\phi}(\xi_2)^{n}(Q^{n}_{0,2}H_{P_2})(x,y),
\end{equation*}
we have
\begin{equation}\label{eq:ex3_R3_Tilde_Est}
\vert \widetilde{\mathcal{R}}^{n}_{3}(x,y) \vert \leq  \frac{C}{n^{4/3}}e^{-My^2/n}\left[ \exp\left( -M  \frac{5}{6}\left(\frac{8}{3}\right)^{1/5} \frac{\abs{x}^{6/5}}{n^{1/5}}  \right)+\exp \left( -M  \frac{x^2}{4n}  \right)  \right]
\end{equation}
for all $n\in \mathbb{N}_+$ and $(x,y)\in\mathbb{Z}^2$, where $C$ and $M$ are positive constants. Based on information we have so far, to get the full expression for $\widetilde{\mathcal{R}}_3^{n}$ we are missing the terms of $k=1$ for $\lambda=2 $ and $\lambda=3$. For that, we use $S_{1,1}(\xi)$, $S_{2,1}(\xi)$, and $S_{3,1}(\xi)$ given in \eqref{eq:ex3-Up1} together with the expressions for the Bell polynomials given in Table \ref{tab:Bell}, to get 
\begin{equation*}
    Q^{n}_{2,1}=\frac{1}{2!}\left( 0^2 + nS_{2,1}(i\partial) \right) =\frac{3n}{128}\partial_1^8,\quad \text{and} \quad Q^{n}_{3,1}=0,
\end{equation*}
so that
\begin{equation*}
    (Q^{n}_{2,1}H^{n}_{P_1})(x,y)=\frac{3}{n}h_{6}^{(8)}\left( \frac{2^{2/3}x}{n^{1/6}} \right)h_{2}\left( \frac{2y}{n^{1/2}} \right),\quad \text{and} \quad (Q^{n}_{3,1}H^{n}_{P_1})(x,y)=0.
\end{equation*}
With this, we obtain
\begin{eqnarray*}
\lefteqn{\widetilde{\mathcal{R}}_{3}^{n}(x,y) =\phi^{(n)}(x,y)} \\ & &  - \left[\frac{2^{5/3}}{n^{2/3}}h_6\left( \frac{2^{2/3}x}{n^{1/6}} \right)h_{2}\left( \frac{2y}{n^{1/2}} \right)+  \frac{3}{n}h_{6}^{(8)}\left( \frac{2^{2/3}x}{n^{1/6}} \right)h_{2}\left( \frac{2y}{n^{1/2}} \right)+(-1)^{x+n}  \frac{2}{n}h_2\left( \frac{x}{n^{1/2}} \right)h_{2}\left( \frac{2y}{n^{1/2}} \right)\right]
\end{eqnarray*}
which, in view of our work above, satisfies \eqref{eq:ex3_R3_Tilde_Est} uniformly for $(x,y)\in\mathbb{Z}^2$ and $n\in\mathbb{N}_+$.

We conclude our discussion with one comment. Although for some order of accuracy we can neglect all the terms in the expansion associated with some of the $k'$s, as was the case when $\lambda_1=\lambda_2=1$, it is the case that when we increase the accuracy we will get to some point where we need to take into consideration terms from all the $k'$s, otherwise the order of accuracy we are expecting will not hold. Indeed, if in the expression for $\widetilde{\mathcal{R}}_{3}^{n}$, we had forgotten to take into account the term for $k=2$, that is if instead we had considered the error
\begin{eqnarray*}
_{*}\widetilde{\mathcal{R}}_{3}^{n}(x,y) &:=&\phi^{(n)}(x,y) -\sum_{\lambda=0}^{3}e^{-i(x,y)\cdot \xi_{1}}\widehat{\phi}(\xi_{1})^n (Q^{n}_{\lambda,1}H^{n}_{P_1}) (x,y)  \\ &=& \phi^{(n)}(x,y)- \left[\frac{2^{5/3}}{n^{2/3}}h_6\left( \frac{2^{2/3}x}{n^{1/6}} \right)h_{2}\left( \frac{2y}{n^{1/2}} \right)+  \frac{3}{n}h_{6}^{(8)}\left( \frac{2^{2/3}x}{n^{1/6}} \right)h_{2}\left( \frac{2y}{n^{1/2}} \right)\right].
\end{eqnarray*}
This error no longer satisfies estimate \eqref{eq:ex3_R3_Tilde_Est}, and instead satisfies
\begin{equation*}
\vert _{*}\widetilde{\mathcal{R}}^{n}_{3}(x,y) \vert \leq  \frac{C}{n}e^{-My^2/n}\left[ \exp\left( -M  \frac{5}{6}\left(\frac{8}{3}\right)^{1/5} \frac{\abs{x}^{6/5}}{n^{1/5}}  \right)+\exp \left( -M  \frac{x^2}{4n}  \right)  \right],
\end{equation*}
which is the same we obtained for $\widetilde{\mathcal{R}}_{1}^{n}$. In other words, $_{*}\widetilde{\mathcal{R}}_{3}^{n}$ is not a better approximation than $\widetilde{\mathcal{R}}_{1}^{n}$, and this is illustrated by the results of numerical simulations presented in Figure \ref{fig:ex3_R3_Star_Tilde_LogLog}.

\begin{figure}[h!]
    \centering
    \includegraphics[width=0.6\linewidth]{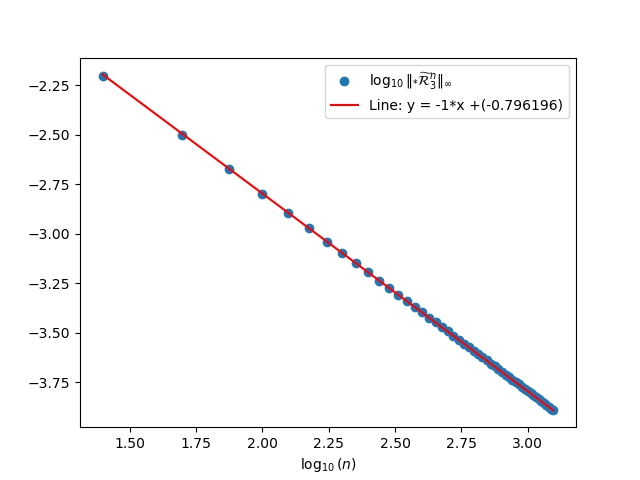}
    \caption{Graph showing the values of $\Vert _{*}\widetilde{\mathcal{R}}^{n}_3\Vert_{\infty}$ (blue points) as a function of $n$ in $\log_{10}-\log_{10}$ scale, for values of $n$ ranging from $25$ to $1,250$ in increments of $25$. The line in red is the best linear fit having slope $-1$ for the plotted points.}
    \label{fig:ex3_R3_Star_Tilde_LogLog}
\end{figure}

\section*{\large Acknowledgments:}The authors thank Jean-Fran\c{c}ois Coulombel and Gr\'{e}gory Faye for their feedback and several helpful suggestions that led to our presentation of local limit theorems in terms of Bell polynomials. We also thank Fernando Gouv\^{e}a and Leo Livshits for many helpful discussions.
\appendix
\section{Appendix}
\subsection{Appendix: Positive-Homogeneous Polynomials}\label{ssec:PHP}

In this subsection, we prove Propositions \ref{prop:PosHomAreSemiElliptic} and \ref{prop:QLmbdOpIsIndependentOfA}. We begin with a lemma used in the proof of the first proposition.

\begin{lemma}\label{lem:TripleSumDiagLemma}
Let $P$ be a positive-homogeneous polynomial in $d$ variables and let $D, N$, and $S$ be real matrices such that $D$ is diagonal, $S$ is skew-symmetric, $N$ is nilpotent and, further, we have the commutativity relations $[D+S,N]=[D,S]=0$; here, $[\cdot,\cdot]$ denotes the classical Lie bracket. If 
\begin{equation*}
    E:=D+S+N\in\Exp(P),
\end{equation*}
then
\begin{equation*}
    D = \diag\left(1/2m_1,1/2m_2,\dots,1/2m_d\right)
\end{equation*}
for some $\mathbf{m}=(m_1,m_2,\dots,m_d)\in\mathbb{N}_+^d$. Further, with this $\mathbf{m}$, $P$ is positive semi-elliptic with
\begin{equation*}
    P(\xi)=\sum_{\abs{\alpha:\mathbf{m}}=2}a_\alpha\xi^{\alpha}
\end{equation*}
for some complex coefficients $\{a_\alpha\}$ and, necessarily, $D\in\Exp(P)$.
\end{lemma}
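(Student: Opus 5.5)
We first factor the one-parameter group. Since $D$, $S$ commute and $N$ commutes with $D+S$, we have
\begin{equation*}
t^E=t^{D}\,t^{S}\,t^{N}
\end{equation*}
for all $t>0$. Here $t^S$ is orthogonal, $t^D$ is diagonal with entries $t^{d_j}$, and $t^N=\exp((\ln t)N)$ is a polynomial in $\ln t$. The plan is to show that the rotational and nilpotent parts must act trivially on $P$. That leaves homogeneity $P(t^D\xi)=tP(\xi)$, from which the exponent structure is read off monomial by monomial.

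\emph{Step 1: eliminating $N$.} Fix $\xi\neq 0$ and consider $t\mapsto R(t^E\xi)=tR(\xi)$. Since $R$ is positive-definite and continuous, and homogeneous with respect to some contracting group (Proposition \ref{prop:PosHomEquiv}), $R(\eta)\asymp$ a ``homogeneous norm'' of $\eta$. I would instead argue algebraically. Write $u=(\ln t)$, so that $t=e^u$, and set $F(u)=P(e^{uE}\xi)$. Expanding $e^{uE}\xi=e^{uD}e^{uS}\sum_{k<d}u^kN^k\xi/k!$, the function $F$ is a finite sum of terms $u^k e^{\lambda u}\cdot(\text{bounded oscillatory})$, where the $\lambda$ are sums of the $d_j$ and the oscillations come from the eigenvalues $\pm i\theta$ of $S$. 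Because $F(u)=e^{u}P(\xi)$ exactly, uniqueness of exponential-polynomial expansions forces all terms with $k\geq1$, or with nonzero oscillation, to cancel. To make this rigorous, I would use the homogeneity on $\mathbb{C}$-valued $t$-paths, or equivalently differentiate: $E\in\Exp(P)$ means $\nabla P(\xi)\cdot E\xi=P(\xi)$. Since $D$, $S$, $N$ are simultaneously in the commutative algebra generated by semisimple and nilpotent parts of $E$, the Jordan–Chevalley argument shows $D+S$ (the semisimple part) and then $D$ (the real part of the semisimple part) are themselves in $\Exp(P)$. This is the standard fact that the exponent set of a polynomial is an affine space whose direction is a Lie algebra $\operatorname{Sym}(P)=\{B:\nabla P\cdot B\xi\equiv0\}$, and that this algebra is algebraic, hence closed under taking semisimple/nilpotent and real/imaginary parts. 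I would prove the needed closure directly. The operator $\mathcal{L}_E:Q\mapsto \nabla Q\cdot E\xi$ acts on polynomials of bounded degree, and $P$ is an eigenvector with eigenvalue $1$. The decomposition $\mathcal{L}_E=\mathcal{L}_D+\mathcal{L}_S+\mathcal{L}_N$ is the Jordan decomposition of $\mathcal{L}_E$, with $\mathcal{L}_D$ real semisimple, $\mathcal{L}_S$ having imaginary spectrum, and $\mathcal{L}_N$ nilpotent, all commuting. An eigenvector with real eigenvalue $1$ is therefore annihilated by $\mathcal{L}_N$ and $\mathcal{L}_S$, which gives $\mathcal{L}_DP=P$, i.e.\ $D\in\Exp(P)$.

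\emph{Step 2: reading off $\mathbf{m}$.} With $D=\diag(d_1,\dots,d_d)$ and $\mathcal{L}_D\xi^\alpha=(\alpha\cdot d)\xi^\alpha$, every monomial in $P$ satisfies $\alpha\cdot d=1$. Positive-definiteness of $R$ restricted to the $j$th coordinate axis gives $R(0,\dots,\xi_j,\dots,0)=c_j\xi_j^{k}>0$ for $\xi_j\neq0$. Hence $R$, and thus $P$, contains a pure monomial $\xi_j^{k_j}$ with $k_j$ even, say $k_j=2m_j$, and $2m_jd_j=1$. This gives $d_j=1/2m_j$, so every monomial has $|\alpha:2\mathbf{m}|=1$, i.e.\ $|\alpha:\mathbf{m}|=2$. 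Positivity of $R$ then makes $P$ positive semi-elliptic.

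The main obstacle is Step 1: justifying rigorously that $\mathcal{L}_D$, $\mathcal{L}_S$, $\mathcal{L}_N$ are the semisimple-real, semisimple-imaginary and nilpotent parts of $\mathcal{L}_E$ on the finite-dimensional space of polynomials of degree $\leq\deg P$. The commutation relations give commutativity of the derivations. Nilpotency of $\mathcal{L}_N$ follows since it lowers an $N$-adapted weight filtration. Semisimplicity with spectrum $\{\alpha\cdot d\}$ and $i\{\alpha\cdot\theta\}$ follows after complex diagonalization of $S$ commuting with $D$.
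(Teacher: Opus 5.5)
Your proof is correct, but it takes a different route from the paper's.

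\textbf{The paper's route.} The paper works at the group level. It moves $t^D$ across to get
$P(t^Nt^S\xi)=tP(t^{-D}\xi)=\sum_\alpha a_\alpha t^{1-\alpha\cdot\lambda}\xi^\alpha$.
The left side is a polynomial in $\ln t$ whose coefficients $b_j(t^S\xi)$ stay bounded, because $t^S$ is orthogonal. The paper then compares growth as $t\to\infty$ (resp.\ $t\to0$) against the extremal monomials with $\alpha\cdot\lambda<1$ (resp.\ $>1$). This forces $\alpha\cdot\lambda=1$ for every $\alpha$, and hence $D\in\Exp(P)$.

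\textbf{Your route.} You linearize instead. You use the Euler-type identity $\mathcal{L}_EP=P$, where $\mathcal{L}_BQ=\nabla Q\cdot B\xi$. You then extract $\mathcal{L}_NP=0$ and $\mathcal{L}_DP=P$ from the uniqueness of the Jordan--Chevalley decomposition of $\mathcal{L}_E$ on the finite-dimensional space of polynomials of degree at most $\deg P$.

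\textbf{A caveat on commutativity.} Your remark that ``the commutation relations give commutativity of the derivations'' needs more for all three pairs. It requires $[D,N]=[S,N]=0$, which is not among the hypotheses. It is true, since $D+S$ is normal and so $D$ and $S$ are polynomials in $D+S$, but you would have to say so. You can avoid this by running the two-stage version you mention first:
\begin{itemize}
\item Since $B\mapsto\mathcal{L}_B$ is a Lie anti-homomorphism, $\mathcal{L}_{D+S}$ is semisimple, $\mathcal{L}_N$ is nilpotent, and they commute.
\item An eigenvector of $\mathcal{L}_E$ with eigenvalue $1$ therefore lies in the $1$-eigenspace of $\mathcal{L}_{D+S}$ and is annihilated by $\mathcal{L}_N$.
\item Finally, split the commuting, simultaneously diagonalizable pair $\mathcal{L}_D,\mathcal{L}_S$, which have real and purely imaginary spectra respectively.
\end{itemize}
This version uses only the brackets actually given.

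\textbf{Step 2.} This is the same coordinate-axis argument the paper uses. You should state explicitly that $\alpha\cdot d=1$ allows at most one pure power $\xi_j^{1/d_j}$ on each axis. Positivity of $R$ then forces that power to be present and even, which also rules out $d_j\le 0$.

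\textbf{Trade-offs.} The paper's argument is elementary and needs no spectral theory of derivations. It uses only that $t^S\xi$ is bounded and that $t^N$ is polynomial in $\ln t$. Your argument is purely algebraic and involves no limits. It identifies the lemma as an instance of the fact that the symmetry algebra $\{B:\mathcal{L}_BP=0\}$ is closed under taking Jordan parts. It also exhibits $S$ and $N$ directly as infinitesimal symmetries of $P$, which the paper only obtains after the fact from $P(t^Nt^S\xi)=P(\xi)$.
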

\begin{proof} We write $D=\diag(\lambda)$ for $\lambda=(\lambda_1,\lambda_1,\dots,\lambda_d)\in\mathbb{R}^d$ and
\begin{equation*}
    P(\xi)=\sum_{\alpha\in \mathcal{I}}a_\alpha\xi^\alpha
\end{equation*}
where $\mathcal{I}$ is a finite subset of multi-indices (for which $a_\alpha\neq 0$). Our primary goal is to show that $\alpha\cdot\lambda=1$ for every $\alpha\in \mathcal{I}$. 

    To this end, given that $[D+S,N]=0$ and $[D,S]=0$, we have
    \begin{equation*}
        t^{E}=t^{N+(D+S)}=t^N t^{D+S}=t^Nt^{S+D}=t^N t^S t^D
    \end{equation*}
    for all $t>0$. Since $E\in\Exp(P)$, we have $tP(\xi)=P(t^Nt^St^D\xi)$ and equivalently
    \begin{equation}\label{eq:TripleSumDiagLemma1}
      P(t^Nt^S\xi)=tP(t^{-D}\xi)= \sum_{\alpha\in \mathcal{I}}a_\alpha t^{1-\alpha\cdot\lambda}\xi^\alpha
    \end{equation}
    for all $t>0$ and $\xi\in\mathbb{R}^d$. Given that $N$ is nilpotent (with, say, order $k+1$), 
    \begin{equation*}
        P(t^Nt^S\xi)=P\left(t^S\xi+(\ln t) N t^S\xi+\cdots+\frac{(\ln t)^k}{k!}N^kt^S\xi\right)=Q(\ln t,t^S\xi)
    \end{equation*}
    where $Q(s,\eta)$ for $(s,\eta)\in\mathbb{R}\times\mathbb{R}^d$ is a multivariate polynomial. In particular, for some finite degree $l$, we can write
    \begin{equation*}
        Q(s,\eta)=\sum_{j=0}^l b_j(\eta)s^j
    \end{equation*}
    where $b_j(\eta)$ for $j=0,1,\dots,l$ are polynomial functions of $\eta\in\mathbb{R}^d$. With this, \eqref{eq:TripleSumDiagLemma1} can be expressed equivalently as
    \begin{equation}\label{eq:TripleSumDiagLemma2}
        \sum_{\alpha\in \mathcal{I}} a_\alpha t^{1-\alpha\cdot\lambda}\xi^\alpha=\sum_{j=0}^l b_j(t^S\xi)(\ln t)^j
    \end{equation}
    for all $t>0$ and $\xi\in\mathbb{R}^d$. 
    
    With \eqref{eq:TripleSumDiagLemma2} in hand, let's assume to reach a contradiction that there is some $\alpha_0\in \mathcal{I}$ for which $\alpha_0\cdot\lambda<1$. Without loss of generality, we take this dot product to be minimal (among those $\alpha\in \mathcal{I}$) and define
    \begin{equation*}
        \mathcal{I}'=\{\alpha\in \mathcal{I}:\alpha\cdot\lambda=\alpha_0\cdot\lambda\}.
    \end{equation*}
    Given that distinct monomials are linearly independent and $a_{\alpha_0}\neq 0$, we may fix some $\xi\in\mathbb{R}^d$ for which
    \begin{equation*}
        \sum_{\alpha\in \mathcal{I}'}a_\alpha \xi^{\alpha}\neq 0. 
    \end{equation*}
    For $\delta:=1-\alpha_0\cdot\lambda>0$, \eqref{eq:TripleSumDiagLemma2} yields
    \begin{equation}\label{eq:TripleSumDiagLemma3}
    \sum_{s\in \mathcal{I}'}a_\alpha \xi^\alpha+\sum_{\alpha\in \mathcal{I}\setminus \mathcal{I}'}a_\alpha t^{1-\alpha\cdot\lambda-\delta}\xi^\alpha=\frac{1}{t^\delta}\sum_{\alpha\in \mathcal{I}}a_\alpha t^{1-\alpha\cdot\lambda}\xi^\alpha=\sum_{j=0}^l b_j(t^S\xi)\frac{(\ln t)^j}{t^\delta}
    \end{equation}
    for all $t>0$. Now, because $S$ is skew-symmetric, $\{t^S\}$ is a subgroup of $\OdR$ and so $t^S \xi$ is bounded for all $t>0$. Therefore
    \begin{equation*}
        \lim_{t\to\infty}\sum_{j=0}^l b_j(t^S\xi)\frac{(\ln t)^j}{t^\delta}=0
    \end{equation*}
    because the polynomial coefficients $b_j(\cdot)$ are bounded on bounded sets. Also, by the minimality of $\alpha_0$, $1-\alpha\cdot\lambda-\delta=\alpha_0\cdot\lambda-\alpha\cdot\lambda<0$ for every $\alpha\in \mathcal{I}\setminus \mathcal{I}'$ and so it follows that
    \begin{equation*}
        \lim_{t\to\infty}\sum_{\alpha\in \mathcal{I}\setminus \mathcal{I}'}a_\alpha t^{1-\alpha\cdot\lambda-\delta}\xi^\alpha=0. 
    \end{equation*}
    In view of the identity \eqref{eq:TripleSumDiagLemma3}, we obtain
    \begin{equation*}
        \sum_{\alpha\in \mathcal{I}'}a_\alpha\xi^\alpha=\lim_{t\to\infty}\left(\sum_{j=0}^l b_j(t^S\xi)\frac{(\ln t)^j}{t^\delta}-\sum_{\alpha\in \mathcal{I}\setminus \mathcal{I}'}a_\alpha t^{1-\alpha\cdot\lambda-\delta}\xi^\alpha\right)=0,
    \end{equation*}
     a contradiction. In the case that $\alpha_0\cdot\lambda>1$ for some $\alpha_0\in \mathcal{I}$, one can obtain a contradiction analogously by taking $t\to 0$. Thus $\alpha\cdot\lambda=1$ for all $\alpha\in \mathcal{I}$ and so we write
     \begin{equation*}
     P(\xi)=\sum_{\alpha\cdot\lambda=1}a_\alpha \xi^\alpha
     \end{equation*}
     (with this presentation allowing zero coefficients for potentially some $\alpha$ for which $\alpha\cdot\lambda=1$). To complete the proof, it remains to determine $\lambda$. For $k=1,2,\dots,d$, let's denote by $e_k$ the standard unit vector in $\mathbb{R}^d$ and observe that
     \begin{equation*}
         R(te_k)=\sum_{\alpha\cdot\lambda=1}\Re(a_\alpha)(te_k)^\alpha=\Re(a_{n_ke_k})t^{n_k}=c_kt^{n_k}
     \end{equation*}
     since all polynomial terms in $R$ vanish except (perhaps) a single $\alpha\in\mathbb{N}_+^d$ of the form $\alpha=n_k e_k$ for some $n_k\in\mathbb{N}$; necessarily, $n_k\lambda_k=\alpha\cdot\lambda=1$. Now, because $R$ is positive definite, it must be the case that $t\mapsto c_kt^{n_k}$ is also positive definite and therefore $\lambda_k=1/n_k=1/2m_k$ for some $m_k\in\mathbb{N}_+$. Consequently, $\alpha\cdot\lambda=1$ if and only if
     \begin{equation*}
        \abs{\alpha:\mathbf{m}}=\frac{\alpha_1}{m_1}+\frac{\alpha_2}{m_2}+\cdots+\frac{\alpha_d}{m_d}=2
     \end{equation*}
     for $\mathbf{m}=(m_1,m_2,\dots,m_d)\in\mathbb{N}_+^d$ and with this it is evident that
     \begin{equation*}
      D=\diag(1/2m_1,1/2m_2,\dots,1/2m_d)\in\Exp(P).
     \end{equation*}
\end{proof}

\begin{proof}[Proof of Proposition \ref{prop:PosHomAreSemiElliptic}]

    Let $F \in \Exp(P)$. Via the real Jordan Canonical Form theorem \cite[Theorem 3.4.1.5]{HJBook}, we know $F$ is similar to a matrix $G$ such that \begin{equation*}
        G = C_1 \oplus C_2 \cdots \oplus C_k \oplus J_1 \oplus \cdots \oplus J_{m},
    \end{equation*} where \begin{equation*}
        J_r = \lambda_r I_{n_r} + N_r \in M_{n_r}(\mathbb{R}),
    \end{equation*} with $\lambda_r \in \mathbb{R}$, $I_{n_r} \in M_{n_d}(\mathbb{R})$ is the $n_r \times n_r$ identity matrix, and $N_r$ is a strictly upper triangular whose entries are either $1$ or $0$ ($N_r = 0$ if $n_r = 1$). Further, \begin{equation*}
        C_r = a_r I_{2j_r} + S_r + N_r' \in M_{2j_r}(\mathbb{R}),
    \end{equation*} where $I_2$ is the $2\times 2$ identity matrix, $I_{2j_r}$ is the $2j_r\times 2j_r$ identity matrix, $a_r, b_r \in \mathbb{R}$, $S_r$ is a direct sum of $j_r$ matrices equal to \begin{equation*}
        B_{r} = \begin{pmatrix}
        0 & b_r \\
        -b_r & 0\\
    \end{pmatrix},
    \end{equation*} and $N_r'$ is a strictly upper triangular matrix whose entries are either $1$ or $0$ (and $N_r' = 0$ if $j_r = 2$). Here, the $\lambda_r$s denote the real eigenvalues of $F$ (and $G$) and the matrices $C_r$ correspond to complex eigenvalues $\mu_r = a_r \pm ib_r$ which necessarily appear in conjugate pairs. The eigenvalues $\lambda_r$ need not be distinct, and the number of times the value $\lambda_r$ occurs in the list $\lambda_{1}, \lambda_{2}, \cdots , \lambda_m$ is the geometric multiplicity of $\lambda_r$. Similarly, the $\mu_r$ need not be distinct and the number of times the block $B_r$ occurs in the list $B_1, B_2, \cdots , B_k$ is the geometric multiplicity of both $\mu_r = a_r + ib_r$ and $\overline{\mu}_r = a_r - ib_r$. It is also possible that no $C_r$ blocks or no $J_r$ blocks occur in $G$, should $G$ only have real or complex eigenvalues, respectively.

    Notice that $[\lambda_r I_{n_r}, N_r] = 0$, and $[a_rI_{2j_r}, N_r'] = [a_rI_{2j_r}, S_r] = 0$, since $I_{n_r}, I_{2j_r}$ are identity matrices. One can also check that $[a_rI_{2j_r} + S_r, N_r'] = 0$ by a quick computation. Then, setting $D_r' = a_rI_{2j_r}$ and $D_r = \lambda_r I_{n_r}$, we can write $G = D + S + N$ where
    \begin{equation}\nonumber
        \begin{split}
            D &= D_1' \oplus \cdots \oplus D_k' \oplus D_1 \oplus \cdots \oplus D_m\\
            N &= N_1' \oplus \cdots \oplus N_k' \oplus N_1 \oplus \cdots \oplus N_m \\
            S & = S_1 \oplus \cdots \oplus S_k \oplus O
        \end{split}
    \end{equation} where $O$ is a zero matrix of appropriate dimensions. Here, $D, S, N \in M_d(\mathbb{R})$, $D$ is diagonal, $N$ is strictly upper triangular and, so, nilpotent and $S = G-D-N \in \mathfrak{o}_d(\mathbb{R})$. By the commutativity relations for $D_r, N_r, D_r', N_r', S_r$ we also know that $[D+S, N] = [D, S] = 0$. 

    Since $F$ is similar to $G$ (via real similarity in the language of \cite{HJBook}), there exists $A\in\GldR$ for which $A^{-1}FA = G = D+N+S$. For $P_A(\xi) = P(A\xi)$, we observe that
    \begin{equation*}
        P_A(t^G\xi) = P_A(t^{A^{-1}FA}\xi) = P_A(A^{-1}t^FA\xi) = P(t^FA\xi) = tP(A\xi) = tP_A(\xi)
    \end{equation*} for all $t>0$ and $\xi\in\mathbb{R}^d$ on account of the fact that $F\in\Exp(P)$. Consequently, $G = D+S+N \in \Exp(P_A)$ with $[D+S, N] = [D, S] = 0$ and so we may invoke Lemma \ref{lem:TripleSumDiagLemma} to find that for some $\mathbf{m}=(m_1,m_2,\dots,m_d)\in\mathbb{N}_+^d$, we have $D =  \diag(1/2m_1, 1/2m_2, \cdots , 1/2m_d)\in \Exp(P_A)$ and \begin{equation*}
    P_A(\xi):=P(A\xi)=\sum_{\abs{\alpha:\mathbf{m}}=2}a_\alpha\xi^\alpha
    \end{equation*}
    Putting $E = ADA^{-1}$, we have \begin{equation*}
        P(t^{E}\xi) = P(t^{ADA^{-1}}\xi) = P(At^DA^{-1}\xi) = P_A(t^DA^{-1}\xi) = tP_A(A^{-1}\xi) = tP(\xi)
    \end{equation*} so $E\in \Exp(P)$ and the proof is complete.

\end{proof}

\noindent The remaining of this subsection is devoted to the proof of Proposition \ref{prop:QLmbdOpIsIndependentOfA}. We will first treat two lemmas.
\begin{lemma}\label{lem:Mon-PA-caract}
    Let $P$ be a positive-homogeneous polynomial and $A\in \GldR$ be such that $P_A$ is semi-elliptic with $\mathbf{m}=(m_1,m_2,\ldots,m_d)\in \mathbb{N}_{+}^{d}$. In view of Proposition \ref{prop:PosHomAreSemiElliptic}, $E=ADA^{-1}\in\Exp(P)$ where $D=\diag(1/2m_1,1/2m_2,\dots,1/2m_d)$ and $A=(v_1|v_2|\cdots|v_d)$ where each column $v_j$ is an eigenvector of $E$ with eigenvalue $1/2m_j$. If, for non-zero $\mathbf{b}\in\mathbb{R}^d$, $P_A(x\mathbf{b})$ is a monomial (in $x$) for $x\in\mathbb{R}$, the degree of this monomial is $2m_k$ for some $k=1,2,\dots,d$ and the only non-zero entries of $\mathbf{b}$ coincide with those of the columns in $A$ associated with the common eigenvalue $1/2m_k$ of $E$. Correspondingly, $A\mathbf{b}$ is an eigenvector of $E$ with eigenvalue $1/2m_k$. 
\end{lemma}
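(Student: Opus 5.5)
Write $P_A(\xi)=\sum_{\abs{\alpha:\mathbf{m}}=2}a_\alpha\xi^\alpha$ and $R_A=\Re P_A$, with $\mathbf{b}=(b_1,\dots,b_d)\neq 0$. Let $J=\{j:b_j\neq 0\}$. The aim is to show that all $m_j$ with $j\in J$ coincide. The remaining claims then follow quickly. The degree is $2m_k$. Since $A\mathbf{b}=\sum_{j\in J}b_jv_j$ is a combination of eigenvectors of $E$ with the common eigenvalue $1/2m_k$, it is itself such an eigenvector.

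First, the restriction $x\mapsto P_A(x\mathbf{b})$ is a nonzero polynomial. Indeed, $R_A(x\mathbf{b})=\Re P_A(x\mathbf{b})>0$ for $x\neq 0$ by positive-definiteness, and $R_A(x\mathbf{b})=R_A(\pm\abs{x}\mathbf{b})$. By hypothesis it is a monomial $cx^N$ with $c\neq0$. Taking real parts gives $R_A(x\mathbf{b})=\Re(c)x^N$. Positive-definiteness forces $\Re(c)>0$ and $N$ even.

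Next, to pin down $N$, use the dilation structure. For $t>0$,
\begin{equation*}
    tP_A(x\mathbf{b})=P_A(t^Dx\mathbf{b})=P_A\Big(\sum_{j\in J}t^{1/2m_j}xb_je_j\Big).
\end{equation*}
The plan is to compare $R_A(x\mathbf{b})=\Re(c)x^N$ with the lower and upper bounds $R_A(\eta)\asymp\sum_j\abs{\eta_j}^{2m_j}$, which are available from the comparison result (Proposition \ref{prop:PureLambdaCompare} in the appendix, also used in Proposition \ref{prop:gBound}). On the line $\eta=x\mathbf{b}$ these bounds give
\begin{equation*}
    \Re(c)\abs{x}^N\asymp\sum_{j\in J}\abs{b_j}^{2m_j}\abs{x}^{2m_j}\qquad\text{for all }x\in\mathbb{R}.
\end{equation*}
Let $x\to\infty$: the right side grows like $\abs{x}^{2\max_J m_j}$, so $N=2\max_{J}m_j$. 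Let $x\to 0$: the right side behaves like $\abs{x}^{2\min_J m_j}$, so $N=2\min_J m_j$. Hence all $m_j$, $j\in J$, equal a common value $m_k$, and $N=2m_k$. The nonzero entries of $\mathbf{b}$ therefore sit exactly at indices of columns $v_j$ of $A$ whose eigenvalue under $E$ is $1/2m_k$. This follows because $Ev_j=ADA^{-1}Ae_j=Ae_jD_{jj}=v_j/2m_j$.

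The main obstacle is justifying the two-sided comparison $R_A(\eta)\asymp\sum_j\abs{\eta_j}^{2m_j}$. It holds uniformly on all of $\mathbb{R}^d$, not just near $0$. I would take it from the appendix's Proposition \ref{prop:PureLambdaCompare}. If that is unavailable, I would argue directly: both sides are continuous, positive-definite, and homogeneous with respect to $\{t^D\}$. The ratio is therefore determined by its values on the compact unital level set of one side, where it is bounded above and below by positive constants (Proposition \ref{prop:PosHomEquiv}).
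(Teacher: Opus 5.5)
Your proof is correct and follows essentially the same route as the paper: restrict the two-sided comparison $R_A(\eta)\asymp\sum_j\abs{\eta_j}^{2m_j}$ (Proposition \ref{prop:PureLambdaCompare}) to the line through $\mathbf{b}$, then read off the degree from the behavior as $x\to 0$ and $x\to\infty$. Passing to real parts (writing $P_A(x\mathbf{b})=cx^N$ and working with $\Re(c)x^N$) is slightly more careful than the paper, which applies the comparison to $P_A$ itself and asserts $P_A(\mathbf{b})>0$ as if $P_A$ were real-valued; the dilation identity you display is never used and can be dropped.
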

\begin{proof}
    We take $\mathbf{m}=(m_1,m_2,\dots,m_d)\in\mathbb{N}_+$ for which 
    \begin{equation*}
        P_A(\xi)=\sum_{\abs{\beta:2\mathbf{m}}=1}b_\beta \xi^\beta
    \end{equation*}
    and $D=\diag(1/2m_1,1/2m_2,\dots,1/2m_d)\in\Exp(P_A)$. In view of Proposition 8.7 of \cite{RSC17} (or Proposition \ref{prop:PureLambdaCompare} below), let $C,C'>0$ be such that
    \begin{equation*}
        C(\xi_1^{2m_1}+\xi_2^{2m_2}+\cdots+\xi_d^{2m_d})\leq P_A(\xi)\leq C'(\xi_1^{2m_1}+\xi_2^{2m_2}+\cdots+\xi_d^{2m_d})
    \end{equation*}
    for every $\xi=(\xi_1,\xi_2,\dots,\xi_d)\in\mathbb{R}^d$. Suppose that, for $\mathbf{b}=(b_1,b_2,\dots,b_d)\in\mathbb{R}^d\setminus\{0\}$, $P_A(x\mathbf{b})$ is a monomial in $x$ (for $x\in\mathbb{R}$). This means, for some $\delta\in\mathbb{N}$, $P_A(x\mathbf{b})=C_\mathbf{b} x^\delta$ for $x\in\mathbb{R}$ where $C_\mathbf{b}=P_A(\mathbf{b})>0$. Thus, we have
    \begin{equation*}
        C(x^{2m_1}b_1^{2m_1}+x^{2m_2}b_2^{2m_2}+\cdots x^{2m_d}b_d^{2m_d})\leq C_\mathbf{b} x^{\delta}\leq C'(x^{2m_1}b_1^{2m_1}+x^{2m_2}b_2^{2m_2}+\cdots x^{2m_d}b_d^{2m_d})
    \end{equation*}
for all $x\in\mathbb{R}$. Since $C_{\mathbf{b}}\neq 0$, a careful look at the asymptotic properties of the above inequality (as $x\to 0$ and $x\to\infty$) shows that $\delta=2m_k$ for some $k=1,2,\dots,d$ and, for any $j$ such that $m_j\neq m_k$, we must have $b_j=0$. In other words, $\delta=2m_k$ and $\mathbf{b}\in\mathbb{R}^d$ is such that its only non-zero entries coincide with those of the columns in $A$ associated to the eigenvalue $1/2m_k$ of $E$. 
\end{proof}

\begin{lemma}
    Let $P$ be a positive-homogeneous polynomial (in d variables) and let $A, A'\in \GldR$ be such that $P_A$ and $P_{A'}$ are semi-elliptic with $\mathbf{m}=(m_1,m_2,\ldots,m_d)$, and $\mathbf{m}'=(m'_1,m'_2,\ldots,m'_d)\in \mathbb{N}_{+}^d$, respectively. Let $E$ and $E'$ be given by $E=ADA^{-1}$  and $E'=A'D'(A')^{-1}$, both elements of $\Exp(P)$, with $D=\diag(1/2m_1,1/2m_2,\ldots,1/2m_d)$ and $D'=\diag(1/2m'_1,1/2m'_2,\ldots, 1/2m'_d)$. Then $\mathbf{m}'$ is a permutation of $\mathbf{m}$ and $E=E'$.
\end{lemma}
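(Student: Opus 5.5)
The plan is to show that the eigenspace decomposition of $E$ is determined by $P$ alone, and hence so is $E$ itself. The tool is Lemma \ref{lem:Mon-PA-caract}, which characterizes directions along which $P$ restricts to a monomial. Write $A=(v_1|\cdots|v_d)$ and $A'=(v_1'|\cdots|v_d')$. Because $P_{A'}$ is semi-elliptic, it is a sum of monomials $\xi^\alpha$ with $|\alpha:\mathbf{m}'|=2$. In particular, $P_{A'}(xe_j)=P(xv_j')$ is a single monomial $c_j x^{2m_j'}$ with $c_j\neq0$; this is a pure power in $x$, since positive-definiteness of the real part forces that term to be present.

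Next, set $\mathbf{b}=A^{-1}v_j'$. Then $P_A(x\mathbf{b})=P(xv_j')$ is a monomial in $x$ of degree $2m_j'$. Lemma \ref{lem:Mon-PA-caract} then says two things. First, $2m_j'=2m_k$ for some $k$. Second, $v_j'=A\mathbf{b}$ is an eigenvector of $E$ with eigenvalue $1/2m_k=1/2m_j'$. Thus every column of $A'$ is an eigenvector of $E$ with eigenvalue $1/2m_j'$. Since the columns of $A'$ form a basis, we get $Ev_j'=(1/2m_j')v_j'=E'v_j'$ for every $j$, and therefore $E=E'$.

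It remains to show that $\mathbf{m}'$ is a permutation of $\mathbf{m}$. Once $E=E'$ is known, $D$ and $D'$ are both similar to $E$, so they have the same eigenvalues with the same multiplicities. Hence the multisets $\{1/2m_j\}$ and $\{1/2m_j'\}$ coincide, which is exactly the permutation statement. (By symmetry one could also apply the argument with the roles of $A$ and $A'$ swapped, but similarity already suffices.)

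The main obstacle is the first step, where we need the restriction along $v_j'$ to be a genuine nonzero monomial so that Lemma \ref{lem:Mon-PA-caract} applies. This works because $R_{A'}$ is positive definite: $R_{A'}(xe_j)>0$ for $x\neq0$ forces the coefficient of $\xi_j^{2m_j'}$ to be nonzero, and semi-ellipticity ensures no other monomial survives on the coordinate axis.
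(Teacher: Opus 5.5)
Your proof is correct and follows the paper's route: you apply Lemma \ref{lem:Mon-PA-caract} to $\mathbf{b}=A^{-1}A'e_j$ to see that every column of $A'$ is an eigenvector of $E$ with the same eigenvalue $1/2m_j'$ it has for $E'$, which gives $E=E'$. The only difference is in the bookkeeping. You obtain the permutation of $\mathbf{m}$ from the similarity $D\sim E=E'\sim D'$ instead of the paper's multiplicity count and reversibility argument, and you justify more explicitly why $P_{A'}(xe_j)$ is a nonzero monomial of degree $2m_j'$.
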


\begin{proof}
Let $A$ and $A'\in \GldR$ be as in the statement, and let's note that $P_A$ and $P_{A'}$ are related through the expression $P_{A'}(\cdot)=P_{A}(A^{-1}A'\cdot)$. \noindent For a fixed $k=1,2,\dots,d$ and any $l=1,2,\dots,d$ with $m'_l=m'_k$, we have
\begin{equation*}
    P_{A'}(xe_{l})=P_{A}(A^{-1}A'(xe_{l}))=P_{A}(xA^{-1}A'e_{l})=P_{A}(x\mathbf{b}),
\end{equation*}
where $e_l$ is the $l$th standard Euclidean coordinate vector and $\mathbf{b}=\mathbf{b}(l)=A^{-1}A'e_l\in \mathbb{R}^d$. The left-hand side of this identity is evidently a monomial in $x$ of degree $2m'_l=2m'_k$ so that $P_{A}(x\mathbf{b})$ is necessarily a monomial with this degree. Therefore, an appeal to Lemma \ref{lem:Mon-PA-caract} tells us that $A\mathbf{b}=A'e_l$ is an eigenvector of $E$ with eigenvalue $1/2m'_k$. Since, $A'e_l$ is also an eigenvector of $E'$ with eigenvalue $1/2m'_k$, we can exhaust every $l=1,2,\dots,d$ for which $m'_k=m'_l$ to see that the geometric multiplicity of $1/2m'_k$ as an eigenvalue of $E'$ is not more than its multiplicity as an eigenvalue of $E$. As this argument is completely reversible, we see that the spectra of $E$ and $E'$ coincide (with the same geometric multiplicity of eigenvalues). From this it follows immediately that $\mathbf{m}'$ is simply a permutation of $\mathbf{m}$. Further, this construction shows that the columns of $A'$ are a basis of $\mathbb{R}^d$ consisting of eigenvectors of both $E$ and $E'$ with matching eigenvalues (for $E$ and $E'$). Therefore, $E=E'$.
\end{proof}

\begin{proof}[Proof of Proposition \ref{prop:QLmbdOpIsIndependentOfA}]
Let's consider $A$ and $\widetilde{A}\in \GldR$ such that $P_A$ and $P_{\widetilde{A}}$ are semi-elliptic with $\mathbf{m}=(m_1,m_2,\ldots,m_d)$ and $\widetilde{\mathbf{m}}=(\widetilde{m}_1,\widetilde{m}_2,\ldots,\widetilde{m}_d)$, respectively. By the previous lemma, we have $E=ADA^{-1}=\widetilde{A}\widetilde{D}\widetilde{A}^{-1}\in\Exp(P)$ where $D=\diag(1/2m_1,1/2m_2,\dots,1/2m_d)$ and $\widetilde{D}=\diag(1/2\widetilde{m}_1,1/2\widetilde{m}_2,\dots,1/2\widetilde{m}_d)$. Consider the holomorphic functions $\psi_{A}(r,z)$ and $\psi_{\widetilde{A}}(r,z)$ defined by
\begin{equation*}
    \psi_{A}(r,z)=\sum_{\lambda=1}^{\infty}\frac{S_{\lambda}(z)}{\lambda!}r^{\lambda}\hspace{1cm}\mbox{and}\hspace{1cm} \psi_{\widetilde{A}}(r,z)=\sum_{\lambda=1}^{\infty}\frac{\widetilde{S}_{\lambda}(z)}{\lambda!}r^{\lambda}
\end{equation*}
which, in view of Lemma \ref{lem:PsiProperties}, converge and are holomorphic on some open neighborhood of $0$ in $\mathbb{C}^{d+1}$ (which, in particular, contains any $z\in\mathbb{C}^d$ so as long as $r$ is small). From \eqref{eq:PsiRelation}, we have
\begin{equation*}
    r^{2m}\psi_{A}(r,A^{-1}z)=\Upsilon_A(r^{2mD}A^{-1}z)=\Upsilon(Ar^{2mD}A^{-1}z)=\Upsilon(r^{2mE}z)
\end{equation*}
and, similarly, 
\begin{equation*}
    r^{2m}\psi_{\tilde{A}}(r,\widetilde{A}^{-1}z)=\Upsilon(r^{2m E}z)
\end{equation*}
for any $z\in\mathbb{C}^d$ and sufficiently small $r$, where $E=ADA^{-1}=\widetilde{A}\widetilde{D}\widetilde{A}^{-1}$. Thus, for any $z\in\mathbb{C}^d$ and sufficiently small $r$, we have
\begin{equation*}
    \sum_{\lambda=1}^\infty \frac{S_{\lambda}(A^{-1}z)}{\lambda!}r^\lambda=\psi_A(r,A^{-1}z)=r^{-2m}\Upsilon(r^{2mE}z)=\psi_{\widetilde{A}}(r,\widetilde{A}^{-1}z)=\sum_{\lambda=1}^\infty\frac{\widetilde{S}_\lambda(\widetilde{A}^{-1}z)}{\lambda!}r^\lambda.
\end{equation*}
Because power series coefficients are unique, we conclude that $S_\lambda(A^{-1}z)=\widetilde{S}_\lambda(\widetilde{A}^{-1}z)$ for each $z\in\mathbb{C}^d$ and $\lambda\in\mathbb{N}_+$, from which it immediately follws that $Q_{\lambda}^n=\widetilde{Q}_\lambda^n$ for every $n$ and $\lambda$.

\end{proof}

\subsection{Asymptotics for positive-homogeneous functions}\label{ssec:PHFAsymp}

\begin{proposition}\label{prop:ExpSetIntersectionAsympt}
    Let $P$ and $Q$ be positive-homogeneous functions with $E\in \Exp(P)\cap \Exp(Q)$. Then, $P\asymp Q$.
\end{proposition}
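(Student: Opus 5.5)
The plan is a compactness argument on a level set, with the shared exponent $E$ used to move every nonzero point onto that set. I will use the level set of $P$: by Proposition \ref{prop:PosHomEquiv}, the set $S_P=\{\eta\in\mathbb{R}^d:P(\eta)=1\}$ is compact. Since $Q$ is continuous and positive-definite, it is bounded above and bounded away from zero on $S_P$; note that $0\notin S_P$ because $P(0)=0$. Set
\begin{equation*}
c=\min_{\eta\in S_P}Q(\eta)>0\qquad\mbox{and}\qquad C=\max_{\eta\in S_P}Q(\eta)<\infty.
\end{equation*}

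Next I write each nonzero $\xi$ in polar form adapted to $E$. Given $\xi\neq 0$, put $t=P(\xi)>0$ and $\eta=t^{-E}\xi$. Since $E\in\Exp(P)$, we get $P(\eta)=t^{-1}P(\xi)=1$, so $\eta\in S_P$ and $\xi=t^{E}\eta$.

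Then I apply homogeneity of $Q$ with respect to the same $E$:
\begin{equation*}
Q(\xi)=Q(t^{E}\eta)=tQ(\eta).
\end{equation*}
Hence $c\,P(\xi)\leq Q(\xi)\leq C\,P(\xi)$ for every $\xi\neq 0$. At $\xi=0$ both functions vanish, so the inequality holds there trivially. This gives $P\asymp Q$ on all of $\mathbb{R}^d$.

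The only point needing care is compactness of $S_P$ together with $0\notin S_P$. The first is exactly condition (a) of Proposition \ref{prop:PosHomEquiv}, which holds by Definition \ref{def:PosDefFunc}; the second is immediate from positive-definiteness. It is also essential that the same $E$ is used for both $P$ and $Q$; this is precisely the hypothesis $E\in\Exp(P)\cap\Exp(Q)$, and no further obstacle is expected.
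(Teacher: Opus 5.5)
Your proof is correct and follows the paper's argument exactly: use compactness of the unit level set $S_P$ to bound $Q$ above and below there, write each nonzero $\xi$ as $t^E\eta$ with $t=P(\xi)$ and $\eta\in S_P$, and let the common exponent $E$ transfer those bounds to $Q(\xi)/P(\xi)$. Your version is in fact tidier than the paper's, whose constants $c_1,c_2$ are written in swapped order.
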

\begin{proof}
Let $S=\{\xi\in\mathbb{R}^d:P(\eta)=1\}$ and, since $S$ is compact thanks to Proposition \ref{prop:PosHomEquiv} and does not contain $0$, there are positive constants $c_1$ and $c_2$ for which $c_2 \leq Q(\eta) \leq  c_1$ for all $\eta\in S$. By the definition of $S$, we see that 
\begin{equation*}
    c_1\leq \frac{Q(\eta)}{P(\eta)}\leq c_2
\end{equation*}
for all $\eta\in S$. Now, for any $\xi\in\mathbb{R}^d\setminus \{0\}$, we can choose a $t>0$ for which $\xi=t^E\eta$ for $\eta\in S$ (simply take $t=P(\xi)$ and $\eta=P(\xi)^{-E}\xi$). With this, we see that
\begin{equation*}
    c_1\leq \frac{Q(\eta)}{P(\eta)}=\frac{tQ(\eta)}{tP(\eta)}=\frac{Q(t^E\eta)}{P(t^E\eta)}=\frac{Q(\xi)}{P(\xi)}=\frac{tQ(\eta)}{tP(\eta)}\leq c_2
\end{equation*}
giving our desired estimate on $\mathbb{R}^d\setminus\{0\}$. Recalling that $P(0) = Q(0) = 0$, the proof is complete.
\end{proof}

\noindent As an immediate corollary, we have the following.
\begin{proposition}\label{prop:PureLambdaCompare}
Given a positive-homogeneous function $R$, assume that $D=\diag(\lambda_1,\lambda_2,\dots,\dots,\lambda_d)\in \Exp(R)$ for some $\lambda_1,\lambda_2,\dots,\lambda_d$ which are all positive numbers. Then
\begin{equation*}
    R(x)\asymp \abs{x}^{\mathbf{1/\lambda}}:=\abs{x_1}^{1/\lambda_1}+\abs{x_2}^{1/\lambda_2}+\cdots+\abs{x_d}^{1/\lambda_d}
\end{equation*}
for $x\in\mathbb{R}^d$.
\end{proposition}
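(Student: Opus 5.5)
This follows directly from Proposition \ref{prop:ExpSetIntersectionAsympt}. The plan is to show that the comparison function
\begin{equation*}
Q(x)=\abs{x}^{\mathbf{1/\lambda}}=\abs{x_1}^{1/\lambda_1}+\abs{x_2}^{1/\lambda_2}+\cdots+\abs{x_d}^{1/\lambda_d}
\end{equation*}
is itself a positive-homogeneous function in the sense of Definition \ref{def:PosDefFunc}, with the same $D$ in its exponent set. Since $D\in\Exp(R)$ by hypothesis, we will then have $D\in\Exp(R)\cap\Exp(Q)$, and Proposition \ref{prop:ExpSetIntersectionAsympt} gives $R\asymp Q$ immediately.

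\textbf{Step 1: $Q$ is continuous and positive definite.} Continuity is clear because each $1/\lambda_j>0$. Each summand is non-negative and vanishes only when $x_j=0$, so $Q(x)=0$ exactly when $x=0$.

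\textbf{Step 2: $D\in\Exp(Q)$.} Since $t^D=\diag(t^{\lambda_1},\dots,t^{\lambda_d})$ for $t>0$, we compute
\begin{equation*}
Q(t^Dx)=\sum_{j=1}^d \abs{t^{\lambda_j}x_j}^{1/\lambda_j}=\sum_{j=1}^d t\abs{x_j}^{1/\lambda_j}=tQ(x).
\end{equation*}
Hence $\Exp(Q)\neq\varnothing$.

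\textbf{Step 3: $Q$ is positive-homogeneous.} We verify condition (c) of Proposition \ref{prop:PosHomEquiv}. Because all $\lambda_j>0$, we have $\|t^D\|=\max_j t^{\lambda_j}\to 0$ as $t\to 0$, so $\{t^D\}$ is contracting. (Condition (d) is equally easy: $Q(x)\geq \max_j\abs{x_j}^{1/\lambda_j}\to\infty$ as $x\to\infty$.) Thus $Q$ is a positive-homogeneous function.

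\textbf{Step 4: conclude.} Apply Proposition \ref{prop:ExpSetIntersectionAsympt} to the pair $R$ and $Q$, which share the exponent $D$.

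There is no real obstacle here. The only point to check is that the hypotheses of Proposition \ref{prop:ExpSetIntersectionAsympt} require both functions to be positive-homogeneous; this is exactly what Steps 1–3 establish for $Q$, while it is assumed for $R$.
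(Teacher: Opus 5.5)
Your proposal is correct and matches the paper's approach: the paper presents this as an immediate corollary of Proposition \ref{prop:ExpSetIntersectionAsympt}, applied to $R$ and $Q(x)=\abs{x}^{\mathbf{1/\lambda}}$, which share $D$ in their exponent sets. Your Steps 1--3 check explicitly that $Q$ is a positive-homogeneous function, a verification the paper leaves implicit.
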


\noindent With the above proposition and Proposition \ref{prop:PosHomAreSemiElliptic} in hand, we are now above to prove Proposition \ref{prop:LFCompare}.

\begin{proof}[Proof of Proposition \ref{prop:LFCompare}]
    First, notice that for $A\in Gl_d(\mathbb{R})$ as given in Proposition \ref{prop:PosHomAreSemiElliptic}, we have \begin{equation*}
        R^{\#}(x) = \sup_{\xi} \{x\cdot \xi - R(\xi)\} = \sup_{\xi} \{x\cdot A\xi - R(A\xi)\} = \sup_{\xi} \{A^{\top}x\cdot \xi - R_A(\xi)\}  = R_A^{\#}(A^{\top}x).
    \end{equation*}
    for $x\in\mathbb{R}^d$. Now, by Proposition \ref{prop:PosHomAreSemiElliptic}, we know $D = \diag(1/2m_1, 1/2m_2, \cdots , 1/2m_d) \in \Exp(P_A) \subseteq \Exp(R_A)$, and so by Proposition \ref{prop:ExpSetLegFenchelTransform} we have $I-D \in \Exp(R_A^{\#})$. A quick computation also shows that for $Q(x) = \abs{x}^{\mathbf{1/2m}}$,  $I-D \in \Exp(Q)$. So, $I-D \in \Exp(Q) \cap \Exp(R_A^{\#})$, and the desired result follows by Proposition \ref{prop:ExpSetIntersectionAsympt}.
\end{proof}

\subsection{Characterizing subhomogeneity}\label{ssec:SubHom}

\noindent In this section, we discuss the notion of subhomogeneity (with respect to some $E\in\MdR$) and give a characterization for it in terms of positive-homogeneous functions (containing $E$ in their exponent sets). This material was introduced in \cite{BR22} and also appears in \cite{R23}. We present this material for completeness and note that our final result below, Lemma \ref{lem:SubHomLittleO}, appears as Proposition 1.7 in \cite{R23}.

\begin{definition}\label{def:SubHom}
    Let $f$ be a real-valued function defined on some open neighborhood $\mathcal{O}$ of $0$ in $\mathbb{R}^d$. Also, suppose that, for $E\in\MdR$, $\{t^E\}$ is contracting. We say that $f$ is subhomogeneous with respect to $E$ if, for every compact set $K\subseteq \mathbb{R}^d$ and every $\epsilon>0$, there is some $\tau$ for which
    \begin{equation*}
        \abs{f(t^Ex)} \leq \epsilon t
    \end{equation*}

    \noindent
    for all $x\in K$ and $0 < t \leq \tau$.
\end{definition}

\begin{lemma}\label{lem:SubHomLittleO}
Let $R$ be a positive-homogeneous function on $\mathbb{R}^d$ and let $f$ be defined on an open neighborhood $\mathcal{O}$ of $0$ in $\mathbb{R}^d$. Then $f(\xi)=o(R(\xi))$ as $\xi\to 0$ if and only if $f$ is subhomogeneous with respect to $E$ for every $E\in\Exp(P)$. 
\end{lemma}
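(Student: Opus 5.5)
We have to prove Lemma \ref{lem:SubHomLittleO}: $f(\xi)=o(R(\xi))$ as $\xi\to 0$ if and only if $f$ is subhomogeneous with respect to every $E\in\Exp(R)$. (The statement writes $\Exp(P)$; we read it as $\Exp(R)$.) The strategy is to use the polar-type decomposition $\xi=t^E\eta$ with $t=R(\xi)$ and $\eta\in S_R=\{R=1\}$. This is exactly the decomposition used in the proof of Proposition \ref{prop:ExpSetIntersectionAsympt}, and $S_R$ is compact by Proposition \ref{prop:PosHomEquiv}. Throughout, fix $E\in\Exp(R)$; by Proposition \ref{prop:PosHomEquiv}, $\{t^E\}$ is contracting, so subhomogeneity makes sense.

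\emph{($\Rightarrow$)} Let $K$ be compact and $\epsilon>0$. Since $R$ is continuous, $c:=\sup_{x\in K}R(x)<\infty$. If $c=0$, then $K\subseteq\{0\}$ and there is nothing to prove beyond $f(0)=0$, which follows from the $o(R)$ hypothesis. Otherwise, by hypothesis there is $r>0$ such that $\abs{f(\xi)}\le (\epsilon/c) R(\xi)$ whenever $\abs{\xi}<r$. Because $\{t^E\}$ is contracting, $\|t^E\|\to 0$, so there is $\tau>0$ with $\sup_{x\in K}\abs{t^Ex}<r$ and $t^EK\subseteq\mathcal{O}$ for $0<t\le\tau$. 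For such $t$ and $x\in K$ we then get $\abs{f(t^Ex)}\le(\epsilon/c)R(t^Ex)=(\epsilon/c)\,tR(x)\le\epsilon t$.

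\emph{($\Leftarrow$)} It suffices to use a single $E$. Take $K=S_R$ and fix $\epsilon>0$; subhomogeneity gives $\tau$ with $\abs{f(t^E\eta)}\le\epsilon t$ for all $\eta\in S_R$ and $0<t\le\tau$. The set $\{\xi: R(\xi)\le\tau\}$ contains a neighborhood $V$ of $0$, by continuity of $R$ and $R(0)=0$. For $\xi\in V\setminus\{0\}$ write $t=R(\xi)\in(0,\tau]$ and $\eta=t^{-E}\xi$. Then $R(\eta)=t^{-1}R(\xi)=1$, so $\eta\in S_R$, and hence $\abs{f(\xi)}=\abs{f(t^E\eta)}\le\epsilon t=\epsilon R(\xi)$. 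This is exactly $f(\xi)=o(R(\xi))$.

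The only delicate points are bookkeeping: the uniformity $\sup_K\abs{t^Ex}\to0$, which follows from the operator-norm contraction, and the value at $\xi=0$. The latter is harmless, since $o(R(\xi))$ concerns $\xi\to0$ with $\xi\neq0$, and $R(0)=0$. No step is a real obstacle; the key input is the compactness of $S_R$ from Proposition \ref{prop:PosHomEquiv}.
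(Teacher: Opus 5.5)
Your proof is correct and is essentially the paper's argument: the forward direction bounds $\abs{f}\le(\epsilon/\sup_K R)\,R$ near $0$ and combines the contraction of $\{t^E\}$ with $R(t^Ex)=tR(x)$, and the converse applies subhomogeneity on the compact level set $S_R$ and writes $\xi=t^E\eta$ with $t=R(\xi)$ and $\eta\in S_R$. Your extra care (reading $\Exp(P)$ as $\Exp(R)$, and treating the case $\sup_K R=0$) goes slightly beyond the paper, but fix one convention for $\xi=0$ and use it in both directions. The right one is the neighborhood version of $o(R)$, which includes $\xi=0$: under it, the converse needs $f(0)=0$, and this follows from subhomogeneity applied with $K=\{0\}$.
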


\begin{proof}
We first suppose that $f(\xi)=o(R(\xi))$ as $\xi\to 0$ and take $E\in\Exp(P)$. Given $\epsilon>0$ and $K\subseteq\mathbb{R}^d$ be compact, let $M=\sup_{\xi\in K}R(\xi)<\infty$. Our hypotheses gives us a neighborhood $\mathcal{U}\subseteq \mathcal{O}$ on which  \begin{equation*}
    \abs{f(\xi)} \leq \frac{\epsilon}{M} R(\xi)
\end{equation*} for all $\xi\in \mathcal{U}$. Since $\{t^E\}$ is contracting by virtue of Proposition \ref{prop:PosHomEquiv}, we can pick $\tau >0$ for which $t^E(K) \subseteq \mathcal{U}$ for every $0<t\leq \tau$. Thus,
\begin{equation*}
    \abs{f(t^E\xi)} \leq \frac{\epsilon}{M} R(t^E\xi) =  \frac{\epsilon}{M} t R(\xi) \leq \epsilon t
\end{equation*} for all $0 < t \leq \tau$ and $\xi \in K$.

Conversely, suppose $f$ is subhomogeneous with respect to $E\in\Exp(P)$, let $\epsilon>0$, and define $S=S_R = \{\eta \in \mathbb{R}^d : R(\eta) = 1\}$, which is compact by Proposition \ref{prop:PosHomEquiv}. Then, there exists $\tau > 0$ such that \begin{equation*}
    \abs{f(t^E\eta)}\leq \epsilon t
\end{equation*} for every $\eta \in S$ and every $0<t\leq \tau$. For any $\xi$ in the open neighborhood $\mathcal{U}=\mathcal{O}\cap\{\xi\in\mathbb{R}^d:R(\xi)<\tau\}$ of $0$, $t=R(\xi)<\tau$ and $\eta=t^{-E}\xi\in S$ and so it follows that
 \begin{equation*}
    \abs{f(\xi)} = \abs{f(t^Et^{-E}\xi)}=\abs{f(t^E\eta)} \leq \epsilon t = \epsilon R(\xi).
\end{equation*}

\end{proof}

\end{document}